\newtheorem{theorem}{Theorem}
\newtheorem{corollary}[theorem]{Corollary}
\newtheorem{definition}[theorem]{Definition}
\newtheorem{example}[theorem]{Example}
\newtheorem{lemma}[theorem]{Lemma}
\newtheorem{proposition}[theorem]{Proposition}
\newtheorem{remark}[theorem]{Remark}
\begin{document}
\title[Statistical properties of dynamics Functional analytic approach ]{%
Statistical properties of dynamics\\
Introduction to the functional analytic approach.}
\author{Stefano Galatolo}
\email{galatolo@dm.unipi.it}
\maketitle

\begin{abstract}
These are lecture notes for a simple minicourse approaching the satistical
properties of a dynamical system by the study of the associated transfer
operator (considered on a suitable functions or measures spaces).

The following questions will be addressed:

\begin{itemize}
\item existence of a regular invariant measure;

\item Lasota Yorke inequalities and spectral gap;

\item decay of correlations and some limit theorem;

\item stability under perturbations of the system

\item linear response

\item random systems

\item hyperbolic systems
\end{itemize}

The point of view taken is to present the general construction and ideas
needed to obtain these results in the simplest way. For this, some theorem
is proved in a form which is weaker than usually known, but with an
elementary and simple proof.\newline
\end{abstract}

\tableofcontents
\address{Dipartimento di Matematica, Universita di Pisa, Via \ Buonarroti
1,Pisa - Italy}

\section{Introduction}

The term \emph{statistical properties} of a dynamical system refers to the
long time behavior of a trajectory $x_{1},...,x_{n}$, or a set of
trajectories of the system: their distribution in the phase space, the
average of a given observable along the trajectory (consider a function $f$
with values on $\mathbb{R}$ or $\mathbb{C}$ and the time average $\frac{%
f(x_{1})+...+f(x_{n})}{n}$), the speed of convergence to those averages, the
frequency of a deviation from the average behavior and so on. As we will see
in the following, this relates to the properties of the evolution of large
sets of trajectories, measures or even distributions by the action of the
dynamics.

In chaotic systems the statistical properties of the dynamics are often a
better object to be studied than the pointwise behavior of trajectories.
Indeed, due to the initial condition sensitivity, the future behavior of
initial data can be unstable and unpredictable, but statistical properties
are often stable and their description simpler. This is a classical approach
to dynamics that has been implemented in the so called Ergodic Theory (the
reader may find in any library or even in the references to these notes very
good books on this general theory and about its \ many applications).

In Ergodic Theory often it is supposed that the dynamics preserves a given
measure, and some other properties (ergodicity, mixing) from this, deep
consequences on the statistical behavior of the system are deduced. But,
given a dynamical system, how to show that there are interesting invariant
measures, which are their properties and how to show that the resulting
measure preserving system is ergodic or mixing and how to estimate the
mixing speed? Furthermore, are these findings stable by perturbations of the
system? Can, all of this be computed numerically in a reliable way?

These notes focuses around some questions of this kind, between dynamics and
ergodic theory. We consider dynamical systems having certain geometrical
properties and show that they have "good" invariant measures. Sometime we
can prove that they satisfy other finer properties (mixing, fast decay of
correlations, spectral gap), so that we can apply many results from Ergodic
Theory and Probability to deduce other consequences. We also consider the
important problem of the stability of these invariant measures and mixing
properties, this allows to have information about whether the statistical
properties are stable under small changes in the system or not, or even
about the direction of change of these properties when the system changes.
(this has of course important applications for the understanding of the
behavior of many systems)

As said before, the properties we mean to investigate are related to the
evolution of measures by the action of the dynamics. We will see that given
a dynamical system it is possible to associate to the system a transfer
operator describing the action of the dynamics on suitable functional spaces
of measures (or distributions sometime). Many important results can be
obtained studying the properties of this transfer operator\footnote{%
From an historical point of view, this approach brought important
developments and applications starting around the beginning of 21 century
(see \cite{S},\cite{L2},\cite{S}) and it is still fastly developing.}. This
is the main subject of the following sections. We will start defining the
transfer operator and its basic properties. We will see how it is possible
to deduce the existence of a regular invariant measure and the speed of
convergence to this measure by the iteration of the dynamics. We will see
that under additional assumptions we may have a precise description of the
action of the transfer operator on suitable spaces of measures (spectral
gap, Section \ref{spg}), and some of its statistical consequences. We will
then consider the problem of stability of all these concepts under
perturbation (Section \ref{stab}).

The general theory and tools shown in the notes are applied to some of the
simplest kind of dynamical systems. We enter in details showing how all
these concepts can be applied to\emph{\ expanding maps}, but we also show
how the same approach can be applied to \emph{\ piecewise expanding maps}
(Section \ref{PW}) and some class of \emph{hyperbolic maps} (Section \ref{yp}%
). Here the technicalities needed are much more complicated, but we try to
give a sketch of the ideas needed to extend the transfer operator approach
to this case, in which there is both contraction and expansion in the map
generating the dynamics. We also see how the transfer operator approach can
be applied to\emph{\ random dynamical systems} (Section \ref{rnd}). \newline

This version of the notes was prepared as a support for the \ Hokkaido
University summer institute school 2017, 2021\textbf{\ }and the course "An
introduction to random dynamical systems and their
perturbations\textquotedblright\ organized by Centro de Giorgi and Scuola
Superiore S. Anna (Fondazione Conservatorio Santa Chiara, 2019).

\section{Physical measures}

In these section we introduce the notion of physical measure for
deterministic, discrete time dynamical systems. Let $X$ be a metric space, $%
T:X\mapsto X$ a Borel measurable map. Given some initial condition $x_{0}\in
X$ we define the orbit of $x_{0}$ by the dynamics $T$ as $%
x_{1}=T(x_{0}),x_{2}=T(x_{1}),...,x_{i}=T^{i}(x_{0})$. Now let $M(X)$ be the
set of Borel positive measures on $X$. Lus see how we can apply the dynamics
to measures instead of points, by considering the so called pushforward map $%
T_{\ast }:M(X)\rightarrow M(X).$ Given a Borel probability measure $\nu $ on 
$X,$ we define $T^{\ast }(\nu )$ as%
\begin{equation*}
T_{\ast }[\nu ](A)=\nu (T^{-1}(A)).
\end{equation*}%
where $A$ is any measurable subset of $X.$ \footnote{%
To help in the understanding of why this is a reasonable definition of the
action of the dynamics $T$ on measures, we invite the reader to verify that
by the above definition, if $\delta _{x_{0}}$ is the Dirac measure placed on 
$x_{0}$ then $T^{\ast }(\delta _{x_{0}})=\delta _{T(x_{0})}.$}

We say that a Borel probability measure $\mu $ is $T$\textbf{-invariant} if $%
T_{\ast }[\mu ]=\mu $, or equivalently, if for each measurable set $A$ it
holds $\mu (A)=\mu (T^{-1}(A))$. \ In this framework, when $T:X\rightarrow X$
and $\mu $ is invariant, we call the triple $(X,T,\mu )$ \ \emph{measure
preserving transformation.}

\begin{example}
For a rotation $x\rightarrow x+\alpha ~(\func{mod}1)$ the Lebesgue measure
(the 1-d volume) is invariant.
\end{example}

\begin{example}
For the doubling map $x\rightarrow 2x~(\func{mod}1)$ the Lebesgue measure
(the 1-d volume) is invariant, but there are many other invariant measures.
\end{example}

Invariant measures represent \emph{equilibrium} states, in the sense that
probabilities of events do not change in time. A given a map $T$, may have
many of these invariant measures\footnote{%
For example, to every periodic orbit it correspond one invariant measure for
the system, but as we will see in the following there can be are many more
in the system.}, but some of them is particularly important to describe the
statistical properties of the dynamics associated to $T$.

In this section we will define the notion of Physical measure, which is a
particularly important kind of invariant measure. In the following, we will
see that under suitable assumptions some physical measure measure is an
attractor of many other regular measures by the dynamics induced on $M(X)$
by $T_{\ast }$, and the speed of convergence to this equilibrium state has
important consequences for the statistical properties of the dynamics.

Given a measure preseving transformation $(X,T,\mu )$, a set $A\subseteq X$
is called $T$-invariant if $T^{-1}(A)=A$ up to zero measure sets. The system 
$(X,T,\mu )$ is said to be ergodic if each $T$-invariant set has total or
null measure. An ergodic system is then a system whose dynamics is
indecomposable into different invariant sets (up to zero measure).

The celebrated\textbf{\ Birkhoff pointwise ergodic theorem} (see any book
about ergodic theory) says that in this case, time averages computed along $%
\mu $-typical orbits coincides with space average with respect to $\mu .$
More precisely, in ergodic systems, for any $f\in L^{1}(X,\mu )$ it holds 
\begin{equation}
\underset{n\rightarrow \infty }{\lim }\frac{S_{n}^{f}(x)}{n}=\int \!{f}\,%
\mathrm{d}{\mu },  \label{Birkhoff}
\end{equation}%
for $\mu $ almost each $x$, where $S_{n}^{f}=f+f\circ T+\ldots +f\circ
T^{n-1}.$

Note that the equality in (\ref{Birkhoff}) is up to negligible sets
according to $\mu $. A given map $T:X\mapsto X$ may hence have many
invariant measures corresponding to many possible statistical limit
behaviors for its orbits.

It is important to select the physically relevant ones; the ones which come
from the time averages of a large set of points. Large according to the
natural measure we can consider a priori on our phase space; when $X$ is a
manifold this could be the Riemann volume or Lebesgue measure.

\begin{definition}
Let $X$ be a manifold with boundary. We say that a point $x$ belongs to the
basin of an invariant measure $\mu $ if (\ref{Birkhoff}) holds at $x$ for
each bounded continuous $f$. A \textbf{physical measure} is an invariant
measure whose basin has positive Lebesgue measure.
\end{definition}

Often these physical measures also have other interesting features such as:

\begin{itemize}
\item they are as regular as possible among the invariant ones;

\item they have a certain stability under perturbations of the system;

\item they are in some sense limits of iterates of the Lebesgue measure (by
iterating the pushforward map or taking suitable Cesaro averages of such
iterates).
\end{itemize}

These measures hence encodes important information about the statistical
behavior of the system (see \cite{LSY} for a general survey). In the
following we will see some method to select those measures, prove their
existence and some of its main statistical properties.

\begin{remark}
If $\mu $ is ergodic and physical, then the correspondence between time and
space averages expressed by \ref{Birkhoff} also hold for a set which is also
relevant for the Lebesgue measure.
\end{remark}

\begin{remark}
In the following we will show techniques to find the physical invariant
measures of a given system and to know many intresting properties of them.
Altough in many systems of interest in the mathematics and in the
applications one can show that such invariant measures exist, it is worth to
remark that there are deterministic systems having no invariant meaures at
all. A remarkable general statement about existence of invariant measures
(Krylov-Bogoliubov theorem) says that "if $X$ is a compact metric space $%
T:X\rightarrow X$ is continuous then there is at least one invariant
measure", we now show a system which is discontinuous and does not have such
measures. Consider $T:[0,1]\rightarrow \lbrack 0,1]$ defined as 
\begin{equation*}
T(x)=\left\{ 
\begin{array}{c}
\frac{1}{2}x+\frac{1}{4}~if~x\neq \frac{1}{2} \\ 
0~~if~x=\frac{1}{2}%
\end{array}%
\right. ,
\end{equation*}%
whose graph is represented in the following figure.

\begin{equation*}
\FRAME{itbpF}{1.0836in}{1.0793in}{0in}{}{}{Plot}{\special{language
"Scientific Word";type "MAPLEPLOT";width 1.0836in;height 1.0793in;depth
0in;display "USEDEF";plot_snapshots TRUE;mustRecompute FALSE;lastEngine
"MuPAD";xmin "0.0";xmax "1";xviewmin "0.0";xviewmax "1";yviewmin
"0";yviewmax "1";viewset"XY";rangeset"X";plottype 4;axesFont "Times New
Roman,12,0000000000,useDefault,normal";numpoints 100;plotstyle
"patch";axesstyle "normal";axestips FALSE;xis \TEXUX{x};var1name
\TEXUX{$x$};function \TEXUX{$\frac{1}{2}x+\frac{1}{4}$};linecolor
"black";linestyle 1;pointstyle "point";linethickness 2;lineAttributes
"Solid";var1range "0.0,1";num-x-gridlines 100;curveColor
"[flat::RGB:0000000000]";curveStyle "Line";function \TEXUX{$x$};linecolor
"green";linestyle 1;pointstyle "point";linethickness 1;lineAttributes
"Solid";var1range "0.0,1";num-x-gridlines 100;curveColor
"[flat::RGB:0x00008000]";curveStyle "Line";function \TEXUX{$0$};linecolor
"black";linestyle 1;pointplot TRUE;pointstyle "point";linethickness
2;lineAttributes "Solid";var1range "0.5,0.5";num-x-gridlines 100;curveColor
"[flat::RGB:0000000000]";curveStyle "Point";rangeset"X";function
\TEXUX{$\frac{1}{2}$};linecolor "black";linestyle 1;pointplot
TRUE;pointstyle "circle";linethickness 2;lineAttributes "Solid";var1range
"0.5,0.5";num-x-gridlines 100;curveColor "[flat::RGB:0000000000]";curveStyle
"Point";rangeset"X";VCamFile 'R8QG5I00.xvz';valid_file "T";tempfilename
'R8QG5I00.wmf';tempfile-properties "XPR";}}
\end{equation*}

This map has no invariant measures, let us briefly explain why. Let $A=[0,%
\frac{1}{2})\cup (\frac{1}{2},1]$, \ it holds that $\forall \epsilon
~\exists n~s.t.T^{n}(A)\subseteq (\frac{1}{2}-\epsilon ,\frac{1}{2}+\epsilon
)-\{\frac{1}{2}\}:=S_{\epsilon }$.

Suppose $\mu $ is an invariant measure, since the map is $1-1$ on $A$ we get 
$\mu (T^{n}A)=\mu (A)=\mu ([0,1]-\{\frac{1}{2}\}).$ It must also hold that $%
\mu (S_{\epsilon })\rightarrow 0$, hence $\mu (A)=0.$ However since $T^{-1}(%
\frac{1}{2})=\emptyset $ we also have $\mu (0)=0$ and thus the whole space
has zero measure.

\begin{example}
We show two examples of maps of the unit interval with a plot of the density
of their associated absolutely continuous and hence physical invariant
measures, computed with the methods described in Section \ref{Ul}.

The first example is a piecewise expanding map (see Section \ref{PW}) $%
T:[0,1]\rightarrow \lbrack 0,1]$ defined by%
\begin{equation*}
T(x)=\left\{ 
\begin{array}{cc}
\frac{109}{64}(1/2-x)^{51/64} & x<1/2 \\ 
1-\frac{109}{64}(x-1/2)^{51/64} & x>1/2.%
\end{array}%
\right.
\end{equation*}%
The graph of the map and the associated invariant density is shown in figure %
\ref{o1}.
\end{example}
\end{remark}

\FRAME{ftbpFU}{2.9813in}{1.5384in}{0pt}{\Qcb{A piecewise expanding map and
its invariant density.}}{\Qlb{o1}}{lorenz2.bmp}{\special{language
"Scientific Word";type "GRAPHIC";maintain-aspect-ratio TRUE;display
"USEDEF";valid_file "F";width 2.9813in;height 1.5384in;depth
0pt;original-width 5.3004in;original-height 2.7198in;cropleft "0";croptop
"1";cropright "1";cropbottom "0";filename
'../../Congressi-scuole-workshop/Hokkaido-2021/Slides/Mon/lorenz2.bmp';file-properties "XNPEU";}%
}

The second example is another map on the interval which has a generally
expanding behavior, but also a fixed point where the derivative is $1$. This
is a slowly repelling fixed point, and this fact forces the invariant
measure to concentrate around that point, givig an invariant density which
diverges around the point. Again we consider a map $T:[0,1]\rightarrow
\lbrack 0,1]$ defined by%
\begin{equation}
T(x)=x+x^{1+\frac{1}{8}}\quad \text{mod $1$,}  \label{eq:mann}
\end{equation}

whose graph is shown in Figure\ 2.%
\begin{equation*}
\FRAME{itbpFUX}{1.0585in}{1.0286in}{0in}{\Qcb{Figure 2.}}{\Qlb{f2}}{Plot}{%
\special{language "Scientific Word";type "MAPLEPLOT";width 1.0585in;height
1.0286in;depth 0in;display "USEDEF";plot_snapshots TRUE;mustRecompute
FALSE;lastEngine "MuPAD";xmin "0";xmax "1";xviewmin "0";xviewmax
"1";yviewmin "0";yviewmax "1";viewset"XY";rangeset"X";plottype 4;axesFont
"Times New Roman,12,0000000000,useDefault,normal";numpoints 100;plotstyle
"patch";axesstyle "normal";axestips FALSE;xis \TEXUX{x};var1name
\TEXUX{$x$};function \TEXUX{$x+x^{2}-\left\lfloor x+x^{2}\right\rfloor
$};linecolor "black";linestyle 1;pointstyle "point";linethickness
1;lineAttributes "Solid";var1range "0,1";num-x-gridlines 100;curveColor
"[flat::RGB:0000000000]";curveStyle "Line";function \TEXUX{$x$};linecolor
"green";linestyle 1;pointstyle "point";linethickness 1;lineAttributes
"Solid";var1range "0,1";num-x-gridlines 100;curveColor
"[flat::RGB:0x00008000]";curveStyle "Line";VCamFile
'R8WOG80N.xvz';valid_file "T";tempfilename
'R8WOUG06.wmf';tempfile-properties "XPR";}}
\end{equation*}
\ and a plot of the invariant density is shown in Figure 3.\FRAME{dtbpFU}{%
1.3342in}{1.3342in}{0pt}{\Qcb{Figure 3.}}{\Qlb{f3}}{%
mann_0125_dens-eps-converted-to.bmp}{\special{language "Scientific
Word";type "GRAPHIC";maintain-aspect-ratio TRUE;display "USEDEF";valid_file
"F";width 1.3342in;height 1.3342in;depth 0pt;original-width
2.2657in;original-height 2.2657in;cropleft "0";croptop "1";cropright
"1";cropbottom "0";filename
'mann_0125_dens-eps-converted-to.bmp';file-properties "XNPEU";}}

\section{The transfer operator}

We have seen how the dynamics $T$ can be applied to measures by the
pusforward map. Let us consider the space $SM(X)$ of Borel measures with
sign on $X$ (equivalently complex valued measures can be considered) this is
a vector space and the pushforward map $T_{\ast }$ can be extended to $SM(X)$
with the same definition to a \emph{linear map} $L:SM(X)\rightarrow SM(X)$.
Because of its importance and to emphatize the fact that this is a linear
function we will denote it by $L$ or $L_{T}$ and call it the \textbf{%
transfer operator} associated to $T$. Recalling the definition, we remark
that if $\nu \in SM(X)$ then $L[\nu ]\in SM(X)$ is such that%
\begin{equation*}
L[\nu ](A)=\nu (T^{-1}(A)).
\end{equation*}

The main theme of these lectures is to see how the understanding of the
properties of this operator, applied on suitable normed vector spaces of
measures allow to understand many important statistical properties of the
dynamics of $T$.

We will see this by dealing first with some of the simplest examples, the
expanding maps for which we will prove the existence of physical measures
having a smooth density, the convergence to equilibrium and statistical
stability properties. To do this we will hence restrict our interest on
absolutely continuous measures, and until Section \ref{sec:contr} we will
only consider this kind of measures and the related densities, also on other
more complicated classes of of deterministic and random systems. We then now
focus our attention on the understanding of the properties of the transfer
operator applied to absolutely continuous measures.

Remark that if the measure we consider is absolutely continuous: $d\nu =f~dm$
(here we are considering the Lebesgue measure $m$ as a reference measure,
note that other measures can be considered) and if \thinspace $T$ is
nonsingular\footnote{%
A map is nonsingular (with respect to the Lebesgue measure) when $%
m(T^{-1}(A))=0$ $\iff m(A)=0.$} the operator $L$ induces another operator $%
\tilde{L}:L^{1}(m)\rightarrow L^{1}(m)$ acting on the measure densities
defined by 
\begin{equation*}
\tilde{L}f=\frac{d(L(f~m))}{dm}.
\end{equation*}
By a small abuse of notation we will still indicate by $L$ this operator%
\footnote{%
In many applications the transfer operator will be considered as acting on
some suitable space of regular measures with sign hence other subspaces of $%
L^{1}(m)$ or $SM(X)$.}. We remark that in this point of view, one can see $%
L^{1}(m)$ equivalently \ as a space of integrable funtions or a space of
measures which are absolutely continuous with respect to $m,$ hence measures
having some regularity.

Considering hence $L:L^{1}\rightarrow L^{1}$, it is easy to verify that this
is a \emph{positive operator\footnote{%
A positive operator $L$ is an operator for which $f\geq 0\implies Lf\geq 0$.
We remark that by linearity one also has $f\geq g\implies Lf\geq Lg.$} }and
preserves the integral

\begin{equation*}
\int_{X}Lf~dm=\int_{X}f~dm.
\end{equation*}%
We recall that positive, integral preserving operators are also called \emph{%
Markov Operators. }Let us see some other important basic properties

\begin{proposition}
\label{weakcontr} $L:L^{1}\rightarrow L^{1}$ is a \emph{weak contraction}
for the $L^{1}$ norm. If $f$ is a $L^{1}$ density, then 
\begin{equation*}
||Lf||_{1}\leq ||f||_{1}.
\end{equation*}
\end{proposition}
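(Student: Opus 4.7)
The plan is to reduce the $L^1$ contraction inequality to the two properties of $L$ already established just before the statement: positivity and integral preservation. The statement as written concerns a density, but the proof I have in mind works for any $f\in L^{1}(m)$, and this more general version is what is needed to genuinely call $L$ a weak contraction.

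First I would perform the standard decomposition $f=f^{+}-f^{-}$, where $f^{\pm}=\max(\pm f,0)$ are non-negative $L^{1}$ functions satisfying $|f|=f^{+}+f^{-}$. By linearity of $L$ we have $Lf = Lf^{+}-Lf^{-}$, and by positivity of $L$ the two functions $Lf^{+}$ and $Lf^{-}$ are non-negative almost everywhere. Hence the elementary inequality $|a-b|\leq a+b$ valid for $a,b\geq 0$ gives the pointwise almost-everywhere bound
\begin{equation*}
|Lf|=|Lf^{+}-Lf^{-}|\leq Lf^{+}+Lf^{-}=L(f^{+}+f^{-})=L|f|.
\end{equation*}

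Next I would integrate this pointwise inequality against $m$ and invoke the integral-preservation identity $\int Lg\,dm=\int g\,dm$ applied to the non-negative function $g=|f|\in L^{1}$. This yields
\begin{equation*}
\|Lf\|_{1}=\int |Lf|\,dm\leq \int L|f|\,dm=\int |f|\,dm=\|f\|_{1},
\end{equation*}
which is the desired inequality. In the special case where $f$ is already a density (i.e.\ $f\geq 0$), the first step is unnecessary and the computation actually gives equality $\|Lf\|_{1}=\|f\|_{1}$, consistent with the fact that $L$ maps probability densities to probability densities.

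There is essentially no hard step: once the decomposition $f=f^{+}-f^{-}$ is in place, positivity gives the key pointwise bound and integral preservation closes the estimate. The only minor point to keep in mind is that all manipulations are valid almost everywhere with respect to $m$, which is the setting in which $L$ was defined via Radon--Nikodym, so no measurability or integrability issues arise.
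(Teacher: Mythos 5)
Your proof is correct and follows essentially the same route as the paper: decompose $f=f^{+}-f^{-}$, use positivity and linearity to obtain the pointwise bound $|Lf|\leq L|f|$, then integrate and apply the integral-preservation identity. Your write-up is in fact slightly more explicit than the paper's chain of inequalities about where positivity is used, and the remark that equality holds for densities is a correct bonus observation.
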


\begin{proof}
Since $L$ preserves the integral%
\begin{eqnarray*}
||Lf||_{1} &=&\int |Lf|~dm\leq \int |L(f^{+}-f^{-})|dm\leq \int
|L(f^{+})|+|L(f^{-})|dm \\
&\leq &\int |(f^{+})|+|(f^{-})|dm=\int |f|~dm=||f||_{1}.
\end{eqnarray*}
\end{proof}

\begin{proposition}
\label{duality}Consider $f$ $\in L^{1}(m)$, and $g\in L^{\infty }(m)$, then:%
\begin{equation*}
\int g~L(f)~dm=\int g\circ T~f~dm.
\end{equation*}
\end{proposition}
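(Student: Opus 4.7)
The plan is to prove the identity first on indicator functions, where it reduces directly to the definition of $L$ on measures, and then extend to general $g \in L^{\infty}(m)$ by the standard measure-theoretic bootstrap (linearity plus a limit argument).

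The starting point is the case $g = \chi_A$ for a Borel set $A$. Here the left-hand side is $\int \chi_A \, L(f) \, dm = \int_A L(f)\, dm = L(fm)(A)$, and by the very definition of the transfer operator on measures this equals $(fm)(T^{-1}A)$. On the other hand the right-hand side is $\int \chi_A \circ T \cdot f \, dm = \int \chi_{T^{-1}A} \cdot f \, dm = \int_{T^{-1}A} f \, dm$, which is the same quantity. So the identity holds for indicators.

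By linearity of the integral and of $L$, the identity then extends to every simple function $g = \sum_{i} c_i \chi_{A_i}$. To pass to an arbitrary $g \in L^{\infty}(m)$, I would take a sequence of simple functions $g_n$ with $\|g_n\|_{\infty} \leq \|g\|_{\infty}$ and $g_n \to g$ pointwise (hence also $g_n \circ T \to g \circ T$ pointwise). Since $|g_n L(f)| \leq \|g\|_{\infty} |L(f)|$ with $|L(f)| \in L^{1}$ by Proposition~\ref{weakcontr}, and similarly $|g_n \circ T \cdot f| \leq \|g\|_{\infty}|f|$ with $|f| \in L^{1}$, the dominated convergence theorem applies on both sides and yields the claimed equality in the limit.

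The only mildly delicate point is making sure the approximation is controlled on both sides simultaneously; this is handled by choosing the $g_n$ uniformly bounded by $\|g\|_{\infty}$ (e.g.\ via truncation of the standard simple-function approximation), so that the same constant dominates both sequences and the two applications of dominated convergence go through with integrable envelopes coming from $L(f)$ and $f$ respectively. No other difficulty arises, and the proposition follows.
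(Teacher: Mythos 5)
Your proof is correct and follows essentially the same route as the paper: both reduce the identity to indicator functions via the definition of $L$ on measures (using nonsingularity of $T$) and then pass to general $g\in L^{\infty}(m)$ through simple functions. The only, immaterial, difference is in the closing limit step: the paper approximates $g$ uniformly (within $\epsilon$ in the $L^{\infty}$ norm) by a simple function and bounds the error on both sides using $\|Lf\|_{1}\le \|f\|_{1}$, whereas you use a uniformly bounded pointwise approximating sequence together with dominated convergence; both arguments are valid.
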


\begin{proof}
Let us first prove it for simple functions if $g=1_{B}$ then%
\begin{eqnarray*}
\int g\circ T~f~dm &=&\int 1_{B}\circ T~f~dm= \\
\int 1_{T^{-1}B}~~fdm &=&\int_{T^{-1}B}f~dm= \\
\int_{B}Lf~ &=&\int 1_{B}~Lf~dm.
\end{eqnarray*}%
If $g\in L^{\infty }$ we can approximate it by a combination of simple
functions $\hat{g}=\sum_{i}a_{i}1_{A_{i}}$ in a way that $||g-\hat{g}%
||_{\infty }\leq \epsilon $ and%
\begin{equation*}
\int \hat{g}\circ T~f~dm=\int \hat{g}~L(f)~dm.
\end{equation*}%
Then%
\begin{eqnarray*}
\int g\circ T~f~dm &=&\int [g-\hat{g}+\hat{g}]\circ T~f~dm \\
&=&\int [g-\hat{g}]\circ T~f~dm+\int \hat{g}\circ T~f~dm.
\end{eqnarray*}%
Moreover%
\begin{eqnarray*}
\int g~L(f)~dm &=&\int [g-\hat{g}+\hat{g}]~L(f)~dm \\
&=&\int [g-\hat{g}]~L(f)~dm+\int \hat{g}~L(f)~dm
\end{eqnarray*}%
and since $T$ is nonsingular 
\begin{eqnarray*}
|\int [g-\hat{g}]\circ T~f~dm| &\leq &||[g-\hat{g}]\circ T||_{\infty
}||f||_{1} \\
&\leq &||[g-\hat{g}]||_{\infty }||f||_{1}\leq \epsilon ,
\end{eqnarray*}%
moreover%
\begin{equation*}
|\int [g-\hat{g}]~L(f)~dm|\leq ||[g-\hat{g}]||_{\infty }||Lf||_{1}\leq
\epsilon
\end{equation*}%
directly leading to the statement.
\end{proof}

\begin{remark}
A similar statement 
\begin{equation*}
\int g~d(L\mu )=\int g\circ T~d\mu .
\end{equation*}
applies in the more general case $\mu $ is any Borel measure with sign. The
proof is almost the same as above.
\end{remark}

Measures which are invariant for $T$ are fixed points of $L$. Since physical
measures usually have some "as good as possible" regularity property we will
find such invariant measures in some space of "regular" measures. A first
example which will be explained in more details below is the one of
expanding maps, where we are going to find physical measures in the space of
invariant measures having an absolutely continuous density.

\section{Expanding maps: regularizing action of the transfer operator and
existence of a regular invariant measure\label{maps}}

%\begin{equation*}
%\begin{array}{cc}
%\FRAME{itbpFU}{2.4007in}{2.4007in}{0in}{\Qcb{The expanding map $x\rightarrow
%4x+0.01\sin (8\protect\pi x)$ $\func{mod}1$}}{}{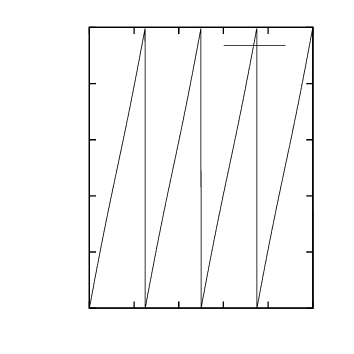}{\special%
%{language "Scientific Word";type "GRAPHIC";maintain-aspect-ratio
%TRUE;display "USEDEF";valid_file "F";width 2.4007in;height 2.4007in;depth
%0in;original-width 2.3609in;original-height 2.3609in;cropleft "0";croptop
%"1";cropright "1";cropbottom "0";filename 'higher01func.eps';file-properties
%"XNPEU";}} & \FRAME{itbpFU}{2.4007in}{2.4007in}{0in}{\Qcb{and a plot of its
%invariant density.}}{}{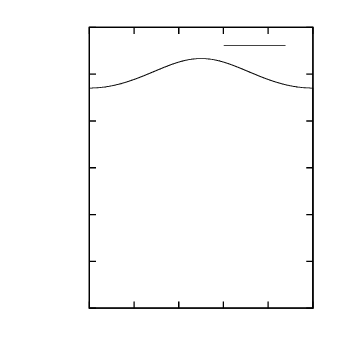}{\special{language "Scientific
%Word";type "GRAPHIC";maintain-aspect-ratio TRUE;display "USEDEF";valid_file
%"F";width 2.4007in;height 2.4007in;depth 0in;original-width
%2.3609in;original-height 2.3609in;cropleft "0";croptop "1";cropright
%"1";cropbottom "0";filename 'higher01dens.eps';file-properties "XNPEU";}}%
%\end{array}%
%\end{equation*}

\begin{figure}[!ht]


\centering
\includegraphics[width=30mm]{higher01func.eps}
\hspace{3mm}
\includegraphics[width=30mm]{higher01dens.eps}
\caption{The expanding map $x\rightarrow
4x+0.01\sin (8\protect\pi x)$ $\func{mod}1$ and a plot of its
invariant density.}
\end{figure}

In this section we illustrate one approach which allows to prove the
existence of regular invariant measures. The approach is quite general, but
we will show it on a class of one dimensional maps where the construction is
technically simple. An important step is to find a suitable function space
on which the transfer operator has good properties.

Let us consider a map $T$ which is expanding on the circle. i.e.

\begin{itemize}
\item $T:S^{1}\rightarrow S^{1},$

\item $T\in C^{2},$

\item $|T^{\prime }(x)|>1$ $\forall x$.
\end{itemize}

Let us consider the Banach space $W^{1,1}$ of absolutely continuous density
functions\footnote{%
For which $~f(x)=f(0)+\int_{0}^{x}f^{^{\prime }}(t)~dt$ for some $f^{\prime
}\in L^{1}$.} with the norm 
\begin{equation*}
||f||_{W^{1,1}}=||f||_{1}+||f^{\prime }||_{1}.
\end{equation*}

We will show that the transfer operator is \emph{regularizing} for the $%
||~||_{W^{1,1}}$ norm. This implies that iterates of a starting measure have
bounded $||~||_{W^{1,1}}$ norm, allowing to find a suitable invariant
measure (and much more information on the statistical behavior of the
system, as it will be described in the following sections).

\subsection{Lasota-Yorke inequalities\label{sec:LY}}

A main tool to implement this idea is the so called Lasota Yorke inequality (%
\cite{LY}) \footnote{%
In the probabilistic context, this kind of estimations are often called as
Doeblin Fortet inequalities.}, let us see what it is about: we consider the
operator $L$ restricted to some normed vector space \ of signed Borel
measures $(B_{s},||~||_{s})$ (often a Banach space) and we consider another
space $B_{w}\supset B_{s}$ equipped with a weaker norm $||~||_{w}$ such that 
$||L^{n}||_{B_{w}\rightarrow B_{w}}\leq M$ is uniformly bounded (as it is
for the $L^{1}$ norm, see Remark \ref{weakcontr}). In this context, if the
two spaces are well chosen it is possible in many interesting cases to prove
that there are $A\geq 0,0\leq \lambda <1$ such that for each $n$%
\begin{equation}
||L^{n}g||_{s}\leq A\lambda ^{n}||g||_{s}+B||g||_{w}.  \label{lytyp}
\end{equation}%
This means that the iterates $L^{n}g$ have bounded strong norm $||~||_{s}$
and then by suitable compactness arguments which will be shown in the next
paragraphs this inequality may show the existence of an invariant measure in 
$B_{s}$. Similar inequalities can be proved in many systems, and they are a
main tool for the study of statistical properties of dynamical systems.

Now let us see how the above inequality can be obtained in the case of
expading maps, were $W^{1,1}$ and $L^{1}$ are considered as strong and weak
space. Let us consider a nonsingular transformation and see its action on
densities. In the case of expanding maps we are considering we have an
explicit formula for the transfer operator\footnote{%
To prove the formula consider a small neighborhood $B(x,\epsilon )$ of $x$
and check the amount of measure which is sent there by $L$. The value of the
density $Lf(x)$ will a.e. be the value of the limit $\lim_{\epsilon
\rightarrow 0}\frac{\mu (T^{-1}(B(x,\epsilon )))}{2\epsilon }$ (or see \cite%
{gora} pag. 85 ).}:%
\begin{equation}
\lbrack Lf](x)=\sum_{y\in T^{-1}(x)}\frac{f(y)}{|T^{\prime }(y)|}.
\label{pf}
\end{equation}%
Taking the derivative of (\ref{pf}) (remember that $T^{\prime }(y)=T^{\prime
}(T^{(-1)}(x))$) we get%
\begin{equation*}
(Lf)^{^{\prime }}=\sum_{y\in T^{-1}(x)}\frac{1}{(T^{\prime }(y))^{2}}%
f^{\prime }(y)-\frac{T^{\prime \prime }(y)}{(T^{\prime }(y))^{3}}f(y).
\end{equation*}

Note that%
\begin{equation}
(Lf)^{^{\prime }}=L(\frac{1}{T^{\prime }}f^{\prime })-L(\frac{T^{\prime
\prime }}{(T^{^{\prime }})^{2}}f)  \label{preLY}
\end{equation}%
and%
\begin{eqnarray*}
||(Lf)^{^{\prime }}||_{1} &\leq &||\frac{1}{T^{\prime }}f^{\prime }||_{1}+||%
\frac{T^{\prime \prime }}{(T^{^{\prime }})^{2}}f||_{1} \\
&\leq &\alpha ||f^{\prime }||_{1}+||\frac{T^{\prime \prime }}{(T^{^{\prime
}})^{2}}||_{\infty }||f||_{1}
\end{eqnarray*}

where $\alpha =\max (\frac{1}{T^{\prime }})$.

Hence%
\begin{equation*}
||(Lf)^{^{\prime }}||_{1}+||Lf||_{1}\leq \alpha ||f^{\prime }||_{1}+\alpha
||f||_{1}+(||\frac{T^{\prime \prime }}{(T^{^{\prime }})^{2}}||_{\infty
}+1)||f||_{1}
\end{equation*}%
and%
\begin{equation*}
||(Lf)||_{W^{1,1}}\leq \alpha ||f||_{W^{1,1}}+(||\frac{T^{\prime \prime }}{%
(T^{^{\prime }})^{2}}||_{\infty }+1)||f||_{1}.
\end{equation*}

Iterating the inequality\footnote{$||L^{2}f||\leq \alpha ||Lf||+B||Lf||_{1}$%
\par
$\leq \alpha ^{2}||f||+\alpha B||f||_{1}+B||f||_{1},...$}%
\begin{equation}
||(L^{n}f)||_{W^{1,1}}\leq \alpha ^{n}||f||_{W^{1,1}}+\frac{(||\frac{%
T^{\prime \prime }}{(T^{^{\prime }})^{2}}||_{\infty }+1)}{1-\alpha }%
||f||_{1}.  \label{LY}
\end{equation}

Hence if we start with $f\in W^{1,1}$ all the elements of the sequence $%
L^{n}f^{^{\prime }}$ of iterates of $f$ are in $W^{1,1}$ and their strong
norms are uniformly bounded.

\subsubsection{A Lipschitz Lasota Yorke inequality}

By Equation \ref{LY}, $||(L^{n}f)||_{W^{1,1}}$ is uniformly bounded, then
also $||(L^{n}f)||_{\infty }$ is. \ Let us remark that since the transfer
operator is positive 
\begin{equation*}
M:=\sup_{n,||f||_{\infty }\leq 1}||(L^{n}f)||_{\infty
}=\sup_{n}||(L^{n}1)||_{\infty }.
\end{equation*}%
Then there is $n_{1}$ such that $\alpha ^{n_{1}}M<1$.

Now consider a new map, $T_{2}=T^{n_{1}}$. This map still has the same
regularity properties as before and is uniformly expanding on the circle.
Let $L_{2}$ be its transfer operator. From (\ref{preLY}) \ and rhe
positivity of $L_{2}$ we have%
\begin{eqnarray}
||(L_{2}f)^{^{\prime }}||_{\infty } &\leq &||L_{2}(\frac{1}{T_{2}^{\prime }}%
f^{\prime })||_{\infty }+||L_{2}(\frac{T_{2}^{\prime \prime }}{%
(T_{2}^{^{\prime }})^{2}}f)||_{\infty } \\
&\leq &\alpha ^{n_{1}}M||f^{\prime }||_{\infty }+M||\frac{T_{2}^{\prime
\prime }}{(T_{2}^{^{\prime }})^{2}}||_{\infty }||f||_{\infty }.
\end{eqnarray}

Then $L_{2}$ satisfies a Lasota Yorke inequality, with the norms $%
||~||_{Lip} $ defined as $||f||_{Lip}=||f^{\prime }||_{\infty
}+||f||_{\infty }$ and $||~||_{\infty }$, that is%
\begin{equation*}
||L^{nn_{1}}f||_{Lip}\leq \lambda ^{n}||f||_{Lip}+B||f||_{\infty }
\end{equation*}%
with $\lambda =\alpha ^{n_{1}}M<1$ and $B\geq 1$. By this%
\begin{eqnarray}
||L^{nn_{1}+q}f||_{Lip} &\leq &\lambda
^{n}||L^{q}f||_{Lip}+B||L^{q}f||_{\infty }  \label{LYlip} \\
&\leq &\lambda ^{n}M||f||_{Lip}+BM||f||_{\infty }  \notag
\end{eqnarray}

and then also $L$ satisfies a Lasota Yorke inequality of the type $(\ref%
{lytyp})$ with these norms.

\begin{remark}
In this section we have shown two examples of regularization estimations on
different function spaces. This kind of estimations are possible over many
kind of systems having some uniform contracting/expanding behavior. In these
cases the choice of the good measure spaces involved is crucial. When the
system is expanding, even on higher dimension and with low regularity,
spaces of bounded variation functions and absolutely continuous measures are
usually considered. If the system has contracting directions, the physical
measure usually has fractal support and it is often included in some
suitable space on which a Lasota Yorke estimation can be proved (see. e.g. 
\cite{L2},\cite{B},\cite{gora},\cite{BT}, \cite{GL}), in these cases it is
often useful to include the space of measures in a suitable distribution
spaces. In Section \ref{yp} we will see an example of spaces adapted to a
class of hyperbolic systems.
\end{remark}

\subsection{Existence of a regular invariant measure\label{ext}}

In this section we prove the existence of an absolutely continuous invariant
measure for the expanding maps. The idea is to iterate the transfer operator
on the space $W^{1,1}$, the Lasota Yorke inequality will ensure that the
iterates of some initial smooth density remain of bounded $W^{1,1}$ norm.
The following theorem provides a compactness argument to prove that from
this sequence of iterates there is a subsequence converging in $L^{1}$. (see 
\cite{RC} for more details and generalizations)

\begin{proposition}[Rellich-Kondrachov]
\label{RK}$W^{1,1}$ is compactly immersed in $L^{1}$. If $B\subset W^{1,1}$
is a strongly bounded set: $B\subseteq B_{W^{1,1}}(0,K)\footnote{%
The strong ball centered in $0$ with radius $K$.}$ then for each $\epsilon ,$
$B$ has a finite $\epsilon $-net for the $L^{1}$ topology.

In particular any bounded subsequence $f_{n}\in W^{1,1}$ has a weakly
converging subsequence. There is $f_{n_{k}}$ and $f\in L^{1}$ such that%
\begin{equation*}
f_{n_{k}}\rightarrow f
\end{equation*}%
in $L^{1}$.
\end{proposition}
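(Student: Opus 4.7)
The plan is to prove the compactness statement by combining a Poincaré-type approximation (which replaces any $W^{1,1}$ function by a step function with controlled $L^{1}$-error) with the trivial compactness of bounded sets in finite dimensions; the second part of the proposition (existence of a convergent subsequence) then follows from the standard fact that total boundedness in a complete metric space implies sequential compactness.

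Concretely, fix $\epsilon >0$. Partition $S^{1}$ into $N$ intervals $I_{1},\dots ,I_{N}$ of equal length $\delta $ (to be chosen), and for $f\in W^{1,1}$ define the projection $\pi f=\sum_{i}c_{i}(f)\,\mathbf{1}_{I_{i}}$ where $c_{i}(f)=\delta ^{-1}\int_{I_{i}}f\,dm$. The first key step is a one-line Poincaré estimate: for $x\in I_{i}$ one has $|f(x)-c_{i}(f)|\leq \delta ^{-1}\int_{I_{i}}|f(x)-f(y)|\,dy\leq \int_{I_{i}}|f^{\prime }|\,dm$ by the fundamental theorem of calculus; integrating over $I_{i}$ and summing over $i$ gives
\begin{equation*}
\|f-\pi f\|_{1}\leq \delta \|f^{\prime }\|_{1}\leq \delta K.
\end{equation*}
Choose $\delta $ so that $\delta K<\epsilon /2$.

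The second key step is that the image $\pi (B)$ lies in a bounded subset of a finite-dimensional space. Indeed $\pi f$ is determined by the vector $(c_{1}(f),\dots ,c_{N}(f))\in \mathbb{R}^{N}$, and $\sum _{i}|c_{i}(f)|\,\delta \leq \|f\|_{1}\leq K$, so these vectors lie in a fixed bounded set of $\mathbb{R}^{N}$. Since bounded sets in $\mathbb{R}^{N}$ are totally bounded, there is a finite family $g_{1},\dots ,g_{M}$ of step functions on the partition such that every $\pi f$ is within $\epsilon /2$ (in $L^{1}$) of some $g_{j}$. Combining with the previous bound, every $f\in B$ is within $\epsilon $ of some $g_{j}$, producing the required finite $\epsilon $-net.

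For the sequential statement, let $f_{n}$ be bounded in $W^{1,1}$. Applying the $\epsilon $-net construction with $\epsilon =1/k$ and a standard diagonal extraction, one obtains a subsequence $f_{n_{k}}$ that is Cauchy in $L^{1}$; completeness of $L^{1}(m)$ yields the limit $f$. The only genuine technical point is the Poincaré-type inequality on the small intervals, and once that is in hand the rest is bookkeeping (choice of $\delta $, finite-dimensional covering, diagonal extraction). Because the whole argument rests on the fundamental theorem of calculus applied to elements of $W^{1,1}$ on intervals of length $\delta $, there is no real obstacle beyond checking the constants; the compactness of $S^{1}$ means no tightness argument at infinity is needed.
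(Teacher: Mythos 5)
Your proof is correct and follows essentially the same strategy as the paper's sketch: approximate each $f\in B$ by a finite-rank projection onto a partition of mesh $\delta$, bound the $L^{1}$ error by $\delta\,\|f'\|_{1}\leq\delta K$ via the fundamental theorem of calculus on each subinterval, and conclude by total boundedness of the finite-dimensional image. The only (harmless) difference is that you project onto step functions by local averaging while the paper interpolates piecewise-linearly at the nodes; your variant has the mild advantage that boundedness of the projected set follows directly from $\|f\|_{1}\leq K$ without having to control point values of $f$.
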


\begin{proof}
(sketch)\ Let us consider a subdivision $x_{1},...,x_{m}$ of $I$ with step $%
\epsilon .$ Let $f\in B_{W^{1,1}}(0,K),$let us consider $\pi _{n}f$ to be
the piecewise linear approximation of $f$ such that%
\begin{equation*}
\pi _{n}f(x_{i})=f(x_{i}).
\end{equation*}

Now, if $x_{i}\leq \overline{x}\leq x_{i+1}$ then 
\begin{equation*}
|f_{n}(\overline{x})-\pi _{n}f(\overline{x})|\leq
\int_{x_{i}}^{x_{i+1}}|f_{n}^{^{\prime }}(t)|~dt:=h_{i}.
\end{equation*}%
Remark that $\sum h_{i}\leq K.$ Hence%
\begin{equation}
||f_{n}-\pi _{n}f||_{1}\leq \epsilon \sum h_{i}\leq \epsilon K.  \label{w1}
\end{equation}

Since $\pi _{n}$ has finite rank it is then standard to construct a finite $%
2\epsilon K$-net for $\pi _{n}(B_{W^{1,1}}(0,K))$ which by $(\ref{w1})$ will
also cover $B_{W^{1,1}}(0,K).$
\end{proof}

Now let us prove the existence of an absolutely continuous invariant
measure\ (with density in $W^{1,1}$) for the expanding maps. As the reader
may notice, the procedure can be generalized to many other cases where a
regularization inequality and a compactness statement like Proposition \ref%
{RK} are available.

\begin{proposition}
The transfer operator $L$ associate to an expanding map has an invariant
density \ $h\in L^{1}$.
\end{proposition}

\begin{proof}
Let us consider the sequence 
\begin{equation}
g_{n}=\frac{1}{n}\sum_{i=0}^{n-1}L^{n}1  \label{gn}
\end{equation}%
where $1$ is the density of the normalized Lebesgue measure.\ By the Lasota
Yorke inequality $(\ref{LY})$\ the sequence has uniformly bounded $W^{1,1}$
norm and by Proposition \ref{RK} has a subsequence $g_{n_{k}}$ converging in 
$L^{1}$ to a limit $h$.

Now recall that $L$ is continuous in the $L^{1}$ norm. By this\footnote{%
Remark that by the definition $g_{n}=\frac{1}{n}(L^{0}1+...+L^{n}1)$ giving $%
||Lg_{n_{K}}-g_{n_{k}}||_{1}\leq \frac{2}{n}$.} 
\begin{equation*}
Lh=L(\lim_{k\rightarrow \infty }g_{n_{k}})=\lim_{k\rightarrow \infty
}Lg_{n_{k}}=h.
\end{equation*}%
Then $h$ is an invariant density.
\end{proof}

\begin{proposition}
\label{cauchy}The density $h$ found above has the following properties:

\begin{itemize}
\item $h\in W^{1,1}$ and

\item $||h||\leq \frac{(||\frac{T^{\prime \prime }}{(T^{^{\prime }})^{2}}%
||_{\infty }+1)}{1-\alpha }.$
\end{itemize}
\end{proposition}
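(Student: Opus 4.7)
The plan is to push the Lasota--Yorke inequality \eqref{LY} through the Cesàro averages $g_n = \frac{1}{n}\sum_{i=0}^{n-1}L^i 1$ to obtain a uniform $W^{1,1}$ bound, and then transfer this bound to the $L^1$-limit $h$ by lower semicontinuity of the $W^{1,1}$ norm.

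First I would apply \eqref{LY} with $f \equiv 1$, noting that $\|1\|_{W^{1,1}} = \|1\|_1 = 1$. Setting $C := \|T''/(T')^2\|_\infty + 1$, this yields
\begin{equation*}
\|L^i 1\|_{W^{1,1}} \leq \alpha^i + \frac{C}{1-\alpha}\qquad \text{for every } i \geq 0.
\end{equation*}
Averaging and using the geometric series bound $\sum_{i=0}^{n-1}\alpha^i \leq (1-\alpha)^{-1}$,
\begin{equation*}
\|g_n\|_{W^{1,1}} \leq \frac{1}{n(1-\alpha)} + \frac{C}{1-\alpha},
\end{equation*}
so $\limsup_{k\to\infty} \|g_{n_k}\|_{W^{1,1}} \leq \frac{C}{1-\alpha}$, which is exactly the bound asserted for $h$.

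The nontrivial step is upgrading this bound on the approximants to a bound on $h$, since we only know $g_{n_k}\to h$ in $L^1$ and a priori $h$ need not even lie in $W^{1,1}$. For this I would invoke the lower semicontinuity of the $W^{1,1}$-norm with respect to $L^1$-convergence: the $L^1$-norm of the weak derivative can be represented as
\begin{equation*}
\|f'\|_1 = \sup\Bigl\{-\textstyle\int f\,\phi'\,dm : \phi \in C^1(S^1),\ \|\phi\|_\infty \leq 1\Bigr\},
\end{equation*}
and each functional $f \mapsto -\int f\phi'\,dm$ is continuous for the $L^1$-topology. Since $g_{n_k}\to h$ in $L^1$, for every admissible test function $\phi$ we have $-\int h\,\phi'\,dm = \lim_k -\int g_{n_k}\,\phi'\,dm \leq \liminf_k \|g_{n_k}'\|_1$. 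Taking the supremum over $\phi$ gives $\|h'\|_1 \leq \liminf_k \|g_{n_k}'\|_1$, so in particular $h$ has an $L^1$ weak derivative and hence lies in $W^{1,1}$. Combining this with the trivial continuity $\|g_{n_k}\|_1 \to \|h\|_1$, we obtain
\begin{equation*}
\|h\|_{W^{1,1}} \leq \liminf_k \|g_{n_k}\|_{W^{1,1}} \leq \frac{C}{1-\alpha},
\end{equation*}
as required.

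The only delicate point is the lower semicontinuity argument, which is the natural substitute for the (false) continuity of the strong norm on the weaker $L^1$ topology; the rest is purely a bookkeeping application of \eqref{LY} combined with the fact that both norms $\|\cdot\|_1$ and $\|\cdot\|_{W^{1,1}}$ are preserved under convex averaging.
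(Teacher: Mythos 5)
Your computation of the uniform bound $\|g_n\|_{W^{1,1}}\leq \frac{1}{n(1-\alpha)}+\frac{C}{1-\alpha}$ is correct, and the numerical bound on $h$ would follow once membership in $W^{1,1}$ is secured. The gap is in the membership step. The variational formula
\begin{equation*}
\sup\Bigl\{-\int f\,\phi'\,dm \;:\; \phi\in C^{1}(S^{1}),\ \|\phi\|_{\infty}\leq 1\Bigr\}
\end{equation*}
computes, for a general $f\in L^{1}$, the total variation of the \emph{distributional} derivative $Df$; finiteness of this supremum only tells you that $Df$ is a finite signed Borel measure, i.e.\ that $f\in BV$. It does not tell you that $Df$ is absolutely continuous with respect to Lebesgue, which is what membership in $W^{1,1}$ as defined in the paper ($f(x)=f(0)+\int_{0}^{x}f'\,dt$ with $f'\in L^{1}$) requires. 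The standard counterexample is a sequence of piecewise linear functions converging in $L^{1}$ to a step function: the $W^{1,1}$ norms stay bounded and your lower semicontinuity inequality holds, but the limit has a jump and lies in $BV\setminus W^{1,1}$. So your argument, as written, only yields $h\in BV$ with total variation norm at most $\frac{C}{1-\alpha}$; the step ``so in particular $h$ has an $L^{1}$ weak derivative'' is exactly the point that fails, because a $W^{1,1}$-bounded, $L^{1}$-convergent sequence need not have its limit in $W^{1,1}$ (the space is not reflexive and its unit ball is not weakly sequentially compact).

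The paper closes this gap by a different mechanism: it applies the Lasota--Yorke inequality to \emph{differences} of iterates, $\|L^{m}(g_{n_{1},a}-g_{n_{2},b})\|_{W^{1,1}}\leq \alpha^{m}\|g_{n_{1},a}-g_{n_{2},b}\|_{W^{1,1}}+B\|g_{n_{1},a}-g_{n_{2},b}\|_{1}$, together with the fact that the $L^{1}$ distances to $h$ do not increase under $L$, to show that the diagonal sequence $g_{n_{k},k}=L^{k}g_{n_{k}}$ is Cauchy in the \emph{strong} norm. Completeness of $W^{1,1}$ then gives strong convergence to a limit, which is forced to equal $h$, so $h\in W^{1,1}$. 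If you want to keep a soft-analysis route you would have to use the Lipschitz Lasota--Yorke inequality (\ref{LYlip}) instead (uniform Lipschitz bounds on the $g_n$ give equicontinuity, and Arzel\`a--Ascoli produces a Lipschitz limit), or otherwise rule out a singular part of $Dh$; the $W^{1,1}$ bound alone cannot do it. Your derivation of the numerical constant, by contrast, is fine and is a slightly different route from the paper's, which instead plugs the invariance $h=L^{n}h$ into the Lasota--Yorke inequality and lets $n\to\infty$.
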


\begin{proof}
Consider $g_{n}$ as defined at $(\ref{gn})$ and\ $g_{n,m}=L^{m}(g_{n}).$
Remark that, $g_{n,m}\in W^{1,1}$ and the norms are uniformly bounded. By
the Lasota Yorke inequality 
\begin{equation}
||g_{n_{1},a+m}-g_{n_{2},b+m}||_{W^{1,1}}\leq \alpha
^{m}||g_{n_{1},a}-g_{n_{2},b}||_{W^{1,1}}+B||g_{n_{1},a}-g_{n_{2},b}||_{1}
\label{cazzo}
\end{equation}%
also remark that if $||g_{n_{k},0}-h||_{1}\leq \epsilon $ then $%
||g_{n_{k},j}-h||_{1}\leq \epsilon $ for all $j\geq 0$. Then the sequence $%
g_{n_{k},k}\rightarrow h$ in $L^{1},$ and by $(\ref{cazzo})$ is a Cauchy
sequence in the $W^{1,1}$norm, indeed, suppose $k_{1}\leq k_{2}$%
\begin{equation}
||g_{n_{k_{1}},k_{1}}-g_{n_{k_{2}},k_{2}}||_{W^{1,1}}\leq \alpha
^{k_{1}}||g_{n_{k_{1}},0}-g_{n_{k_{2},k_{2}-k_{1}}}||_{W^{1,1}}+B||g_{n_{k_{1}},0}-g_{n_{k_{2},k_{2}-k_{1}}}||_{1}.
\end{equation}

Since $W^{1,1}$ is complete, this implies that it converges in $W^{1,1}$ to
some limit which is forced to be $h.$ Hence $h\in W^{1,1}.$

By the Lasota Yorke inequality, since $h$ is invariant then $%
||h||=||Lh||\leq \frac{(||\frac{T^{\prime \prime }}{(T^{^{\prime }})^{2}}%
||_{\infty }+1)}{1-\alpha }$.
\end{proof}

\begin{remark}
\label{c3} Using the procedure explained in this section, using the Lasota
Yorke inequality \ref{LYlip} (remark that starting with the constant density 
$1$ we are constructing at each step a $C^{1}$ function and we are
controlling its Lipschitz norm, which is equivalent to the $C^{1}$ norm on $%
C^{1}$). It is possible to prove that a $C^{2}$ expanding map has a $C^{1}$
invariant density. In a similar way it is possible to prove that a $C^{3}$ \
expanding map of the circle has a $C^{2}$ invariant density.
\end{remark}

\begin{remark}
The measure $hm$, where $h$ is the smooth invariant density found above is a
physical measure for the expanding map $T$.

In the next section we will also see that such $h$ is unique.
\end{remark}

\section{Convergence to equilibrium and mixing\label{CEM}}

In this section we see the concept of convergence to equilibrous, and how to
ise it to prove that the absolutely continuous invariant measure found for
an expanding map in the previous section is unique. We will also see that an
expanding map considered with its alsolutely continuous invariant measure is
a mixing measure preserving transformation.

Let us consider a positive, integral preserving (Markov) operator $L$ acting
on a strong and a weak normed vector spaces $B_{s}\subseteq B_{w}\subseteq
L^{1}$ as we have seen before.

\noindent \textbf{Assumptions A.} Let us suppose that:

\begin{itemize}
\item $B_{w}$ contains the indicator functions of measurable sets

\item $B_{s}$ is closed under product and there is $C\geq 1$ such that for
each $f,g\in B_{s}$, $||fg||_{s}\leq C||f||_{s}||g||_{s}$.

\item $B_{s}$ is dense in $B_{w}$ for the $||~||_{w}$ topology

\item for each $f\in B_{w}$ $||f||_{w}\geq \int |f|~dm$ and $%
||~||_{B_{s}}\geq ||~||_{L^{\infty }}.$
\end{itemize}

\begin{remark}
\label{sarig}We remark that by a rescaling the assumption $||fg||_{s}\leq
C||f||_{s}||g||_{s}$ can be put in the form $||fg||_{s}\leq
||f||_{s}~||g||_{s}$. Considering the new rescaled norm $%
||~||_{K}=K||~||_{s} $ it holds%
\begin{equation*}
||fg||_{K}=K||fg||_{s}\leq K^{2}||f||_{s}~||g||_{s}\leq ||f||_{K}||g||_{K}.
\end{equation*}
\end{remark}

Let us consider the strong and weak space of zero average densities

\begin{equation*}
V_{s}=\{g\in B_{s}~s.t.~\int g~dm=0\}
\end{equation*}%
and%
\begin{equation*}
V_{w}=\{g\in B_{w}~s.t.~\int g~dm=0\}.
\end{equation*}

If the dynamics has some mixing properties we expect that iterating a zero
average density its positive part gets annihilated with the negative part
and the iterates will eventually converge to zero in some sense.

This can be expressed more in general for Markov operators and the general
definition will be useful to treat deterministic and random dynamical
systems with the same tools and general results.

\begin{definition}[Convergence to equilibrium]
A Markov operator $L:B_{s}\rightarrow B_{s}$ is said to have convergence to
equilibrium if for each $g\in V_{s}$%
\begin{equation*}
\lim_{n\rightarrow \infty }||L^{n}g||_{w}=0.
\end{equation*}
\end{definition}

We recall the classical definition of mixing

\begin{definition}[Mixing]
A measure preserving transformation $(X,T,\mu )$ is said to be mixing if for
each measurable $E,F\subseteq X$%
\begin{equation*}
\lim_{n\rightarrow \infty }\mu (E\cap T^{-n}F)=\mu (E)~\mu (F).
\end{equation*}
\end{definition}

We now see how a convergence to equilibrium result implies mixing.

\begin{proposition}
\label{mix}Consider $B_{s}$ and $B_{w}$ satisfying the properties listed at
the beginning of the section and the transfer operator $L:B_{w}\rightarrow
B_{w}$ associated to a map $T$ having $\mu =hm$ with $h\in B_{s}$ as an
invariant probability measure. Suppose that $h$ is bounded, and that for
each $f\in V_{s}$%
\begin{equation}
\lim_{n\rightarrow \infty }||L^{n}f||_{w}\rightarrow 0  \label{conv121}
\end{equation}%
then the system $(T,\mu )$ is mixing.
\end{proposition}

\begin{proof}
By the assumptions, given any $f\in B_{s}$ with $\int f~dm=1$ the sequence $%
L^{n}f-h\int f~dm\rightarrow 0$ in $B_{w}.$

Now let us consider two measurable sets $E$ and $F$ and the indicator
functions $1_{E}$, $1_{F}\in B_{w}$. We have that%
\begin{eqnarray*}
\mu (E\cap T^{-n}F) &=&[hm](E\cap T^{-n}F) \\
&=&\int 1_{E}~(1_{F}\circ T^{n})~hdm.
\end{eqnarray*}

By the density of $B_{s}$ in $B_{w}$, let us consider $\epsilon >0$ and $%
g_{\epsilon }\in B_{s}$ such that $||1_{E}-g_{\epsilon }||_{w}\leq \epsilon $
thus for each $n\geq 0$%
\begin{equation*}
|\mu (E\cap T^{-n}F)-\int g_{\epsilon }~(1_{F}\circ T^{n})~hdm|\leq
||1_{E}-g_{\epsilon }||_{w}
\end{equation*}%
and%
\begin{equation*}
|\mu (E\cap T^{-n}F)-\int 1_{F}~L^{n}(hg_{\epsilon })~dm|\leq \epsilon .
\end{equation*}

By $(\ref{conv121})$ we have $hg_{\epsilon }\rightarrow h[\int (hg_{\epsilon
})~dm]$ in $B_{w}$, then 
\begin{equation*}
\int 1_{F}~L^{n}(hg_{\epsilon })~dm\underset{n\rightarrow \infty }{%
\rightarrow }\int 1_{F}~[h\int (hg_{\epsilon })~dm]~dm=\int
(h1_{F})~dm~\cdot ~\int (hg_{\epsilon })~dm
\end{equation*}%
and since 
\begin{equation*}
|\int (hg_{\epsilon })~dm-\int (h1_{E})~dm|\leq \epsilon
\end{equation*}%
we have that for each $\epsilon ,$ eventually as $n\rightarrow \infty $ 
\begin{equation*}
|\mu (E\cap T^{-n}F)-\mu (E)~\mu (F)|\leq 3\epsilon
\end{equation*}%
proving the statement.
\end{proof}

Now we prove that expanding maps have convergence to equilibrium and are
mixing. Later we will see that the speed of convergence of this equilibrium
is exponential. We consider $W^{1,1}$ and $L^{1}$ as a strong and weak
spaces.

\begin{proposition}
\label{propora}For each $g\in V_{s}$, it holds 
\begin{equation*}
\lim_{n\rightarrow \infty }||L^{n}g||_{1}=0.
\end{equation*}
\end{proposition}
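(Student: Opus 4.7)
The plan is to combine (i) monotonicity of $\|L^n g\|_1$ coming from the weak contraction in Proposition~\ref{weakcontr} with (ii) compactness provided by the Lasota--Yorke bound~\eqref{LY} and Proposition~\ref{RK}, then use (iii) positivity of $L$ together with the expanding structure of $T$ to rule out any nonzero subsequential limit.

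First, set $a_n := \|L^n g\|_1$. By Proposition~\ref{weakcontr} the sequence $(a_n)$ is non-increasing, and it is bounded below by $0$, so $a_n \downarrow \ell$ for some $\ell \geq 0$; my goal is to show $\ell = 0$. By the Lasota--Yorke inequality~\eqref{LY} applied with the uniform bound $\|L^n g\|_1 \leq \|g\|_1$, the sequence $\{L^n g\}$ is bounded in $W^{1,1}$. Proposition~\ref{RK} then yields a subsequence $L^{n_k}g \to h$ in $L^1$, with $\int h\,dm = 0$ (since $L$ preserves the integral and each $L^n g$ has zero average) and $\|h\|_1 = \ell$. For any fixed $m \geq 0$, $L^1$-continuity of $L^m$ gives $L^{m+n_k}g \to L^m h$ in $L^1$, so
\begin{equation*}
\|L^m h\|_1 = \lim_{k} \|L^{m+n_k}g\|_1 = \ell = \|h\|_1 \qquad \text{for every } m \geq 0.
\end{equation*}

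Now I would decompose $h = h^+ - h^-$ with $h^\pm \geq 0$ supported on disjoint sets, noting $\int h^+\,dm = \int h^-\,dm = \ell/2$. Positivity of $L^m$ gives the pointwise bound $|L^m h| \leq L^m h^+ + L^m h^- = L^m|h|$; integrating yields $\ell = \|L^m h\|_1 \leq \|L^m|h|\|_1 = \|h\|_1 = \ell$. Equality forces $|L^m h| = L^m h^+ + L^m h^-$ almost everywhere, which is exactly the statement that $L^m h^+$ and $L^m h^-$ have essentially disjoint supports for every $m \geq 0$. From the explicit formula~\eqref{pf}, for any $f \geq 0$ one has $\{L f > 0\} = T(\{f > 0\})$ modulo null sets, so this disjointness translates into the assertion that $T^m(\operatorname{supp} h^+)$ and $T^m(\operatorname{supp} h^-)$ are disjoint for all $m$. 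But for a $C^2$ expanding map of the circle, iterates of any positive-measure set eventually cover $S^1$ mod $0$, so the only way to have permanent disjointness is for one of $h^+,h^-$ to vanish; combined with $\int h = 0$ this forces $h = 0$ and hence $\ell = 0$. Since $(a_n)$ is monotone and has a subsequential limit equal to $0$, the full sequence tends to $0$.

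The main obstacle is the spreading-of-supports step: I need that for an expanding circle map, $m(T^m A) \to 1$ (mod $0$) for every $A$ of positive measure. This is a standard consequence of expansion (topological exactness on $S^1$), but it is not proved in the excerpt, so I would have to insert a short self-contained argument--for instance, choose an iterate $T^{n_1}$ whose branches are uniformly contracting under $T^{-n_1}$, use a Lebesgue density point of $A$ to show $T^{kn_1}A$ covers a full branch of $T^{n_1}$ for large $k$, and conclude by one further application of $T^{n_1}$. An alternative route, avoiding this, would be to invoke uniqueness of the $W^{1,1}$ invariant density (which follows from Remark~\ref{c3} together with positivity of the continuous invariant density $h_0$): any $W^{1,1}$ fixed point of $L$ with zero average must then be a scalar multiple of $h_0$ and simultaneously have zero integral, hence be $0$.
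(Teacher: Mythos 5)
Your argument is essentially correct but follows a genuinely different route from the paper. The paper works quantitatively: it first assumes $||g||_{l}<\infty$, uses the Lipschitz Lasota--Yorke inequality (\ref{LYlip}) to get a uniform pointwise bound $\overline{M}$ on all iterates, finds an interval on which $g^{+}$ exceeds a definite fraction of $||g||_{1}$, pushes that interval onto all of $S^{1}$ in $n_{1}$ steps (needing only that $T^{n_{1}}$ maps an interval of definite length onto the whole circle), and deduces the explicit decrease $||L^{n_{1}}g||_{1}\leq ||g||_{1}-C||g||_{1}^{1+\beta}$, which it then iterates; a density argument handles general $g\in W^{1,1}$. You instead run a soft argument: monotonicity of $||L^{n}g||_{1}$, compactness from (\ref{LY}) and Proposition \ref{RK} to extract a limit $h$ with $||L^{m}h||_{1}=||h||_{1}$ for all $m$, the equality case of $|L^{m}h|\leq L^{m}h^{+}+L^{m}h^{-}$ to force $T^{m}(\{h^{+}>0\})$ and $T^{m}(\{h^{-}>0\})$ to stay disjoint, and finally a covering property to contradict this. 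Each step of this is sound. What the paper's proof buys is self-containedness and an explicit decay estimate; what yours buys is a cleaner structural argument that isolates exactly one dynamical input (exactness of expanding maps on positive-measure sets).

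That input is the one real gap, and you correctly flag it: you need $m(T^{m}A)\rightarrow 1$ for every set $A$ of positive Lebesgue measure, not merely for intervals as in the paper's proof. Your density-point sketch is the right idea, but to convert ``density of $A$ in the branch domain $J_{k}$ tends to $1$'' into ``$T^{kn_{1}}(A\cap J_{k})$ has measure tending to $1$'' you must invoke bounded distortion of the inverse branches of $T^{n}$ (this is where $T\in C^{2}$ is used), so a short distortion lemma has to be inserted. By contrast, your proposed ``alternative route'' does not work as stated: the subsequential limit $h$ is not a fixed point of $L$, so uniqueness of the invariant density cannot be applied to it directly; moreover, in the paper uniqueness of the $W^{1,1}$ invariant density is itself a corollary of Proposition \ref{propora}, and deriving it instead from Remark \ref{c3} plus positivity of the continuous density would require an ergodicity argument that is not available in the text. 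Stick with the support-spreading argument and supply the distortion lemma.
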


\begin{proof}
First let us suppose that $||g||_{Lip}<\infty $. By $(\ref{LYlip})$ we know
that all the iterates of $g$ have uniformly bounded $l$ norm. There is $%
\overline{M}\geq 0$ such that for each $n\geq 0$ 
\begin{equation*}
||L^{n}g||_{Lip}\leq \overline{M}.
\end{equation*}

Let us denote by $g^{+},g^{-}$ the positive and negative parts of $g$.
Remark that since $g\in V_{s}$, $||g||_{1}=2\int g^{+}~dm$. There is a point 
$\overline{x}$ such that $g^{+}(\overline{x})\geq \frac{1}{2}||g||_{1}$.
Around this point consider a neighborhood $N=B(\overline{x},\frac{1}{4}%
||g||_{1}\overline{M}^{-1}).$ For each point $x\in N,$ $g^{+}(x)\geq \frac{1%
}{4}||g||_{1}$.

Now let $d=\min |T^{^{\prime }}|,$ $D=\max |T^{\prime }|$. If $n_{1}$ is the
smallest integer such that $d^{n_{1}}\frac{1}{2}||g||_{1}\overline{M}^{-1}>1$
\begin{equation*}
(i.e.~\frac{\log (2||g||_{1}^{-1}\overline{M})}{\log d}+1>n_{1}>\frac{\log
(2||g||_{1}^{-1}\overline{M})}{\log d})
\end{equation*}
then $T^{n_{1}}(N)=S^{1}$ and $L^{n_{1}}g^{+}$ has density%
\begin{eqnarray*}
L^{n_{1}}g^{+} &\geq &\frac{||g||_{1}}{4D^{n_{1}}} \\
&\geq &\frac{||g||_{1}}{4De^{\log (2||g||_{1}^{-1}\overline{M})\frac{\log D}{%
\log d}}} \\
&=&\frac{||g||_{1}}{4D}(\frac{1}{2}||g||_{1}\overline{M}^{-1})^{\frac{\log D%
}{\log d}}
\end{eqnarray*}%
on $S^{1}$. The same is true for $g^{-}$ and then, after iterating $n_{1}$
times this positive constant part of the density and the corresponding
negative one annihilates, and setting $C=\frac{(\frac{1}{2}\overline{M}%
^{-1})^{\frac{\log D}{\log d}}}{4D},$ it holds%
\begin{equation*}
||L^{n_{1}}g||_{1}\leq ||g||_{1}-C||g||_{1}{}^{\frac{\log D}{\log d}+1}.
\end{equation*}%
Let us denote $g_{1}=L^{n_{1}}g$. We can repeat the above construction and
obtain $n_{2}$ such that%
\begin{equation*}
||g_{2}||_{1}:=||L^{n_{2}}g_{1}||_{1}\leq ||g_{1}||_{1}-C||g_{1}||_{1}{}^{%
\frac{\log D}{\log d}+1}
\end{equation*}%
and so on. Continuing, we have a sequence $g_{n}$ such that 
\begin{equation*}
||g_{n+1}||_{1}\leq ||g_{n}||_{1}-C||g_{n}||_{1}{}^{\frac{\log D}{\log d}+1}
\end{equation*}%
and then $||g_{n}||_{1}\rightarrow 0.$

If now more generally, $g\in W^{1,1}$ we can approximate $g$ with a $\tilde{g%
}$ such that $||\tilde{g}||_{Lip}<\infty $ in a way that $||g-\tilde{g}%
||_{1}\leq \epsilon $. Since $||L||_{L^{1}\rightarrow L^{1}}\leq 1$, $%
\lim_{n\rightarrow \infty }||L^{n}(g-\tilde{g})||_{1}\leq \epsilon $\ . And
then the statement follows.
\end{proof}

Using Proposition \ref{mix} we get

\begin{corollary}
Expanding maps of the circle, considered with its $W^{1,1}$invariant measure
are mixing.
\end{corollary}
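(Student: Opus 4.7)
The plan is to reduce the mixing property to the $L^1$-convergence to equilibrium established in Proposition \ref{propora}, using the duality between the transfer operator and the composition operator. Recall that $(X,T,\mu)$ is mixing when $\mu(A\cap T^{-n}B)\to \mu(A)\mu(B)$ for all Borel $A,B$, which is what Proposition \ref{mix} in the appendix reformulates in the functional language I will exploit.

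First I would rewrite the mixing integral through the transfer operator. Iterating the identity $\int g\,Lf\,dm=\int (g\circ T)f\,dm$ gives $\int g\,L^{n}f\,dm=\int (g\circ T^{n})f\,dm$. Applying this with $f=1_{A}h$ (where $h$ is the $W^{1,1}$ invariant density from Proposition \ref{cauchy}) and $g=1_{B}$ yields
\begin{equation*}
\mu(A\cap T^{-n}B)=\int 1_{A}(1_{B}\circ T^{n})h\,dm=\int 1_{B}\,L^{n}(1_{A}h)\,dm.
\end{equation*}
So mixing is equivalent to $L^{n}(1_{A}h)\to \mu(A)h$ in the weak sense against bounded functions, and it suffices to show $L^{n}(1_{A}h)\to \mu(A)h$ in $L^{1}(m)$.

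Next I would decompose $1_{A}h=\mu(A)h+\varphi_{A}$, where $\varphi_{A}:=1_{A}h-\mu(A)h$ has zero integral against $dm$. Since $Lh=h$, this reduces the claim to showing $\Vert L^{n}\varphi_{A}\Vert_{1}\to 0$. If $\varphi_{A}$ were in the strong space $V_{s}\subset W^{1,1}$, Proposition \ref{propora} would immediately give the conclusion. The obstacle is that $1_{A}h$ generally fails to be in $W^{1,1}$, so a density argument is needed.

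To overcome this, I would approximate $1_{A}$ in $L^{1}(m)$ by Lipschitz functions $\phi_{k}$ with $0\leq \phi_{k}\leq 1$; then $\phi_{k}h\in W^{1,1}$ because $h,h'\in L^{1}$ and $\phi_{k},\phi_{k}'\in L^{\infty}$. Subtracting its integral gives a zero-average $W^{1,1}$ density to which Proposition \ref{propora} applies, yielding $L^{n}(\phi_{k}h-(\int \phi_{k}h\,dm)h)\to 0$ in $L^{1}$. The weak contraction bound $\Vert L\Vert_{L^{1}\to L^{1}}\leq 1$ of Proposition \ref{weakcontr} ensures that the approximation error $\Vert L^{n}(1_{A}h-\phi_{k}h)\Vert_{1}\leq \Vert 1_{A}-\phi_{k}\Vert_{1}\Vert h\Vert_{\infty}$ is uniform in $n$, so a standard three-$\varepsilon$ argument upgrades the limit to $L^{n}(1_{A}h)\to \mu(A)h$ in $L^{1}$. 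Pairing with $1_{B}$ gives the mixing condition. The main technical point is precisely this approximation/three-$\varepsilon$ step; once it is in place, the corollary is immediate.
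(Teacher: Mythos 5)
Your proof is correct and follows essentially the same route as the paper: the duality computation $\mu(A\cap T^{-n}B)=\int 1_{B}\,L^{n}(1_{A}h)\,dm$ is exactly the content of Proposition \ref{mix} in the appendix, which the paper invokes together with Proposition \ref{propora}. The only difference is that you make explicit the approximation/three-$\varepsilon$ step needed because $1_{A}h\notin W^{1,1}$ (using $h\in L^{\infty}$, which holds since $W^{1,1}$ embeds in $L^{\infty}$ on the circle), a step the paper leaves implicit when passing from $V_{s}$ to general zero-average $L^{1}$ densities.
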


\begin{corollary}
For expanding maps of the circle, there is only one invariant measure in $%
W^{1,1}.$
\end{corollary}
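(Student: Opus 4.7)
The plan is to reduce the uniqueness statement to Proposition~\ref{propora} via a standard linearity/invariance argument. Suppose, for contradiction, that there are two distinct invariant probability densities $h_1, h_2 \in W^{1,1}$ (both integrating to $1$, since an invariant measure in $W^{1,1}$ is by definition an invariant probability measure with $W^{1,1}$ density). Consider the difference $g := h_1 - h_2$. Then $g \in W^{1,1}$ and $\int g \, dm = 1 - 1 = 0$, so $g \in V_s$.

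Next I would use the linearity of the transfer operator together with the invariance $L h_1 = h_1$ and $L h_2 = h_2$ to obtain $L^n g = g$ for every $n \geq 0$. In particular
\begin{equation*}
\|g\|_1 = \|L^n g\|_1 \quad \text{for all } n.
\end{equation*}

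Now I invoke Proposition~\ref{propora}, which asserts that for every $g \in V_s$ one has $\lim_{n\to\infty} \|L^n g\|_1 = 0$. Combined with the identity above, this forces $\|g\|_1 = 0$, hence $h_1 = h_2$ as elements of $L^1$, contradicting our assumption.

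There is essentially no substantive obstacle here: the only thing to be mildly careful about is the normalization convention (making sure both measures are probabilities so that the difference lands in the zero-average space $V_s$ where Proposition~\ref{propora} applies), and the observation that $g$ does belong to $W^{1,1}$, which is immediate since $W^{1,1}$ is a vector space. All the real work was done in proving the convergence to equilibrium result, and this corollary is simply its natural byproduct.
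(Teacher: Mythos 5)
Your argument is correct and is exactly the paper's proof, merely written out in full: the paper's one-line proof ("$h_1-h_2\in V_s$ and invariant, impossible") is precisely your observation that an invariant element of $V_s$ would contradict Proposition~\ref{propora}. No issues.
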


\begin{proof}
If there are two invariant probability densities $h_{1},h_{2}$ then $%
h_{1}-h_{2}\in V_{s}$ and invariant, impossible.
\end{proof}

\begin{corollary}
The whole sequence $g_{n}=\frac{1}{n}\sum_{i=0}^{n-1}L^{n}1$ converges to $%
h. $
\end{corollary}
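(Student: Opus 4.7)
The plan is to reduce this to Proposition \ref{propora} (convergence to equilibrium on zero-average $W^{1,1}$ densities) combined with the standard fact that Ces\`aro averages of a null sequence are null. The earlier argument only produced a convergent \emph{subsequence} via Rellich--Kondrachov compactness; now that we have in hand both the invariant density $h$ and the fact that $L^n$ sends zero-mean $W^{1,1}$ functions to $0$ in $L^1$, the full sequence comes for free.

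First I would exploit the invariance of $h$: since $Lh = h$, one has $L^i h = h$ for every $i\geq 0$, and therefore
\begin{equation*}
g_n - h \;=\; \frac{1}{n}\sum_{i=0}^{n-1} L^i 1 \;-\; \frac{1}{n}\sum_{i=0}^{n-1} L^i h \;=\; \frac{1}{n}\sum_{i=0}^{n-1} L^i(1-h).
\end{equation*}
The function $\varphi := 1 - h$ lies in $W^{1,1}$ (since both $1$ and $h$ do, by Proposition \ref{cauchy}) and has zero average, because $1$ and $h$ are both probability densities. Thus $\varphi \in V_s$.

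Next I would apply Proposition \ref{propora} to $\varphi$: we get $\|L^i \varphi\|_1 \to 0$ as $i \to \infty$. The last step is the elementary observation that if $a_i \to 0$ in any Banach space then the Ces\`aro averages $\frac{1}{n}\sum_{i=0}^{n-1} a_i$ also tend to $0$; applied to $a_i = L^i \varphi \in L^1$ this gives
\begin{equation*}
\|g_n - h\|_1 \;\leq\; \frac{1}{n}\sum_{i=0}^{n-1} \|L^i \varphi\|_1 \;\longrightarrow\; 0.
\end{equation*}

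There is no real obstacle here; the substance has already been done in establishing Proposition \ref{propora} and in producing $h$ with $h \in W^{1,1}$. The only mild point worth flagging is that the argument depends on $h$ itself belonging to $W^{1,1}$ (so that $\varphi = 1-h$ is admissible input to Proposition \ref{propora}); this is precisely the content of Proposition \ref{cauchy}, so everything fits together.
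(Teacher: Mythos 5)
Your proof is correct and follows the route the paper clearly intends (the corollary is stated without proof, immediately after Proposition \ref{propora} and alongside the Remark noting that $g-h\in V_{s}$ for any probability density $g\in W^{1,1}$, so that $L^{n}1\rightarrow h$ in $L^{1}$). Writing $g_{n}-h$ as the Ces\`aro average of $L^{i}(1-h)$ with $1-h\in V_{s}$ and invoking Proposition \ref{propora} together with the fact that Ces\`aro averages of a null sequence are null is exactly the expected argument, and your remark that $h\in W^{1,1}$ (Proposition \ref{cauchy}) is the only hypothesis that needs checking is well placed.
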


\begin{remark}
By Proposition \ref{propora} it also follows that, if $g\in W^{1,1}$is a
probability density (and then $g-h\in V_{s}$) then $L^{n}g\rightarrow h$ in
the $L^{1}$ norm. By the Lasota Yorke inequality we also get the convergence
in $W^{1,1}$. Indeed (see the proof of Proposition \ref{cauchy}) $%
||L^{n+m}(g-h)||_{w^{1,1}}\leq \lambda
^{n}||L^{m}(g-h)||_{W^{1,1}}+B||L^{m}(g-h)||_{1}$ and letting first $%
m\rightarrow \infty $ \ and then $n\rightarrow \infty $ \ we get the
statement.
\end{remark}

\subsection{Speed of convergence to equilibrium and decay of correlations 
\label{eq}}

For several applications, it is important to quantify the speed of mixing or
convergence to equilibrium.

Let us see how to quantify: consider two vector subspaces of the space of
signed (complex) Borel measures on $X$ 
\begin{equation*}
B_{s}\mathcal{\subseteq }B_{w}
\end{equation*}%
endowed with two norms, the strong norm $||~||_{s}$ on $B_{s}$ and the weak
norm $||~||_{w}$ on $B_{w}$, such that $||~||_{s}\geq ||~||_{w}$ on $B_{w}$

In the case of operators acting on spaces of measures which are not
necessarily absolutely continuous we say that an operator is \emph{Markov }%
if it preserves positive measures and for each signed measure $\mu $ it
holds $\mu (X)=[L\mu ](X)$.

\begin{definition}
\label{conve}Let us consider $\Phi :\mathbb{N\rightarrow R}$ with $%
\lim_{n\rightarrow \infty }\Phi (n)=0.$ We say that a Markov operator $L$
has convergence to equilibrium with speed $\Phi $ with respect to these
norms if for any $f\in V_{s}$.%
\begin{equation}
||L^{n}f||_{w}\leq \Phi (n)~||f||_{s}.  \label{wwe}
\end{equation}
\end{definition}

We remark that in this case if $\nu $ is a starting probability measure in $%
B_{s}$ and $\mu $ is the invariant measure, still in $B_{s}$, then $\nu -\mu
\in V_{s}$ and then 
\begin{equation*}
||L^{n}\nu -\mu ||_{w}\leq \Phi (n)~||\nu -\mu ||_{s}.
\end{equation*}%
and then $L^{n}\nu $ converges to $\mu $ with a speed $\Phi (n)$. Depending
on the strong norm, in certain cases one may prove that for each probability
measure $\nu \in B_{s}$, $||\nu -\mu ||_{s}\leq K||\nu ||_{s}$ where $K$
does not depend on $\nu $, obtaining 
\begin{equation*}
||L^{n}\nu -\mu ||_{w}\leq K\Phi (n)~||\nu ||_{s}
\end{equation*}

\subsection{Decay of correlations\label{corrr}}

Convergence to equilibrium is often estimated or applied in the form of
correlation integrals. In this subsection we show an example of how to
relate these colleration estimates with the notions of convergence to
equilibrium we defined above.

Let us suppose that $X$ is a manifold, let us denote by $m$ the normalized
Lebesgue measure, consider non singular transformations, and let us apply
the transfer operators to absolutely continuous invariant measures. We
consider the transfer operator acting on a strong and a weak space $%
B_{s}\subseteq B_{w}\subseteq L^{1}$ with norms satisfying the Assumptions $%
A $ listed at beginning of Section \ref{CEM}.

The reader can easily verify that these assumptions hold for example when $%
X=S^{1}$ and $B_{s}=W^{1,1},$ $B_{w}=L^{1}$ and apply the results to
expanding maps. Similar arguments applies to other kinds of spaces.

\subsubsection{Estimating $\protect\int \protect\psi \circ T^{n}~g~dm-%
\protect\int g~dm~\protect\int \protect\psi ~d\protect\mu $}

In this subsection we consider a measure preserving transformation $(T,\mu )$
with $\mu =hm$ and we see how the notion of convergence to equilibrium, as
in Definition \ref{conve} allow the estimate of the following integral%
\begin{equation*}
\int \psi \circ T^{n}~g~dm-\int g~dm~\int \psi ~d\mu .
\end{equation*}

\begin{lemma}
Let us consider normed vector spaces $B_{s}\subseteq B_{w}$, as described
above. Consider the transfer operator $L$ associated to a map having $\mu
=hm $ with $h\in B_{s}$ as an invariant measure. Suppose that there is $\Phi
(n)\rightarrow 0$ such that for each $f\in V_{s}$ and $n\geq 0$%
\begin{equation*}
||L^{n}f||_{w}\leq \Phi (n)||f||_{B_{s}}
\end{equation*}%
then for each $\forall \psi \in L^{\infty },$ $g\in B_{s},$ $n\geq 0$ we
have 
\begin{equation*}
|\int \psi \circ T^{n}~g~dm-\int g~dm~\int \psi ~d\mu |\leq 2C\Phi
(n)||h||_{B_{s}}||\psi ||_{\infty }||g||_{B_{s}}.
\end{equation*}
\end{lemma}

\begin{proof}
Since $\mu $ is invariant \ and $\psi \in L^{\infty }$%
\begin{eqnarray*}
|\int \psi \circ T^{n}~g~dm-\int g~dm~\int \psi ~d\mu | &\leq & \\
&\leq &|\int \psi \circ T^{n}~g~dm-\int g~dm\int \psi \circ T^{n}~h~dm| \\
&\leq &|\int \psi \circ T^{n}~[g-h\int g~dm]~dm| \\
&=&|\int \psi ~L^{n}[g-h\int g~dm]~dm|
\end{eqnarray*}%
since $g-h\int g~dm$ is a zero average density in $B_{s}$%
\begin{eqnarray}
|\int \psi \circ T^{n}~g~dm-\int g~dm~\int \psi ~d\mu | &\leq &||\psi
||_{\infty }||L^{n}[g-h\int g~dm]||_{w}  \label{conv} \\
&\leq &\Phi (n)||\psi ||_{\infty }||g-h\int g~dm||_{B_{s}}
\end{eqnarray}%
and 
\begin{eqnarray*}
||(g-\int g~dm)h||_{B_{s}} &\leq &||gh||_{B_{s}}+||\int g~dmh||_{B_{s}} \\
&\leq &2C||h||_{B_{s}}||g||_{B_{s}}.
\end{eqnarray*}
\end{proof}

\subsubsection{Estimating $\protect\int \protect\psi \circ T^{n}~g~d\protect%
\mu -\protect\int g~d\protect\mu ~\protect\int \protect\psi ~d\protect\mu $
(a decay of correlation estimate)}

Suppose we have a system having invariant measure $\mu $ and convergence to
equilibrium with respect to norms $||~||_{w},||~||_{B_{s}}$ and speed $\Phi $
as above. For many applications it is useful to estimate the speed of
decreasing of the following correlation integral 
\begin{equation}
|\int g\cdot (\psi \circ F^{n})~d\mu -\int \psi ~d\mu \int g~d\mu |.
\end{equation}

\begin{lemma}
Consider the transfer operator $L$ associated to a map having $\mu =hm$ with 
$h\in B_{s}$ as an invariant measure as above. Suppose that for each $f\in
B_{s}$ such that $\int f~dm=0$ and $n\geq 0$%
\begin{equation*}
||L^{n}f||_{w}\leq \Phi (n)||f||_{B_{s}}
\end{equation*}%
then for each $\forall \psi \in L^{\infty },g\in B_{s}$ and $n\geq 0$ it
holds%
\begin{equation*}
|\int g\cdot (\psi \circ F^{n})~d\mu -\int \psi ~d\mu \int g~d\mu |\leq
2C[||h||_{B_{s}}+||h||_{B_{s}}^{2}]\Phi (n)||\psi ||_{\infty }||g||_{B_{s}}.
\end{equation*}
\end{lemma}

\begin{proof}
First we remark that adding a constant $K$ to $g$ does not change the
correlation integral:%
\begin{eqnarray}
\int (g+K)\cdot (\psi \circ F^{n})~d\mu -\int \psi ~d\mu \int (g+K)~d\mu
&=&\int g\cdot (\psi \circ F^{n})~d\mu +K\int (\psi \circ F^{n})~d\mu \\
&&-\int \psi ~d\mu \int gd\mu -K\int \psi ~d\mu
\end{eqnarray}%
but since $\mu $ in invariant 
\begin{equation*}
K\int (\psi \circ F^{n})~d\mu -K\int \psi ~d\mu =0
\end{equation*}%
and then 
\begin{equation*}
|\int (g+K)\cdot (\psi \circ F^{n})~d\mu -\int \psi ~d\mu \int (g+K)~d\mu
|=|\int g\cdot (\psi \circ F^{n})~d\mu -\int \psi ~d\mu \int g~d\mu |.
\end{equation*}%
We then choose $K=-\int g~d\mu ,$ we have that $\int g-[\int g~d\mu ]~d\mu
=0 $ and%
\begin{eqnarray*}
|\int g\cdot (\psi \circ F^{n})~d\mu -\int \psi ~d\mu \int g~d\mu | &=&|\int
(g-\int g~d\mu )\cdot (\psi \circ F^{n})~d\mu | \\
&=&|\int \psi ~L^{n}[(g-\int g~d\mu )h]~dm| \\
&\leq &||\psi ||_{\infty }||L^{n}[(g-\int g~d\mu )h]||_{w}
\end{eqnarray*}%
and since $\int [g-\int g~d\mu ]h~dm=0$ the convergence to equilibrium
implies 
\begin{equation*}
|\int g\cdot (\psi \circ F^{n})~d\mu -\int \psi ~d\mu \int g~d\mu |\leq
||\psi ||_{\infty }\Phi (n)||(g-\int g~d\mu )h||_{B_{s}}
\end{equation*}%
and 
\begin{eqnarray*}
||(g-\int g~d\mu )h||_{B_{s}} &\leq &||gh||_{B_{s}}+||\int g~d\mu
~h||_{B_{s}} \\
&\leq &C||h||_{B_{s}}||g||_{B_{s}}+|\int g~d\mu |~||h||_{B_{s}} \\
&\leq &C[||h||_{B_{s}}+||h||_{B_{s}}^{2}]||g||_{B_{s}}
\end{eqnarray*}
\end{proof}

In the next section we will see how the Lasota Yorke inequality and the
properties of the spaces we have chosen allows to prove exponential speed of
convergence for our circle expanding maps.

\section{Spectral gap and consequences\label{spg}}

Now we see a general result that easily implies that the convergence to
equilibrium of certain systems is exponentially fast.

We recall some basic concepts on the spectrum of operators. Let $%
L:B\rightarrow B$ be an operator acting on a complex Banach space $(B,||~||)$%
:

\begin{itemize}
\item the spectrum of an operator is defined as%
\begin{equation*}
spec(L)=\{\lambda \in \mathbb{C}:(\lambda I-L)~has~no~bounded~inverse\}
\end{equation*}

\item the spectral radius of $L$ is defined as 
\begin{equation*}
\rho (L)=\sup \{|z|:z\in spec(L)\}.
\end{equation*}
\end{itemize}

An important connection between the spectral properties of the operator and
the asymptotic behavior of its iterates is given by the following formula

\begin{proposition}[Spectral radius formula]
Under the above assumptions%
\begin{equation*}
\rho (L)=\lim_{n\rightarrow \infty }\sqrt[n]{||L^{n}||}=\inf_{n}\sqrt[n]{%
||L^{n}||}.
\end{equation*}
\end{proposition}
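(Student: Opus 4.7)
The plan is to break the statement into two parts: first show that $\lim_n \sqrt[n]{\|L^n\|}$ exists and equals $\inf_n \sqrt[n]{\|L^n\|}$, and then show that this common value $r$ coincides with $\rho(L)$.

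For the first part I would invoke Fekete's subadditive lemma. Since $L^{n+m}=L^n L^m$ and the operator norm is submultiplicative, the sequence $a_n:=\log\|L^n\|$ satisfies $a_{n+m}\le a_n+a_m$. Fekete's lemma then yields $\lim_n a_n/n = \inf_n a_n/n$, which gives the equality of limit and infimum together with the existence of the limit in one stroke. Call this common value $r$.

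For the second part I would prove two inequalities. The easy direction is $\rho(L)\le r$: for any $\lambda$ with $|\lambda|>r$, form the Neumann series $S_\lambda:=\sum_{n\ge 0}\lambda^{-n-1}L^n$. By the root test and the definition of $r$, this converges absolutely in the operator norm (one has $\sqrt[n]{\|\lambda^{-n-1}L^n\|}\to r/|\lambda|<1$). A direct algebraic verification shows $(\lambda I-L)S_\lambda = S_\lambda(\lambda I-L)=I$, so $\lambda$ belongs to the resolvent set. Thus $\mathrm{spec}(L)\subseteq\{|\lambda|\le r\}$, giving $\rho(L)\le r$.

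The harder direction, $\rho(L)\ge r$, is where the real work lies. The idea is that the resolvent $\lambda\mapsto R(\lambda)=(\lambda I-L)^{-1}$, viewed as an operator-valued map, is holomorphic on the complement of $\mathrm{spec}(L)$ and vanishes at infinity, so it admits a Laurent expansion at $\infty$ valid on the whole region $\{|\lambda|>\rho(L)\}$. By the uniqueness of the expansion and the computation already made for large $|\lambda|$, this Laurent series must be $\sum_{n\ge 0}\lambda^{-n-1}L^n$. The Cauchy--Hadamard formula (adapted to operator-valued power series) then forces $\limsup_n\sqrt[n]{\|L^n\|}\le \rho(L)$, i.e.\ $r\le\rho(L)$. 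The main obstacle is making this last step rigorous: the scalar Cauchy--Hadamard formula does not apply directly to operator-valued series, and one needs either a Banach-space version of the formula or a reduction via duality — pair $R(\lambda)$ with an arbitrary continuous linear functional $\varphi$ on the bounded operators, apply the scalar theory to $\varphi\circ R$, and then lift the pointwise bounds $\varphi(\lambda^{-n-1}L^n)\to 0$ on $|\lambda|>\rho(L)$ to uniform operator-norm convergence via the Banach--Steinhaus principle. This is the one place where the proof goes beyond elementary estimates and requires a genuine piece of complex analysis.
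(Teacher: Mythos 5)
The paper states this proposition as a classical fact and gives no proof, so there is nothing to compare against; your argument is the standard Gelfand spectral radius proof (Fekete for existence of the limit, Neumann series for $\rho(L)\le r$, holomorphy of the resolvent plus the Laurent expansion at infinity for $\rho(L)\ge r$) and it is correct in outline. One small imprecision: Banach--Steinhaus applied to the weakly bounded family $\{\lambda^{-n-1}L^{n}\}$ gives you uniform \emph{norm boundedness}, not norm convergence to zero; but boundedness is all you need, since $\Vert L^{n}\Vert \le M|\lambda|^{n+1}$ already yields $\limsup_{n}\sqrt[n]{\Vert L^{n}\Vert}\le |\lambda|$ for every $|\lambda|>\rho(L)$.
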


\begin{definition}[Spectral gap]
\label{defgap}The operator $L:B\rightarrow B$ is said to have spectral gap if%
\begin{equation*}
L=\lambda \func{P}+\func{N}
\end{equation*}%
where

\begin{itemize}
\item $\func{P}$ is a projection (i.e. $\func{P}^{2}=\func{P}$) and $\dim
(Im(\func{P}))=1$;

\item the spectral radius of $\func{N}$ satisfies $\rho (\func{N})<|\lambda
| $;

\item $\func{P}\func{N}=\func{N}\func{P}=0$.
\end{itemize}
\end{definition}

The following is an elementary tool to verify spectral gap of $L$ on $B_{s}$.

\begin{theorem}
\label{gap}Let us consider a Markov operator $L$ acting on two normed vector
spaces $(B_{s},||~||_{s}),~(B_{w},||~||_{w}),$ $B_{s}\subseteq
B_{w}\subseteq CM(X)$ with $||~||_{s}\geq ||~||_{w}$ (where $CM(X)$ stands
for the set of Borel complex valued measures on $X$). Suppose:

\begin{enumerate}
\item (Lasota Yorke inequality). There are $A,B\geq 0$ and $0\leq \lambda
_{1}\leq 1$ such that for each $g\in B_{s}$%
\begin{equation*}
||L^{n}g||_{s}\leq A\lambda _{1}^{n}||g||_{s}+B||g||_{w};
\end{equation*}

\item (Convergence to equilibrium) for each $g\in V_{s}$, it holds 
\begin{equation}
\lim_{n\rightarrow \infty }||L^{n}g||_{w}=0;  \label{CE}
\end{equation}

\item (Compact inclusion) the strong zero average space $V_{s}$ is compactly
immersed in the weak one $V_{w}$ (more precisely, the strong unit ball has a
finite $\epsilon $ net in the weak topology\ for each $\epsilon $);

\item (Weak boundedness) the weak norm of the operator restricted to $V_{s}$
satisfies%
\begin{equation*}
\sup_{n}||L^{n}|_{V_{s}}||_{w}<\infty .
\end{equation*}
\end{enumerate}

Under these assumptions there are $C_{2}>0,\rho _{2}<1$ such that for all $%
g\in V_{s}$%
\begin{equation}
||L^{n}g||_{s}\leq C_{2}\rho _{2}^{n}||g||_{s}.  \label{gap2}
\end{equation}
\end{theorem}

\begin{proof}
We first show that assumptions (2) and (3) and (4) imply that $L$ is
uniformly contracting from $V_{s}$ to $V_{w}$: there is $n_{1}>0$ such that $%
\forall g\in V_{s}$%
\begin{equation}
||L^{n_{1}}g||_{w}\leq \lambda _{2}||g||_{s}  \label{1}
\end{equation}

where$~\lambda _{2}B<1.$

Indeed, by (3), for any $\epsilon $ there is a finite set $\{g_{i}\}_{i\in
(1,...,k)}$ in the strong unit ball $\mathcal{B}$ of $V_{s}$ such that for
each $g$ in $\mathcal{B}$ there is a $g_{i}\in V_{s}$ such that $%
||g-g_{i}||_{w}\leq \epsilon .$

Hence%
\begin{equation*}
\sup_{g\in V_{s},||g||_{s}\leq 1}||L^{n}g||_{w}\leq \sup_{1\leq i\leq k,v\in
\{v\in V_{s}~s.t.~||v||_{w}\leq \epsilon \}}||L^{n}(g_{i}+v)||_{w}.
\end{equation*}%
Now, by (4) suppose that $\forall n~$\ $||L^{n}|_{V_{s}}||_{w}\leq M$ , then%
\begin{equation*}
\sup_{i}||L^{n}(g_{i}+v)||_{w}\leq \sup_{i}||L^{n}(g_{i})||_{w}+M\epsilon .
\end{equation*}%
Since $\epsilon $ can be chosen as small as wanted \ and by (2) for each $i$%
, $\lim_{n\rightarrow \infty }||L^{n}(g_{i})||_{w}=0$ and we have $(\ref{1})$
(first fix $\epsilon $ small enough and then choose $i$ big enough) .

Let us apply the Lasota Yorke inequality to strengthen $(\ref{1})$ to an
estimate for the strong norm. For each $f\in V_{s}$%
\begin{equation*}
||L^{n_{1}+m}f||_{s}\leq A\lambda
_{1}^{m}||L^{n_{1}}f||_{s}+B||L^{n_{1}}f||_{w}
\end{equation*}%
then%
\begin{eqnarray*}
||L^{n_{1}+m}f||_{s} &\leq &A\lambda _{1}^{m}||L^{n_{1}}f||_{s}+B\lambda
_{2}||f||_{s} \\
&\leq &A\lambda _{1}^{m}[A\lambda _{1}^{n_{1}}||f||_{s}+B||f||_{w}]+B\lambda
_{2}||f||_{s}.
\end{eqnarray*}

If $m$ is big enough%
\begin{equation*}
||L^{n_{1}+m}f||_{s}\leq \lambda _{3}||f||_{s}
\end{equation*}%
with $\lambda _{3}<1.$

This easily implies the statement. Indeed \ set $n_{2}=n_{1}+m,$ for each $%
k,q\in \mathbb{N},$ $q\leq n_{2}$, $g\in V_{s}$,%
\begin{eqnarray*}
||L^{kn_{2}+q}g||_{s} &\leq &\lambda _{3}^{k}||L^{q}g||_{s} \\
&\leq &\lambda _{3}^{k}(\lambda _{1}^{q}||g||_{s}+B||g||_{w}).
\end{eqnarray*}

Implying that for each $g\in V_{s}$ there are $C_{2}>0,\rho _{2}<1$ such that%
\begin{equation}
||L^{n}g||_{s}\leq C_{2}\rho _{2}^{n}||g||_{s}.  \label{2232}
\end{equation}
\end{proof}

In the case where $(B_{s},||~||_{s})$ is a complex Banach space, by this
theorem and the spectral \ radius formula, the spectral radius of $L$
restricted to $V_{s}$ is strictly smaller than $1$, and the spectral gap as
defined in Definition \ref{defgap} follows.

\begin{theorem}
\label{spg1}Under the assumptions of Theorem \ref{gap}, if $%
(B_{s},||~||_{s}) $ is a Banach space then $L$ has spectral gap.
\end{theorem}

Before the proof we need a preliminary Lemma

\begin{lemma}
Under the assumptions of Theorem \ref{gap}, if $(B_{s},||~||_{s})$ is a
Banach space then\ $L$ has a unique invariant probability measure in $B_{s}$.
\end{lemma}

\begin{proof}
The proof follows the same construction as in section \ref{ext} using the
compact immersion (assumption 3 of Theorem \ref{gap}) instead of Proposition %
\ref{RK}.
\end{proof}

\begin{proof}[Proof of Theorem \protect\ref{spg1} (sketch)]
Remark that by the Lasota Yorke inequality and the spectral radius formula,
the spectral radius of $L$ on $B_{s}$ is not greater than than $1$. Since
there is an invariant measure in $B_{s}$ then this radius is $1$. By (2)
there can be only one fixed probability measure of $L$ in $B_{s}$ which we
denote by $\mu $ (if there were two, consider the difference which is in $%
V_{s}$ and iterate...).

Now let us remark that every $g\in B_{s}$ can be written as follows:%
\begin{equation*}
g=[g-\mu ~g(X)]+[\mu ~g(X)].
\end{equation*}%
the\footnote{%
Where $g(X)$ \ stands for the $g-$measure of the whole space.} function $%
P:B_{s}\rightarrow B_{s}$ defined as 
\begin{equation*}
P(g)=\mu ~g(X)
\end{equation*}%
is a projection. The function \ $N:B_{s}\rightarrow B_{s}$ defined as%
\begin{equation*}
N(g)=L[g-\mu ~g(X)]
\end{equation*}%
is such that $N(B_{s})\subseteq V_{s}$ , $N|V_{s}=L|V_{s}$, and by $(\ref%
{gap2})$ it satisfies $\rho (N)<1$. It holds 
\begin{equation*}
L=P+N
\end{equation*}%
and $PN=NP=0$. Thus $L$ has spectral gap according to the Definition \ref%
{defgap}.
\end{proof}

We remark that in several texts the role of Theorem \ref{spg1} is played by
a general result referred to Hennion, Herv\'{e} or Ionescu-Tulcea and
Marinescu (see e.g. \cite{L2}, \cite{S}) whose proof is more complicated.

\begin{remark}
\label{conv1}Equation \ref{2232} obviously implies exponential convergence
to equilibrium with exponential speed.
\end{remark}

\begin{remark}[spectral gap for expanding maps of the circle]
\label{cnv2}By Proposition \ref{RK}\footnote{%
Which can be easily adapted to $V_{s}$, by considering an integral
preserving projection $\pi _{2}f=\pi f-\int \pi f$ \ .} , Proposition \ref%
{propora} and the Lasota Yorke inequality, the assumptions of Theorem \ref%
{gap} are verified on our expanding maps of the circle for the $W^{1,1}$
norm (with the $L^{1}$ norm as a weak norm). Then their transfer operator
have spectral gap.
\end{remark}

\subsection{Central limit}

We see an application of Theorem \ref{gap} to the estimation of the
fluctuations of an observable, obtaining a sort of central limit theorem. A
proof of the result can be found in \cite{S} (see also Remark \ref{sarig}).

\begin{theorem}
\label{CL}Let $(X,T,\mu )$ be a mixing nonsingular measure preserving
transformation. Consider its associater transfer operator $L$ acting on some
Banach spaces $B_{s}$ \ and $B_{w}=L^{1}$, suppose $B_{s}$ and $B_{w}$
satisfy Assumptions A at beginning of Section \ref{CEM} and that furthermore 
$B_{s}$ contains the constant fuctions.\ Suppose $L:B_{s}\rightarrow B_{s}$
has spectral gap.

Let $f\in V_{s}$. If there is no $\nu \in \mathcal{B}$ such that $f=\nu -\nu
\circ T$ a.e., then $\exists \sigma >0$ s.t. for all intervals $[a,b],$%
\begin{equation*}
\mu \left\{ x:\frac{1}{\sqrt{n}}\sum_{k=0}^{n-1}f\circ T^{k}\in \lbrack
a,b]\right\} \rightarrow \frac{1}{\sqrt{2\pi \sigma ^{2}}}%
\int_{a}^{b}e^{-t^{2}/2\sigma ^{2}}dt.
\end{equation*}
\end{theorem}

We remark, that by Theorem \ref{spg1} and the general properties we have
shown about the spaces $W^{1,1}$and $L^{1},$ Theorem \ref{CL} applies to
expanding maps on $S^{1}$.

\section{Stability and response to perturbation\label{stab}}

In this section we consider small perturbations of a given system and try to
study the dependence of the invariant measure on the perturbation. If the
measure varies continuously, we know that many of the statistical properties
of the system are stable under perturbation (see \cite{AS} and \cite{DZ} for
examples of results in this direction, in several classes of systems) and
the system will be said to have statistical stability.

On the other hand it is known that even in relatively simple families of
piecewise expanding maps, the physical invariant measure may change
discontinuously (see Section. \ref{disc}).

We will see that under certain general assumptions related to the
convergence to equilibrium of the system and the kind of perturbation, the
physical measure changes continuously, and we can estimate quantitatively
the modulus of continuity. If stronger, assumptions applies, the dependence
can be Lipschitz, or even differentiable.

We remark that with more work, other stability results can be proved for the
whole spectral picture of the system and not only for the physical measure
(see \cite{L2}).

Consider again two vector spaces of measures with sign on $X$ 
\begin{equation*}
B_{s}\mathcal{\subseteq }B_{w}\mathcal{\subseteq }SM(X),
\end{equation*}%
endowed with two norms, the strong norm $||~||_{s}$ on $B_{s}$ and the weak
norm $||~||_{w}$ on $B_{w}$, such that $||~||_{s}\geq ||~||_{w}$ as before.
Suppose $L_{\delta }(B_{s})\subseteq B_{s}$and $L_{\delta }(B_{w})\subseteq
B_{w}$. Denote as before by $V_{s},V_{w}$ the "zero average" spaces.

\textbf{A uniform family of operators. }Let us consider a one parameter
family of operators $L_{\delta }$ , $\delta \in \lbrack 0,1)$. Suppose that:

\begin{enumerate}
\item[\textbf{UF1}] (Uniform Lasota Yorke ineq.) There are constants $%
A,B,\lambda _{1}\geq 0$ with $\lambda _{1}<1$ such that $\forall f\in
B_{s},\forall n\geq 1,\forall \delta \in \lbrack 0,1)$ and each operator
satisfies a Lasota Yorke inequality. 
\begin{equation}
||L_{\delta }^{n}f||_{s}\leq A\lambda _{1}^{n}||f||_{s}+B||f||_{w}.
\end{equation}

\item[\textbf{UF2}] Suppose that $L_{\delta }$ approximates $L_{0}$ when $%
\delta $ is small in the following sense: there is $C\in \mathbb{R}$ such
that $\forall g\in B_{s}$:%
\begin{equation}
||(L_{\delta }-L_{0})g||_{w}\leq \delta C||g||_{s}.
\end{equation}

\item[\textbf{UF3}] Suppose that $L_{0}$ has exponential convergence to
equilibrium, with respect to the norms $||~||_{w}$ and $||~||_{s}$.

\item[\textbf{UF4}] (The weak norm is not expaned) There is $M$ such that $%
\forall \delta ,n,g\in B_{s}$ $\ ||L_{\delta }^{n}g||_{w}\leq M||g||_{w}.$
\end{enumerate}

We will see that under these assumptions we can ensure that the invariant
measure of the system varies continuously (in the weak norm) when $L_{0}$ is
perturbed to $L_{\delta }$ for small values of $\delta $. We will also
provide a quantitative estimate for the modulus of continuity.

\begin{remark}
\label{conter}We remark that UF3, UF4 and UF1 together implies that $L_{0}$
\ eventually contracts exponentially fast the zero average space $V_{s}$.
Indeed let $f\in $ $V_{s}$, using the inequality and then the convergence to
equilibrium%
\begin{eqnarray*}
||L_{0}^{n+m}f||_{s} &\leq &A\lambda
_{1}^{n}||L_{0}^{m}f||_{s}+B||L_{0}^{m}f||_{w} \\
&\leq &A\lambda _{1}^{n}||L_{0}^{m}f||_{s}+BE\lambda _{2}^{m}||f||_{s} \\
&\leq &A\lambda _{1}^{n}(B+A)||f||_{s}+BE\lambda _{2}^{m}||f||_{s}
\end{eqnarray*}%
by which there are $n,m$ \ big enough that $||L_{0}^{n+m}f||_{s}\leq \frac{1%
}{2}||f||_{s}$.
\end{remark}

\subsection{Stability of fixed points, a general statement}

We state a general result on the stability of fixed points of Markov
operators satisfying certain assumptions. This will be a flexible tool to
obtain the stability of the invariant measure under small perturbations.

Let us consider two operators $L_{0}$ and $L_{\delta }$ preserving spaces $%
B_{s}\subseteq B_{w}\mathcal{\subseteq }SM(X)$ with norms $||\
||_{s},||~||_{w}$. Let us suppose that $f_{0},$ $f_{\delta }\in B_{s}$ are
fixed probability measures, respectively of $L_{0}$ and $L_{\delta }$.

\begin{lemma}
\label{gen}Suppose that:

\begin{description}
\item[a)] $||L_{\delta }f_{\delta }-L_{0}f_{\delta }||_{w}<\infty $

\item[b)] $\exists \,C_{i}~s.t.~\forall g\in \mathcal{B},~||L_{0}^{i}g||_{w}%
\leq C_{i}||g||_{w}$ (compare with UF4)
\end{description}

Then for each $N$%
\begin{equation}
||f_{\delta }-f_{0}||_{w}\leq ||L_{0}^{N}(f_{\delta
}-f_{0})||_{w}+||L_{\delta }f_{\delta }-L_{0}f_{\delta }||_{w}\sum_{i\in
\lbrack 0,N-1]}C_{i}.  \label{mainres}
\end{equation}
\end{lemma}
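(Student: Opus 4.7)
The plan is to derive a telescoping identity for $f_\delta - f_0$ using only the fact that $f_\delta$ and $f_0$ are fixed points of $L_\delta$ and $L_0$ respectively, and then take $\mathcal{B}$-norms and apply the continuity bound~(b).

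First, I would exploit the fixed-point equations $f_\delta = L_\delta f_\delta$ and $f_0 = L_0 f_0$ to write
\begin{equation*}
f_\delta - f_0 \;=\; L_\delta f_\delta - L_0 f_0 \;=\; (L_\delta - L_0) f_\delta \;+\; L_0(f_\delta - f_0).
\end{equation*}
This is the ``one-step'' identity. Substituting the same identity into the last term and iterating $N$ times (a straightforward induction, using the linearity of $L_0$ and that $f_\delta - f_0$ reappears unchanged at each stage) yields the telescoping formula
\begin{equation*}
f_\delta - f_0 \;=\; \sum_{i=0}^{N-1} L_0^{\,i}\,(L_\delta - L_0)\, f_\delta \;+\; L_0^{N}(f_\delta - f_0).
\end{equation*}
This is the only genuinely creative step; assumption~(a) is what makes the summands meaningful in $\mathcal{B}$.

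Next I would take the $||\,\cdot\,||_{\mathcal{B}}$ norm of both sides and use the triangle inequality, followed by assumption~(b) applied to the terms $L_0^{\,i}[(L_\delta - L_0)f_\delta]$, giving
\begin{equation*}
||L_0^{\,i}\,(L_\delta - L_0)f_\delta||_{\mathcal{B}} \;\leq\; C_i\,||(L_\delta - L_0)f_\delta||_{\mathcal{B}} \;=\; C_i\,||L_\delta f_\delta - L_0 f_\delta||_{\mathcal{B}}.
\end{equation*}
Summing over $i \in [0,N-1]$ and adding the remainder term $||L_0^{N}(f_\delta - f_0)||_{\mathcal{B}}$ produces exactly~\eqref{mainres}.

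There is no real obstacle here beyond writing the telescoping identity cleanly; the rest is norm estimates and the triangle inequality. The virtue of the statement is that the right-hand side has been split into a contraction-type term $||L_0^{N}(f_\delta - f_0)||_{\mathcal{B}}$ (which in applications will be small via a convergence-to-equilibrium estimate like UF3, once one verifies $f_\delta - f_0 \in V_s$) and a perturbation term involving only the ``one-step'' displacement $L_\delta f_\delta - L_0 f_\delta$ (controlled by UF2), weighted by the operator norms $C_i$ of $L_0^{\,i}$.
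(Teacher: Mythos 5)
Your proof is correct and is essentially the paper's argument: the paper derives the same identity $f_{\delta}-f_{0}=L_{0}^{N}(f_{\delta}-f_{0})+\sum_{i=0}^{N-1}L_{0}^{i}(L_{\delta}-L_{0})f_{\delta}$ by applying the operator telescoping formula $L_{0}^{N}-L_{\delta}^{N}=\sum_{k=1}^{N}L_{0}^{N-k}(L_{0}-L_{\delta})L_{\delta}^{k-1}$ to the fixed point $f_{\delta}$, whereas you obtain it by iterating the one-step identity; the two derivations are interchangeable. The concluding norm estimates via the triangle inequality and assumption (b) are identical.
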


\begin{proof}
The proof is a direct computation%
\begin{eqnarray*}
||f_{\delta }-f_{0}||_{w} &\leq &||L_{\delta }^{N}f_{\delta
}-L_{0}^{N}f_{0}||_{w} \\
&\leq &||L_{0}^{N}f_{0}-L_{0}^{N}f_{\delta }||_{w}+||L_{0}^{N}f_{\delta
}-L_{\delta }^{N}f_{\delta }||_{w}
\end{eqnarray*}%
Hence%
\begin{equation*}
||f_{0}-f_{\delta }||_{\mathcal{B}}\leq ||L_{0}^{N}(f_{0}-f_{\delta
})||_{w}+||L_{0}^{N}f_{\delta }-L_{\delta }^{N}f_{\delta }||_{w}
\end{equation*}%
but%
\begin{equation*}
L_{0}^{N}-L_{\delta }^{N}=\sum_{k=1}^{N}L_{0}^{N-k}(L_{0}-L_{\delta
})L_{\delta }^{k-1}
\end{equation*}%
and%
\begin{eqnarray*}
(L_{0}^{N}-L_{\delta }^{N})f &=&\sum_{k=1}^{N}L_{0}^{N-k}(L_{0}-L_{\delta
})L_{\delta }^{k-1}f_{\delta } \\
&=&\sum_{k=1}^{N}L_{0}^{N-k}(L_{0}-L_{\delta })f_{\delta }
\end{eqnarray*}%
by item b)%
\begin{eqnarray*}
||(L_{0}^{N}-L_{\delta }^{N})f_{\delta }||_{w} &\leq
&\sum_{k=1}^{N}C_{N-k}||(L_{0}-L_{\delta })f_{\delta }||_{w} \\
&\leq &||(L_{0}-L_{\delta })f_{\delta }||_{w}\sum_{i\in \lbrack 0,N-1]}C_{i}
\end{eqnarray*}%
then%
\begin{equation*}
||f_{\delta }-f_{0}||_{w}\leq ||L_{0}^{N}(f_{0}-f_{\delta
})||_{w}+||(L_{0}-L_{\delta })f_{\delta }||_{w}\sum_{i\in \lbrack
0,N-1]}C_{i}.
\end{equation*}
\end{proof}

Now, let us apply the statement to our family of operators satisfying
assumptions UF 1,...,4. \ On can fix $C_{i}=M$. We have the following

\begin{proposition}
\label{stabi}Suppose $L_{\delta }$ is a uniform family of operators
satisfying UF1,...,4. $f_{0}$ is the unique invariant probability measure of 
$L_{0}$, $f_{\delta }$ is an invariant probability measure of $L_{\delta }$.
Then%
\begin{equation*}
||f_{\delta }-f_{0}||_{w}=O(\delta \log \delta ).
\end{equation*}
\end{proposition}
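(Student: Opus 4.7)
The plan is to apply the general fixed-point stability Lemma \ref{gen} with $\mathcal{B}=B_{w}$, and then optimize the free parameter $N$ against $\delta$.

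First I would establish a uniform bound on the strong norms of the invariant densities $f_{\delta}$. Since $f_{\delta}$ is $L_{\delta}$-invariant, UF1 gives, for every $n$,
\begin{equation*}
\|f_{\delta}\|_{s}=\|L_{\delta}^{n}f_{\delta}\|_{s}\leq A\lambda_{1}^{n}\|f_{\delta}\|_{s}+B\|f_{\delta}\|_{w}.
\end{equation*}
Choosing $n$ large enough that $A\lambda_{1}^{n}\leq 1/2$, and using that $\|f_{\delta}\|_{w}$ is bounded (say by $1$, being a probability density), one obtains $\|f_{\delta}\|_{s}\leq K$ with $K$ independent of $\delta$. In particular $\|f_{\delta}-f_{0}\|_{s}\leq 2K$.

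Next I would verify the hypotheses of Lemma \ref{gen}. Hypothesis (a) follows from UF2 applied to $f_{\delta}$, giving $\|L_{\delta}f_{\delta}-L_{0}f_{\delta}\|_{w}\leq \delta C K$. For hypothesis (b), since $f_{\delta}\in B_{s}$ and the operators $L_{0},L_{\delta}$ preserve $B_{s}$, the argument inside the proof of Lemma \ref{gen} only ever applies $L_{0}^{i}$ to elements of $B_{s}$; UF4 then yields $C_{i}\leq M$, uniformly in $i$. Plugging these into \eqref{mainres} with $\mathcal{B}=B_{w}$,
\begin{equation*}
\|f_{\delta}-f_{0}\|_{w}\leq \|L_{0}^{N}(f_{\delta}-f_{0})\|_{w}+\delta C K \cdot N M.
\end{equation*}

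For the first term, note that $f_{\delta}-f_{0}\in V_{s}$ because both are probability densities; by UF3 there exist $E>0$, $\lambda_{2}<1$ with $\|L_{0}^{N}(f_{\delta}-f_{0})\|_{w}\leq E\lambda_{2}^{N}\|f_{\delta}-f_{0}\|_{s}\leq 2KE\lambda_{2}^{N}$. So
\begin{equation*}
\|f_{\delta}-f_{0}\|_{w}\leq 2KE\lambda_{2}^{N}+\delta CKMN.
\end{equation*}

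Finally I would optimize $N$. Choosing $N=\lceil \log(\delta^{-1})/\log(\lambda_{2}^{-1})\rceil$ makes the first term $O(\delta)$ and the second $O(\delta |\log\delta|)$, producing the stated $O(\delta\log\delta)$ bound. The main conceptual obstacle is the a priori uniform control $\|f_{\delta}\|_{s}\leq K$ — without it UF2 and UF3 cannot be leveraged, because the comparison in UF2 and the convergence rate in UF3 are expressed in the strong norm. The rest is a direct substitution and a logarithmic optimization of $N$ against $\delta$.
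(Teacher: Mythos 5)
Your argument is correct and follows essentially the same route as the paper: uniform strong-norm bound on $f_{\delta}$ from UF1, Lemma \ref{gen} with $\mathcal{B}=B_{w}$ using UF2 for hypothesis (a) and UF4 for hypothesis (b), exponential convergence to equilibrium (UF3) applied to $f_{\delta}-f_{0}\in V_{s}$, and the choice $N\sim\log\delta/\log\rho_{2}$. Your explicit derivation of the uniform bound $\|f_{\delta}\|_{s}\leq K$ is a welcome elaboration of a step the paper only asserts.
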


\begin{proof}
We remark that by the uniform Lasota Yorke inequality $||f_{\delta
}||_{s}\leq M$ are uniformly bounded.

Hence 
\begin{equation*}
||L_{\delta }f_{\delta }-L_{0}f_{\delta }||_{w}\leq \delta CM
\end{equation*}%
(see item a) of Lemma \ref{gen}). Moreover by UF$4$, $C_{i}\leq M_{2}.$

Hence%
\begin{equation*}
||f_{\delta }-f_{0}||_{w}\leq \delta CMM_{2}N+||L_{0}^{N}(f_{0}-f_{\delta
})||_{w}.
\end{equation*}%
Now by the exponential convergence to equilibrium of $L_{0}$%
\begin{eqnarray*}
||L_{0}^{N}(f_{\delta }-f_{0})||_{w} &\leq &C_{2}\rho _{2}^{N}||(f_{\delta
}-f_{0})||_{s} \\
&\leq &C_{2}\rho _{2}^{N}M
\end{eqnarray*}%
hence%
\begin{equation*}
||f_{\delta }-f_{0}||_{w}\leq \delta CMM_{2}N+C_{2}\rho _{2}^{N}M
\end{equation*}%
choosing $N=\left\lfloor \frac{\log \delta }{\log \rho _{2}}\right\rfloor $%
\begin{eqnarray}
||f_{\delta }-f_{0}||_{w} &\leq &\delta CMM_{2}\left\lfloor \frac{\log
\delta }{\log \rho _{2}}\right\rfloor +C_{2}\rho _{2}^{\left\lfloor \frac{%
\log \delta }{\log \rho _{2}}\right\rfloor }M \\
&\leq &\delta \log \delta CM_{2}M\frac{1}{\log \rho _{2}}+C_{2}\delta M. 
\notag
\end{eqnarray}
\end{proof}

\begin{remark}
We remark that in this statement we did not really use the Lasota Yorke
inequality in its full strength. We used it only to get $||f_{\delta
}||_{s}\leq M.$ \ Moreover the statement could be generalized to slower than
exponential convergence to equilibrium (see \cite{G}) obtaining other kinds
of continuity relations. In the following sections we apply these statements
to some classes of maps, we remark that the modulus of continuity $\delta
\log \delta $ is sharp for Piecewise Expanding map (see Section \ref{PW}).
\end{remark}

\begin{remark}
We remark that in UF2 the size of the perturbation is measured in the \
strong-weak norm, i.e. as an operator$:B_{s}\rightarrow B_{w}$. This allows
general perturbations (allowing to move discontinuities, like when making
small perturbation in the Skorokhod distance, see Eq. \ref{sko}) furthermore
even in the differentiable case, small perturbations of the map in the $%
C^{k} $ norm induce small perturbations of the associated transfer operator
in the strong-weak norm (see also Proposition \ref{ssd}). Measuring the size
of the perturbation in the strong-strong norm, will lead to stronger
results, like Lipschitz or differentiable stability (see Sections \ref%
{lipsec}, \ref{lrsec}). However the typical perturbations one is interested
to put on a deterministic dynamical system (perturbing a $C^{k}$map slightly
in the $C^{k}$ norm e.g.) are not small in the strong-strong norm.
\end{remark}

\subsection{Application to expanding maps}

In the previous section we considered the stability of the invariant measure
under small perturbations of the transfer operator.

There are many kinds of interesting perturbations to be considered. Two main
classes are deterministic or stochastic ones.

In the deterministic ones the transfer operator is perturbed by small
changes on the underlying dynamics (the map).

The stochastic ones can be of several kinds. The simplest one is the adding
of some noise perturbing the result of the deterministic dynamics at each
iteration (see \cite{L2} for some example and related estimations).

We now consider small deterministic perturbations of our expanding maps on $%
S^{1}.$ Let us consider an expanding map $T_{0}$ and a one parameter family $%
T_{\delta },$ $\delta \in \lbrack 0,1]$ of expanding maps of the circle
satisfying the properties stated at beginning of Section \ref{maps} and

\begin{description}
\item[UFM] $||T_{\delta }-T_{0}||_{C^{2}}\leq K\delta $ for some $K\in 
\mathbb{R}$.
\end{description}

To each of these maps it is associated a transfer operator $L_{\delta }$
acting on $W^{1,1}.$ We now prove that the transfer operators of a uniform
family of expanding maps satistfies the general property \textbf{UF2} and
this will allow to apply our general quantitative stability results.

\begin{proposition}
\label{ssd}If $L_{0}$ and $L_{\delta }$ are transfer operators of expanding
maps $T_{0}$ and $T_{\delta },$ satisfying UFM, then there is a $C\in 
\mathbb{R}$\ such that $\forall g\in W^{1,1}$:%
\begin{equation}
||(L_{\delta }-L_{0})f||_{1}\leq \delta C||f||_{W^{1,1}}
\end{equation}%
and assumption \textbf{UF2} is satisfied.
\end{proposition}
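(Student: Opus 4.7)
The plan is to work by duality. Using the identity $\int g\cdot Lf\,dm = \int g\circ T\cdot f\,dm$ proven earlier, we rewrite
$$\int g\,(L_\delta - L_0)f\,dm = \int (g\circ T_\delta - g\circ T_0)\,f\,dm,$$
and since $\|h\|_1 = \sup_{\|g\|_\infty\le 1}|\int g\,h\,dm|$, it suffices to bound the right-hand side by $C\delta\|g\|_\infty\|f\|_{W^{1,1}}$ uniformly in $g\in L^\infty$ with $\|g\|_\infty\le 1$.

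The main obstacle is that $g$ is only $L^\infty$, so one cannot compare $g\circ T_\delta$ and $g\circ T_0$ via a Taylor expansion of $g$ around $T_0$. The key idea is to trade the missing regularity of $g$ for the $W^{1,1}$ regularity of $f$ via integration by parts on $S^1$. Since adding a constant to $g$ does not affect the difference $g\circ T_\delta - g\circ T_0$, one may assume $\int g\,dm = 0$ and set $G(x)=\int_0^x g(t)\,dt$; then $G$ is Lipschitz with $\|G\|_\infty,\|G\|_{\mathrm{Lip}}\le \|g\|_\infty$ and, crucially (thanks to the zero-mean reduction), continuous on the circle, so that the forthcoming integration by parts has no boundary contribution. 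Writing $g\circ T_\delta = (G\circ T_\delta)'/T_\delta'$, integrating by parts against $f/T_\delta'$, and doing the same for $T_0$, we obtain
$$\int (g\circ T_\delta-g\circ T_0)f\,dm = -\int (G\circ T_\delta - G\circ T_0)\Bigl(\tfrac{f}{T_\delta'}\Bigr)'dm - \int (G\circ T_0)\left[\Bigl(\tfrac{f}{T_\delta'}\Bigr)' - \Bigl(\tfrac{f}{T_0'}\Bigr)'\right]dm.$$

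The remaining work is to bound these two integrals. For the first, $|G\circ T_\delta - G\circ T_0|\le K\delta\|g\|_\infty$ by the Lipschitz property of $G$ and hypothesis UFM, while expanding $(f/T_\delta')' = f'/T_\delta' - f\,T_\delta''/(T_\delta')^2$ shows that its $L^1$ norm is bounded by a constant times $\|f\|_{W^{1,1}}$, uniformly in $\delta$, by the same manipulation as in the Lasota--Yorke computation together with uniform expansion. For the second, $\|G\circ T_0\|_\infty\le\|g\|_\infty$, and one expands the bracket as
$$f'\Bigl(\tfrac{1}{T_\delta'}-\tfrac{1}{T_0'}\Bigr) - f\Bigl(\tfrac{T_\delta''}{(T_\delta')^2}-\tfrac{T_0''}{(T_0')^2}\Bigr),$$
each of whose coefficients is $O(\delta)$ in $L^\infty$ by the uniform lower bound on $|T'|$ and the $\|T_\delta-T_0\|_{C^2}\le K\delta$ hypothesis. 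Summing the two contributions yields the bound $C\delta\|g\|_\infty\|f\|_{W^{1,1}}$. The only delicate step in the argument is ensuring that the integration by parts on $S^1$ has no boundary term, which is precisely what the preliminary reduction $\int g\,dm=0$ accomplishes; the rest is routine bookkeeping of Leibniz-rule expansions.
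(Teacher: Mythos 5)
Your proof is correct, but it follows a genuinely different route from the one in the notes. The paper works directly with the explicit formula $[L_\delta f](x)=\sum_{y\in T_\delta^{-1}(x)}f(y)/|T_\delta'(y)|$, splitting the difference $[L_\delta f](x)-[L_0f](x)$ into the error from replacing $T_\delta'$ by $T_0'$ at the perturbed preimages (giving a term $D_1(\delta)|L_\delta f(x)|$), the displacement of the preimages themselves (controlled by writing $f(y_i)-f(y_i^0)=\int_{y_i^0}^{y_i}f'$ and bounding the resulting sum by a convolution $1_{[-\Delta_y,0]}\ast|f'|$ pushed through $L_\delta$), and the variation of $1/|T_0'|$ between the two preimage sets. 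You instead dualize against $g\in L^\infty$ via the identity $\int g\,L_\delta f\,dm=\int g\circ T_\delta\,f\,dm$ and transfer the missing regularity of $g$ onto $f$ by integrating by parts against the antiderivative $G$; the zero-mean normalization of $g$, which makes $G$ continuous on $S^1$ and kills the boundary term, is the one point that genuinely needs care, and you handle it correctly (adding a constant to $g$ leaves $g\circ T_\delta-g\circ T_0$ unchanged). Both arguments ultimately spend the $W^{1,1}$ regularity of $f$ in the same place — absorbing an $O(\delta)$ displacement into $\|f'\|_1$ — but yours never touches the preimage structure, so it is cleaner and transfers more easily to settings where the sum-over-preimages formula is awkward, at the cost of yielding only an integrated ($L^1$-dual) estimate rather than the pointwise bound on $|[L_\delta f](x)-[L_0f](x)|$ that the paper's computation actually produces along the way.
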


\begin{proof}
We have that the transfer operator is defined by the formula%
\begin{equation}
\lbrack L_{\delta }f](x)=\sum_{y\in T_{\delta }^{-1}(x)}\frac{f(y)}{%
|T_{\delta }^{\prime }(y)|}.
\end{equation}%
\begin{eqnarray*}
|[L_{\delta }f](x)-[L_{0}f](x)| &=&|\sum_{y\in T_{\delta }^{-1}(x)}\frac{f(y)%
}{|T_{\delta }^{\prime }(y)|}-\sum_{y\in T_{0}^{-1}(x)}\frac{f(y)}{%
|T_{0}^{\prime }(y)|}| \\
&\leq &|\sum_{y\in T_{\delta }^{-1}(x)}\frac{f(y)}{|T_{\delta }^{\prime }(y)|%
}-\sum_{y\in T_{\delta }^{-1}(x)}\frac{f(y)}{|T_{0}^{\prime }(y)|}|+ \\
&&+|\sum_{y\in T_{\delta }^{-1}(x)}\frac{f(y)}{|T_{0}^{\prime }(y)|}%
-\sum_{y\in T_{0}^{-1}(x)}\frac{f(y)}{|T_{0}^{\prime }(y)|}|.
\end{eqnarray*}

The first summand can be estimated as follows%
\begin{eqnarray*}
|\sum_{y\in T_{\delta }^{-1}(x)}\frac{f(y)}{|T_{\delta }^{\prime }(y)|}%
-\sum_{y\in T_{\delta }^{-1}(x)}\frac{f(y)}{|T_{0}^{\prime }(y)|}| &\leq
&|\sum_{y\in T_{\delta }^{-1}(x)}\frac{f(y)}{|T_{\delta }^{\prime }(y)|}[1-%
\frac{|T_{\delta }^{\prime }(y)|}{|T_{0}^{\prime }(y)|}]| \\
&\leq &D_{1}(\delta )|\sum_{y\in T_{\delta }^{-1}(x)}\frac{f(y)}{|T_{\delta
}^{\prime }(y)|}| \\
&\leq &D_{1}(\delta )|L_{\delta }f(x)|
\end{eqnarray*}%
where $D_{1}(\delta )=\sup_{y\in S^{1}}|1-\frac{|T_{\delta }^{\prime }(y)|}{%
|T_{0}^{\prime }(y)|}|$ and remark that $D_{1}=O(\delta )$. For second
summand let us denote $T_{\delta }^{-1}(x)=\{y_{1},...,y_{n}\}$ , $%
T_{0}^{-1}(x)=\{y_{1}^{0},...,y_{n}^{0}\}$. Let $\Delta
_{y}=\sup_{x,i}(|y_{i}-y_{i}^{0}|)$, \ since $\Delta _{y}=O(\delta )$ (see
Lemma \ref{tec} and its proof\ or \cite{GP} Lemma 3.2 for details)%
\begin{eqnarray*}
|\sum_{y\in T_{\delta }^{-1}(x)}\frac{f(y)}{|T_{0}^{\prime }(y)|}-\sum_{y\in
T_{0}^{-1}(x)}\frac{f(y)}{|T_{0}^{\prime }(y)|}| &\leq &|\sum_{i=1}^{n}\frac{%
f(y_{i})-f(y_{i}^{0})}{|T_{0}^{\prime }(y_{i})|}|+|%
\sum_{i=1}^{n}f(y_{i}^{0})(\frac{1}{|T_{0}^{\prime }(y_{i})|}-\frac{1}{%
|T_{0}^{\prime }(y_{i}^{0})|})| \\
&\leq &|\sum_{i=1}^{n}\frac{f(y_{i})-f(y_{i}^{0})}{|T_{0}^{\prime }(y_{i})|}%
|+|\sum_{i=1}^{n}\frac{f(y_{i}^{0})}{|T_{0}^{\prime }(y_{i}^{0})|}(\frac{%
|T_{0}^{\prime }(y_{i}^{0})|}{|T_{0}^{\prime }(y_{i})|}-1)| \\
&\leq &|\sum_{i=1}^{n}\frac{f(y_{i})-f(y_{i}^{0})}{|T_{0}^{\prime }(y_{i})|}%
|+D_{2}(\delta )|\sum_{i=1}^{n}\frac{f(y_{i}^{0})}{|T_{0}^{\prime
}(y_{i}^{0})|}| \\
&\leq &|\sum_{i=1}^{n}\frac{\int_{y_{i}^{0}}^{y_{i}}f^{^{\prime }}(t)dt}{%
|T_{0}^{\prime }(y_{i})|}|+D_{2}(\delta )|L_{0}f(x)|
\end{eqnarray*}%
where $D_{2}(\delta ):=\sup_{x,i}|\frac{|T_{0}^{\prime }(y_{i}^{0})|}{%
|T_{0}^{\prime }(y_{i})|}-1|=O(\delta )$. Hence 
\begin{eqnarray*}
||L_{\delta }f-L_{0}f||_{1} &\leq &D_{1}(\delta )||L_{\delta
}f(x)||_{1}+||\sum_{i=1}^{n}\frac{\int_{y_{i}^{0}}^{y_{i}}f^{^{\prime }}(t)dt%
}{|T_{0}^{\prime }(y_{i})|}||_{1}+D_{2}(\delta )||L_{0}f(x)||_{1} \\
&\leq &(D_{1}(\delta )+D_{2}(\delta ))||f(x)||_{1}+||\sum_{i=1}^{n}\frac{%
\int_{y_{i}^{0}}^{y_{i}}f^{^{\prime }}(t)dt}{|T_{0}^{\prime }(y_{i})|}||_{1}
\\
&\leq &O(\delta )||f(x)||_{1}+||\sum_{i=1}^{n}\frac{\int_{y_{i}-\Delta
_{y}}^{y_{i}}|f^{^{\prime }}(t)|dt}{|T_{0}^{\prime }(y_{i})|}||_{1} \\
&\leq &O(\delta )||f(x)||_{1}+||\sum_{i=1}^{n}\frac{[1_{[0,\Delta _{y}]}\ast
|f^{^{\prime }}|](y_{i})}{|T_{0}^{\prime }(y_{i})|}||_{1} \\
&\leq &O(\delta )||f(x)||_{1}+||L_{\delta }[1_{[0,\Delta _{y}]}\ast
|f^{^{\prime }}|]||_{1} \\
&\leq &O(\delta )||f(x)||_{1}+||1_{[0,\Delta _{y}]}\ast |f^{^{\prime
}}|||_{1} \\
&\leq &O(\delta )||f(x)||_{1}+||1_{[0,\Delta _{y}]}||_{1}||f^{^{\prime
}}||_{1} \\
&\leq &O(\delta )||f||_{W^{1.1}.}
\end{eqnarray*}

where $[1_{[0,\Delta _{y}]}\ast |f^{^{\prime }}|]$ stands for the
convolution function between the characteristic of the interval $[0,\Delta
_{y}]$ $(\func{mod}$~$1)$ and $|f^{\prime }|$. And the statement is proved.
\end{proof}

By the above estimates one can get a quantitative statistical stability
estimate for expanding maps and small deterministic perturbations.

\begin{corollary}
\label{de2}Let $T_{0}$ \ a $C^{2}$ espanding map and $T_{\delta }$ be a
family of expanding maps satisffying the assumption UFM \ above. Let $%
h_{\delta }$ be the family of invariant measures in $L^{1}$ \ for the maps $%
T_{\delta }$. Then%
\begin{equation*}
||h_{0}-h_{\delta }||_{1}=O(\delta \log \delta ).
\end{equation*}
\end{corollary}

\begin{proof}
It is easy to verify that for $\delta $ small enough the map $T_{\delta }$
satisfy the assumptions at beginning of Section \ref{maps} and the
associated transfer operators satisfy UF1,UF3,UF4 uniformly. By Proposition %
\ref{ssd}, UFM implies UF2. This allow to apply Proposition \ref{stabi} and
obtain the result.
\end{proof}

\begin{remark}
We will see in the next sections that in the case of expanding maps one can
be able to prove more precise estimates on the stability of the physical
measure to deterministic perturbations of the system. A stability result
similar \ to Corollary \ref{de2} also applies to suitable deterministic
perturbations of piecewise expanding maps (See Section \ref{PW}).
\end{remark}

\subsubsection{Further small Perturbation estimates}

In this subsection we show that small perturbations of an expanding map
induces a small perturbation of the associated transfer operator when
considered as acting from stronger to weaker Sobolev spaces.

\begin{proposition}
\label{prop14} If $L_{0}$ and $L_{\delta }$ are transfer operators of $C^{4}$
expanding maps $T_{0}$ and $T_{\delta },$ such that for some $K\in \mathbb{R}
$ 
\begin{equation*}
||T_{\delta }-T_{0}||_{C^{2}}\leq K\delta
\end{equation*}%
then there is a $C\in \mathbb{R}$\ such that $\forall f\in W^{1,1}$:%
\begin{equation}
||(L_{\delta }-L_{0})f||_{W^{1,1}}\leq \delta C||f||_{W^{2,1}}.  \label{2l}
\end{equation}

If furthermore%
\begin{equation*}
||T_{\delta }-T_{0}||_{C^{3}}\leq K\delta
\end{equation*}%
then%
\begin{equation}
||(L_{\delta }-L_{0})f||_{W^{2,1}}\leq \delta C||f||_{W^{3,1}}.  \label{3l}
\end{equation}
\end{proposition}

\begin{proof}
In Proposotion \ref{ssd} it is shown that if $L_{0}$ and $L_{\delta }$ are
transfer operators of expanding maps $T_{0}$ and $T_{\delta },$ such that
for some $K\in \mathbb{R}$ 
\begin{equation*}
||T_{\delta }-T_{0}||_{C^{2}}\leq K\delta
\end{equation*}%
then there is a $C\in \mathbb{R}$\ such that $\forall g\in W^{1,1}$:%
\begin{equation}
||(L_{\delta }-L_{0})f||_{1}\leq \delta C||f||_{W^{1,1}}
\end{equation}%
and the first line of \ref{2l} \ is established. From this we can also
recover the second line, indeed in our case we have an explicit formula for
the transfer operator:%
\begin{equation}
\lbrack L_{0}f](x)=\sum_{y\in T^{-1}(x)}\frac{f(y)}{|T_{0}^{\prime }(y)|}.
\end{equation}%
Considering that $T_{0}^{\prime }(y)=T_{0}^{\prime }(T_{0}^{(-1)}(x))$ we
can compute the derivative of $(\ref{pf})$ 
\begin{equation*}
(L_{0}f)^{^{\prime }}=\sum_{y\in T_{0}^{-1}(x)}\frac{1}{(T_{0}^{\prime
}(y))^{2}}f^{\prime }(y)-\frac{T_{0}^{\prime \prime }(y)}{(T_{0}^{\prime
}(y))^{3}}f(y).
\end{equation*}%
And similarly for $L_{\delta }.$ Note that%
\begin{equation}
(L_{0}f)^{^{\prime }}=L_{0}(\frac{1}{T_{0}^{\prime }}f^{\prime })-L_{0}(%
\frac{T_{0}^{\prime \prime }}{(T_{0}^{^{\prime }})^{2}}f).
\end{equation}

Hence 
\begin{eqnarray*}
||(L_{\delta }-L_{0})f||_{W^{1,1}} &\leq &||(L_{\delta
}-L_{0})f||_{L^{1}}+||((L_{\delta }-L_{0})f)^{\prime }||_{L^{1}} \\
&\leq &\delta C||f||_{W^{1,1}}+||(L_{\delta }-L_{0})(\frac{1}{T_{0}^{\prime }%
}f^{\prime })-(L_{\delta }-L_{0})(\frac{T_{0}^{\prime \prime }}{%
(T_{0}^{^{\prime }})^{2}}f)||_{L^{1}} \\
&&+||(\frac{1}{T_{\delta }^{\prime }}f^{\prime })-(\frac{1}{T_{0}^{\prime }}%
f^{\prime })+(\frac{T_{\delta }^{\prime \prime }}{(T_{\delta }^{^{\prime
}})^{2}}f)-(\frac{T_{0}^{\prime \prime }}{(T_{0}^{^{\prime }})^{2}}%
f)||_{L^{1}} \\
&\leq &\delta C(||f||_{W^{1,1}}+||\frac{1}{T^{\prime }}f^{\prime
}||_{W^{1,1}}+||\frac{T^{\prime \prime }}{(T^{^{\prime }})^{2}}f||_{W^{1,1}})
\\
&&+||T_{\delta }-T_{0}||_{C^{2}}||f||_{W^{1,1}} \\
&\leq &\delta C_{2}||f||_{W^{2,1}}
\end{eqnarray*}%
for some $C_{2}\geq 0$ depending on $T_{0}$ but not on $f.$ This proves $(%
\ref{2l}).$

To prove $(\ref{3l})$ one has to take a further derivative, applying $(\ref%
{preLY})$ again, and involving further derivatives of $T_{0}$ and $f$, but
leading to a similar computation and final result.
\end{proof}

\subsection{Uniform family of operators and uniform $V_{s}$ contraction\label%
{sunifcontr}}

Now we show how a suitable uniform family of nearby operators, not only has
a certain stability on the invariant measure as seen above, but also a
uniform rate of contraction of the space $V_{s}$ and hence a uniform
convergence to equilibrium and spectral gap (we remark that stability
results on the whole spectral picture are known, see \cite{L2}, \cite{B}
e.g.).

Consider two vector subspaces of the space of signed measures on $X$%
\begin{equation*}
B_{s}\mathcal{\subseteq }B_{w}\mathcal{\subseteq }SM(X),
\end{equation*}%
endowed with two norms, the strong norm $||~||_{s}$ on $B_{s}$ and the weak
norm $||~||_{w}$ on $B_{w}$, such that $||~||_{s}\geq ||~||_{w}$. Denote as
before by $V_{s},V_{w}$ the "zero average" strong and weak spaces.\textbf{\ }

\begin{proposition}[Uniform $V_{s}$ contraction for the uniform family of
operators]
\label{unifcont}Let us consider a one parameter family of operators $%
L_{\delta }$, $\delta \in \lbrack 0,1)$. Suppose that they satisfy
UF1,...UF4, then there are $\lambda _{4}<1$ and $A_{2},\delta _{0}\geq 0$
such that for each $\delta \leq \delta _{0}$ and $f\in V_{s}$%
\begin{equation}
||L_{\delta }^{k}f||_{s}\leq A_{2}\lambda _{4}^{k}||f||_{s}.
\label{contract3}
\end{equation}
\end{proposition}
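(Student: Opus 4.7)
The plan is to transfer the exponential convergence to equilibrium of $L_0$ (assumption UF3) to the perturbed operators $L_\delta$ for small $\delta$ via a telescoping identity. Because UF2 controls $L_\delta - L_0$ only in the strong-to-weak direction, the argument must first establish a uniform contraction of $L_\delta^N$ in the \emph{weak} norm on $V_s$, and then use the uniform Lasota-Yorke inequality UF1 to lift this to a genuine strong contraction.

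First I would use, for $f \in V_s$, the telescoping identity
\begin{equation*}
L_\delta^N f - L_0^N f = \sum_{k=1}^{N} L_\delta^{N-k}(L_\delta - L_0) L_0^{k-1} f,
\end{equation*}
and estimate the weak norm of each summand by pairing UF4 on $L_\delta^{N-k}$ with UF2 on $L_\delta - L_0$; the uniform bound $||L_0^{k-1} f||_s \leq (A+B)||f||_s$ coming from UF1 (together with $||f||_w \leq ||f||_s$) keeps the aggregate error linear in $N\delta$. Combining this with UF3 applied to $||L_0^N f||_w$ yields an estimate of the form $||L_\delta^N f||_w \leq \Phi(N)\,||f||_s + N M C (A+B)\,\delta\,||f||_s$, where $\Phi$ decays exponentially. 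Choosing $N$ so that the first term drops below $\tfrac{1}{8B}||f||_s$ and then $\delta_0$ small enough to push the second below the same threshold produces a uniform weak contraction $||L_\delta^N f||_w \leq \tfrac{1}{4B}||f||_s$ for every $\delta \leq \delta_0$.

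Next I would apply UF1 to $L_\delta^{N+m}$ written as $L_\delta^m \circ L_\delta^N$:
\begin{equation*}
||L_\delta^{N+m} f||_s \leq A \lambda_1^m ||L_\delta^N f||_s + B ||L_\delta^N f||_w \leq A \lambda_1^m (A+B) ||f||_s + \tfrac{1}{4}||f||_s.
\end{equation*}
Choosing $m$ so that $A \lambda_1^m (A+B) < \tfrac{1}{4}$ gives a strict strong contraction $||L_\delta^{N_0} f||_s \leq \tfrac{1}{2}||f||_s$ at the single iterate $N_0 := N+m$, uniformly in $\delta \leq \delta_0$. Since both $L_0$ and $L_\delta$ preserve integrals, $V_s$ is invariant under each of them; iterating gives $||L_\delta^{j N_0} f||_s \leq 2^{-j}||f||_s$, and a final application of UF1 to $k = j N_0 + r$ with $0 \leq r < N_0$ interpolates the intermediate iterates to produce $||L_\delta^k f||_s \leq A_2 \lambda_4^k ||f||_s$ with $\lambda_4 = 2^{-1/N_0} < 1$ and $A_2 = 2(A+B)$.

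The main obstacle is the asymmetry of UF2: because the perturbation is controlled only in the strong-to-weak norm, the strong contraction of $L_0$ on $V_s$ that is already available through Remark \ref{conter} cannot be exported to $L_\delta$ by a direct perturbation argument, and the detour through the weak norm is unavoidable. This forces careful bookkeeping of the constants; in particular, the weak contraction achieved in the first step must be pushed strictly below $1/(2B)$ (I used $1/(4B)$) so that, after the Lasota-Yorke boost in the second step, the $B||L_\delta^N f||_w$ contribution leaves enough slack for the $A\lambda_1^m(A+B)\,||f||_s$ term to be absorbed by taking $m$ sufficiently large.
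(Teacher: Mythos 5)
Your proof is correct and follows essentially the same route as the paper: the telescoping estimate for $\|(L_\delta^N-L_0^N)f\|_w$ via UF2 and UF4 is exactly the paper's Lemma \ref{lemmapre} (you bound $\|L_0^{k-1}f\|_s$ crudely by $(A+B)\|f\|_s$ rather than summing the geometric series, which costs nothing since the error is absorbed by taking $\delta$ small after $N$ is fixed), and the subsequent Lasota--Yorke boost to a strong contraction at a single iterate $N_0$, followed by iteration and interpolation, matches the paper's argument.
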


We remark that the contraction rate of the zero average space for the
operator $L_{0}$ can be obtained simply by applying directly Remark \ref%
{conter}. Before the proof of Proposition \ref{unifcont} we need the
following

\begin{lemma}
\label{lemmapre}Suppose that $L_{0}$ satisfies a Lasota Yorke inequality 
\begin{equation*}
||L_{0}^{k}g||_{s}\leq A\lambda _{1}^{k}||g||_{s}+B||g||_{w}
\end{equation*}%
and the following holds

\begin{itemize}
\item $\forall g\in B_{s}$ $\ \ ||(L_{\delta }-L_{0})g||_{w}\leq C\delta
||g||_{s};$

\item there is $M\geq 0$ such that $\forall \delta ,n,g\in B_{s},$ $\
||L_{\delta }^{n}g||_{w}\leq M||g||_{w}$ $;$
\end{itemize}

then $L_{\delta }^{n}$ approximates $L_{0}^{n}$ in the following sense:
there are constants $E,D\geq 0$ such that $\forall g\in B_{s},\forall n\geq
0 $%
\begin{equation}
||(L_{\delta }^{n}-L_{0}^{n})g||_{w}\leq \delta (E||g||_{s}+nD||g||_{w}).
\end{equation}
\end{lemma}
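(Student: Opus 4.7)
My plan is to prove the lemma via the standard telescoping identity for a difference of powers, carefully choosing its direction so that only $L_0$ (not $L_\delta$) is hit by the Lasota--Yorke inequality (since that is all we are given).

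Concretely, I would write
\begin{equation*}
L_{\delta}^{n} - L_{0}^{n} \;=\; \sum_{k=1}^{n} L_{\delta}^{n-k}\,(L_{\delta} - L_{0})\,L_{0}^{k-1},
\end{equation*}
which is easily verified by a telescoping check (each intermediate pair $L_\delta^{n-k+1}L_0^{k-1}$ appears with opposite signs in adjacent summands). Applying this to $g$ and taking $\|\cdot\|_w$ gives
\begin{equation*}
\|(L_{\delta}^{n}-L_{0}^{n})g\|_{w} \;\le\; \sum_{k=1}^{n} \|L_{\delta}^{n-k}\,(L_{\delta}-L_{0})\,L_{0}^{k-1}g\|_{w}.
\end{equation*}

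Next, the two hypotheses bound each summand. The weak-boundedness assumption on the family gives $\|L_{\delta}^{n-k}\, h\|_{w} \le M\,\|h\|_{w}$ for any $h \in B_s$ (in particular for $h = (L_{\delta}-L_{0})L_{0}^{k-1}g$). The approximation assumption then gives $\|(L_{\delta}-L_{0})L_{0}^{k-1}g\|_{w} \le C\delta\,\|L_{0}^{k-1}g\|_{s}$. Finally, because $L_0$ satisfies the Lasota--Yorke inequality,
\begin{equation*}
\|L_{0}^{k-1}g\|_{s} \;\le\; A\lambda_{1}^{k-1}\|g\|_{s} + B\|g\|_{w}.
\end{equation*}
Combining these three bounds,
\begin{equation*}
\|(L_{\delta}^{n}-L_{0}^{n})g\|_{w} \;\le\; MC\delta \sum_{k=1}^{n}\bigl(A\lambda_{1}^{k-1}\|g\|_{s} + B\|g\|_{w}\bigr).
\end{equation*}

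Summing the geometric series and the constant term yields
\begin{equation*}
\|(L_{\delta}^{n}-L_{0}^{n})g\|_{w} \;\le\; \delta\Bigl(\tfrac{MCA}{1-\lambda_{1}}\|g\|_{s} + nMCB\,\|g\|_{w}\Bigr),
\end{equation*}
which is exactly the stated bound with $C' := MCA/(1-\lambda_{1})$ and $D := MCB$. The only place where care is needed is the \emph{direction} of the telescoping: the alternative expansion $\sum L_{0}^{n-k}(L_{\delta}-L_{0})L_{\delta}^{k-1}$ would force us to bound $\|L_{\delta}^{k-1}g\|_{s}$, which is not available under our hypotheses (we have no uniform Lasota--Yorke for $L_\delta$ in this lemma). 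So the real "idea," if there is one, is just to telescope in the order that puts $L_0$ on the right and $L_\delta$ on the left; everything else is routine.
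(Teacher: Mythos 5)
Your proof is correct and is essentially identical to the paper's: the same telescoping identity $L_{\delta}^{n}-L_{0}^{n}=\sum_{k=1}^{n}L_{\delta}^{n-k}(L_{\delta}-L_{0})L_{0}^{k-1}$, followed by the weak-boundedness bound $M$, the perturbation bound $C\delta\|L_0^{k-1}g\|_s$, the Lasota--Yorke inequality, and summation of the geometric series. Your remark about choosing the telescoping direction so that $L_0$ sits on the right is exactly the point of the paper's argument as well.
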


\begin{proof}
Developing $L_{\delta }^{n}-L_{0}^{n}$ as a telescopic sum as done before%
\begin{eqnarray*}
||(L_{\delta }^{n}-L_{0}^{n})g||_{w} &\leq &\sum_{k=1}^{n}||L_{\delta
}^{n-k}(L_{\delta }-L_{0})L_{0}^{k-1}g||_{w}\leq M\sum_{k=1}^{n}||(L_{\delta
}-L_{0})L_{0}^{k-1}g||_{w} \\
&\leq &M\sum_{k=1}^{n}\delta C||L_{0}^{k-1}g||_{s} \\
&\leq &\delta MC\sum_{k=1}^{n}(A\lambda _{1}^{k-1}||g||_{s}+B||g||_{w}) \\
&\leq &\delta MC(\frac{A}{1-\lambda _{1}}||g||_{s}+Bn||g||_{w}).
\end{eqnarray*}
\end{proof}

\begin{proof}[Proof of Proposition \protect\ref{unifcont}]
Let us apply the Lasota Yorke inequality%
\begin{equation*}
||L_{\delta }^{n+m}f||_{s}\leq A\lambda _{1}^{n}||L_{\delta
}^{m}f||_{s}+B||L_{\delta }^{m}f||_{w}
\end{equation*}%
by the assumption UF3, and Lemma \ref{lemmapre}%
\begin{eqnarray*}
||L_{\delta }^{n+m}f||_{s} &\leq &A\lambda _{1}^{n}||L_{\delta
}^{m}f||_{s}+B[F\lambda _{2}^{m}||f||_{s}+\delta (E||f||_{s}+mD||f||_{w})] \\
&\leq &A\lambda _{1}^{n}[A\lambda _{1}^{m}||f||_{s}+B||f||_{w}]+B[F\lambda
_{2}^{m}||f||_{s}+\delta (E||f||_{s}+mD||f||_{w})].
\end{eqnarray*}%
If $n,m$ are big enough suitably chosen and $\delta $ small enough, then we
have that there is a $\lambda _{3}<1$ such that for each $f\in V_{s}$%
\begin{equation*}
||L_{\delta }^{n+m}f||_{s}\leq \lambda _{3}||f||_{s}
\end{equation*}%
thus there are $\lambda _{4}<1,$ $A_{2}\in \mathbb{R}$ such that for each $%
f\in V_{s}$ and $\delta $ small enough%
\begin{equation}
||L_{\delta }^{k}f||_{s}\leq A_{2}\lambda _{4}^{k}||f||_{s}.
\end{equation}
\end{proof}

The following Lemma establishes the continuity of the resolvent when the
operator is perturbed, and will be used in the following.

\begin{lemma}
\label{restab}Let us suppose that $B_{w}$ and $B_{s}$ are Banach spaces. Let
us consider a one parameter family of operators $L_{\delta }$, $\delta \in
\lbrack 0,1)$. Suppose that they satisfy UF1,...UF4. Consider the resolvent
operator $(Id-L_{\delta })^{-1}:V_{s}\rightarrow V_{w}$ 
\begin{equation*}
(Id-L_{\delta })^{-1}:=\sum_{k=0}^{\infty }L_{\delta }^{k}
\end{equation*}%
which is well defined and continuous thanks to $(\ref{contract3})$ and the
completeness of $B_{w}$. Under these assumptions we have 
\begin{equation*}
\lim_{\delta \rightarrow 0}||(Id-L_{\delta
})^{-1}-(Id-L_{0})^{-1}||_{V_{s}\rightarrow V_{w}}=0.
\end{equation*}
\end{lemma}

\begin{proof}
Let us fix $\epsilon >0$ \ and let us prove that for $\delta $ small enough $%
||(Id-L_{\delta })^{-1}-(Id-L_{0})^{-1}||_{V_{s}\rightarrow V_{w}}\leq
\epsilon $. By $(\ref{contract3})$ there is there is $\overline{\delta }>0$
and $n\geq 0$ such that for each $0\leq \delta \leq \overline{\delta }$ $%
\sum_{k=n}^{\infty }||L_{\delta }^{k}||_{V_{s}\rightarrow V_{w}}\leq \frac{%
\epsilon }{3}.$ By Lemma \ref{lemmapre} \ there is $\overline{\delta }>0$
such for each $k\leq n$, $||(L_{\delta }^{k}-L_{0}^{k})||_{B_{s}\rightarrow
B_{w}}\leq \frac{\epsilon }{3n}$. Then considering $g\in V_{s}$ with $%
||g||_{s}\leq 1$ we have for each $0\leq \delta \leq \overline{\delta }$%
\begin{eqnarray*}
||(Id-L_{\delta })^{-1}g-(Id-L_{0})^{-1}g||_{w} &\leq &||\sum_{k=n}^{\infty
}L_{\delta }^{k}g||_{w}+||\sum_{k=n}^{\infty
}L_{0}^{k}g||_{w}+||\sum_{k=0}^{n}(L_{\delta }^{k}-L_{0}^{k})g||_{w} \\
&\leq &\epsilon .
\end{eqnarray*}
\end{proof}

\subsection{Lipschitz continuity\label{lipsec}.}

Let us suppose $B_{w}$ and $B_{s}$ are a Banach spaces as above. Now we see
that exploiting the uniform contraction rate of $V_{s}$ and some further
assumptions we can prove Lipschitz dependence of the relevant invariant
measure under system perturbations (see \cite{L2} for similar reasoings on
expanding maps). Further work also lead to differentiable dependence (see
next section).

\begin{proposition}
\label{lip}Let us consider a uniform family $L_{\delta }$, $\delta \in
\lbrack 0,1)$ of operators satisfying UF1,...,UF4. Suppose that each
operator $L_{\delta }$ has a unique invariant probability measure $h_{\delta
}$ in $B_{s}$. Suppose furthermore that there is $C_{h_{0}}$ such that for $%
\delta $ small enough%
\begin{equation}
||(L_{\delta }-L_{0})h_{0}||_{s}\leq \delta C_{h_{0}}  \label{lop}
\end{equation}%
then the map $\delta \rightarrow $ $h_{\delta }$ is Lipschitz (\underline{%
with respect to the strong norm})%
\begin{equation*}
||h_{0}-h_{\delta }||_{s}\leq O(\delta ).
\end{equation*}
\end{proposition}

\begin{proof}
Denote $\Delta h=h_{\delta }-h_{0}$:%
\begin{eqnarray*}
(I-L_{\delta })\Delta h &=&(I-L_{\delta })(h_{\delta }-h_{0}) \\
&=&h_{\delta }-L_{\delta }h_{\delta }-h_{0}+L_{\delta }h_{0} \\
&=&(L_{\delta }-L_{0})h_{0}.
\end{eqnarray*}%
By the uniform contraction $(\ref{contract3})$ we have that \ $(I-L_{\delta
})$ is invertible on $V_{s},$ and $(I-L_{\delta })^{-1}=\sum_{0}^{\infty
}L_{\delta }^{i}$ is uniformly bounded \ and there is $M_{2}\geq 0$ such
that for $\delta $ small ehough $||(I-L_{\delta })^{-1}||_{V_{s}\rightarrow
V_{s}}\leq M_{2}$.

Since $(L_{\delta }-L_{0})h_{0}\in V_{s}$, then%
\begin{equation*}
\Delta h=(I-L_{\delta })^{-1}(L_{\delta }-L_{0})h_{0}.
\end{equation*}%
and since $||(L_{\delta }-L_{0})h_{0}||_{s}\leq \delta C_{h_{0}}$%
\begin{equation}
||\Delta h||_{s}\leq \delta M_{2}C_{h_{0}}.
\end{equation}%
hence we have the statement.
\end{proof}

\begin{remark}
\label{rmk122}We remark that $(\ref{lop})$ contains a "small pertutbation"
estimate similar to UF2, but on the strong ropology. The result can be
easily applied to a suitable family of \ expanding maps satisfying
UF1,...,UF4 and $(\ref{lop})$\ obtaining Lipschitz statistical stability on
the strong norm for this family of maps. In Section \ref{lrex} we show a set
of easy to be verified conditions on the family implying \ $(\ref{lop})$.
\end{remark}

\section{Some general Linear Response statements\label{lrsec}}

In this section we prove that when a systems has fast enough convergence to
equilibrium and \emph{it is perturbed smoothly in with respect to the strong
norm }then its invariant measure (and then its statistical properties)
changes in a smooth way. This is called Linear Response. We refer to \cite%
{Baicm} for a general introduction to this kind of problems and a survey of
recent results. In the following we show a general and simple result
(Theorems \ref{wlr}, \ref{LR}) allowing to prove linear response for a quite
large set of systems and perturbations. A different approach to prove a
Linear Response result (even for higher derivatives) was also provided in 
\cite{Sed}. Other general results can be found in \cite{BGN} .

Let $X$ be a compact metric space. Let us consider some complete normed
vector subpaces \ $(B_{ss},\Vert ~\Vert _{ss})\subseteq (B_{s},\Vert ~\Vert
_{s})\subseteq (B_{w},\Vert ~\Vert _{w})\subseteq SM(X)$ of the space of
signed Borel measures on $X$, $SM(X)$, with with norms satisfying%
\begin{equation*}
\Vert ~\Vert _{w}\leq \Vert ~\Vert _{s}\leq \Vert ~\Vert _{ss}.
\end{equation*}%
We will assume that the linear form $\mu \rightarrow \mu (X)$ is continuous
on $B_{i}$, for $i\in \{ss,s,w\}$. We will consider Markov operators acting
on these spaces, the following (closed) spaces $V_{ss}\subseteq
V_{s}\subseteq V_{w}$ of \ zero average measures defined as:%
\begin{equation*}
V_{i}:=\{\mu \in B_{i}|\mu (X)=0\}
\end{equation*}%
where $i\in \{ss,s,w\}$, will play an important role. If $A,B$ are two
normed vector spaces and $L:A\rightarrow B$ we denote the mixed norm $\Vert
L\Vert _{A\rightarrow B}$ as 
\begin{equation*}
\Vert L\Vert _{A\rightarrow B}:=\sup_{f\in A,\Vert f\Vert _{A}\leq 1}\Vert
Lf\Vert _{B}.
\end{equation*}%
Let us consider a system having a transfer operator $L_{0},$ some $\delta
_{0}>0$ \ and a family of "nearby" system $L_{\delta }$ with $\delta \in
\lbrack 0,\overline{\delta }]$ and suppose the operators $L_{\delta }$
preserve the spaces: $L_{\delta }(B_{ss})\subset B_{ss}$, $L_{\delta
}(B_{s})\subset B_{s}$ and $L_{\delta }(B_{w})\subset B_{w}$.

We now now enter more in details about what we mean by linear response and
the motivation behind the search of results establishing the differentiable
behavior of the invariant measure with respect to a small perturbation. Let
us consider a family of dynamical systems $(X,T_{\delta })$ and suppose that 
$\mu _{\delta }$ is a physical measure for $T_{\delta }$. Suppose that the
invariant measure varies in a smooth way, and after a small perturbation $p$
of size $\delta $ we know that in some sense%
\begin{equation}
\frac{\mu _{\delta }-\mu _{0}}{\delta }\rightarrow \dot{\mu}.  \label{1m}
\end{equation}%
Consider an observable $f$ \ and its time average $\underset{n\rightarrow
\infty }{\lim }\frac{S_{n}^{f}(x)}{n}$ (see $(\ref{Birkhoff})$). If the
topology in which $(\ref{1m})$ converges is strong enough, on the basin of $%
f $ we get%
\begin{equation*}
\frac{d(\underset{n\rightarrow \infty }{\lim }\frac{S_{n}^{f}(x)}{n})}{%
d\delta }=\frac{\int f~d\mu _{\delta }-\int f~d\mu _{0}}{\delta }\rightarrow
\int f~d\dot{\mu}.
\end{equation*}

This shows how the Linear Response controls the behavior of long time
averages of observables and the statistical behavior of the system under
perturbations.

Let us see \QTR{frametitle}{an heuristic argument to compute a Linear
Response formula, giving \ }$\dot{\mu}$ \ as a function of the initial
system and of the perturbation applied\QTR{frametitle}{.}

By using that $\mu _{0}$ and $\mu _{\delta }$ are fixed points of their
respective operators \ $L_{0},L_{\delta }$ we obtain that%
\begin{equation}
(Id-L_{0})\frac{\mu _{\delta }-\mu _{0}}{\delta }=\frac{1}{\delta }%
(L_{\delta }-L_{0})f_{\delta }.  \label{5}
\end{equation}%
If the convergence to equilibrium is fast enough, like done in the proof of
Proposition \ref{lip} one can prove that the resolvent $(Id-L_{0})^{-1}$ is
well defined and continuous. By applying the resolvent to $(\ref{5})$ \ one
gets 
\begin{eqnarray*}
(Id-L_{0})^{-1}(Id-L_{0})\frac{\mu _{\delta }-\mu _{0}}{\delta }
&=&(Id-L_{0})^{-1}\frac{L_{\delta }-L_{0}}{\delta }\mu _{\delta } \\
&=&(Id-L_{0})^{-1}\frac{L_{\delta }-L_{0}}{\delta }\mu _{0} \\
&&+(Id-L_{0})^{-1}\frac{L_{\delta }-L_{0}}{\delta }(\mu _{\delta }-\mu _{0})
\end{eqnarray*}

and then 
\begin{equation*}
\frac{\mu _{\delta }-\mu _{0}}{\delta }=(Id-L_{0})^{-1}\frac{L_{\delta
}-L_{0}}{\delta }\mu _{0}+(Id-L_{0})^{-1}\frac{L_{\delta }-L_{0}}{\delta }%
(\mu _{\delta }-\mu _{0}).
\end{equation*}

if we choose the right topologies:

\begin{itemize}
\item $\frac{\mu _{\delta }-\mu _{0}}{\delta }$ tends to the Linear response 
$\dot{\mu}$.

\item $(Id-L_{0})^{-1}\frac{L_{\delta }-L_{0}}{\delta }f_{0}$ tends to $%
(Id-L_{0})^{-1}\dot{L}f_{0}$

\item $(Id-L_{0})^{-1}\frac{L_{\delta }-L_{0}}{\delta }(f_{\delta }-f_{0})$
tends to zero
\end{itemize}

\QTR{frametitle}{We now show how to make this argument rigorous proving a
general theorem and showing examples of application. }

We remark that the first result we are going to see (Theorem \ref{wlr})
applies to systems having less than exponential convergence to equilibrium.
Examples of application of this statement in this case are outside the scope
of these lectures. In next section we apply the statement to expanding maps.

We recall that the speed of convergence to equilibrium of the system\ (see
Definition $(\ref{conve})$) is measured by the speed of contraction to $0$
of the zero average spaces $V_{s}$ and $V_{w}$ we suppose that the topology
of $B_{s}$ is strong enough so that $V_{s}$ is a closed subspace of $B_{s}$.

\begin{theorem}[Linear Response, summable decay]
\label{wlr}Let $L_{\delta }$ a family of Markov operators preserving $%
B_{ss}, $ $B_{s}$ and $B_{w}$ as above. Suppose that for each $\delta \in
\lbrack 0,\overline{\delta })$ there is $f_{\delta }\in B_{ss}$ \ which is a
fixed probability measure of $L_{\delta }.$ Suppose the system satisfy the
following:

\begin{enumerate}
\item (summable convergence) there is $\phi $ such that $\sum \phi
(n)<\infty $, such that $L_{0}$ has \emph{convergence to equilibrium }with
respect $B_{s}$, $B_{w}$ and speed $\phi $. (remark that by this $f_{0}$ is
the unique fixed point of $L_{0}$).

\item (strong statistical stability) $\lim_{\delta \rightarrow 0}||f_{\delta
}-f_{0}||_{ss}=0;$

\item (derivative operator) suppose there is $\dot{L}:B_{ss}\rightarrow
V_{s} $ continuous such that for each $f\in B_{ss}$%
\begin{equation*}
\underset{\delta \rightarrow 0}{\lim }||\frac{(L_{\delta }-L_{0})}{\delta }f-%
\dot{L}f||_{s}=0
\end{equation*}%
then 
\begin{equation*}
\lim_{\delta \rightarrow 0}||\frac{f_{\delta }-f_{0}}{\delta }-(1-L_{0})^{-1}%
\dot{L}f_{0}||_{w}=0
\end{equation*}%
where $(1-L_{0})^{-1}:=\sum_{0}^{\infty }L_{0}^{i}$ is a continuous operator:%
$V_{s}\rightarrow V_{w}$.
\end{enumerate}
\end{theorem}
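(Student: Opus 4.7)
The plan is to derive an explicit identity for $f_\delta - f_0$ via a resolvent of $L_0$, and then take the limit as $\delta \to 0$ piece by piece. Both sides of the identity will live in $V_s$, where the operator $(I-L_0)^{-1} := \sum_{i\geq 0} L_0^i$ is well-defined as a bounded map from $(V_s,\|\cdot\|_s)$ to $(B_w,\|\cdot\|_w)$ thanks to summable convergence to equilibrium.

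\textbf{Step 1 (fixed point identity).} Using $f_\delta = L_\delta f_\delta$ and $f_0 = L_0 f_0$,
$$f_\delta - f_0 = L_\delta f_\delta - L_0 f_0 = L_0(f_\delta - f_0) + (L_\delta - L_0)f_\delta,$$
so $(I - L_0)(f_\delta - f_0) = (L_\delta - L_0)f_\delta$. Both sides lie in $V_s$: the right side has zero mass since $L_\delta, L_0$ preserve integrals, and the left side is a difference of probability densities.

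\textbf{Step 2 (invert $I-L_0$).} By assumption 1, for any $g \in V_s$,
$$\left\|\sum_{i=0}^{\infty} L_0^i g\right\|_w \leq \sum_{i=0}^{\infty}\phi(i)\,\|g\|_s =: C\,\|g\|_s,$$
so $(I-L_0)^{-1} := \sum_i L_0^i$ is continuous from $V_s$ to $B_w$. Applying it to the identity and dividing by $\delta$ gives, in $B_w$,
$$\frac{f_\delta - f_0}{\delta} = (I-L_0)^{-1}\,\frac{L_\delta - L_0}{\delta}\,f_\delta.$$

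\textbf{Step 3 (pass to the limit).} Subtracting the target and using continuity of the resolvent,
$$\left\|\frac{f_\delta - f_0}{\delta} - (I-L_0)^{-1}\hat{L}f_0\right\|_w \leq C \left\|\frac{L_\delta - L_0}{\delta}f_\delta - \hat{L}f_0\right\|_s.$$
I split the right-hand side as
$$\frac{L_\delta - L_0}{\delta}f_\delta - \hat{L}f_0 = \left(\frac{L_\delta - L_0}{\delta} - \hat{L}\right)f_0 + \frac{L_\delta - L_0}{\delta}(f_\delta - f_0).$$
The first summand tends to $0$ in strong norm directly by assumption 4 applied at $f_0$.

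\textbf{Main obstacle.} The delicate piece is the second summand: we need that $\|(L_\delta-L_0)/\delta\|_{\mathrm{op}}$ is uniformly bounded in $\delta$ on $B_s$ so that strong stability (assumption 3) of $f_\delta \to f_0$ can be combined with this bound. My plan is to obtain this uniform bound via the Banach--Steinhaus principle: assumption 4 gives pointwise strong convergence $(L_\delta-L_0)/\delta \, f \to \hat{L}f$ for every $f \in B_s$, hence pointwise boundedness, which under completeness of $B_s$ (satisfied in the relevant applications such as $W^{1,1}$) yields a uniform operator bound $K$. Then
$$\left\|\frac{L_\delta - L_0}{\delta}(f_\delta - f_0)\right\|_s \leq K\,\|f_\delta - f_0\|_s \longrightarrow 0$$
by assumption 3, which finishes the argument.
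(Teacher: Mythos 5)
Your proposal is correct and follows the same skeleton as the paper's proof: the fixed-point identity $(I-L_{0})(f_{\delta}-f_{0})=(L_{\delta}-L_{0})f_{\delta}$, inversion of $I-L_{0}$ on $V_{s}$ via the Neumann series made convergent (as a map $B_{s}\rightarrow B_{w}$) by summable convergence to equilibrium, and the splitting $f_{\delta}=f_{0}+(f_{\delta}-f_{0})$ so that assumption 4 handles the first term and strong stability the second. Two remarks on where you diverge. First, when you ``apply $(I-L_{0})^{-1}$ to the identity'' you are implicitly using that $\left(\sum_{i\leq n}L_{0}^{i}\right)(I-L_{0})g=g-L_{0}^{n+1}g$ together with $\lim_{n}||L_{0}^{n+1}g||_{w}=0$ for $g\in V_{s}$; the paper makes this telescoping explicit, and it deserves a line since $\sum_{i}L_{0}^{i}$ is only a one-sided inverse landing in $B_{w}$. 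Second, and more substantively, your treatment of the ``main obstacle'' differs from (and is more careful than) the paper's: the paper simply bounds $||\frac{L_{\delta}-L_{0}}{\delta}(f_{\delta}-f_{0})||_{s}$ by $||\hat{L}||_{s\rightarrow s}\,||f_{\delta}-f_{0}||_{s}$, which does not follow from the merely pointwise convergence in assumption 4. Your Banach--Steinhaus argument supplies the missing uniform bound, but note that it requires two hypotheses not listed in the theorem: $B_{s}$ must be complete, and each $(L_{\delta}-L_{0})/\delta$ must individually be a bounded operator on $B_{s}$ (the setup only assumes $L_{\delta}(B_{s})\subset B_{s}$). Both hold in the intended applications (e.g.\ $W^{1,1}$ with a uniform Lasota--Yorke inequality), so your route is a legitimate, indeed tightened, version of the paper's argument, at the price of these extra tacit standing assumptions.
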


\begin{proof}
Let $f\in B_{s}$. Since $\sum_{0}^{\infty }\phi (n)<\infty ,$ then $%
(1-L_{0})^{-1}f:=\sum_{0}^{\infty }L_{0}^{i}f$ converges in $B_{w}$ and
defines a continuous operator $V_{s}\rightarrow \ V_{w}$. It also holds $%
||(1-L_{0})^{-1}||_{B_{s}\rightarrow B_{w}}\leq \sum_{0}^{\infty }\phi (n)$.

Denote $\Delta f=f_{\delta }-f_{0}$%
\begin{eqnarray*}
(I-L_{0})\frac{\Delta f}{\delta } &=&(I-L_{0})\frac{f_{\delta }-f_{0}}{%
\delta } \\
&=&\frac{1}{\delta }(f_{\delta }-L_{0}f_{\delta }-f_{0}+L_{0}f_{0}) \\
&=&\frac{1}{\delta }(L_{\delta }-L_{0})f_{\delta }. \\
(1+L_{0}+...+L_{0}^{n})(I-L_{0})\frac{\Delta f}{\delta }
&=&(1+L_{0}+...+L_{0}^{n})\frac{L_{\delta }-L_{0}}{\delta }f_{\delta } \\
\frac{\Delta f}{\delta }-L_{0}^{n+1}\frac{\Delta f}{\delta }
&=&(1+L_{0}+...+L_{0}^{n})\frac{L_{\delta }-L_{0}}{\delta }f_{\delta }
\end{eqnarray*}%
\begin{eqnarray*}
\frac{\Delta f}{\delta }-L_{0}^{n+1}\frac{\Delta f}{\delta }
&=&(1+L_{0}+...+L_{0}^{n})\frac{L_{\delta }-L_{0}}{\delta }(f_{\delta
}+f_{0}-f_{0}) \\
(1-L_{0}^{n+1})\frac{\Delta f}{\delta } &=&(1+L_{0}+...+L_{0}^{n})\frac{%
L_{\delta }-L_{0}}{\delta }f_{0}+ \\
&&(1+L_{0}+...+L_{0}^{n})\frac{L_{\delta }-L_{0}}{\delta }(f_{\delta
}-f_{0}).
\end{eqnarray*}%
Letting $n\rightarrow \infty ,$ since $\Delta f\in V_{s}$, by convergence to
equilibrium, it holds that $L_{0}^{n+1}\frac{\Delta f}{\delta }\rightarrow 0$
in the weak norm. Thus 
\begin{equation*}
\frac{\Delta f}{\delta }=(1-L_{0})^{-1}\frac{L_{\delta }-L_{0}}{\delta }%
f_{0}+(1-L_{0})^{-1}\frac{L_{\delta }-L_{0}}{\delta }(f_{\delta }-f_{0})
\end{equation*}%
as elements of $B_{w}.$ Now by the strong statistical stability%
\begin{equation*}
||(1-L_{0})^{-1}\frac{L_{\delta }-L_{0}}{\delta }(f_{\delta
}-f_{0})||_{w}\leq (\sum_{i}||L_{0}^{i}||_{B_{s}\rightarrow B_{w}})~||\dot{L}%
||_{B_{ss}\rightarrow B_{s}}||f_{\delta }-f_{0}||_{ss}\rightarrow 0
\end{equation*}%
then in the weak norm, as $\delta \rightarrow 0$%
\begin{equation*}
\dot{\mu}=\lim_{\delta \rightarrow 0}\frac{\Delta f}{\delta }=(1-L_{0})^{-1}%
\dot{L}f_{0}.
\end{equation*}
\end{proof}

\begin{remark}
\label{lr1}Theorem \ref{wlr} is quite abstract and it is stated for families
of operators. In particular it may be adapted both to stochastic or
deterministic perturbations of (deterministic of stochastic) systems. One
key point is the existence of the derivative operator (assumption 3) ). The
form of this operator is strictly related to the kind of perturbation
considered.\newline
In the following section we will compute this operator for smooth
perturbations of expanding maps, see \cite{BGN}, \cite{GS} or \cite{GG} for
the derivative operator in a stochastic case.
\end{remark}

\begin{remark}
\label{lr3}If $||L_{0}^{n}(g)||_{s}\leq \phi (n)||g||_{s}$ with $\phi (n)$
summable (prove that this is equivalent to exponential contraction of the
zero average space). Then $(1-L_{0})^{-1}$ is defined $B_{s}\rightarrow \
B_{s}$ and the conclusion of the theorem is reinforced: \ $\lim_{\delta
\rightarrow 0}||\frac{f_{\delta }-f_{0}}{\delta }-(1-L_{0})^{-1}\dot{L}%
f_{0}||_{s}=0.$
\end{remark}

Theorem \ref{wlr} requires a weak assumption on the decay of correlation,
which is only assumed to be summable and only checked at the unperturbed
transfer operator $L_{0}$, on the other hand it requires the strong
statistical stability of the system. $\lim_{\delta \rightarrow 0}||f_{\delta
}-f_{0}||_{ss}=0.$

Sometimes is easy to verify some uniform convergence to equilibrium for the
family of perturbed systems, like in Section \ref{sunifcontr}. We then show
a linear response result exploting this uniform estimate in the place of the
strong statistical stability.

\begin{theorem}
\label{LR}Suppose that for $\delta \in \lbrack 0,\bar{\delta})$ there is a
probability measure $\ v_{\delta }\in B_{s}$ such that 
\begin{equation*}
L_{\delta }v_{\delta }=v_{\delta }
\end{equation*}%
and that there is $\dot{L}v_{0}\in B_{s}$ such that%
\begin{equation*}
\lim_{\delta \rightarrow 0}||\frac{L_{\delta }-L_{0}}{\delta }v_{0}-\dot{L}%
v_{0}||_{s}=0.
\end{equation*}%
\ Suppose the resolvent operator is defined and bounded from $V_{s}$ to $%
V_{w},$ $||(Id-L_{\delta })^{-1}||_{V_{s}\rightarrow V_{w}}=M<+\infty $ and 
\begin{equation*}
\lim_{\delta \rightarrow 0}||(Id-L_{\delta
})^{-1}-(Id-L_{0})^{-1}||_{V_{s}\rightarrow V_{w}}=0.
\end{equation*}%
Then 
\begin{equation*}
\lim_{\delta \rightarrow 0}||\frac{v_{\delta }-v_{0}}{\delta }%
-(Id-L_{0})^{-1}\dot{L}v_{0}||_{V_{w}}=0.
\end{equation*}
\end{theorem}

\begin{proof}
We have that for each $\delta \in \lbrack 0,\bar{\delta})$, $v_{\delta }$ is
a fixed point of $L_{\delta }.$ Using this we get%
\begin{eqnarray*}
(Id-L_{\delta })\frac{v_{\delta }-v_{0}}{\delta } &=&\frac{v_{\delta }-v_{0}%
}{\delta }-\frac{L_{\delta }v_{\delta }-L_{\delta }v_{0}}{\delta } \\
&=&\frac{-v_{0}+L_{\delta }v_{0}}{\delta } \\
&=&\frac{1}{\delta }(L_{\delta }-L_{0})v_{0}.
\end{eqnarray*}%
We remark that for each $\delta ,$ $L_{\delta }$ preserves $V_{s}$. Since $%
\forall \delta >0$, $\frac{L_{\delta }-L_{0}}{\delta }v_{0}\in V_{s}$ \ and $%
(Id-L_{\delta })^{-1}:V_{s}\rightarrow V_{w}$ is a bounded operator, we can
apply the resolvent both sides and get%
\begin{eqnarray}
\frac{v_{\delta }-v_{0}}{\delta } &=&(Id-L_{\delta })^{-1}\frac{L_{\delta
}-L_{0}}{\delta }v_{0}  \label{xx1} \\
&=&(Id-L_{\delta })^{-1}\frac{L_{\delta }-L_{0}}{\delta }%
v_{0}-(Id-L_{0})^{-1}\frac{L_{\delta }-L_{0}}{\delta }v_{0}  \notag \\
&&+(Id-L_{0})^{-1}\frac{L_{\delta }-L_{0}}{\delta }v_{0}.
\end{eqnarray}%
Since $||(Id-L_{\delta })^{-1}-(Id-L_{0})^{-1}||_{V_{s}\rightarrow
V_{w}}\rightarrow 0$ \ we have%
\begin{eqnarray*}
||[(Id-L_{\delta })^{-1}-(Id-L_{0})^{-1}]\frac{L_{\delta }-L_{0}}{\delta }%
v_{0}||_{w} &\leq &||(Id-L_{\delta
})^{-1}-(Id-L_{0})^{-1}||_{V_{s}\rightarrow V_{w}}||\frac{L_{\delta }-L_{0}}{%
\delta }v_{0}||_{s} \\
&\rightarrow &0.
\end{eqnarray*}%
Since $\lim_{\delta \rightarrow 0}\frac{L_{\delta }-L_{0}}{\delta }v_{0}$
converges in $V_{s},$ then $\ (\ref{xx1})$ \ implies that in the $B_{w}$
topology%
\begin{eqnarray*}
\lim_{\delta \rightarrow 0}\frac{v_{\delta }-v_{0}}{\delta } &=&\lim_{\delta
\rightarrow 0}~(Id-L_{0})^{-1}\frac{L_{\delta }-L_{0}}{\delta }v_{0} \\
&=&(Id-L_{0})^{-1}\lim_{\delta \rightarrow 0}\frac{L_{\delta }-L_{0}}{\delta 
}v_{0} \\
&=&(Id-L_{0})^{-1}[\dot{L}v_{0}].
\end{eqnarray*}
\end{proof}

\subsection{Applying the general theorems to expanding maps}

In this subsection we show how to get a linear response result for small
dterministic perturbations of expanding maps, applying Theorem \ref{LR}. Let 
$T_{\delta }:S^{1}\rightarrow S^{1}$ \ be a family of $C^{3}$ expanding
orientation preserving maps of the circle $X$ where $\delta \in (0,\overline{%
\delta })$. Let us suppose that the dependence of the family on $\delta $ is
differentiable at $0$, hence can be written 
\begin{equation}
T_{\delta }(x)=T_{0}(x)+\delta \dot{T}(x)+o_{C^{3}}(\delta )~for~x\in X
\label{fam}
\end{equation}%
where $\dot{T}\in C^{3}(X,\mathbb{R})$, and $o_{C^{3}}(\delta )$ denotes a
term whose $C^{3}$ norm tends to zero faster than $\delta $, as $\delta
\rightarrow 0$.\footnote{%
More precisely we say that $T_{\delta }$ is a differentiable family of $%
C^{3} $ expanding maps if there exists $\epsilon \in C^{3}(X,\mathbb{R})$
such that $\Vert (T_{\delta }-T_{0})/\delta -\epsilon \Vert
_{C^{3}}\rightarrow 0$ as $\delta \rightarrow 0$, where 
\begin{equation*}
\Vert f(x)\Vert _{C^{3}}=\sup_{x\in X}|f(x)|+\sup_{x\in X}|f^{\prime
}(x)|+\sup_{x\in X}|f^{\prime \prime }(x)|+\sup_{x\in X}|f^{\prime \prime
\prime }(x)|
\end{equation*}%
is the usual norm on $C^{3}$ functions.}We will see that if $\overline{%
\delta }$ is small enough, UF1,...,UF4 \ are satisfied by the associated
transfer operators, when applied to suitable Sobolev spaces $W^{k,1}$ and
then, provided we establish the existence of the derivative operator,
Theorem \ref{LR} can be applied.

\begin{definition}
\label{ufm}A set $A_{M,L}$ of expanding maps is called a \emph{uniform C}$%
^{k}$\emph{\ family} with parameters $M\geq 0$ and $L>1$ if it satisfies
uniformly the expansiveness and regularity condition: $\forall T\in A_{M,L}$%
\begin{equation*}
||T||_{C^{k}}\leq M,~\inf_{x\in S^{1}}|T^{\prime }(x)|\geq L.
\end{equation*}
\end{definition}

We already proved in Section \ref{LY} \ Lasota Yorke inequalities for the
transfer operators associated to such maps, acting on Sobolev spaces, with a
similar proof on can obtain a general result (see \cite{GS}, Lemma 29 \ and
its proof).

\begin{lemma}
\label{Lemsu} Let $A_{M,L}$ be a uniform $C^{k}$ family of expanding maps,
the transfer operators $L_{T}$ associated to a $T\in A_{M,L}$ satisfy a
uniform Lasota-Yorke inequality on $W^{i,1}(\mathbb{S}^{1})$: for each $%
1\leq i\leq k-1$ there are $\alpha <1$, $A_{i},~B_{i}\geq 0$ such that for
each $n\geq 0,$ $T\in A_{M,L}$%
\begin{eqnarray}
||L_{T}^{n}f\Vert _{W^{i-1,1}} &\leq &A_{i}||f\Vert _{W^{i-1,1}} \\
||L_{T}^{n}f\Vert _{W^{i,1}} &\leq &\alpha ^{in}\Vert f\Vert
_{W^{i,1}}+B_{i}\Vert f\Vert _{W^{i-1,1}}.
\end{eqnarray}
\end{lemma}

From this last result, and the compact immmersion of \ $W^{k,1}$ in $%
W^{k-1,1}$ it is classically deduced that the transfer operator $L_{T}$ of a 
$C^{k}$ expanding map $T$ has spectral gap on each $W^{i,1}(\mathbb{S}^{1}),$
with $1\leq i\leq k-1$.

Since a family of maps $T_{\delta }$ as in $(\ref{fam})$ for $\delta $ small
enough is a uniform family in the sense of Definition \ref{ufm}, then Lemma %
\ref{Lemsu} applies to their transfer operators, and then UF1 and UF4 are
verified considering $~B_{s}=W^{1,1}$and $B_{w}=L^{1}$. UF2 \ is also
verified by Proposition \ref{ssd} and UF3 is proved in Section \ref{CEM}.
Since Proposition \ref{restab} allow to establish the stability of the
resolvent for these perturbations, to apply Theorem \ref{LR} \ we only need
to verify the existence of the derivative transfer operator.

\subsubsection{The derivative operator and linear response for circle
expanding maps\label{lrex}}

In the following we show how to obtain the existence of the derivative
operator for a smooth family of expanding maps. Let \ us consider $T_{\delta
}:S^{1}\rightarrow S^{1}$ \ be a family of $C^{3}$ expanding maps as in $(%
\ref{fam})$.

We hence have a family of $C^{3}$ expanding maps, and each one of them has a
invariant density $f_{\delta }$ in $C^{2}$ Remark \ref{c3}. The following
proposition present a detailed description of the structure of the operator $%
\dot{L}:C^{2}\rightarrow {W^{1,1}}$ in our case.

\begin{proposition}
\label{mainprop} Let $w\in C^{2}(S^{1},\mathbb{R})$. For each $x\in S^{1}$
we can write 
\begin{equation}
\dot{L}w(x)=\lim_{\delta \rightarrow 0}\left( \frac{L_{\delta }w(x)-L_{0}w(x)%
}{\delta }\right) =-L_{0}\left( \frac{w\dot{T}^{\prime }}{T_{0}^{\prime }}%
\right) (x)-L_{0}\left( \frac{\dot{T}w^{\prime }}{T_{0}^{\prime }}\right)
(x)+L_{0}\left( \frac{\dot{T}T_{0}^{\prime \prime }}{T_{0}^{\prime 2}}%
w\right) (x)
\end{equation}%
and the convergence is also in the $C^{1}$ topology.
\end{proposition}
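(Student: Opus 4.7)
The plan is to compute the limit directly by a first-order Taylor expansion of each branch of $L_\delta w$ at $\delta=0$. Fix $x\in X$, and let $y_0\in T_0^{-1}(x)$. Since $T_0$ is expanding and orientation preserving, $T_0'(y_0)\neq 0$, so by the implicit function theorem applied to $(\delta,y)\mapsto T_\delta(y)-x$ there is a unique $C^2$ branch $\delta\mapsto y_\delta(x)$ with $y_\delta(x)\to y_0(x)$ as $\delta\to 0$. For $\delta$ small enough this gives a bijection between $T_\delta^{-1}(x)$ and $T_0^{-1}(x)$. Differentiating the relation $T_0(y_\delta)+\delta\epsilon(y_\delta)+o_{C^3}(\delta)=x$ at $\delta=0$ yields
\[
y_\delta(x)=y_0(x)-\delta\,\frac{\epsilon(y_0)}{T_0'(y_0)}+o(\delta),
\]
where the remainder is uniform in $x$.

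Next, expand the summand $w(y_\delta)/T_\delta'(y_\delta)$ (orientation preservation removes the absolute values). From $T_\delta'=T_0'+\delta\epsilon'+o_{C^2}(\delta)$ evaluated at $y_\delta$, together with a first-order Taylor expansion of $T_0'$ at $y_0$,
\[
\frac{1}{T_\delta'(y_\delta)}=\frac{1}{T_0'(y_0)}-\frac{\delta}{T_0'(y_0)^2}\!\left(\epsilon'(y_0)-\frac{\epsilon(y_0)T_0''(y_0)}{T_0'(y_0)}\right)+o(\delta),
\]
while $w(y_\delta)=w(y_0)-\delta\,\epsilon(y_0)w'(y_0)/T_0'(y_0)+o(\delta)$. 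Multiplying, keeping only terms of order $\delta$, and extracting a factor $1/T_0'(y_0)$ from each, the $\delta$-coefficient of $w(y_\delta)/T_\delta'(y_\delta)$ becomes
\[
\frac{1}{T_0'(y_0)}\!\left[-\frac{w(y_0)\epsilon'(y_0)}{T_0'(y_0)}+\frac{w(y_0)\epsilon(y_0)T_0''(y_0)}{T_0'(y_0)^2}-\frac{\epsilon(y_0)w'(y_0)}{T_0'(y_0)}\right].
\]
Summing over $y_0\in T_0^{-1}(x)$ and recognizing each bracket as the argument of an application of $L_0$ reproduces exactly the three terms in the claimed formula for $\hat L w$.

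The remaining task is to upgrade this pointwise convergence to convergence in $C^1$. The $o(\delta)$ errors above all come from second-order Taylor remainders. The $C^3$ hypothesis on the family $T_\delta$, the uniform expansion $|T_\delta'|\geq\lambda>1$ (which bounds the number of preimages and, via the implicit function theorem, gives uniform $C^2$ regularity of the branches $x\mapsto y_\delta(x)$), and the $C^2$ regularity of $w$, together ensure that these remainders are $o(\delta)$ uniformly in $x$ even after one further differentiation in $x$. The main obstacle in writing out the proof is exactly this uniform bookkeeping: one must differentiate each summand of the expansion with respect to $x$ (using the chain rule through $y_\delta(x)$) and verify that the second-order remainder in $\delta$ still decays to $0$ in $\|\cdot\|_\infty$. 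This is a direct but somewhat tedious computation which uses the full $C^3$ regularity of $T_\delta$ (one derivative is absorbed in the expansion, one appears through $y_\delta'(x)$, and one is needed for the remainder) and the $C^2$ regularity of $w$, and yields the claimed $C^1$ convergence.
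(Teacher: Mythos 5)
Your proposal is correct and follows essentially the same route as the paper: the key ingredient in both is the first-order expansion of the perturbed preimages $y_{\delta}=y_{0}-\delta\,\epsilon(y_{0})/T_{0}^{\prime}(y_{0})+o(\delta)$ (the paper's Lemma \ref{tec}, which you obtain via the implicit function theorem), followed by a first-order Taylor expansion of each summand $w(y_{\delta})/T_{\delta}^{\prime}(y_{\delta})$ of the transfer-operator formula. The only difference is bookkeeping --- the paper telescopes the difference quotient into three separate terms $(I)$, $(II)$, $(III)$ rather than multiplying out a single product expansion --- and, like you, it handles the $C^{1}$ convergence by tracking $o_{C^{2}}(\delta)$ and $o_{C^{1}}(1)$ remainders rather than writing out the differentiated estimates in full.
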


Before presenting the proof of Proposition \ref{mainprop} we state a
technical lemma.

\begin{lemma}
\label{tec} Let $T_{\delta }:S^{1}\rightarrow S^{1}$, where $\delta \in (0,%
\hat{\delta})$ be a family of $C^{2}$ expanding maps. Let us suppose that
the dependence of the family on $\delta $ is differentiable at $0$ in the
following sense%
\begin{equation}
T_{\delta }(x)=T_{0}(x)+\delta \dot{T}(x)+o_{C^{2}}(\delta )  \label{zzz}
\end{equation}%
where $\dot{T}\in C^{2}(S^{1},\mathbb{R})$, and $o_{C^{k}}(\delta )$ denotes
a function $f(x)$ $\in C^{k}(S^{1})$ satisfying $\lim_{\delta \rightarrow 0}%
\frac{||f||_{C^{k}}}{\delta }=0$.

Under these assumptions, if $y_{i}^{\delta }\in T_{\delta }^{-1}(x)$ then
when $\delta \rightarrow 0$ we can expand 
\begin{equation*}
y_{i}^{\delta }=y_{i}^{0}+\delta \left( -\frac{\dot{T}(y_{i}^{0})}{%
T_{0}^{\prime }(y_{i}^{0})}\right) +o_{C^{1}}(\delta ).
\end{equation*}
\end{lemma}

\begin{proof}[Proof of Lemma \protect\ref{tec}]
Let us fix $x\in S^{1}$ and write 
\begin{equation}
y_{i}^{\delta }(x)=y_{i}^{0}(x)+\delta \epsilon _{i}(x)+F_{i}(\delta ,x)
\label{sss}
\end{equation}%
where for each $x$, $F_{i}(\delta ,x)=o(\delta )$ as $\delta \rightarrow 0$.
We will show that $\epsilon _{i}(x)=-\frac{\dot{T}(y_{i}^{0}(x))}{%
T_{0}^{\prime }(y_{i}^{0}(x))}$ for each $x,$ then we will show that $%
F_{i}(\delta ,x)=o_{C^{1}}(\delta )$.

For the first claim, let us fix $x\in S^{1}.$ Substituting $(\ref{zzz})$
into the identity $T_{\delta }(y_{i}^{\delta }(x))=x$ we can expand%
\begin{eqnarray}
x &=&T_{\delta }(y_{i}^{\delta }(x)) \\
&=&T_{0}(y_{i}^{\delta }(x))+\delta \dot{T}(y_{i}^{\delta }(x))+E(\delta ,x)
\end{eqnarray}%
where $E(\delta ,x)=o_{C^{2}}(\delta )$ and then by $(\ref{sss})$ 
\begin{eqnarray}
x &=&T_{0}(y_{i}^{0}(x)+\delta \epsilon _{i}(x)+F_{i}(\delta ,x))
\label{long} \\
&&\ \ \ +\delta \dot{T}(y_{i}^{0}(x)+\delta \epsilon _{i}(x)+F_{i}(\delta
,x))+E(\delta ,x).
\end{eqnarray}%
Since $T_{0}\in C^{2}$ we can write the first term in the right hand side of 
$(\ref{long})$ as%
\begin{eqnarray}
T_{0}(y_{i}^{0}(x)+\delta \epsilon _{i}(x)+F_{i}(\delta ,x))
&=&T_{0}(y_{i}^{0}(x))+T_{0}^{\prime }(y_{i}^{0}(x))(\delta \epsilon
_{i}(x)+F_{i}(\delta ,x)) \\
&+&o(\delta \epsilon _{i}(x)+F_{i}(\delta ,x)).  \label{aaa}
\end{eqnarray}

Since $\dot{T}\in C^{2}$ we can write the second term of $(\ref{long})$ as%
\begin{eqnarray}
\delta \dot{T}(y_{i}^{0}(x)+\delta \epsilon _{i}(x)+F_{i}(\delta ,x))
&=&\delta \dot{T}(y_{i}^{0}(x)) \\
&&+\delta \dot{T}^{\prime }(y_{i}^{0}(x))(\delta \epsilon
_{i}(x)+F_{i}(\delta ,x))+\delta o((\delta \epsilon _{i}(x)+F_{i}(\delta
,x))).
\end{eqnarray}%
and use that $T_{0}(y_{i}^{0}(x))=x$ to cancel terms on either side of $(\ref%
{long})$ to get that 
\begin{eqnarray*}
0 &=&T_{0}^{\prime }(y_{i}^{0}(x))(\delta \epsilon _{i}(x)+F_{i}(\delta
,x))+\delta \dot{T}(y_{i}^{0}(x))+\delta \dot{T}^{\prime
}(y_{i}^{0}(x))(\delta \epsilon _{i}(x)+F_{i}(\delta ,x)) \\
&&+o(\delta \epsilon _{i}(x)+F_{i}(\delta ,x)).
\end{eqnarray*}

For each fixed $x,$ as $\delta \rightarrow 0$ we can then identify the
relation among the first order terms (dividing by $\delta $ and letting $%
\delta \rightarrow 0$) as 
\begin{equation*}
\delta T_{0}^{\prime }(y_{i}^{0}(x))\epsilon _{i}(x)+\delta \dot{T}%
(y_{i}^{0}(x))=0
\end{equation*}%
giving $\epsilon _{i}(x)=-\frac{\dot{T}(y_{i}^{0})}{T_{0}^{\prime
}(y_{i}^{0})}$. \ Now we have $F_{i}(\delta ,x)=y_{i}^{\delta
}(x)-y_{i}^{0}(x)+\delta \frac{\dot{T}(y_{i}^{0}(x))}{T_{0}^{\prime
}(y_{i}^{0}(x))}.$ We remark since each $T_{\delta }$ is uniformly $C^{2}$
and $T^{^{\prime }}>1$ we have that $\frac{\partial F_{i}(\delta ,x)}{%
\partial x}$ and $\frac{\partial ^{2}F_{i}(\delta ,x)}{\partial x^{2}}$ are
uniformly bounded for each $\delta ,$ $x$ and $i$. \ \ Thus $||\frac{%
F_{i}(\delta ,x)}{\delta }||_{C^{1}}\rightarrow 0$ as $\delta \rightarrow 0$
and $F_{i}(\delta ,x)=o_{C^{1}}(\delta ).$
\end{proof}

We now return to the proof of Proposition \ref{mainprop}.

\begin{proof}[Proof of Proposition \protect\ref{mainprop}]
Let us again denote by $\{y_{i}^{\delta }\}_{i=1}^{d}:=T_{\delta }^{-1}(x)$
and $\{y_{i}^{0}\}_{i=1}^{d}:=T_{0}^{-1}(x)$ the $d$ preimages under $%
T_{\delta }$ and $T_{0}$, respectively, of a point $x\in X$. Furthermore, we
assume that the indexing is chosen so that $y_{i}^{\delta }$ is a small
perturbation of $y_{i}^{0}$, for $1\leq i\leq d$. We can write 
\begin{eqnarray*}
\frac{L_{\delta }w(x)-L_{0}w(x)}{\delta } &=&\frac{1}{\delta }\left(
\sum_{i=1}^{d}\frac{w(y_{i}^{\delta })}{T_{\delta }^{\prime }(y_{i}^{\delta
})}-\sum_{i=1}^{d}\frac{w(y_{i}^{0})}{T_{0}^{\prime }(y_{i}^{0})}\right) \\
&=&\underbrace{\frac{1}{\delta }\left( \sum_{i=1}^{d}w(y_{i}^{\delta
})\left( \frac{1}{T_{\delta }^{\prime }(y_{i}^{\delta })}-\frac{1}{%
T_{0}^{\prime }(y_{i}^{\delta })}\right) \right) }_{=:(I)}+\underbrace{\frac{%
1}{\delta }\left( \sum_{i=1}^{d}\frac{w(y_{i}^{\delta })-w(y_{i}^{0})}{%
T_{0}^{\prime }(y_{i}^{\delta })}\right) }_{=:(II)} \\
&&+\underbrace{\frac{1}{\delta }\left( \sum_{i=1}^{d}w(y_{i}^{0})\left( 
\frac{1}{T_{0}^{\prime }(y_{i}^{\delta })}-\frac{1}{T_{0}^{\prime
}(y_{i}^{0})}\right) \right) }_{=:(III)}.
\end{eqnarray*}%
\label{eq1} To develop the the first term we differentiate the expansion $%
T_{\delta }(x)=T_{0}(x)+\delta \dot{T}(x)+o_{C^{3}}(\delta )$ in $x$ to get: 
\begin{equation*}
T_{\delta }^{\prime }(x)=T_{0}^{\prime }(x)+\delta \dot{T}^{\prime
}(x)+o_{C^{2}}(\delta ).
\end{equation*}%
We can then write %\begin{equation*}
\begin{eqnarray*}
(I) &=&\frac{1}{\delta }\left( \sum_{i=1}^{d}w(y_{i}^{\delta })\left( \frac{1%
}{T_{\delta }^{\prime }(y_{i}^{\delta })}-\frac{1}{T_{0}^{\prime
}(y_{i}^{\delta })}\right) \right) \\
&=&\frac{1}{\delta }\left( \sum_{i=1}^{d}\frac{w(y_{i}^{\delta })}{T_{\delta
}^{\prime }(y_{i}^{\delta })}\left( 1-\frac{T_{\delta }^{\prime
}(y_{i}^{\delta })}{T_{0}^{\prime }(y_{i}^{\delta })}\right) \right) \\
&=&\frac{1}{\delta }\left( \sum_{i=1}^{d}\frac{w(y_{i}^{\delta })}{T_{\delta
}^{\prime }(y_{i}^{\delta })}\left( 1-\left( \frac{T_{0}^{\prime
}(y_{i}^{\delta })+\delta \dot{T}^{\prime }(y_{i}^{\delta
})+o_{C^{2}}(\delta )}{T_{0}^{\prime }(y_{i}^{\delta })}\right) \right)
\right) \\
&=&\left( -\sum_{i=1}^{d}\frac{w(y_{i}^{\delta })\dot{T}^{\prime
}(y_{i}^{\delta })}{T_{\delta }^{\prime }(y_{i}^{\delta })T_{0}^{\prime
}(y_{i}^{\delta })}\right) +o_{C^{2}}(1).
\end{eqnarray*}%
%
%
%
%
%
%
%
%
%
%
%
%
%
%
%
%
%
%
%
%
%
%
%
%
%
%
%
%
%
%
%
%
%
%
%
%
%
%
%
%
%
%
%
%
%
%
%
%
%
%
%
%
%
%
%
%
%
%
%
%
%
%
%
%
%
%
%
%
%
%
%
%
%
%
%
%
%
%\end{equation*}%
Thus we have that %\begin{equation*}
\begin{eqnarray*}
\lim_{\delta \rightarrow 0}\frac{1}{\delta }\left(
\sum_{i=1}^{d}w(y_{i}^{\delta })\left( \frac{1}{T_{\delta }^{\prime
}(y_{i}^{\delta })}-\frac{1}{T_{0}^{\prime }(y_{i}^{\delta })}\right)
\right) &=&\lim_{\delta \rightarrow 0}\left( -\sum_{i=1}^{d}\frac{%
w(y_{i}^{\delta })\dot{T}^{\prime }(y_{i}^{\delta })}{T_{\delta }^{\prime
}(y_{i}^{\delta })T_{0}^{\prime }(y_{i}^{\delta })}\right) \\
&=&-L_{0}\left( \frac{w\dot{T}^{\prime }}{T_{0}^{\prime }}\right)
\end{eqnarray*}%
%
%
%
%
%
%
%
%
%
%
%
%
%
%
%
%
%
%
%
%
%
%
%
%
%
%
%
%
%
%
%
%
%
%
%
%
%
%
%
%
%
%
%
%
%
%
%
%
%
%
%
%
%
%
%
%
%
%
%
%
%
%
%
%
%
%
%
%
%
%
%
%
%
%
%
%
%
%\end{equation*}%
and by Lemma \ref{tec} \ the limit also converges in $C^{1}$.

For the second term of (\ref{eq1}) we remark that by Lagrange theorem, for
any small $h$ there is $\xi $ such that $|\xi |\leq |h|$ and%
\begin{equation*}
w(y_{i}^{0}+h)=w(y_{i}^{0})+hw^{\prime }(y_{i}^{0})+\frac{1}{2}%
h^{2}w^{\prime \prime }(\xi )
\end{equation*}%
considering Lemma \ref{tec} and setting $h=y_{i}^{\delta }-y_{i}^{0}=\delta
\left( -\frac{\dot{T}(y_{i}^{0})}{T_{0}^{\prime }(y_{i}^{0})}\right)
+o_{C^{1}}(\delta )$ we get

\begin{eqnarray}
w(y_{i}^{\delta }) &=&w(y_{i}^{0}+\delta \left( -\frac{\dot{T}(y_{i}^{0})}{%
T_{0}^{\prime }(y_{i}^{0})}\right) +o_{C^{1}}(\delta ))  \label{OX} \\
&=&w(y_{i}^{0})+w^{\prime }(y_{i}^{0})(\delta \left( -\frac{\dot{T}%
(y_{i}^{0})}{T_{0}^{\prime }(y_{i}^{0})}\right) +o_{C^{1}}(\delta )) \\
&+&\frac{1}{2}(\delta \left( -\frac{\dot{T}(y_{i}^{0})}{T_{0}^{\prime
}(y_{i}^{0})}\right) +o_{C^{1}}(\delta ))^{2}w^{\prime \prime }(\xi )
\end{eqnarray}%
Since $w^{\prime \prime }$ is uniformly bounded, then%
\begin{equation*}
w(y_{i}^{\delta })=w(y_{i}^{0})+w^{\prime }(y_{i}^{0})\delta \left( -\frac{%
\dot{T}(y_{i}^{0})}{T_{0}^{\prime }(y_{i}^{0})}\right) +o_{C^{1}}(\delta ).
\end{equation*}

%$$
%\begin{aligned}
%w(y_i^\delta) &= w(y_i^0) +  w'(y_i^0)  \left(\frac{dy_i^\delta}{d\delta}|_{\delta=0} \right)\delta + o_{C^1}(\delta)\cr
%&= w(y_i^0) +  w'(y_i^0) \left( - \frac{\epsilon(y_i^0)}{T'_{0}(y_i^0)}\right)
%\delta + o_{C^1}(\delta).
%\end{aligned}
%$$
Thus %\begin{equation*}
\begin{eqnarray*}
(II)=\frac{1}{\delta }\sum_{i=1}^{d}\frac{w(y_{i}^{\delta })-w(y_{i}^{0})}{%
T_{0}^{\prime }(y_{i}^{\delta })} &=&\sum_{i=1}^{d}\frac{w^{\prime
}(y_{i}^{0})}{T_{0}^{\prime }(y_{i}^{\delta })}\left( -\frac{\dot{T}%
(y_{i}^{0})}{T_{0}^{\prime }(y_{i}^{0})}\right) +o_{C^{1}}(1) \\
&=&-\sum_{i=1}^{d}\frac{\dot{T}(y_{i}^{0})w^{\prime }(y_{i}^{0})}{%
T_{0}^{\prime }(y_{i}^{0})T_{0}^{\prime }(y_{i}^{\delta })}+o_{C^{1}}(1)
\end{eqnarray*}%
%
%
%
%
%
%
%
%
%
%
%
%
%
%
%
%
%
%
%
%
%
%
%
%
%
%
%
%
%
%
%
%
%
%
%
%
%
%
%
%
%
%
%
%
%
%
%
%
%
%
%
%
%
%
%
%
%
%
%
%
%
%
%
%
%
%
%
%
%
%
%
%
%
%
%
%
%
%\end{equation*}%
and therefore, both pointwise and in the $C^{1}$ topology 
\begin{equation*}
\lim_{\delta \rightarrow 0}\frac{1}{\delta }\sum_{i=1}^{d}\frac{%
w(y_{i}^{\delta })-w(y_{i}^{0})}{T_{0}^{\prime }(y_{i}^{\delta })}%
=-L_{0}\left( \frac{\dot{T}w^{\prime }}{T_{0}^{\prime }}\right) (x).
\end{equation*}%
Finally, for the third term we can write %\begin{equation*}
\begin{eqnarray*}
T_{0}^{\prime }(y_{i}^{\delta }) &=&T_{0}^{\prime }(y_{i}^{0})+T_{0}^{\prime
\prime }(y_{i}^{0})\left( \frac{dy_{i}^{\delta }}{d\delta }|_{\delta
=0}\right) \delta +o_{C^{1}}(\delta ) \\
&=&T_{0}^{\prime }(y_{i}^{0})+T_{0}^{\prime \prime }(y_{i}^{0})\left( -\frac{%
\dot{T}(y_{i}^{0})}{T_{0}^{\prime }(y_{i}^{0})}\right) \delta
+o_{C^{1}}(\delta ),
\end{eqnarray*}%
Again using the Lemma \ref{tec}, we get%
\begin{eqnarray*}
(III) &=&\frac{1}{\delta }\left( \sum_{i=1}^{d}w(y_{i}^{0})\left( \frac{1}{%
T_{0}^{\prime }(y_{i}^{\delta })}-\frac{1}{T_{0}^{\prime }(y_{i}^{0})}%
\right) \right)  \\
&=&\frac{1}{\delta }\left( \sum_{i=1}^{d}w(y_{i}^{0})\left( \frac{%
T_{0}^{\prime }(y_{i}^{0})-T_{0}^{\prime }(y_{i}^{\delta })}{T_{0}^{\prime
}(y_{i}^{\delta })T_{0}^{\prime }(y_{i}^{0})}\right) \right)  \\
&=&\frac{1}{\delta }\left( \sum_{i=1}^{d}w(y_{i}^{0})\left( \frac{-\left(
T_{0}^{\prime }(y_{i}^{0})+T_{0}^{\prime \prime }(y_{i}^{0})\left( -\frac{%
\dot{T}(y_{i}^{0})}{T_{0}^{\prime }(y_{i}^{0})}\right) \delta \right)
+T_{0}^{\prime }(y_{i}^{0})}{T_{0}^{\prime }(y_{i}^{\delta })T_{0}^{\prime
}(y_{i}^{0})}\right) \right) +o_{C^{1}}(1) \\
&=&\left( \sum_{i=1}^{d}w(y_{i}^{0})\left( \frac{\dot{T}(y_{i}^{0})T_{0}^{%
\prime \prime }(y_{i}^{0})}{T_{0}^{\prime }(y_{i}^{0})^{2}T_{0}^{\prime
}(y_{i}^{\delta })}\right) \ \right) +o_{C^{1}}(1)
\end{eqnarray*}%
%
%
%
%
%
%
%
%
%
%
%
%
%
%
%
%
%
%
%
%
%
%
%
%
%
%
%
%
%
%
%
%
%
%
%
%
%
%
%
%
%
%
%
%
%
%
%
%
%
%
%
%
%
%
%
%
%
%
%
%
%
%
%
%
%
%
%
%
%
%
%
%
%
%
%
%
%
%\end{equation*}%
and thus, finally, 
\begin{equation*}
\lim_{\delta \rightarrow 0}\frac{1}{\delta }\left(
\sum_{i=1}^{d}w(y_{i}^{0})\left( \frac{1}{T_{0}^{\prime }(y_{i}^{\delta })}-%
\frac{1}{T_{0}^{\prime }(y_{i}^{0})}\right) \right) =L_{0}\left( \frac{\dot{T%
}T_{0}^{\prime \prime }}{T_{0}^{\prime 2}}w\right) (x)
\end{equation*}%
in $C^{1}$.
\end{proof}

We remark that the $C^{1}$ convergence implies the convergence in the $%
W^{1,1}$ topology. Since we have obtained the existence of the operator $%
\dot{L}:C^{2}\rightarrow W^{1,1}$, we know that $T_{0}$ has spectral gap and
then exponential contraction on the space of zero average $W^{1,1}$
densities, since the assumption of \ Theorem \ref{LR} \ have been verified
in the last paragraph before Section \ref{lrex} we can apply the theorem and
get

\begin{proposition}
\label{pert} Let us assume that $T_{\delta }$ is a $C^{1}$ family of $C^{3}$
expanding maps as in $(\ref{fam})$. Let $\ f_{\delta }\in C^{2}(X,\mathbb{R}%
) $ be the invatiant probability density of $T_{\delta }$. We have the
following%
\begin{equation*}
\lim_{\delta \rightarrow 0}||\frac{f_{\delta }-f_{0}}{\delta }-(1-L_{0})^{-1}%
\dot{L}f_{0}||_{L^{1}}=0.
\end{equation*}%
Where $\dot{L}$ is the operator defined in Proposition \ref{mainprop}. Hence
we have a linear response formula 
\begin{equation*}
\lim_{\delta \rightarrow 0}\frac{f_{\delta }-f_{0}}{\delta }=(1-L_{0})^{-1}%
\dot{L}f_{0}
\end{equation*}%
where the limit converge is in $L^{1}.$
\end{proposition}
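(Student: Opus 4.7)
The plan is to apply Theorem \ref{wlr} directly, using the refinement in Remark \ref{lr2} to handle the fact that the derivative operator constructed in Proposition \ref{mainprop} has regularity hypotheses $C^2$ on its input, and then invoking Remark \ref{lr3} to upgrade the final convergence from the weak norm $L^1$ to the strong norm $W^{1,1}$. The choice of spaces is $B_w = L^1$, $B_s = W^{1,1}$, and $B_{ss} = C^2$; by Remark \ref{c3} each invariant density $f_\delta$ is $C^2$, so $f_\delta \in B_{ss}$ as required.

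For assumption~(1) of Theorem \ref{wlr}, by Remark \ref{cnv2} the unperturbed operator $L_0$ has spectral gap on $W^{1,1}$, so there exist $C>0$ and $\rho<1$ such that $\|L_0^n g\|_{W^{1,1}} \leq C\rho^n \|g\|_{W^{1,1}}$ for every $g\in V_s$. This gives summable convergence to equilibrium in both the strong--weak and strong--strong senses, in particular activating Remark \ref{lr3}. For assumption~(2), iterating the uniform Lasota--Yorke inequality UF1 on the fixed-point identity $f_\delta = L_\delta^n f_\delta$ and using $\|f_\delta\|_1=1$ yields $\|f_\delta\|_{W^{1,1}} \leq M$ uniformly in $\delta$.

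The derivative operator required by assumption~(4) is furnished by Proposition \ref{mainprop}: for $w\in C^2$, the quotient $(L_\delta - L_0)w/\delta$ converges in $C^1$ (hence in $W^{1,1}$) to $\hat L w$. Reading this through Remark \ref{lr2} with $B_{ss}=C^2$, this is exactly what is needed. For assumption~(3) (strong statistical stability $\|f_\delta-f_0\|_{W^{1,1}}\to 0$), I would apply Proposition \ref{lip}; the only nontrivial hypothesis there, namely $\|(L_\delta - L_0) f_0\|_{W^{1,1}} \leq C_{f_0} \delta$, follows from Proposition \ref{mainprop} applied to $w=f_0 \in C^2$, since the $C^1$ convergence of the difference quotient implies $\|(L_\delta - L_0) f_0\|_{W^{1,1}} = O(\delta)$. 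The hypotheses UF1, UF3, UF4 themselves are standard for a $C^1$ family of $C^3$ expanding maps (UF1 is a uniform version of the Lasota--Yorke computation in Section \ref{sec:LY}, UF3 is Remark \ref{cnv2} applied to $L_0$, UF4 is the $L^1$ weak contraction of Proposition \ref{weakcontr} applied uniformly).

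With all four hypotheses verified, Theorem \ref{wlr} yields $\lim_{\delta\to 0}\bigl\|\tfrac{f_\delta - f_0}{\delta} - (1-L_0)^{-1}\hat L f_0\bigr\|_{L^1} = 0$. Since convergence to equilibrium for $L_0$ is summable already in the strong--strong sense, Remark \ref{lr3} promotes $(1-L_0)^{-1}$ to a bounded operator $W^{1,1}\to W^{1,1}$ and strengthens the conclusion to convergence in $W^{1,1}$, which is the claim. The main technical obstacle is verifying that Proposition \ref{mainprop} really delivers both the derivative operator in assumption~(4) and the Lipschitz bound needed for Proposition \ref{lip}; once this double duty is recognized, the rest is essentially an assembly of already-proved tools.
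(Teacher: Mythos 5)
Your proposal is correct and follows essentially the same route as the paper: Theorem \ref{wlr} with the $B_{ss}=C^{2}$ refinement of Remark \ref{lr2}, the derivative operator from Proposition \ref{mainprop} doing double duty to also supply the bound $\|(L_{\delta}-L_{0})f_{0}\|_{W^{1,1}}=O(\delta)$ needed for Proposition \ref{lip} (hence assumption~3), and Remark \ref{lr3} upgrading the conclusion from $L^{1}$ to $W^{1,1}$. This is exactly the assembly the paper performs in the paragraph preceding Proposition \ref{pert}.
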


\subsection{Rigorous numerical methods for the computation of invariant
measures\label{Ul}}

We briefly mention an application of the transfer operator methods exposed
in these notes. It is possible to use the quantitative stability results on
the invariant measures here explained to design efficient numerical methods
for the approximation of the invariant measure and other important features
of the statistical behavior of the system.

The approximation can also be rigorous, in the sense that an explicit bound
on the approximation error can be provided (for example it is possible to
approximate the absolutely continuous invariant measure of a system up to a
small explicitly given error in the $L^{1}$ distance). We remark that doing
in this way, the result of the computation has a mathematical meaning and
can be used for computer aided proofs.

This can be done by approximating the transfer operator $L_{0}$ of the
system by a suitable finite rank one $L_{\delta }$ which is essentially a
matrix, of which we can compute fixed points and other properties.

There are many ways to construct a suitable approximating operator $%
L_{\delta }$ depending on the system which is considered. The most used one
(for $L^{1}$ approximations) is the so called Ulam discretization\emph{.}

In the \emph{Ulam Discretization} method, the phase space $X$ is discretized
by a partition $I_{\delta }=\{I_{i}\}$ (where $\delta $ is a resolution
parameter, for example the maximum diameter of the elements of the
partition) and the system is approximated by a (finite state) Markov Chain.
In the deterministic case, supposing the dynamics is generated by a map $%
T:X\rightarrow X$, the transition transition probabilities of the
approximating Markov chain are given by 
\begin{equation}
P_{ij}={m(T}^{-1}{(I_{j})\cap I_{i})}/{m(I_{i})}  \label{pij}
\end{equation}%
(where $m$ is the normalized Lebesgue measure on the phase space). The
approximated operator $L_{\delta }$ can be also seen in the following way:
let $F_{\delta }$ be the $\sigma -$algebra associated to the partition $%
I_{\delta }$, let us consider the projection on the step functions supported
on the elements of the partition given by%
\begin{equation*}
\pi _{\delta }(f)=\mathbf{E}(f|F_{\delta })
\end{equation*}%
($\mathbf{E}$ is the conditional expectation), we define the approximated
operator as:%
\begin{equation}
L_{\delta }=\pi _{\delta }L\pi _{\delta }.  \label{000}
\end{equation}

In a series of works it was proved that in several cases the fixed
probability measure $f_{\delta }$ of $L_{\delta }$ converges to the fixed
point of $L$ as the accuracy of the approximation gets better and better.
Explicit bounds on the error have been given, rigorous methods implemented
and experimented in several classes of cases (see e.g. \cite{BB},\cite{BM},%
\cite{DelJu02},\cite{F08},\cite{L} and \cite{GN} where several computations
on nontrivial systems are also shown). Ulam methods and similar methods have
been also used to rigorously compute (up to prescribed errors) other
important quantities related to the statistical properties of dynamics, as
Linear Response (see \cite{BGN}), dimension of attractors (see e.g. \cite%
{GN2}) or diffusion coefficients (see e.g. \cite{BGN2}). Some more details
on the Ulam method will be given in Section \ref{dopo} \ in the case of
Piecewise Expanding maps of the interval.

\section{Piecewise expanding maps\label{PW}}

%\begin{equation*}
%\begin{array}{cc}
%\FRAME{itbpFU}{2.4007in}{2.4007in}{0in}{\Qcb{A piecewise expanding map...}}{
%}{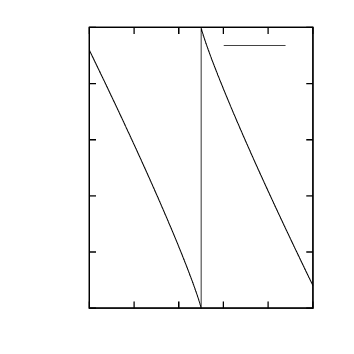}{\special{language "Scientific Word";type
%"GRAPHIC";maintain-aspect-ratio TRUE;display "USEDEF";valid_file "F";width
%2.4007in;height 2.4007in;depth 0in;original-width 2.3609in;original-height
%2.3609in;cropleft "0";croptop "1";cropright "1";cropbottom "0";filename
%'dynamic_lorenz.eps';file-properties "XNPEU";}} & \FRAME{itbpFU}{2.4007in}{
%2.4007in}{0in}{\Qcb{... and a plot of its a.c.i.m.}}{}{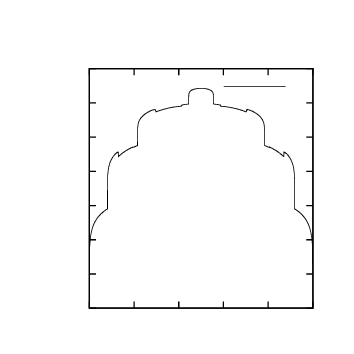}{%
%\special{language "Scientific Word";type "GRAPHIC";maintain-aspect-ratio
%TRUE;display "USEDEF";valid_file "F";width 2.4007in;height 2.4007in;depth
%0in;original-width 2.3609in;original-height 2.3609in;cropleft "0";croptop
%"1";cropright "1";cropbottom "0";filename
%'lorenz_invariant.eps';file-properties "XNPEU";}}%
%\end{array}%
%\end{equation*}

\begin{figure}[!ht]


\centering
\includegraphics[width=40mm]{dynamic_lorenz.eps}
\hspace{3mm}
\includegraphics[width=40mm]{lorenz_invariant.eps}
\caption{A piecewise expanding map and a plot of its a.c.i.m. }
\end{figure}

We now consider a class of maps on the interval which are expanding, but
allow discontinuities. This class is interesting and was much studied
because it presents a quite rich behavior, while being approachable with
techniques similar to the ones introduced in the previous sections. In the
following we outline the main properties and tools which allow the study of
\ the statistical properties of these maps when seen as deterministic
dynamical systems. Since the theory of the statistical behavior of piecewise
expanding maps was exposed in several books (see \cite{gora} or \cite{Viana}%
) we will not enter in the technical details, but we will focus on
similarities and differences between the behavior of these systems and the
behavior of expanding maps treated in the previous sections.

\begin{definition}
We call a nonsingular function $T:([0,1],m)\rightarrow ([0,1],m)$ piecewise
expanding if

\begin{itemize}
\item There is a finite set of points $d_{1}=0,d_{2},...,d_{n}=1$ such that
for each $i$, $T_{i}:=T|_{(d_{i},d_{i+1})}$ is $C^{2}$ and $\sup_{[0,1]}%
\frac{|T^{\prime \prime }|}{(T^{\prime 2})}dx<\infty $.

\item $\inf_{x\in \lbrack 0,1]}|T^{\prime }(x)|>1$ on the set where it is
defined.
\end{itemize}
\end{definition}

The transfer operator associated to a map of this class has general
properties similar to the ones of the expanding maps, if we apply the
transfer operator to measures having a density we obtain the following
explicit formula (see e.g. \cite{gora} chapter 4 for details) describing the
action of the associated transfer operator (which we continue denoting with $%
L$) on measure densities

\begin{equation}
\lbrack Lf](x)=\sum_{i\leq n}\frac{f(T_{i}^{-1}x)1_{T_{i}(d_{i},d_{i+1})}}{%
|T^{\prime }(T_{i}^{-1}x)|}
\end{equation}%
where $1_{T_{i}(d_{i},d_{i+1})}$ is the characteristic function of the
interval $T_{i}(d_{i},d_{i+1}).$

In the presence of discontinuities of the map $T$ the transfer operator does
not necessarily preserve spaces of continuous densities. For this the
introduction of a suitable space of regular densities including
discontinuous functions is important.

\subsection{Bounded variation and the Lasota Yorke inequality for piecewise
expanding maps}

Let $\phi :[0,1]\rightarrow \mathbb{R}$ a real function. Let $%
\{x_{1},...,x_{k}\}\subseteq \lbrack 0,1]$ be a finite sequence of points.
Let us define the variation of $\phi $ with respect to $\{x_{1},...,x_{k}\}$
as

\begin{equation*}
Var_{\{x_{1},...,x_{k}\}}(\phi )=\sum_{i=1}^{k-1}|\phi (x_{i})-\phi
(x_{i+1})|
\end{equation*}%
we define the variation of $\phi $ as the supremum of $Var_{%
\{x_{1},...,x_{k}\}}(\phi )$ over all the finite sequences $%
\{x_{1},...,x_{k}\}$%
\begin{equation*}
Var(\phi )=\sup_{\{x_{1},...,x_{k}\}\subseteq \lbrack
0,1]}Var_{\{x_{1},...,x_{k}\}}(\phi ).
\end{equation*}

We say that $\phi $ has bounded variation if $Var(\phi )<\infty $. We call $%
BV$ or $BV[0,1]$ the set of bounded variation functions on the interval. An
important property of bounded variation functions is the following

\begin{theorem}[Helly selection principle]
\label{Helly}Let $\phi _{n}$ be a sequence of bounded variation functions on
the interval $[0,1]$ such that $Var(\phi _{n})\leq M$ and $||\phi
_{n}||_{1}\leq M$ are uniformly bounded.

Then there is $\phi \in BV$ and subsequence $\phi _{n_{k}}$ such that 
\begin{equation*}
\phi _{n_{k}}\rightarrow \phi
\end{equation*}%
in $L^{1}$ (and almost everywhere).
\end{theorem}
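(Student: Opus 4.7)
The plan is to adapt the strategy used for Rellich--Kondrachov (Proposition~\ref{RK}) to the BV setting: approximate each $\phi_n$ by a step function on a fine partition, use finite-dimensional compactness to extract a subsequence convergent on that partition, diagonalize over finer and finer meshes, and finally verify that the resulting $L^1$-limit actually lies in $BV$.

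First I would derive a uniform $L^\infty$ bound. Since $Var(\phi_n)\le M$, for any $x,y\in[0,1]$ one has $|\phi_n(x)|\le|\phi_n(y)|+M$; integrating in $y$ and using $\|\phi_n\|_1\le M$ gives $\|\phi_n\|_\infty\le 2M$. This replaces the role played by $\|f_n\|_1\le K$ in the proof of Proposition~\ref{RK}.

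Next, fix $\epsilon>0$ and a partition $0=x_0<x_1<\cdots<x_k=1$ of step $\epsilon$, and let $\pi\phi_n$ be the step function equal to $\phi_n(x_i)$ on $[x_i,x_{i+1})$. For $\bar x\in[x_i,x_{i+1})$ we have
\begin{equation*}
|\phi_n(\bar x)-\pi\phi_n(\bar x)|\le V_i^{(n)}:=Var\bigl(\phi_n\big|_{[x_i,x_{i+1}]}\bigr),
\end{equation*}
and since $\sum_i V_i^{(n)}\le Var(\phi_n)\le M$, integration yields $\|\phi_n-\pi\phi_n\|_1\le\epsilon M$. The sequence $\{\pi\phi_n\}_n$ lives in the finite-dimensional space $\mathbb{R}^k$ and is bounded in sup norm by $2M$, so Bolzano--Weierstrass extracts a subsequence converging in $L^1$. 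Combined with the $\epsilon M$ approximation error, this furnishes a finite $3\epsilon M$-net for $\{\phi_n\}$ in $L^1$. A diagonal extraction along a sequence $\epsilon_j\to 0$ then produces a single subsequence $\phi_{n_k}$ which is Cauchy, and hence convergent, in $L^1$ to some $\phi\in L^1$.

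The main obstacle is the last step: showing that $\phi\in BV$ with $Var(\phi)\le M$, since lower semicontinuity of the total variation is not automatic under $L^1$ convergence. The plan is to pass to a further subsequence (still denoted $\phi_{n_k}$) with $\phi_{n_k}\to\phi$ almost everywhere, and to select a good representative of $\phi$ (say right-continuous) defined pointwise. For any finite collection $t_0<t_1<\cdots<t_r$ of points at which pointwise convergence holds, one has
\begin{equation*}
\sum_{i}|\phi(t_{i+1})-\phi(t_i)|=\lim_k\sum_i|\phi_{n_k}(t_{i+1})-\phi_{n_k}(t_i)|\le M.
\end{equation*}
Since the set of points of pointwise convergence is of full measure, hence dense, approximating arbitrary partition points by nearby convergence points (using the right-continuous representative) and taking the supremum gives $Var(\phi)\le M$, completing the proof.
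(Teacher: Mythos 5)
Your proof is correct, but note that the paper does not actually prove Theorem \ref{Helly}: it is stated as a classical selection theorem and used as a black box (playing for $BV$ the role that Proposition \ref{RK} plays for $W^{1,1}$). What you have done is supply a self-contained proof by transplanting the paper's own sketch of Proposition \ref{RK}: the uniform $L^\infty$ bound, the estimate $\|\phi_n-\pi\phi_n\|_1\leq\epsilon\,\mathrm{Var}(\phi_n)$ via additivity of the variation over subintervals, the finite $\epsilon$-net from finite rank, and the diagonal extraction are all sound, and this route has the virtue of making the two compactness statements in the notes visibly the same argument. The classical proof is different in flavour: it uses the Jordan decomposition $\phi_n=\phi_n^+-\phi_n^-$ into monotone functions and a pointwise diagonal argument on a countable dense set, which yields everywhere-pointwise convergence of a subsequence for free; your route gets $L^1$ convergence first and must then recover membership in $BV$, which is exactly where your argument is most compressed. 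That last step is correct but deserves one more line: after passing to an a.e.\ convergent subsequence you get a function $\psi$ on a full-measure set $E$ whose variation over partitions drawn from $E$ is at most $M$, and you must observe that this bounded-variation property forces the one-sided limits $\lim_{t\downarrow x,\,t\in E}\psi(t)$ to exist at every $x$ (otherwise an oscillation would contribute infinitely to the variation); only then is the right-continuous representative $\widetilde\phi$ well defined, equal to $\psi$ off a countable set, and of variation at most $M$ by approximating arbitrary partition points from the right within $E$. With that remark added there is no gap.
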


Bounded variation functions are preserved by the transfer operator of a
piecewise expanding map, moreover the following Lasota Yorke inequality can
be proved.

\begin{theorem}
\label{LYBV}Let $T$ be a piecewise expanding map with branches $T_{i}$ on
the intervals $I_{i}$. Let $\phi $ a bounded variation density on an
interval $I_{i}=(d_{i},d_{i+1}]$ and $T:I_{i}\rightarrow \lbrack 0,1]$ a
piecewise expanding function. Then 
\begin{equation}
Var(L_{T}\phi )\leq \frac{2}{\inf_{[0,1]}(T^{\prime })}Var(\phi
)+(\sup_{[0,1]}(|\frac{T^{\prime \prime }}{T^{\prime 2}}|)+\frac{2}{%
\inf_{i}|I_{i}|})\int |\phi |.  \label{ok}
\end{equation}
\end{theorem}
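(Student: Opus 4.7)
The plan is to decompose $L_T\phi$ branch by branch and bound the variation of each piece using the product rule for variation plus a careful estimate of the boundary jumps.

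First I would write $L_T\phi = \sum_{i} \psi_i$ with $\psi_i(x) = \frac{\phi(T_i^{-1}x)}{|T'(T_i^{-1}x)|}\mathbf{1}_{T_i(I_i)}(x)$, and use subadditivity $Var(L_T\phi)\le \sum_i Var(\psi_i)$. Since each branch $T_i$ is a monotone $C^2$ diffeomorphism from $I_i=(d_i,d_{i+1})$ onto $T_i(I_i)$, the change of variable preserves variation, so
\begin{equation*}
Var_{[0,1]}(\psi_i) \;=\; Var_{I_i}(\phi/|T'|) \;+\; \frac{|\phi(d_i^+)|}{|T'(d_i^+)|} \;+\; \frac{|\phi(d_{i+1}^-)|}{|T'(d_{i+1}^-)|},
\end{equation*}
where the last two terms are the jumps from zero at the two endpoints of $T_i(I_i)$.

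Next I would control the interior variation by the Leibniz-type identity $(\phi/|T'|)' = \phi'/|T'| - \phi\,T''/T'^2$ (working with smooth approximations of $\phi$, or equivalently interpreting derivatives as distributional in the BV sense), yielding
\begin{equation*}
Var_{I_i}(\phi/|T'|) \;\le\; \frac{1}{\inf|T'|}\,Var_{I_i}(\phi) \;+\; \sup\Bigl|\frac{T''}{T'^{2}}\Bigr|\int_{I_i}|\phi|.
\end{equation*}
For the boundary jumps the key observation — the one that allows the constant $2$ rather than $3$ in front of $Var(\phi)$ — is that for every $x\in I_i$ one has
\begin{equation*}
|\phi(d_i^+)| + |\phi(d_{i+1}^-)| \;\le\; 2|\phi(x)| + Var_{(d_i,x)}(\phi) + Var_{(x,d_{i+1})}(\phi) \;=\; 2|\phi(x)| + Var_{I_i}(\phi);
\end{equation*}
averaging over $x\in I_i$ gives $|\phi(d_i^+)|+|\phi(d_{i+1}^-)|\le \frac{2}{|I_i|}\int_{I_i}|\phi|+Var_{I_i}(\phi)$. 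Dividing by $\inf|T'|\ge 1$ and adding to the interior estimate yields
\begin{equation*}
Var(\psi_i) \;\le\; \frac{2}{\inf|T'|}Var_{I_i}(\phi) + \Bigl(\sup\Bigl|\tfrac{T''}{T'^{2}}\Bigr| + \tfrac{2}{|I_i|}\Bigr)\int_{I_i}|\phi|.
\end{equation*}

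Finally I would sum over $i$, using $\sum_i Var_{I_i}(\phi)\le Var(\phi)$ and $\sum_i\int_{I_i}|\phi|=\int|\phi|$, together with $|I_i|\ge \inf_i|I_i|$, to arrive at \eqref{ok}. I expect the main obstacle to be the sharp boundary estimate in the third paragraph: using the naive bound $|\phi(d_i^+)|\le |\phi(x)|+Var_{I_i}(\phi)$ twice (once for each endpoint) produces an extra copy of $Var_{I_i}(\phi)$ and gives the weaker constant $3/\inf|T'|$ instead of $2/\inf|T'|$. Splitting the variation as $Var_{(d_i,x)}+Var_{(x,d_{i+1})}=Var_{I_i}$ is essential, and a secondary technical point is the justification of the Leibniz identity for genuinely BV (possibly non-absolutely-continuous) densities, which one handles either by mollification or by working directly with the supremum-over-partitions definition of the variation.
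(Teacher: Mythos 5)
Your proposal is correct and follows essentially the same route as the paper: a branch-by-branch decomposition, a Leibniz/mean-value estimate giving one factor $\frac{1}{\inf|T'|}Var_{I_i}(\phi)+\sup|\frac{T''}{T'^2}|\int_{I_i}|\phi|$ for the interior variation of $\phi/|T'|$, and a separate treatment of the two endpoint jumps contributing the second $\frac{1}{\inf|T'|}Var_{I_i}(\phi)+\frac{2}{|I_i|}\int_{I_i}|\phi|$. The only cosmetic differences are that the paper works directly with the supremum-over-partitions definition pulled back through $T_i^{-1}$ and controls the boundary terms by choosing a single point $\hat{x}$ with $|\phi(\hat{x})|\leq\frac{1}{|I_i|}\int_{I_i}|\phi|$, whereas you average over $x\in I_i$ --- the same idea, and your splitting $Var_{(d_i,x)}+Var_{(x,d_{i+1})}=Var_{I_i}$ is exactly what makes the constant $2$ rather than $3$ come out in both arguments.
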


Before the proof we consider the behavior of the operator when acting on the
density on a single interval $I_{i}$.

\begin{lemma}
\label{lemly}Let $\phi $ a bounded variation density on an interval $I_{i}$
and $T:I_{i}\rightarrow \lbrack 0,1]$ an invertible expanding function. Then 
\begin{equation*}
Var(L_{T}(\phi ))\leq \frac{2}{\inf_{I_{i}}(T^{\prime })}Var(\phi
)+(\sup_{I_{i}}(|\frac{T^{\prime \prime }}{T^{\prime 2}}|)+\frac{2}{|I_{i}|}%
)\int |\phi |.
\end{equation*}
\end{lemma}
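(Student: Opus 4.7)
The plan is to compute the variation of $L_T\phi$ extended by zero outside $T(I_i)$, splitting it into an \emph{interior} part on the image interval $J:=T(I_i)=[a,b]$ plus the two \emph{boundary jumps} at $a$ and $b$ (where the function drops to $0$). On $J$, the formula for the transfer operator on a single invertible branch reads
\begin{equation*}
L_T\phi(x)=\frac{\phi(T^{-1}(x))}{|T'(T^{-1}(x))|},\qquad x\in J,
\end{equation*}
so since $T:I_i\to J$ is a monotone $C^2$ bijection, the interior variation is preserved under the change of variables $y=T^{-1}(x)$: $\mathrm{Var}_{(a,b)}(L_T\phi)=\mathrm{Var}_{I_i}(\phi/|T'|)$.

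First I would estimate the interior variation. Using $(\phi/|T'|)'=\phi'/T'-\phi\,T''/(T')^2$ (interpreted in the BV sense), one gets
\begin{equation*}
\mathrm{Var}_{I_i}\!\left(\frac{\phi}{|T'|}\right)\leq \frac{1}{\inf_{I_i}|T'|}\mathrm{Var}_{I_i}(\phi)+\sup_{I_i}\!\left|\frac{T''}{T'^{2}}\right|\int_{I_i}|\phi|.
\end{equation*}
This accounts for one copy of $\mathrm{Var}(\phi)/\inf|T'|$ and the full $\sup|T''/T'^2|\int|\phi|$ term.

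The key trick — and the main obstacle, since a naive bound gives a factor $3/\inf|T'|$ instead of $2/\inf|T'|$ — is how one bounds the two boundary jumps $|L_T\phi(a^+)|$ and $|L_T\phi(b^-)|$. By the mean value argument for integrals, there exists $y^*\in I_i$ with $|\phi(y^*)|\leq \frac{1}{|I_i|}\int_{I_i}|\phi|$. Then for the left endpoint $d_i=T^{-1}(a)$,
\begin{equation*}
\frac{|\phi(d_i^+)|}{|T'(d_i^+)|}\leq \frac{|\phi(y^*)|+|\phi(d_i^+)-\phi(y^*)|}{\inf_{I_i}|T'|}\leq \frac{1}{\inf_{I_i}|T'|}\left(\frac{1}{|I_i|}\int_{I_i}|\phi|+\mathrm{Var}_{[d_i,y^*]}(\phi)\right),
\end{equation*}
and symmetrically for the right endpoint with $\mathrm{Var}_{[y^*,d_{i+1}]}(\phi)$. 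The point of using the single interior reference $y^*$ is that the two partial variations add up to exactly $\mathrm{Var}_{I_i}(\phi)$, contributing only \emph{one additional} copy of $\mathrm{Var}(\phi)/\inf|T'|$ rather than two.

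Finally I would assemble the three contributions (interior variation plus two boundary jumps), obtaining
\begin{equation*}
\mathrm{Var}(L_T\phi)\leq \frac{2}{\inf_{I_i}|T'|}\mathrm{Var}(\phi)+\left(\sup_{I_i}\!\left|\frac{T''}{T'^{2}}\right|+\frac{2}{|I_i|\inf_{I_i}|T'|}\right)\int_{I_i}|\phi|,
\end{equation*}
and since $\inf|T'|\geq 1$, the coefficient $\tfrac{2}{|I_i|\inf|T'|}$ is dominated by $\tfrac{2}{|I_i|}$, giving the stated inequality. The delicate step is the $y^*$-trick above; everything else is bookkeeping and the elementary chain rule for $\phi/|T'|$.
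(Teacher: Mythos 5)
Your proposal is correct and follows essentially the same route as the paper's proof: the paper also splits the variation into the two endpoint jumps plus the interior part, controls the jumps with a single interior point $\hat{x}$ satisfying $|\phi(\hat{x})|\leq \frac{1}{|I_i|}\int|\phi|$ (your $y^{*}$, which is exactly the source of the factor $2$ rather than $3$), and bounds the interior part by $\frac{1}{\inf|T'|}Var(\phi)+\sup|T''/T'^2|\int|\phi|$. The only cosmetic difference is that you obtain the interior estimate via the BV product rule after a change of variables, whereas the paper works with explicit partition sums and the mean value theorem applied to $1/T'$.
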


\begin{proof}
Let us consider $y_{1},...,y_{k}\in \lbrack 0,1]$ and let us suppose there
are $h_{1},h\in \mathbb{N}$ and $x_{1},...,x_{h}\in I_{i}$ such that $%
T^{-1}y_{i}=\left\{ 
\begin{array}{c}
\emptyset ~if~i<h_{1} \\ 
x_{i-h_{1}+1}~if~h_{1}\leq i\leq h_{1}+h \\ 
\emptyset ~if~i>h_{1}+h%
\end{array}%
\right. .$%
\begin{eqnarray*}
Var_{y_{1,...,}y_{k}}(L_{T}\phi ) &=&\sum_{1}^{k-1}|L_{T}\phi
(y_{i})-L_{T}\phi (y_{i+1})| \\
&\leq &|\frac{1}{T^{\prime }(x_{h_{1}})}\phi (x_{h_{1}})|+|\frac{1}{%
T^{\prime }(x_{h_{1}}+h)}\phi (x_{h_{1}+h})|+\sum_{h_{1}}^{h_{1}+h-1}|\frac{1%
}{T^{\prime }(x_{i})}\phi (x_{i})-\frac{1}{T^{\prime }(x_{i+1})}\phi
(x_{i+1})|
\end{eqnarray*}

Since there must be $\hat{x}\in I_{i}$ such that $\phi (\hat{x})\leq \frac{1%
}{|I_{i}|}\int |\phi |$, then%
\begin{eqnarray*}
&&|\frac{1}{T^{\prime }(x_{h_{1}})}\phi (x_{h_{1}})|+|\frac{1}{T^{\prime
}(x_{h_{1}}+h)}\phi (x_{h_{1}+h})| \\
&\leq &\frac{1}{\inf_{I_{i}}(T^{\prime })}(2\phi (\hat{x})+|\phi
(x_{h_{1}})-\phi (\hat{x})|+|\phi (x_{h_{1}+h})-\phi (\hat{x})|) \\
&\leq &\frac{1}{\inf_{I_{i}}(T^{\prime })}Var(\phi )+\frac{2}{|I_{i}|}\int
|\phi |.
\end{eqnarray*}

The other summand can be bounded by%
\begin{eqnarray*}
\sum_{h_{1}}^{h_{1}+h-1}|\frac{1}{T^{\prime }(x_{i})}\phi (x_{i})-\frac{1}{%
T^{\prime }(x_{i+1})}\phi (x_{i+1})| &\leq &\sum_{h_{1}}^{h_{1}+h-1}|\frac{1%
}{T^{\prime }(x_{i})}\phi (x_{i})-\frac{1}{T^{\prime }(x_{i})}\phi (x_{i+1})|
\\
&&+|\frac{1}{T^{\prime }(x_{i})}\phi (x_{i+1})-\frac{1}{T^{\prime }(x_{i+1})}%
\phi (x_{i+1})| \\
&\leq &\frac{1}{\inf_{I_{i}}(T^{\prime })}Var(\phi
)+\sum_{h_{1}}^{h_{1}+h-1}|\frac{1}{T^{\prime }(x_{i})}\phi (x_{i+1})-\frac{1%
}{T^{\prime }(x_{i+1})}\phi (x_{i+1})| \\
&\leq &\frac{1}{\inf_{I_{i}}(T^{\prime })}Var(\phi
)+\sum_{h_{1}}^{h_{1}+h-1}|\frac{T^{\prime \prime }(\xi _{i})}{T^{\prime
}(\xi _{i})}|x_{i}-x_{i+1}|\phi (x_{i+1})
\end{eqnarray*}%
by Lagrange theorem, for $\xi _{i}\in \lbrack x_{i},x_{i+1}]$. And%
\begin{eqnarray*}
&&\frac{1}{\inf_{I_{i}}(T^{\prime })}Var(\phi )+\sum_{h_{1}}^{h_{1}+h-1}|%
\frac{T^{\prime \prime }(\xi _{i})}{T^{\prime }(\xi _{i})}%
|x_{i}-x_{i+1}|\phi (x_{i+1}) \\
&\leq &\frac{1}{\inf_{I_{i}}(T^{\prime })}Var(\phi )+\sup_{I_{i}}(|\frac{%
T^{\prime \prime }}{T^{\prime 2}}|)\sum_{h_{1}}^{h_{1}+h-1}|x_{i}-x_{i+1}||%
\phi (x_{i+1})|.
\end{eqnarray*}%
Remarking that $\forall \epsilon $, if the subdivision $\{x_{i}\}$ is fine
enough then $\sum_{h_{1}}^{h_{1}+h-1}|x_{i}-x_{i+1}||\phi (x_{i+1})|\leq
\int_{I_{i}}\phi +\epsilon $ \ and collecting all summands\ we have the
statement.
\end{proof}

This allow to conclude

\begin{proof}
(of Thm\ref{LYBV}) Let $\phi _{i}=\phi |_{I_{i}}$. We have that $L_{T}\phi
=\sum_{i}L_{T}\phi _{i}$. Then%
\begin{equation*}
Var(L_{T}\phi )\leq \sum_{i}VarL\phi _{i}
\end{equation*}%
by Lemma \ref{lemly} 
\begin{eqnarray*}
\sum_{i}VarL\phi _{i} &\leq &\sum_{i}\frac{2}{\inf_{I_{i}}(T^{\prime })}%
Var(\phi _{i})+(\sup_{I_{i}}(|\frac{T^{\prime \prime }}{T^{\prime 2}}|)+%
\frac{2}{|I_{i}|})\int |\phi _{i}| \\
&\leq &\frac{2}{\inf_{[0,1]}(T^{\prime })}Var(\phi )+(\sup_{[0,1]}(|\frac{%
T^{\prime \prime }}{T^{\prime 2}}|)+\frac{2}{\inf_{i}|I_{i}|})\int |\phi |
\end{eqnarray*}
\end{proof}

One can define the Bounded Variation norm $||~||_{BV}$ as 
\begin{equation*}
||f||_{BV}=Var(f)+||f||_{1}.
\end{equation*}%
It is immediate to deduce from $(\ref{ok})$ the Lasota Yorke inequality for
the bounded variation norm: there is $B$ such that%
\begin{equation*}
||L_{T}\phi ||_{BV}\leq \frac{2}{\inf_{I_{i}}(T^{\prime })}||\phi
||_{BV}+B||\phi ||_{1}.
\end{equation*}

Like done before for expanding maps, writing $\lambda =\frac{2}{%
\inf_{I_{i}}(T^{\prime })}$ and iterating the inequality we obtain%
\begin{equation*}
||L_{T}^{n}\phi ||_{BV}\leq \lambda ^{n}||\phi ||_{BV}+\frac{B}{1-\lambda }%
||\phi ||_{1}.
\end{equation*}

This inequality holds for maps such that $\frac{2}{\inf_{I_{i}}((T^{n})^{%
\prime })}<1$. The inequality can be applied to the other piecewise
expanding maps by previously iterating the map until $\frac{2}{%
\inf_{I_{i}}((T^{n})^{\prime })}<1$.

In this case, as before, a straightforward computation lead to the general
Lasota Yorke inequality valid for any piecewise expanding map: there are $%
A,B\geq 0$ \ and $\lambda \in \lbrack 0,1)$ such that%
\begin{equation}
||L_{T}^{n}\phi ||_{BV}\leq A\lambda ^{n}||\phi ||_{BV}+B||\phi ||_{1}.
\label{lygen}
\end{equation}

The first consequence of the inequality is the existence of a bounded
variation invariant density for each piecewise expanding map, indeed, by (%
\ref{lygen}) and Theorem \ref{Helly}, repeating the arguments stated in
Section \ref{ext} we get the existence of an absolutely continuous invariant
measure with bounded variation density for this kind of maps. By arguments
very similar to the ones presented in Section \ref{spg} it is also possible
to obtain that a piecewise expanding map has spectral gap on the space of
bounded variation densities\footnote{%
Using the $BV$ and $L^{1}$ as strong and weak spaces and its Lasota Yorke
inequality, Theorem \ref{gap} and Theorem \ref{Helly} to obtain the compact
inclusion property.} if the map is mixing in some sense, then there will be
an exponential convergence to equilibrium of the system.

About the mixing assumption, it is well known that a condition which is
sufficient to imply the exponential convergence to equilibrium (and hence
all the other statistical consequences) for Piecewise expanding maps is the
topological mixing (see \cite{Viana2}, assumption E3 and following for the
details)

\begin{definition}
We say that a piecewise expanding map $T$ is \emph{topologically mixing }if
\ there is an interval $I_{\ast }\subseteq I$ such that $f(I_{\ast
})=I_{\ast },$ every orbit $T^{n}(x),x\in \lbrack 0,1]$ eventually enters $%
I_{\ast }$ \ and for every $J\subset I_{\ast }$ there is $n\geq 1$ such that 
$T^{n}(J)=I_{\ast }.$
\end{definition}

Piecewise expanding maps however, have a more complicated behavior than
expanding ones with respect to statistical stability on perturbations. We
point out that the Lasota Yorke inequality we have proved in Lemma \ref%
{lemly}, works only if the expansion rate of the map is bigger than $2$. To
prove the existence of an absolutely continuous invariant measure as
sketched above, one has to take an iterate of the map such that $\frac{2}{%
\inf_{I_{i}}((T^{n})^{\prime })}<1$.

When considering the statistical stability of a family of maps $T_{\delta }$
this is not only a technical point but is substantial, because sometime it
is not possible to find a uniform iterate which is suitable for the whole
family $T_{\delta }$. While many of the stability arguments outlined in the
previous sections applies also to piecewise expanding maps with expansion
rate greater than $2$, for the maps $T$ such that $1\leq
\inf_{I_{i}}(T^{\prime })\leq 2$ the stability questions are more
complicated. We present below some examples of results illustrating the
questions.

\subsection{The stability under deterministic perturbations\label{disc}}

We have seen that mixing piecewise expanding maps associated transfer
operators have spectral gap on bounded variation functions.

Now let us consider their statistical stability to perturbations. If we
consider $BV$ and $L^{1}$ as a weak and strong spaces, and we consider
perturbations for which UF1,...,UF4 \ are satisfied, then Proposition \ref%
{stabi}, gives us a quantitative statistical stability estimation%
\begin{equation}
||h_{\delta }-h_{0}||_{1}=O(\delta \log \delta )  \label{hso}
\end{equation}%
where $h_{\delta },h_{0}$ are the absolutely continuous invariant measures
of the perturbed map $T_{\delta }$ and of the unperturbed one $T_{0}$.
UF1,...,UF4 are satisfied by a wide variety of stochastic and deterministic
perturbations.

In particular let us consider deterministic perturbations so that $L_{\delta
}$ is the transfer operator of a family of maps $T_{\delta }$ satisfying a
uniform Lasota Yorke inequality (UF1) \ with $BV$ \ and $L^{1}$ \ as strong
and weak space (we remark that given a family or a given perturbation of
some map, the existence of a uniform Lasota Yorke inequality is something
that can me checked easily). \ In the case $T_{0}$ is topologically mixing
the assumptions UF3 \ UF4 \ are easily satisfied. We recall that in this
case the assimption UF2 would be $||(L_{\delta }-L_{0})g||_{1}\leq \delta
C||g||_{BV}$. If also UF2 is satisfied for some kind of perturbation then we
have our quantitative statistical stability estimation $(\ref{hso})$
established for these systems and perturbations.

It is not much complicated to characterize families of deterministic
perturbations for which UF2 \ hold. \ The Skorokhod metric defines the
distance between two maps $T_{1}$ and $T_{2}$ as

\begin{gather}
d_{s}(T_{1},T_{2})=\inf \{\epsilon >0:\exists A\subseteq I~and~\exists
\sigma :[0,1]\rightarrow \lbrack 0,1]~  \label{sko} \\
s.t.m(A)\geq 1-\epsilon ,~\sigma ~is~a~diffeomorphism,T_{1}|_{A}=T_{2}\circ
\sigma |_{A}~and  \notag \\
\forall x\in A,|\sigma (x)-x|\leq \epsilon ,|\frac{1}{\sigma ^{\prime }(x)}%
-1|\leq \epsilon \}  \notag
\end{gather}
Families $T_{\delta }$ where $d_{s}(T_{0},T_{\delta })\leq K\delta $ will
satisfy UF2 (see e.g. \cite{gora}, chapter 11.2) and then one can prove
quantitative statistical stability results as in $(\ref{hso}).$

In this context also uniform contraction can be proved (Proposition \ref%
{unifcont}), but Lipschitz continuity is expected to hold only in particular
cases because the difference $||(L_{\delta }-L_{0})h_{0}||_{BV}$ (see
assumptions in Proposition \ref{lip} ) of the initial and perturbed
operators applied to the initial invariant measure is not small in the 
\textbf{strong }norm (the $BV$ norm in this case) for many typical
deterministic perturbations one would like to consider (consider
perturbations moving discontinuities or values at discontinuities for
example). For examples of non Lipschitz behavior of the statistical
stability of families of piecewise expanding maps satisfying a uniform
Lasota Yorke inequality see \cite{Bsusc} or \cite{M}.

Even more complicated behavior can be found if we consider the case when the
family of maps has not a uniform Lasota Yorke inequality. The simplest case
is when the slope of the family $T_{\delta }$ is not uniformly above $2$
(see Theorem \ref{LYBV}). In this case we can have a discontinuous behavior
of the family of associated invariant measures, as shown by \cite{EM} and 
\cite{K} (see also \cite{EG} for further examples).

Consider the $3$ parameters family of maps $W_{a,b,r}$ \ defined by

\begin{equation*}
W_{a,b,r}(x)=\left\{ 
\begin{array}{c}
a(1-x/r)~~~for~0\leq x\leq r \\ 
(2b/(1-2r))(x-r)~~~for~r\leq x\leq 1/2 \\ 
W_{a,b,r}(1-x)~~~for~1/2\leq x\leq 1.%
\end{array}%
\right.
\end{equation*}%
%\FRAME{ftbpFU}{1.7201in}{1.6466in}{0pt}{\Qcb{The map $W$ is continuous,
%piecewise linear and expanding with slopes $a/r$ and $2b/(1-2r)$}}{}{W.bmp}{%
%\special{language "Scientific Word";type "GRAPHIC";display
%"USEDEF";valid_file "F";width 1.7201in;height 1.6466in;depth
%0pt;original-width 4.8334in;original-height 3.6149in;cropleft "0";croptop
%"1";cropright "1";cropbottom "0";filename 'W.bmp';file-properties "XNPEU";}}

\begin{figure}
\includegraphics[width=40mm]{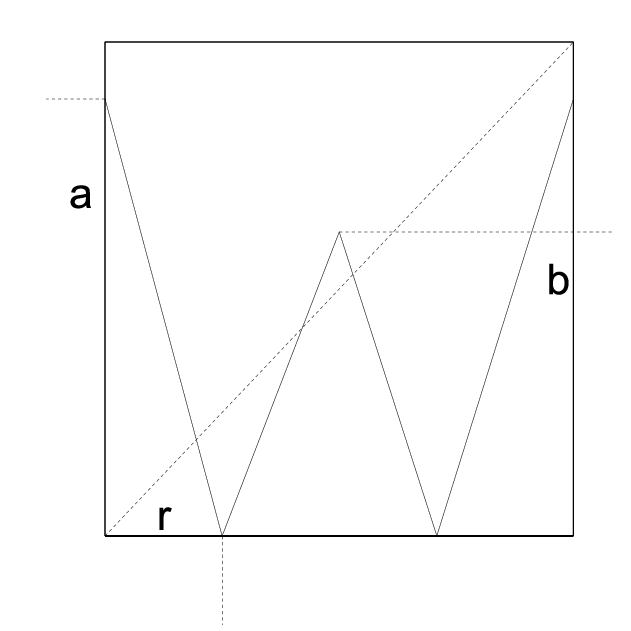}
\caption{The map $W$ is continuous, piecewise linear and expanding with slopes $a/r$ and $2b/(1-2r)$}
\end{figure}

The maps are piecewise expanding, let $h_{a,d,r}$ denote the unique
invariant density for $W_{a,b,r}$. \ Now let us consider a sequence \ $%
(a_{n},d_{n},r_{n})\rightarrow (\frac{1}{2},\frac{1}{2},\frac{1}{4})$ and
the related densities $h_{a_{n},d_{n},r_{n}}$. In \cite{K} (page 331) is
shown that $h_{1,\frac{1}{2},r}=\frac{3}{2}1_{[0,\frac{1}{2}]}+\frac{1}{2}%
1_{(\frac{1}{2},1]}$ and $h_{\frac{1}{2},\frac{1}{2},r}=21_{[0,\frac{1}{2}]}$
while if $\frac{1}{2}<b_{n}\leq 1-2r_{n}$ then $h_{a_{n},d_{n},r_{n}}%
\rightarrow \delta _{1/2}$ weakly. This is due to the fact that for $\frac{1%
}{2}<b\leq 1-2r$ \ the interval $[1-b,b]$ is sent to itself by the map, and
"attracts" all the measure while iterating the map. Hence for $a=1/2$ and $%
a=1$ the limit measure does not coincide with the invariant absolutely
continuous measure of the limit map shown above (by the way the limit map
has $\delta _{1/2}$ as a non absolutely continuous invariant measure since $%
1/2$ \ is a fixed point). We remark that for this family of maps we cannot
have a uniform Lasota Yorke inequality \ as in UF1, as the slopes tend to $2$%
. On the other hand if one takes iterates of the maps to increase the slope,
the smaller and smaller invariant interval around $1/2$ would let the second
coefficient of the Lasota Yorke inequality to converge to $\infty $.

\subsection{The approximation of the invariant measure for piecewise
expanding maps\label{dopo}}

In this section we apply Proposition \ref{stabi} to show how our stability
results can give a quantitative estimate on the error \ in the numerical
approximation of invariant densities of piecewise expanding maps with the
Ulam method We will use $BV$ and $L^{1}$ as a strong and weak space.

Let us consider a piecewise expanding map $T$ which is topologically mixing
and for which $\frac{2}{\inf_{I_{i}}((T)^{\prime })}<1$. Let us consider its
transfer operator $L$. Suppose we want to approximate the absolutely
continuous invariant probability measure $h$ by the Ulam method outlined in
Section \ref{Ul}. $L$ is then approximated by $L_{\delta }$ defined in $(\ref%
{000})$. We will approximate $h$ by the invariant density $h_{\delta }$ of $%
L_{\delta }$. We consider $L_{\delta }$ as a small perturbation of $L$ and
we will use Proposition \ref{stabi} to estimate the distance between $h$ and 
$h_{\delta }$. Suppose $L$ satisfies a Lasota Yorke inequality. Then the
approximated operator $L_{\delta }$ satisfies the same LasotaYorke inequality

\begin{lemma}
If $L$ is such that $||L\phi ||_{BV}\leq \lambda ||\phi ||_{BV}+B||\phi
||_{1}$ with $\lambda <1$, then 
\begin{equation*}
||L_{\delta }\phi ||_{BV}\leq \lambda ||\phi ||_{BV}+B||\phi ||_{1}.
\end{equation*}
\end{lemma}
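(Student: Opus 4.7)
The plan is to exploit the fact that the Ulam projection $\pi_\delta$ is a contraction both in the $L^1$ norm and in the $BV$ norm, so that sandwiching $L$ between two copies of $\pi_\delta$ cannot worsen the Lasota–Yorke constants.

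First I would record two facts about the conditional expectation $\pi_\delta f = \mathbf{E}(f \mid F_\delta)$, which replaces $f$ on each partition element $I_i$ by its mean $\bar f_i = \frac{1}{m(I_i)}\int_{I_i} f\,dm$. The $L^1$ bound $\|\pi_\delta f\|_1 \le \|f\|_1$ is immediate from Jensen's inequality (or from $|\bar f_i|\,m(I_i) \le \int_{I_i}|f|\,dm$ and summing over $i$). For the variation bound I would note that $\pi_\delta f$ is a step function, so its total variation is $\sum_i |\bar f_{i+1}-\bar f_i|$; since each $\bar f_i$ lies between the essential infimum and supremum of $f$ on $I_i$, the jump $|\bar f_{i+1}-\bar f_i|$ is bounded by the variation of $f$ on $I_i \cup I_{i+1}$. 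Summing telescopically, each subinterval of $[0,1]$ is used at most twice, so one obtains $\mathrm{Var}(\pi_\delta f) \le \mathrm{Var}(f)$ and therefore $\|\pi_\delta f\|_{BV} \le \|f\|_{BV}$.

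With these two contraction properties in hand, the proof is a direct chain of inequalities:
\begin{align*}
\|L_\delta \phi\|_{BV} = \|\pi_\delta L \pi_\delta \phi\|_{BV} &\le \|L \pi_\delta \phi\|_{BV} \\
&\le \lambda \|\pi_\delta \phi\|_{BV} + B\|\pi_\delta \phi\|_1 \\
&\le \lambda \|\phi\|_{BV} + B\|\phi\|_1,
\end{align*}
where the first step uses the $BV$-contraction of the outer $\pi_\delta$, the middle step is the hypothesis applied to $\pi_\delta\phi$, and the last step applies both contraction properties to the inner $\pi_\delta$.

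The main obstacle, mild as it is, is justifying $\mathrm{Var}(\pi_\delta f) \le \mathrm{Var}(f)$ cleanly; one has to be a little careful with the boundary behaviour of $\pi_\delta f$ (it is constant on each open $I_i$, so variation is genuinely computed from the consecutive averages rather than by naively comparing with $f$ pointwise). Once that bookkeeping is done, the rest is an algebraic sandwich and no further use of the structure of $T$ or of the partition $I_\delta$ is required.
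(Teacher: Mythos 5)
Your argument is exactly the paper's: the paper also writes $L_{\delta}=\pi_{\delta}L\pi_{\delta}$, invokes $||\mathbf{E}(f|\mathcal{F}_{\delta})||_{BV}\leq ||f||_{BV}$ (citing Liverani's Lemma 4.1 rather than proving it) together with the $L^{1}$ contraction, and runs the same three-line sandwich. So the structure is fine.

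The one point to tighten is your justification of $Var(\pi_{\delta}f)\leq Var(f)$. As written, bounding each jump $|\bar{f}_{i+1}-\bar{f}_{i}|$ by the variation of $f$ on $I_{i}\cup I_{i+1}$ and observing that each subinterval is used at most twice yields only $Var(\pi_{\delta}f)\leq 2\,Var(f)$, and the factor $2$ is not harmless here: it would turn the conclusion into $||L_{\delta}\phi||_{BV}\leq 2\lambda||\phi||_{BV}+2B||\phi||_{1}$, which can destroy the contraction $\lambda<1$ that the lemma is meant to preserve. To get the constant $1$, pick a point $x_{i}$ in each $I_{i}$ independently at random with normalized Lebesgue law; then $\sum_{i}|\bar{f}_{i+1}-\bar{f}_{i}|=\sum_{i}|\mathbf{E}[f(x_{i+1})-f(x_{i})]|\leq \mathbf{E}\bigl[\sum_{i}|f(x_{i+1})-f(x_{i})|\bigr]\leq Var(f)$, since every realization of the ordered points $x_{1}<\dots<x_{k}$ is an admissible partition in the definition of the variation. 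With that (or simply the citation the paper uses) the rest of your chain of inequalities is correct as it stands.
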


\begin{proof}
We remark that $||\mathbf{E}(f|\mathcal{F}_{\delta })||_{BV}\leq ||f||_{BV}$
(see \cite{L} Lemma 4.1).

Then 
\begin{eqnarray*}
||L_{\delta }\phi ||_{BV} &=&||\mathbf{\pi }_{\delta }L\mathbf{\pi }_{\delta
}(\phi )||_{BV}\leq ||L\mathbf{\pi }_{\delta }(\phi )||_{BV} \\
&\leq &\lambda ||\mathbf{\pi }_{\delta }(\phi )||_{BV}+B||\mathbf{\pi }%
_{\delta }\phi ||_{1}\leq \lambda ||\phi ||_{BV}+B||\phi ||_{1}.
\end{eqnarray*}
\end{proof}

For each $\delta \geq 0$ we consider on the interval $[0,1]$ a partition $%
\mathcal{F}$ made of equal intervals $I_{i}$ having length $\frac{1}{%
\left\lfloor \frac{1}{\delta }\right\rfloor }$ (we consider intervals of
size $\delta $ when $\frac{1}{\delta }$ is integer). To apply Proposition %
\ref{stabi} we need an estimation on the quality of approximation by Ulam
discretization like asked in the assumption UF2. This is provided by the
following

\begin{lemma}
If $L$ is the transfer operator associated to a piecewise expanding map and $%
L_{\delta }$ is given by the Ulam discretization as explained before we have
that there is $C>0$ such that 
\begin{equation*}
||Lf-L_{\delta }f||_{1}\leq C\delta ||f||_{BV}
\end{equation*}
\end{lemma}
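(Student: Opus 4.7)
The plan is to decompose the difference $Lf-L_{\delta}f$ by adding and subtracting $\pi_{\delta}Lf$:
\begin{equation*}
Lf-L_{\delta}f = Lf - \pi_{\delta}L\pi_{\delta}f = (I-\pi_{\delta})Lf + \pi_{\delta}L(f-\pi_{\delta}f),
\end{equation*}
so by the triangle inequality
\begin{equation*}
\|Lf-L_{\delta}f\|_{1} \leq \|(I-\pi_{\delta})Lf\|_{1} + \|\pi_{\delta}L(f-\pi_{\delta}f)\|_{1}.
\end{equation*}
Both summands will be estimated by the same elementary lemma saying that for any $g\in BV$ we have $\|g-\pi_{\delta}g\|_{1}\leq \delta\, Var(g)$. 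The lemma is immediate from the definition: on each element $I_{i}$ of the Ulam partition, $\pi_{\delta}g$ equals the mean of $g$ on $I_{i}$, hence the pointwise bound $|g-\pi_{\delta}g|\leq \mathrm{osc}(g|I_{i})\leq Var(g|I_{i})$ holds on $I_{i}$; multiplying by $m(I_{i})\leq \delta$ and summing over $i$ gives the claim.

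For the first summand I apply the lemma with $g=Lf$ to get $\|(I-\pi_{\delta})Lf\|_{1}\leq \delta\, Var(Lf)\leq \delta\|Lf\|_{BV}$. The Lasota--Yorke inequality from Theorem \ref{LYBV} (in the hypothesis $\tfrac{2}{\inf T'}<1$, so it applies at the first step without having to pass to an iterate) yields $\|Lf\|_{BV}\leq \lambda\|f\|_{BV}+B\|f\|_{1}\leq (\lambda+B)\|f\|_{BV}$. For the second summand I use that both $L$ and $\pi_{\delta}$ are weak contractions on $L^{1}$ (the first by Proposition \ref{weakcontr}, the second because conditional expectation never increases the $L^{1}$ norm), so $\|\pi_{\delta}L(f-\pi_{\delta}f)\|_{1}\leq \|f-\pi_{\delta}f\|_{1}$, and then the lemma applied to $g=f$ gives $\|f-\pi_{\delta}f\|_{1}\leq \delta\, Var(f)\leq \delta\|f\|_{BV}$.

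Combining, one obtains $\|Lf-L_{\delta}f\|_{1}\leq \bigl((\lambda+B)+1\bigr)\,\delta\,\|f\|_{BV}$, which is the claimed estimate with $C=1+\lambda+B$. I do not expect genuine obstacles here: the only slightly subtle point is the pointwise oscillation bound $|g-\pi_{\delta}g|\leq Var(g|I_{i})$ on $I_{i}$ which must be stated carefully (a BV function is defined up to countably many points, but one can work with the canonical representative, or equivalently notice that the bound holds a.e.), together with the observation that having $\tfrac{2}{\inf T'}<1$ is exactly what lets us invoke the BV Lasota--Yorke bound on $Lf$ directly without taking an iterate.
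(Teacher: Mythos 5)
Your proposal is correct and follows essentially the same route as the paper: the same decomposition $Lf-\pi_{\delta}L\pi_{\delta}f=(I-\pi_{\delta})Lf+\pi_{\delta}L(f-\pi_{\delta}f)$, the same elementary estimate $\|g-\pi_{\delta}g\|_{1}\leq \delta\,Var(g)$ on each summand, the one-step Lasota--Yorke bound for $\|Lf\|_{BV}$, and the $L^{1}$ weak contraction of $L$ and $\pi_{\delta}$. If anything, your treatment of the second summand (bounding it directly by $\|f-\pi_{\delta}f\|_{1}$) is stated more cleanly than the corresponding inequality in the paper.
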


\begin{proof}
It is not difficult to see that for $f\in BV$, it holds 
\begin{equation}
||\pi _{\delta }f-f||_{L^{1}}\leq \delta \cdot ||f||_{BV}.  \label{bbn}
\end{equation}

Indeed from the definition of the norm we can see that $||f||_{BV}\geq
\sum_{i}|\sup_{I_{i}}(f)-\inf_{I_{i}}(f)|$, where $I_{i}$ are the intervals
composing the partition $\mathcal{F}$. Since $\sup_{I_{i}}(f)\geq \mathbf{E}%
(f|I_{i})\geq \inf_{I_{i}}(f)$, it follows $\int_{I_{i}}|\mathbf{E}(f|%
\mathcal{F}_{\delta })-f|\leq \delta |\sup_{I_{i}}(f)-\inf_{I_{i}}(f)|$
leading to \ref{bbn}.

By this it holds 
\begin{equation*}
||(L-L_{\delta })f||_{L^{1}}\leq ||\mathbf{\pi }_{\delta }L\mathbf{\pi }%
_{\delta }(f)-\pi _{\delta }L(f)||_{L^{1}}+||\pi _{\delta }(Lf)-Lf||_{L^{1}},
\end{equation*}%
and%
\begin{equation*}
||\mathbf{\pi }_{\delta }L\mathbf{\pi }_{\delta }(f)-\pi _{\delta
}L(f)||_{L^{1}}\leq ||\pi _{\delta }(Lf)-Lf||_{L^{1}}\leq \delta
||Lf||_{BV}\leq \delta (\lambda ||f||_{BV}+B||f||_{1})\leq \delta
(B+1)||f||_{BV}.
\end{equation*}
\end{proof}

Since $||\mathbf{\pi }_{\delta }||_{L^{1}\rightarrow L^{1}}\leq 1$ we
immediately get that if $L_{\delta }$ is given by the Ulam method, for each $%
f\in L^{1}$ 
\begin{equation*}
||L_{\delta }f||_{L^{1}}\leq ||f||_{L^{1}}.
\end{equation*}

Hence we proved that UF1,...,UF4 applies to the family of perturbed
operators $L_{\delta }$ and thus we can apply Proposition \ref{stabi},
concluding that for Piecewise expanding maps which are topologically mixing
and such that $\frac{2}{\inf_{I_{i}}((T)^{\prime })}<1$ the Ulam method
approximation $h_{\delta }$ converge to the real invariant density $h.$
Furthermore, we have a quantitative estimation%
\begin{equation*}
||h_{\delta }-h_{0}||_{1}=O(\delta \log \delta ).
\end{equation*}%
In \cite{BM} it is proved that this rate is the optimal one. There are
examples of map for which the approximation rate is asymptotically
proportional to $\delta \log \delta .$

\section{A look at random dynamical systems\label{rnd}}

In this section we apply the transfer operator approach to random dynamical
systems. A random dynamical system is a dynamical system where the dynamics
depend on some random parameter. Instead of iterating a single map $T$ in
this case we have a family of maps $\{T_{\omega }\}_{\omega \in \Omega }$
and the dynamics applies a sequence of maps drawn at random in this family.
We will see that in this case too, we can define an associated transfer
operator, consider the associated invariant measures, and relevant
statistical properties of the system can be studied by these concepts. In
the first part of the section we will introduce what is a random dynamical
system and what is the associated transfer operator. \ We restrict our study
to a class of dynamical systems in which the maps $\{T_{\omega }\}_{\omega
\in \Omega }$ are drawn independently. The concept of random dynamical
system can be considered from a more general point of view (see \cite{Ar}).
In the following subsection we briefly introduce the basic of the ergodic
theory of random dynamical systems. The (simple) approach we choose follows
Section 5 of \cite{Viana}. Then we will focus our attention on the class of
systems with additive noise. These are system in which at each iteration of
the dynamics one applies a deterministic map and the adds a random
perturbation (the noise). This class of systems is flexible enough to
include models of nontrivial real phenomena (see e.g. \cite{GMN}, \cite{LM})
and for this kinds of systems the transfer operator approach allow to obtain
several interesting results in a relatively simple way.

\subsection{Random dyamics, basic definitions.\label{oneside}}

First let us describe the probability space defining the randomness in our
systems Let $(\Omega ,\mathcal{X},p)$ be a probability space. Let $(\Omega ^{%
\mathbb{N}},\mathcal{A},\mu )$ the space of sequences on $\Omega $ endowed
with the product $\sigma $-algebra $\mathcal{A=X}^{\mathbb{N}}$ and the
product measure $\mu =p^{\mathbb{N}}.$ Let $s:\Omega ^{\mathbb{N}%
}\rightarrow \Omega ^{\mathbb{N}}$ be the usual shift map on $\Omega ^{%
\mathbb{N}}$ defined by $s(\omega )=(\omega _{i+1})_{i\in \mathbb{N}}$ where 
$\omega =(\omega _{i})_{i\in \mathbb{N}}$ (for example $s[(\omega
_{0},\omega _{1},\omega _{2},...)]=(\omega _{1},\omega _{2},\omega _{3},...)$
). The measure $\mu $ is invariant for $s$, and $(\Omega ^{\mathbb{N}},s,\mu
)$ is a deterministic ergodic dynamical system which is a model of
independent sorting from $\Omega $ with probability $p$. The $\sigma $%
-algebra $\mathcal{A}$ is indeed generated by the \emph{cylinders }$%
C_{i,A}=\{\omega \in \Omega ^{\mathbb{N}}|\omega _{i}\in A\}$ and $\forall
i\in \mathbb{N},A\subseteq \Omega $ we have%
\begin{equation*}
\mu (C_{i,A})=\mu (A)=\mu (s^{-1}(C_{i,A}))=\mu (C_{i+1,A}).
\end{equation*}

Now we define a random dynamical system formally as a skew product. Let $(N,%
\mathcal{B)}$ be a measurable space. Let us consider $\Omega ^{\mathbb{N}%
}\times N$ with the product $\sigma $-algebra $\mathcal{A\times B}.$

Let $\mathcal{F=\{}F_{i}:N\rightarrow N\mathcal{\}}_{i\in \Omega }$ be a set
of maps $N\rightarrow N$. A random transformation $F_{\omega }:N\rightarrow
N $ over $s$ is a measurable%
\begin{equation*}
F:\Omega ^{\mathbb{N}}\times N\rightarrow \Omega ^{\mathbb{N}}\times N
\end{equation*}%
defined by%
\begin{equation*}
F(\omega ,v)=(s(\omega ),F_{\omega _{0}}(v))
\end{equation*}

where $F_{\omega _{0}}\in \mathcal{F}$ depends only on the $0$-th coordinate 
$\omega _{0}\in \Omega $ of \thinspace $\omega \in \Omega ^{\mathbb{N}}$.

\begin{example}
Let $\Omega =\{0,1\}$ suppose $p(0)=p(1)=\frac{1}{2}$ (and the usual product
structure) Let $F_{0},F_{1}$ be maps $F_{i}:N\rightarrow N$ and 
\begin{equation*}
F(\omega ,v)=(s(\omega ),F_{\omega _{0}}(v)).
\end{equation*}%
Then $F$ represents the random dynamics in which $F_{0},F_{1}$ are applied
independently and with the same probability.
\end{example}

\begin{example}
Let $\Omega =[-\frac{1}{2},\frac{1}{2}]$ suppose $p=Leb~meas.$ (and the
usual product structure) Let $g:\mathbb{R\rightarrow R}$ be a measurable \
function and%
\begin{equation*}
F_{\omega }(v)=g(v)+\omega _{0}.
\end{equation*}%
Then $F$ represents the random dynamics in which at each step $g$ is applied
and some random unif. distributed noise in $[-\frac{1}{2},\frac{1}{2}]$ is
added.
\end{example}

\subsection{The transfer operator}

Given $F_{\omega }:N\rightarrow N$ the associated (annealed) transfer
operator $L:SM(N)\rightarrow SM(N)$ is defined by considering in some sense
the average transfer operator among all the transfer operators associated to
the maps $\{T_{\omega }\}_{\omega \in \Omega }.$ The average will be
considered according to the measure $p$ describing the randomness in our
system%
\begin{equation*}
L(\nu )=\int_{x\in \Omega }L_{F_{x}}(\nu )~dp(x)
\end{equation*}%
i.e. \ given a measurable $A\subseteq N$%
\begin{eqnarray*}
\lbrack L(\nu )](A) &=&\int_{x\in \Omega }[L_{F_{x}}(\nu )](A)~dp(x) \\
&=&\int_{x\in \Omega }\nu \lbrack F_{x}^{-1}(A)]~dp(x).
\end{eqnarray*}

As in the deterministic case: this is a positive operator, and preserves
probability measures, hence it is a Markov operator. Furthermore by $(\ref%
{weakcontr})$ if almost all the maps $\{T_{\omega }\}_{\omega \in \Omega }$
are nonsingular and $\nu \in L^{1}$ then it easily follows%
\begin{equation*}
||L(\nu )||_{1}\leq ||\nu ||_{1}.
\end{equation*}

Associated to $F$ there is another linear map acting of functions, which is
in some sense the dual of the transfer operator. This is the analog of the
composition operator and it is called as the Koopman operator associated to
our system. Let $\phi :N\rightarrow \mathbb{R}$ be measurable, let $x\in N,$
define $P(\phi ):N\rightarrow \mathbb{R}$ as%
\begin{equation*}
\lbrack P(\phi )](x)=\int_{y\in \Omega }\phi (F_{y}(x))~dp(y).
\end{equation*}

As for deterministic dynamical systems we have the following duality
relation between the two operators

\begin{lemma}
Under the above assumptions, suppose that $\phi $ is bounded and measurable,
let $\nu \in SM(N),$ then%
\begin{equation*}
\int \phi ~d[L\nu ]=\int [P(\phi )]~d\nu .
\end{equation*}
\end{lemma}

Compare this statement with Proposition \ref{duality}, from which this
proposition easily follows.

Now we consider the measures which are in a certain sense invariant for a
random dynamical system: a measure $\eta \in PM(N)$ is called \emph{%
stationary for }$F_{\omega }$ if 
\begin{equation*}
L\eta =\eta .
\end{equation*}

The following proposition shows a link between being stationary for a random
system and invariant for the associated skew product $F,$for the proof see 
\cite{Viana}, Section 5.

\begin{theorem}
\label{vi1}Let $F:\Omega ^{\mathbb{N}}\times N\rightarrow \Omega ^{\mathbb{N}%
}\times N$ be a one sided random transformation with associated transfer
operator $L$. A probability measure $\nu $ on $N$ is stationary for the
system ($L\eta =\eta $) if and only if the probability measure $\mu \times
\nu $ is invariant for the skew product map $F$.
\end{theorem}

\subsection{Ergodic stationary measures}

In this section we extend the notion of ergodicity to random dynamical
systems. The reader will notice the similarity of the definitions with the
analogous definitions in the deterministic case. Let $\eta $ be a stationary
measure. A bounded measurable function $\phi :N\rightarrow \mathbb{R}$ is
said to be stationary if it satisfies 
\begin{equation*}
P\phi =\phi .
\end{equation*}

A set $B\subset N$ is said to be stationary if $1_{B}$ is stationary. A
bounded measurable function $\phi :N\rightarrow \mathbb{R}$ is said to be $%
\eta -$stationary if it satisfies 
\begin{equation*}
P\phi =\phi
\end{equation*}%
$\eta -a.e.$ A set $B\subset N$ is said $\eta $-stationary if $1_{B}$ is $%
\eta $-stationary.

\begin{example}
The complement of a stationary set is stationary. Consider a stationary set $%
B,$ then $1_{B}=P(1_{B})$. Observe that $1_{B^{C}}=1-1_{B}$ and%
\begin{eqnarray*}
P(1_{B^{C}}) &=&\int_{x\in \Omega }(1-1_{B})\circ F_{x}~dp(x)=1-\int_{x\in
\Omega }(1_{B})\circ F_{x}~dp(x) \\
&=&1-P1_{B}=1_{B^{C}}
\end{eqnarray*}%
hence $B^{C}$ is stationary.
\end{example}

As in the deterministic case, one has the following equivalence

\begin{theorem}
Let $\eta $ be a stationary measure. The following conditions are equivalent:

\begin{enumerate}
\item every $\eta $-stationary function is constant on some set with full $%
\eta $-measure;

\item if $B\subset N$ is an $\eta $-stationary set then $\eta (B)$ is either 
$0$ or $1$.
\end{enumerate}
\end{theorem}

\begin{definition}
\label{ergor}$\eta $ is said to be \emph{ergodic} if $1$ and $2$ holds.
\end{definition}

There is a relation between the concept of ergodicity in the random system,
and the same concept for the associated skew product map (for the proof of
the statement, again see \cite{Viana}, Section 5).

\begin{theorem}
Let $F:\Omega ^{\mathbb{N}}\times N\rightarrow \Omega ^{\mathbb{N}}\times N$
be a "one sided" random transformation as defined in Section \ref{oneside}.
A probability measure $\nu $ on $N$ is ergodic for the random system in the
sense of Definition \ref{ergor} if and only if the probability measure $\mu
\times \nu $ is ergodic for the map $F$.
\end{theorem}

The set of stationary measures is trivially a convex set. As in the
deterministic case, every stationary measure is a convex combination of
ergodic measures (and the ergodic ones are extremal points, see \cite{Ki},
Appendix A.1 for details).

Next proposition shows another aspect of ergodicity in a random dynamical
system. It can be interpreted as the fact that in an ergodic system, given
some integrable observable $f:N\rightarrow \mathbb{R}$ \ for a typical
random orbit $x_{0},...,x_{i}\in N$ the time average of the observable will
coincide with the space average with respect to the stationary measure, like
in the deterministic case.

\begin{proposition}
Let $F:\Omega ^{\mathbb{N}}\times N\rightarrow \Omega ^{\mathbb{N}}\times N$
be the skew product map associated to a random dynamical system. Let us
consider an ergodic stationary measure $\nu $ on $N.$ \ Let us consider an
observable $f\in L^{1}(N,\mathbb{\nu })$ on $N$. Let us consider $\omega
_{0}\in \Omega ^{\mathbb{N}}$ and $x_{0}\in N$ \ as initial conditions. For
such conditions we have a random orbit $x_{i}\in N$ with this initial
condition, defined by $x_{i}:=\pi _{N}(F^{i}(\omega _{0},x_{0}))$ where $\pi
_{N}:\Omega ^{\mathbb{N}}\times N\rightarrow N$ is the natural projection.
With this notations, for $\mu \times \nu $ almost each initial condition $%
(\omega _{0},x_{0})$ it holds:%
\begin{equation*}
\int f~d\nu =\underset{n\rightarrow \infty }{\lim }\frac{%
f(x_{0})+f(x_{i})+...+f(x_{n})}{n+1}.
\end{equation*}
\end{proposition}

\begin{proof}
Let us consider $f\in L^{1}(N,\mathbb{\nu })$. This function can be extended
to $\tilde{f}\in L^{1}(\Omega ^{\mathbb{N}}\times N,\mathbb{\mu \times \nu }%
) $ by setting $\tilde{f}(\omega ,x)=f(x)$. By the ergodic theorem applied
to $F$ we get%
\begin{equation*}
\int \tilde{f}~d[\mu \times \nu ]=\underset{n\rightarrow \infty }{\lim }%
\frac{\tilde{f}(\omega ,x)+\tilde{f}(F(\omega ,x))+...+\tilde{f}%
(F^{n}(\omega ,x))}{n+1}
\end{equation*}%
for $\mu \times \nu $ almost each $(\omega ,x)\in \Omega ^{\mathbb{N}}\times
N$. Now $\forall \omega $ $\underset{n\rightarrow \infty }{\lim }\frac{%
\tilde{f}(\omega ,x)+\tilde{f}(F(\omega ,x))+...+\tilde{f}(F^{n}(\omega ,x))%
}{n+1}=\underset{n\rightarrow \infty }{\lim }\frac{%
f(x_{0})+f(x_{i})+...+f(x_{n})}{n+1}$ the conclusion follows, since trivially%
\begin{equation*}
\int \tilde{f}~d[\mu \times \nu ]=\int f~d\nu .
\end{equation*}
\end{proof}

\subsubsection{Additive noise}

We are going to define a class of systems (with additive noise) which allow
a simple functional analytic treatment. To start let us consider a random
dynamics on $[0,1].$ Everything we say generalizes easilly to other spaces.
We consider a dynamics in which the orbit of a point is generated by a
deterministic map, but were at each iterate some (small or not so small)
random perturbation is added (the noise). We suppose that the perurbation is
independend from the point and always distributed in the same way. For
simplicity we will suppose that the random perturbation ranges in $[-1,1]$.
\ All these assumptions can be generalized, making the noise depending on
the point, following a similar costruction and similar arguments.

More precisely, consider a Borel map $T:[0,1]\rightarrow \lbrack 0,1]$ (the
deterministic part). Consider a probability density $\rho \in BV[-1,1]$ (the
noise kernel). For simplicity let us suppose $\rho $ being symmetric ($\rho
(x)=\rho (-x)$). Consider the process{%
\begin{equation}
x_{n+1}=T(x_{n})+\Omega _{n}  \label{systm}
\end{equation}%
} where $\Omega _{n}$ is an i.i.d. process distributed according to $\rho $ {%
\ $\in BV$ \footnote{{Notice that this can be realized with the }$n$-th{\
coordinate of a shift as shown before: we can set }$\Omega =[-1,1]$, \ the
process being defined by{\ }$\Omega _{n}(\omega )=\omega _{n}$, with $\omega
\in (\Omega ^{\mathbb{N}},p^{\mathbb{N}})$ and $\frac{dp}{dx}=\rho $.}}

This is the idea behind a random dynamical system with additive noise. If we
want to define a dynamics on the interval there is a technical point to
address: $T(x_{n})\in \lbrack 0,1]$ but $T(x_{n})+\Omega _{n}$ could be
outside, i.e the noise can let the point jump outside the interval. We have
to decide where the point goes in this case. For this we can consider
"boundary reflecting conditions", "periodic boundary conditions" or other
ways to define the dynamics on $[0,1]$ in this case. We enter in the details
of the boundary reflecting conditions. Consider hence the process{%
\begin{equation}
x_{n+1}=T(x_{n})\hat{+}\Omega _{n}
\end{equation}%
}

and $\hat{+}$ is the \textquotedblleft reflecting boundaries sum" on $[0,1]$
defined as follows.

\begin{definition}
Let $\pi :{\mathbb{R}}\rightarrow \lbrack 0,1]$ be a piecewise linear
projection on the interval 
\begin{equation}
\pi (x)=\min_{i\in \mathbb{Z}}|x-2i|.  \label{ppi}
\end{equation}%
Let $a,b\in {\mathbb{R}}$ then 
\begin{equation*}
a\hat{+}b:=\pi (a+b)
\end{equation*}%
where $+$ is the usual sum operator on $\mathbb{R}$. By this $a\hat{+}b\in
\lbrack 0,1].$
\end{definition}

Now let us describe the structure of the transfer operator associated to a
system of this kind. By definition%
\begin{equation*}
L(\nu )=\int_{x\in \Omega }L_{F_{x}}(\nu )~dp(x).
\end{equation*}

Were on $\mathbb{R}$, $F_{x}(a)=T(a)+x$, hence $L_{F_{x}}=Tran(x)\circ L_{T}$
where $Tran(x):SM(\mathbb{R)\rightarrow }SM(\mathbb{R)}$ is the pushforward
of the translation by $x$.

On $[0,1]$ with reflecting boundary conditions we have $F_{x}(a)=T(a)\hat{+}%
x $ and then 
\begin{equation*}
L_{F_{x}}=\pi _{\ast }\circ Tran(x)\circ L_{T}.
\end{equation*}

On $\mathbb{R}$, since $\rho $ is symmetric and the Lebesgue measure is
invariant by translation, if we denote with $\mu _{\rho }=\rho m$ the
measure on $[-1,1]$ having density $\rho $. 
\begin{equation}
L(\nu )=\int_{x\in \Omega }Tran(x)(L_{T}(\nu ))\cdot \rho (x)~dx=\mu _{\rho
}\ast L_{T}(\nu )  \label{ll}
\end{equation}

where $\ast $ stands for the ordinary convolution operator \ between
measures.\footnote{%
We recall that given two measures $\mu ,\nu \in BS[{\mathbb{R}}]$ their
convolution is defined as 
\begin{equation*}
\mu \ast \nu (A)=\int_{{\mathbb{R}}^{2}}{1}_{A}(x+y)d\mu (x)d\nu (y).
\end{equation*}%
}

On $[0,1]$ with the boundary reflecting conditions we then have%
\begin{equation*}
L(\nu )=\pi _{\ast }\circ \lbrack \mu _{\rho }\ast L_{T}(\nu )]:=\mu _{\rho }%
\hat{\ast}L_{T}(\nu ).
\end{equation*}

The transfer operator associated to the system is then the composition of
the pushforward $\pi _{\ast }$ \ associated to $\pi $, the convolution and
the transfer operator related to $T$.

Now let us consider absolutely continuous measures. Let us suppose that $T$
is nonsingular, and hence $L_{T}$ can be seen as an operator $%
L^{1}([0,1])\rightarrow L^{1}([0,1])$. Since the convolution between
measures, when applied to measure having a density, becomes the usual
convolution operator between the associated densities and also $\pi _{\ast }$
preserves absolutely contonuous measures we have that the transfer operator
of the random system $L$ defined at \ref{ll} can be considered as an
operator $L^{1}\rightarrow L^{1}$ sending a measure density to another one.

The well known regularizing properties of the convolution will then make the
operator $L:L^{1}\rightarrow L^{1}$ being a regularizing \ one and we can
get easy quantitative estimates on this. These estimates will play the role
of the Lasota Yorke ones in the theory of deterministic systems. \ We have
indeed the following regularization inequalities:

\begin{lemma}
\label{convoocopy1} Let $f,$ $g\in L^{1}$. We have 
\begin{equation}
\Vert f\hat{\ast}g\Vert _{1}\leq \Vert f\Vert _{1}\cdot \Vert g\Vert _{1}.
\end{equation}%
Furthermore, let $f\in L^{1},$ $g\in BV$%
\begin{equation}
\Vert f\hat{\ast}g\Vert _{BV}\leq 3\Vert f\Vert _{1}\cdot \Vert g\Vert _{BV}.
\label{conv3}
\end{equation}
\end{lemma}

Under the above assumptions, if $\rho \in BV,\nu \in L^{1}$and $n\geq 0$%
\begin{equation}
||L^{n}(\nu )||_{BV}\leq 3||\rho ||_{BV}||\nu ||_{1}.  \label{llll}
\end{equation}

\begin{proof}
We know that $\pi _{\ast }$ is a weak contraction with respect to the $L^{1}$
norm. By the classical properties of the convolution we have $\Vert f\ast
g\Vert _{1}\leq \Vert f\Vert _{1}\cdot \Vert g\Vert _{1}$, then also $\Vert f%
\hat{\ast}g\Vert _{1}\leq \Vert f\Vert _{1}\cdot \Vert g\Vert _{1}.$

About Equation $($\ref{conv3}$)$ on the real line we have the well known
estimate $\Vert f\ast g\Vert _{BV}\leq \Vert f\Vert _{1}\cdot \Vert g\Vert
_{BV}$. For the convolution on the interval let us remark that if $\mu $ is
supported on $[-1,2]$ then $\Vert \pi ^{\ast }(\mu )\Vert _{BV}\leq 3\Vert
\mu \Vert _{BV}$ , giving $(\ref{conv3})$.

About $(\ref{llll})$, since $L$ is a weak contraction with respect to the $%
L^{1}$ norm%
\begin{equation*}
||L^{n}(\nu )||_{BV}\leq 3||\rho ||_{BV}||L^{n-1}(\nu )||_{1}\leq 3||\rho
||_{BV}||\nu ||_{1}.
\end{equation*}
\end{proof}

The previous lemma implies that the tranfer operator $L$ can also be seen as 
$L:BV[0,1]\rightarrow BV[0,1]$. Now, considering $L$ as an operator acting
on a strong and a weak space, $BV$ and $L^{1}$ and exploiting a strategy
very similar to what we have done for expanding and piecewise expending
maps, we prove that a system with additive noise has a stationary measure
with density in $BV$. In this construction the Helly selection principle
(see Theorem \ref{Helly}) will provide the compact immersion between the
strong and the weak space.

\begin{theorem}
\label{exist1}A random dynamical system with additive noise with $\rho \in
BV $ has an absolutely continuous stationary measure having density $h\in BV$
and 
\begin{equation*}
||h||_{BV}\leq 3||\rho ||_{BV}.
\end{equation*}
\end{theorem}

\begin{proof}
Consider the Cesaro averages of iterates of an uniform distribution%
\begin{equation*}
f_{n}=\frac{1}{n}\sum_{i=0}^{n-1}L^{n}1
\end{equation*}
where $1$ is the density of the normalized Lebesgue measure.\ 

By Lemma \ref{convoocopy1}, the sequence has uniformly bounded $BV$ norm and
by Theorem \ref{Helly} this has a subsequence $f_{n_{k}}$ converging in $%
L^{1}$ to a limit $h$.

Since $f_{n}=\frac{1}{n}(L^{0}1+...+L^{n}1)$ we get $%
||Lf_{n_{K}}-f_{n_{k}}||_{1}\leq \frac{2}{n}$. \ Recalling that $L$ is
continuous in the $L^{1}$ norm we get 
\begin{equation*}
Lh=L(\lim_{k\rightarrow \infty }f_{n_{k}})=\lim_{k\rightarrow \infty
}Lf_{n_{k}}=h.
\end{equation*}%
Then $h$ is a stationary probability density in $L^{1}$. Applying again
Lemma \ref{convoocopy1} \ we get 
\begin{equation*}
||h||_{BV}=||Lh||_{BV}\leq 3||\rho ||_{BV}||h||_{1}<\infty
\end{equation*}%
and we proved the statement.
\end{proof}

Now we discuss the problem of how to understand when a system is ergodic and
stronger notions like the notion of convergence to equilibrium (see
Definition \ref{conve}). In the following proposition we see that in a
system with additive noise the convergence to equilibrium implies
ergodicity, thus we can apply the results we know to estimate the
statistical behavior of observables. In next subsection we will show a
general criteria (Lemma \ref{critconv}) to establish the convergence to
equilibrium in such systems.

\begin{proposition}
If a dynamical system with additive noise as above has convergence to
equilibrium (using $BV$ and $L^{1}$ as a strong and weak space, see $(\ref%
{CE})$) then the system is ergodic.
\end{proposition}

\begin{proof}
Consider a stationary set $B$. Suppose that $\mu $ is a stationary measure
for the system. It holds for each $\phi \in BV$

with $\phi \geq 0$%
\begin{equation*}
\int 1_{B}~\phi ~d\mu =\int P^{n}(1_{B})~\phi ~d\mu =\int 1_{B}~d(L^{n}\phi
\mu ).
\end{equation*}%
Since $\phi \mu \in BV$ , if the system has convergence to equilibrium $%
L^{n}\phi \mu \rightarrow \lbrack \int \phi ~d\mu ]\mu $ in $L^{1}$. We get
then that\emph{\ }$\forall \phi \in BV~$with $\phi \geq 0$%
\begin{equation*}
\int 1_{B}~\phi ~d\mu =\int 1_{B}~d\mu \int \phi ~d\mu .
\end{equation*}%
\emph{\ }Thus $1_{B}$ is a.e constant and $\mu (B)\in \{0,1\}$. The system
is hence ergodic.
\end{proof}

We now see that if a map with additive noise \ has convergence to
equilibrium then it also has spectral gap, and then the convergence to
equilibrium is exponentially fast among many other consequences.

\begin{proposition}
Suppose the transfer operator $L:BV\rightarrow BV,$ associated to a map with
additive noise as above has convergence to equilibrium using $BV$ and $L^{1}$
as a strong and weak space (see $(\ref{CE})$) then it has spectral gap.
\end{proposition}

\begin{proof}
The statement directly follows by Theorem \ref{gap}. We consider $%
B_{s}=BV[0,1]$ and $B_{w}=L^{1}[0,1]$ as strong and weak space. Weak
boundedness in $L^{1}$is granted.\ We have compact inclusion of the strong
space into the weak one (Theorem \ref{Helly}) Furthermore 
\begin{equation}
||L^{n}(\nu )||_{BV}\leq 3||\rho ||_{BV}||\nu ||_{1}  \label{reg}
\end{equation}%
is a Lasota Yorke inequality with $\lambda =0.$
\end{proof}

\begin{remark}
Once established the spectral gap on $BV$, the spectral gap on $L^{1}$for
this kind of systems follows easily. Indeed, let us consider $\nu \in \{\mu
\in L^{1}~s.t.~\mu ([0,1])=0\}.$ By $($\ref{reg}$)$ $~||L(v)||_{BV}\leq
3||\rho ||_{BV}||\nu ||_{1}$ and $L(v)\in \{\mu \in BV~s.t.~\mu
([0,1])=0\}:=V_{BV}$. By the spectral gap on $BV$ there is $A\geq 0,$ $%
\lambda \in (0,1)$ such that 
\begin{eqnarray*}
||L^{n}(\nu )||_{1} &\leq &||L^{n}(\nu )||_{BV} \\
&\leq &||L^{n-1}(L\nu )||_{BV} \\
&\leq &A\lambda ^{n-1}3||\rho ||_{BV}||\nu ||_{1}
\end{eqnarray*}%
implying spectral gap on $L^{1}.$
\end{remark}

As an example of a first simple application of the previous proposition we
state the following

\begin{proposition}
Let $(x_{n})_{n\geq 0}$ \ be a random orbit starting from the initial
condition $x_{0}$ and a realization $\omega _{0}$ of the noise. Let
\thinspace $\phi \in L^{1}(N).$ Let us define the average behavior of the
random orbits on such realizations%
\begin{equation*}
E(\phi (x_{n})):=\int_{\Omega ^{\mathbb{N}}}\phi (x_{n})~d\omega _{0}.
\end{equation*}

In a map with additive noise having convergence to equilibrium one has that
for each $x_{0}$%
\begin{equation*}
|E(\phi (x_{n}))-\int \phi ~d\mu |=O(e^{-\lambda n}).
\end{equation*}
\end{proposition}

\begin{proof}
Let us first remark that we can apply the transfer operator to a delta
measure $\delta _{x_{0}}$ and get a measure with $BV$ density: indeed $%
L(\delta _{x_{0}})=\pi _{\ast }\circ \lbrack \rho \ast \delta _{T(x_{0})}]$,
and $\rho \ast \delta _{T(x_{0})}$ is a measure having $BV$ density. We
remark that given \thinspace $\phi \in L^{1}(N)$ and $x_{0}\in N$ we get%
\begin{eqnarray*}
E(\phi (x_{1})) &=&\int_{x\in \Omega }\phi (F_{x}(x_{0}))~dp(x) \\
&=&[P(\phi )](x_{0}) \\
&=&\int P(\phi )d\delta _{x_{0}} \\
&=&\int \phi ~dL(\delta _{x_{0}})
\end{eqnarray*}%
and continuing the iteration%
\begin{equation*}
\int_{\Omega ^{\mathbb{N}}}\phi (x_{2})=\int_{\Omega }\int_{\Omega }\phi
(F_{\omega _{1}}(F_{\omega _{0}}(x_{0})))d\omega _{1}d\omega
_{0}=\int_{\omega _{0}}[P\phi ](F_{\omega _{0}}(x_{0}))~...
\end{equation*}%
We then get%
\begin{eqnarray*}
E(\phi (x_{n})) &=&[P^{n}(\phi )](x_{0}) \\
&=&\int \phi ~dL^{n}(\delta _{x_{0}}).
\end{eqnarray*}
But by the spectral gap $||L^{n}(\delta _{x_{0}})-\mu ||_{BV}=O(e^{-\lambda
n}).$ Then for each $x_{0}$%
\begin{equation*}
|E(\phi (x_{n}))-\int \phi ~d\mu |=O(e^{-\lambda n}).
\end{equation*}
\end{proof}

Having established that the transfer operators associated to systems with
additive noise have good properties as the spectral gap, on $BV$ or $L^{1}$
we can appply the results of Section \ref{stab} to suitable families of
perturbations of such systems and get quantitative stability estimates. We
briefly discuss this direction of work, and leave the details to the reader.

We now consider the easiest way to perturb such a system with additive
noise, by perturbing the distribtion of the noise. Let us consider the
following proposition:

\begin{proposition}
\label{nearn} Let $f\in L^{1}$, suppose $\Vert \rho _{1}-\rho _{0}\Vert
_{1}\leq C\xi $ \ and $L_{1,T},L_{0,T}$ are the transfer operators
associated to the systems given by the map $T$ with additive noise
distributed as $\rho _{1}$ and $\rho _{0}.$ Then 
\begin{equation}
\Vert L_{1,T}f-L_{0,T}f\Vert _{1}\leq C\xi \Vert f\Vert _{1}.
\end{equation}
\end{proposition}

\begin{proof}
The proof is a direct application of the well known fact $\Vert \rho \ast
g\Vert _{1}\leq \Vert \rho \Vert _{1}\Vert g\Vert _{1}$. By this it also
holds 
\begin{equation}
\Vert \rho \hat{\ast}g\Vert _{1}=\pi _{\ast }(\hat{\rho}\ast \hat{g})\leq
\Vert \rho \Vert _{1}\Vert g\Vert _{1}  \label{convl1}
\end{equation}

and $\Vert \rho _{0}\hat{\ast}g-\rho _{\xi }\hat{\ast}g\Vert _{1}\leq \Vert
\rho _{\xi }-\rho _{0}\Vert _{1}\Vert g\Vert _{1}$.
\end{proof}

This last proposition can be used as to show that the property UF2 \ is
verified and estblish $\delta \log \delta $ \ modulus of continuity for the
statistical stability of a map with additive noise having convergence to
equilibrium, but also Lipshitz statistical stability (using the results of
Section \ref{lipsec} and the spectral gap on $L^{1}$). We suggest the
interested reader to fill the details.

Similar results can be proved for other kinds of perturbations of the
transfer operator associated to maps with additive noise like the Ulam
discretization (see Section \ref{dopo} and \cite{GMN}) or perturbations of
the map $T$ (see \cite{GG,GM} \ for pertubations on the map or linear
response results for these systems up to perturbation of the map or noise,
including zero-noise limits).

\subsubsection{A way to establishing convergence to equilibrium in systems
with additive noise.}

Let us consider a family of nonsingular maps with additive noise. We will
suppose that these maps are nonsingular and piecewise $C^{2}$ \footnote{%
The space $[0,1]$ can be decomposed into a union of intervals $I_{i}$ such
that in every set $\overline{I_{i}}$ the map $T_{0}$ can be extended to a $%
C^{2}$ function $\overline{I_{i}}\rightarrow \lbrack 0,1].$} having good
distortion properties. More precisely let us suppose:

\begin{itemize}
\item[A1] $T_{0}$ is a nonsingular piecewise $C^{2}$ map whose associated
pushforward operator $L_{T_{0}}:BV[0,1]\rightarrow BV[0,1]$ is continuous:
there is $C\geq 0$ such that%
\begin{equation*}
||L_{T_{0}}f||_{BV}\leq C||f||_{BV}.
\end{equation*}

\item[A2] $T_{0}$ is eventually onto: for each open interval $I\subseteq
\lbrack 0,1]$ there is $n$ such that $T_{0}^{n}(I)=[0,1]$.
\end{itemize}

Suppose now that at each iteration of $T_{0}$ a random perturbation is added
with noise kernel $\rho _{0}\in BV$, and we suppose that the support of $%
\rho _{0}$ contains a neighborhhod of the origin. We have a system with
additive noise. Let us see that under these conditions this system has
convergence to equilibrium.

\begin{remark}
Assumption $A2$ \ {implies the} topological mixing of the map. In the case
of piecewise expanding maps this assumption is often taken to get a
topologically mixing system (see \cite[Section 3.1, Property E3]{Viana}).
This assumption is not the most general one possible to get convergence to
equilibrium, but it will keep the exposition simple.
\end{remark}

\begin{lemma}
\label{critconv}Under the assumptions above, the transfer operator $L$
associated to the system has convergence to equilibrium: for each $f\in BV$
with $\int f=0$ we have $||L_{0}^{n}f||_{1}\rightarrow 0.$
\end{lemma}

\begin{proof}
By Lemma \ref{convoocopy1} \ and Theorem \ref{Helly} \ $L$ is a compact
operator $L^{1}\rightarrow L^{1}.$ The spectral radius of $L$ as an operator
on $L^{1}$ is bounded by $1$, and the spectrum is discrete. This also hold
for $L$ considered on $BV$.

The system has convergence to equilibrium if and only if there are no other
eigenvalues on the unit circle than the eigenvalue $1$ with multiplicity $1$.

Let us consider one positive stationary probability measure $\mu _{0}\in BV$
for $L$ (as was proved to exist in Theorem \ref{exist1}). \ Suppose that the
system has no convergence to equilibrium, then there is a complex measure $%
\hat{\mu}\in BV$, $\hat{\mu}\neq \mu _{0}$ and {\ }$\lambda \in \mathbb{C}$
with $|\lambda |=1$ such that {$L^{i}\hat{\mu}=\lambda ^{i}\hat{\mu}{\ }$}
for each $i\geq 0$. Let $\mu $ be the real part of $\hat{\mu}.$ This is a
signed measure. Since the transfer operator preserves real valued measures $%
L^{i}\mu $ is a real valued measure with bounded variation density and, for
each $\epsilon \geq 0$ there are infinitely many $i$ such that {\ $\Vert
L^{i}\mu -\mu \Vert _{1}\leq \epsilon $ \ (}for each $\epsilon $, $|\lambda
^{i}-1|\leq \epsilon $ for infinitely many $i$). We also have that there is $%
c\in \mathbb{R}$ such that $\mu _{1}=\mu +c\mu _{0}$ is a zero average
measure with density in $BV$ and 
\begin{equation*}
\underset{i\rightarrow \infty }{\lim \sup }{\Vert L_{0}^{i}\mu _{1}\Vert
_{1}=\Vert \mu _{1}\Vert _{1}}
\end{equation*}%
(indeed ${\Vert L_{0}^{i}\mu _{1}\Vert _{1}\leq \Vert \mu _{1}\Vert _{1}}$ $%
\ $and ${\Vert L_{0}^{i}\mu _{1}\Vert _{1}=||L_{0}^{i}(}\mu +c\mu
_{0})||_{1}=||{L_{0}^{i}(}\mu )-\mu +\mu +{L_{0}^{i}(}c\mu _{0})||_{1}\leq ||%
{L_{0}^{i}(}\mu )-\mu ||_{1}+{\Vert \mu _{1}\Vert _{1}}$ ).

Now let $I$ \ be an interval for which $\mu _{1}|_{I}$ has a strictly
positive density. By assumption $A2$ there is $n$ such that $%
T_{0}^{n}(I)=[0,1]$. Let us consider the measure $\nu =\mu _{1}1_{I}$.
Suppose $s(\nu )$ is the support of $\nu $ (the set on which $\nu $ has
strictly positive density). Since $L_{T_{0}}$ is a positive operator and $%
T_{0}$ is piecewise $C^{2}$, then $L_{T_{0}}\nu $ has also a strictly
positive density almost everywhere on $T_{0}(I).$ Since the convolution can
only increase the support of a positive measure, we get that $s(L_{0}\nu
)\supseteq \overline{s(L_{T_{0}}\nu )}$ $\supseteq T_{0}(I)$, $%
s(L_{0}^{2}\nu )\supseteq T_{0}^{2}(I)$ and $s(L_{0}^{i}\nu )\supseteq
T_{0}^{i}(I)$ for $i\geq 1$. Then by assumption $A2$ we get inductively $%
s(L_{0}^{n}\nu )=[0,1]$. \ This contraddicts the fact that $\underset{%
i\rightarrow \infty }{\limsup }{\Vert L_{0}^{i}\mu _{1}\Vert _{1}}${$=\Vert
\mu _{1}\Vert _{1}$}. Indeed \ {recall that any measure }$\mu _{1}${\ of
zero average can be decomposed in $\mu _{1}^{+}+\mu _{1}^{-}$, the positive
and negative component of }$\mu _{1}${. }We have that $L_{0}^{n}\mu _{1}^{-}$
is a negative measure having a bounded variation density and the support of $%
L_{0}^{n}\nu $ being the whole space overlaps the support of $L_{0}^{n}\mu
_{1}^{-}$ in this way 
\begin{eqnarray*}
{\Vert L_{0}^{n}\mu }_{1}{\Vert _{1}} &=&\Vert L_{0}^{n}(\mu _{1}^{+}+\mu
_{1}^{-})\Vert _{1} \\
&\leq &\Vert L_{0}^{n}(\mu _{1}^{+}-\nu +\nu +\mu _{1}^{-})\Vert _{1} \\
&\leq &\Vert L_{0}^{n}(\mu _{1}^{+}-\nu )||_{1}+||L_{0}^{n}\nu +L_{0}^{n}\mu
_{1}^{-}\Vert _{1} \\
&<&\Vert L_{0}^{n}(\mu _{1}^{+}-\nu )||_{1}+||\mu _{1}^{-}||_{1}+||\nu ||_{1}
\\
&=&||\mu _{1}||_{1}.
\end{eqnarray*}

Then for each $k\geq 0$ $\ {\Vert L_{0}^{n+k}\mu }_{1}{\Vert _{1}\leq \Vert
L_{0}^{n}\mu }_{1}{\Vert _{1}<}||\mu _{1}||_{1}$, contraddicting $\underset{%
i\rightarrow \infty }{\lim \sup }{\Vert L_{0}^{i}\mu _{1}\Vert _{1}}${$%
=\Vert \mu _{1}\Vert _{1}$}.
\end{proof}

\section{Uniformly contracting maps\label{sec:contr}}

We have seen how expansion helps the transfer operator to have
regularization properties when considered on suitable spaces of regular
measures (expressed by the Lasota Yorke inequality). If the dynamics is
contracting instead we cannot expect such a regularization on the same
spaces. Iterates of absolutely continuous initial measures instead will
converge to a Dirac delta measure which is not regular at all according to
the norms considered for expanding maps. \ We will see in this section that
considering different spaces, which are essentially the dual of the spaces
considered in the expanding case we recover a regularization property, even
for the transfer operator associated to a contracting map. This will be
source of inspiration to define the suitable space to consider when
considering maps having both contracting and expanding directions
(hyperbolic systema) as it happens in many models of interesting phisical
phenomena.

Let $(X,d)$ be a compact metric space, $g:X\longrightarrow \mathbb{R}$ be a
Lipschitz function and let $Lip(g)$ be its best Lipschitz constant, i.e. 
\begin{equation*}
\displaystyle{Lip(g)=\sup_{x,y\in X}\left\{ \dfrac{|g(x)-g(y)|}{d(x,y)}%
\right\} }.
\end{equation*}

\begin{definition}
Given two Borel signed measures $\mu $ and $\nu $ on $X,$ we define a 
\textbf{Wasserstein-Kantorovich Like} distance between $\mu $ and $\nu $ by%
\begin{equation}
W_{1}^{0}(\mu ,\nu )=\sup_{g~s.t.~Lip(g)\leq 1,||g||_{\infty }\leq
1}\left\vert \int {g}d\mu -\int {g}d\nu \right\vert .
\end{equation}%
\label{wasserstein}
\end{definition}

From now on we denote%
\begin{equation*}
||\mu ||_{W}:=W_{1}^{0}(0,\mu ).
\end{equation*}%
As a matter of fact, $||\cdot ||_{W}$ defines a norm on the vector space of
signed measures defined on a compact metric space.

\begin{remark}
\label{posi} The space of signed measures is not complete with respect to
the $W_{1}^{0}$ distance, and its completion would be a distribution space.
However for our purposes it is sometime sufficient to consider sequences of
positive measures. The set of positive Borel measures on $X$ however is
complete with respect to the distance $W_{1}^{0}$ (see \cite{Ba}, \cite{BK}).
\end{remark}

We show that a contracting map $F$, such that $Lip(F)<1$ is in some sense
regularizing for the above norm

\begin{lemma}
\label{unamis}Let \ $F$ $:X\rightarrow X$,\ be a Lipschitz function and $X$
is a metric space. For every Borel measure with sign $\mu $ it holds%
\begin{equation}
||L_{F}\mu ||_{W}\leq \alpha ||\mu ||_{W}+\mu (X).  \label{LYdual}
\end{equation}%
(where $\alpha =Lip(F)$). In particular, if $\mu (X)=0$ then%
\begin{equation*}
||L_{F}\mu ||_{W}\leq \alpha ||\mu ||_{W}.
\end{equation*}%
\label{quasicontract}
\end{lemma}

\begin{proof}
If $Lip(g)\leq 1$ and $||g||_{\infty }\leq 1$, then $g\circ F$ is $\alpha $%
-Lipschitz. Moreover since $||g||_{\infty }\leq 1$ then $||g\circ F_{\gamma
}-\theta ||_{\infty }\leq \alpha $ for some $\theta \leq 1$. This implies%
\begin{align*}
\left\vert \int {g}dL_{F}\mu \right\vert & =\left\vert \int {g\circ F}d\mu
\right\vert \\
& =\left\vert \int {g\circ F-\theta }d\mu \right\vert +\left\vert \int {%
\theta }d\mu \right\vert \\
& =\alpha \left\vert \int {\frac{g\circ F-\theta }{\alpha }}d\mu \right\vert
+\theta \mu (X) \\
& =\alpha \left\vert \left\vert \mu \right\vert \right\vert _{W}+\mu (X).
\end{align*}%
And we have $||L_{F}\mu ||_{W}\leq \alpha ||\mu ||_{W}+\mu (X)$. In
particular, if $\mu (X)=0$ we get the second part.
\end{proof}

\begin{itemize}
\item From Lemma \ref{unamis} it easily follows that $L_{F}$ has spectral
gap on the normed vector space of signed measured endowed with the norm $%
||~||_{W}$. Since the space of signed Borel measures is not complete, what
we are going to prove precisely is that (Compare with Definition \ref{defgap}%
)%
\begin{equation}
L_{F}=\func{P}+\func{N}  \label{ab1}
\end{equation}%
where

\item $\func{P}$ is a projection and $\dim (Im(\func{P}))=1$;

\item there is $\alpha ,C\geq 0$ such that $\alpha <1$ and for each $n,$ and
each Borel signed measure $\mu ,$ $||\func{N}^{n}\mu ||_{w}<\alpha
^{n}C||\mu ||_{w}$ ;

\item $\func{P}\func{N}=\func{N}\func{P}=0$.
\end{itemize}

\begin{proposition}
The transfer operator $L_{F}$ associated to a \ contracting map $F$ has the
decomposition $(\ref{ab1})$ as above.
\end{proposition}

\begin{proof}
Being a contraction the map $F$ has a unique fixed point $x_{0}$. Let us
consider the Dirac measure $\delta _{x_{0}}$ placed on $x_{0}$. This is a
fixed point for $L_{F}$, generating a one dimensional fixed space $\mathbb{R}%
\delta _{x_{0}}$ and one can define a projection $\func{P}$ to $\mathbb{R}%
\delta _{x_{0}}$ as $\mu _{0}\rightarrow \mu _{0}(X)\delta _{x_{0}}$. Now
let us consider the $L_{F}$ invariant space $V$ of zero average measures,
there is also a projection to this space, defined as $\mu _{0}\rightarrow
\mu _{0}-\mu _{0}(X)\delta _{x_{0}}$, and define $\func{N}$ as \ $\func{N}%
(\mu _{0})=L_{F}\mu _{0}-\mu _{0}(X)\delta _{x_{0}}$. By Lemma \ref{unamis}
we get that for any measure $\mu $, $\func{N}^{n}(\mu )$ converges
exponentially to $0$.
\end{proof}

From this proposition one could recover many consequences for the
statistical properties of the dynamics of contracting maps, but since this
dynamics is quite trivial (every initial condition is attracted by the fixed
point of $F$) there is no need to do this. This example and the spaces
considered to get a spectral gap in this case are on the other hand,
illuminating to find a transfer operator approach for systems having both
contracting and expanding directions. In fact one idea can be to consider in
some sense some measure space endowed with some norm which behaves as the
Lipshitz norm\ or some Sobolev norm in the directions for which the dynamics
is expanding and as its dual (i.e. the $||~||_{W}$ norm) in the contracting
directions. This is what we will do in the next section on some simple
example of Hyperbolic dynamical systems and in many papers dealing with this
kind of dynamical systems (see \cite{D} and \cite{DKL} \ for a survey and a
detailed treatise on this subject).

\begin{remark}
If $\mu ^{+}$ and $\mu ^{-}$ are positive measures such that $\mu =\mu
^{+}-\mu ^{-}$ (the Jordan decomposition of $\mu $) then one can define the
total variation of $\mu $ as%
\begin{equation*}
||\mu ||_{TV}:=\mu ^{+}(X)+\mu ^{-}(X)
\end{equation*}%
then one has from $(\ref{LYdual})$ 
\begin{equation*}
||L_{F}\mu ||_{W}\leq \alpha ||\mu ||_{W}+||\mu ||_{TV}
\end{equation*}%
which looks like a Lasota Yorke inequality for the operator $L_{F}$ but it
is not, since $||.||_{TV}$ is not weaker than $||.||_{W}$. For an approach
based on a real Lasota Yorke inequality and a statement like Theorem \ref%
{gap} to get spectral gap on a weak space like $||.||_{W}$ we refer to \cite%
{D}.
\end{remark}

\section{A look at hyperbolic systems\label{yp}}

The transfer operator approach we described in the previous \ sections also
works for a large class of systems with uniform expansion and contraction
rate when appropriate functional spaces are considered. In this section, we
give an example of this for a class of uniformly hyperbolic solenoidal maps.
Following an approach of \cite{GLu}, based on the disintegration along
stable manifolds, we show how to define spaces of measures with sign adapted
to this system. We show some properties of the transfer operator restricted
to these spaces, giving the existence of a physical measure for these
systems and a Lasota Yorke inequality allowing to estimate the regularity of
iterates of measures. Quantitative statistical stability and spectral gap
can be obtained with these kind of construction, but this is outside the
scope of these elementary lectures, for more information about this see \cite%
{GLu}, \cite{G}.

A solenoidal map is a $C^{2}$ map $F:X\rightarrow X$ where $X=S^{1}\times
D^{2}$ the filled torus, such that $F$ is a skew product 
\begin{equation}
F(x,y)=(T(x),G(x,y)),  \label{1eq}
\end{equation}%
where $T:S^{1}\longrightarrow S^{1}$ and $G:X\longrightarrow D^{2}$ are
differentiable maps. We suppose the map $T:S^{1}\rightarrow S^{1}$ to be $%
C^{2}$, expanding of degree $q$, giving rise to a map $[0,1]\rightarrow
\lbrack 0,1]$, which by a small abuse of notation we denote by $T$ \ and
whose branches will be denoted by $T_{i}$, $i\in \lbrack 1,..,q]$ and we
make the following assumptions on $G:$

\begin{itemize}
\item Consider the $F$-invariant foliation $\mathcal{F}^{s}:=\{\{x\}\times
D^{2}\}_{x\in S^{1}}$. We suppose that $\mathcal{F}^{s}$ is contracted:
there exists $0<\alpha <1$ such that for all $x\in S^{1}$ holds%
\begin{equation}
|G(x,y_{1})-G(x,y_{2})|\leq \alpha |y_{1}-y_{2}|\ \ \mathnormal{for\ all}\ \
y_{1},y_{2}\in D^{2}.  \label{contracting1}
\end{equation}

\item $||\frac{\partial G}{\partial x}||_{\infty }<\infty .$
\end{itemize}

We construct now some function spaces which are suitable for the systems we
consider. The idea is to consider spaces of measures with sign, with
suitable norms constructed by disintegrating measures along the stable
foliation. Thus a measure will be seen as a collection (a path, see Remark %
\ref{path}) of measures on each leaf. In the stable direction (and on the
leaves) we will consider a norm which is the dual of the Lipschitz norm. In
the expanding direction, since we have an expanding map, we will consider
the $L^{1}$ norm or a suitable Sobolev norm.

Let $SM(X)$ be the set of Borel signed measures on $\Sigma $. Given $\mu \in
SM(X)$ denote by $\mu ^{+}$ and $\mu ^{-}$ the positive and the negative
parts of it ($\mu =\mu ^{+}-\mu ^{-}$).

Denote by $\mathcal{AB}$ the set of signed measures $\mu \in SM(X)$ such
that its associated marginal signed measures, $\mu _{x}^{\pm }=\pi
_{x}^{\ast }\mu ^{\pm }$ are absolutely continuous with respect to the
Lebesgue measure $m$, on $S^{1}$ i.e. \ 
\begin{equation}
\mathcal{AB}=\{\mu \in SM(X):\pi _{x}^{\ast }\mu ^{+}<<m\ \ \mathnormal{and}%
\ \ \pi _{x}^{\ast }\mu ^{-}<<m\}  \label{thespace1}
\end{equation}%
where $\pi _{x}:X\longrightarrow S^{1}$ is the projection defined by $\pi
(x,y)=x$ and $\pi _{x}^{\ast }$ is the associated pushforward map.

Let us consider a finite positive measure $\mu \in \mathcal{AB}$ on the
space $X$ foliated by the contracting leaves $\mathcal{F}^{s}=\{\gamma
_{l}\}_{l\in S^{1}}$ such that $\gamma _{l}={\pi _{x}}^{-1}(l)$. The Rokhlin
Disintegration Theorem describes a disintegration $\left( \{\mu _{\gamma
}\}_{\gamma },\mu _{x}=\phi _{x}m\right) $ by a family $\{\mu _{\gamma
}\}_{\gamma }$ of probability measures on the stable leaves\footnote{%
In the following to simplify notations, when no confusion is possible we
will indicate the generic leaf or its coordinate with $\gamma $.} and a non
negative marginal density $\phi _{x}:S^{1}\longrightarrow \mathbb{R}$ with $%
||\phi _{x}||_{1}=\mu (X)$.

\begin{remark}
The disintegration of a measure $\mu $ is the $\mu _{x}$-unique measurable
family $(\{\mu _{\gamma }\}_{\gamma },\phi _{x})$ such that, for every
measurable set $E\subset X$ it holds 
\begin{equation}
\mu (E)=\int_{S^{1}}{\mu _{\gamma }(E\cap \gamma )}d\mu _{x}(\gamma ).
\end{equation}%
\label{rmkv}
\end{remark}

\begin{definition}
Let $\pi _{\gamma ,y}:\gamma \longrightarrow D^{2}$ be the restriction $\pi
_{y}|_{\gamma }$, where $\pi _{y}:X\longrightarrow D^{2}$ is the projection
defined by $\pi _{y}(x,y)=y$ and $\gamma \in \mathcal{F}^{s}$. Given a
positive measure $\mu \in \mathcal{AB}$ and its disintegration along the
stable leaves $\mathcal{F}^{s}$, $\left( \{\mu _{\gamma }\}_{\gamma },\mu
_{x}=\phi _{x}m_{1}\right) $ (where $m_{1}$ is the Lebesgue measure on $%
S^{1} $) we define the \textbf{restriction of $\mu $ on $\gamma $} as the
positive measure $\mu |_{\gamma }$ on $D^{2}$ (not on the leaf $\gamma $)
defined, for all mensurable set $A\subset D^{2}$, as 
\begin{equation*}
\mu |_{\gamma }(A)=\pi _{\gamma ,y}^{\ast }(\phi _{x}(\gamma )\mu _{\gamma
})(A).
\end{equation*}%
For a given signed measure $\mu \in \mathcal{AB}$ and its decomposition $\mu
=\mu ^{+}-\mu ^{-}$, define the \textbf{restriction of $\mu $ on $\gamma $}
by%
\begin{equation}
\mu |_{\gamma }=\mu ^{+}|_{\gamma }-\mu ^{-}|_{\gamma }.
\end{equation}%
\label{restrictionmeasure}
\end{definition}

\begin{definition}
\label{l1likespace}Let $\mathcal{L}^{1}\subseteq \mathcal{AB}$ be defined as%
\begin{equation}
\mathcal{L}^{1}=\left\{ \mu \in \mathcal{AB}:\int_{S^{1}}{W_{1}^{0}(\mu
^{+}|_{\gamma },\mu ^{-}|_{\gamma })}dm_{1}(\gamma )<\infty \right\}
\label{L1measurewithsign}
\end{equation}%
and define a norm on it, $||\cdot ||_{"1"}:\mathcal{L}^{1}\longrightarrow 
\mathbb{R}$, by%
\begin{equation}
||\mu ||_{"1"}=\int_{S^{1}}{W_{1}^{0}(\mu ^{+}|_{\gamma },\mu ^{-}|_{\gamma
})}dm_{1}(\gamma ).  \label{l1normsm}
\end{equation}%
The notation we use for this norm is similar to the usual $L^{1}$ norm.
\end{definition}

\begin{remark}
\- \label{path}Indeed this is formally the case of some $L^{1}$ norm if we
associate to $\mu ,$ by disintegration, a path $G_{\mu }:S^{1}\rightarrow 
\mathcal{SB}(D^{2})$ defined by $\ G_{\mu }($ $\gamma )=\mu |_{\gamma }$. In
this case, this will be the $L^{1}$ norm of the path. For more details about
the disintegration and the properties of the restriction, see the appendix
of \cite{GLu}.
\end{remark}

Later, similarly we will define a norm $||~||_{W^{1,1}}$ which will work as
a Sobolev norm for these paths (see Definition \ref{W111}).

\subsection{Transfer operator associated to $F$}

Let us now consider the transfer operator $L_{F}$ associated with $F$, i.e.
such that 
\begin{equation*}
\lbrack L_{F}\mu ](E)=\mu (F^{-1}(E))
\end{equation*}%
for each signed measure $\mu $ on $X$ and for each measurable set $E\subset
X $. Being a pushforward map, the same function can be also denoted by $%
F^{\ast }$ we will use this notation sometime. There is a nice
characterization of the transfer operator in our case, which makes it work
quite like a one dimensional transfer operator. For the proof see \cite{GLu}
.

\begin{proposition}
For a given leaf $\gamma \in \mathcal{F}^{s}$, define the map $F_{\gamma
}:D_{2}\longrightarrow D_{2}$ by 
\begin{equation*}
F_{\gamma }=\pi _{y}\circ F|_{\gamma }\circ \pi _{\gamma ,y}^{-1}.
\end{equation*}%
For all $\mu \in \mathcal{L}^{1}$ and for almost all $\gamma \in S^{1}$
holds 
\begin{equation}
(L_{F}\mu )|_{\gamma }=\sum_{i=1}^{q}{\dfrac{F_{T_{i}^{-1}(\gamma )}^{\ast
}\mu |_{T_{i}^{-1}(\gamma )}}{|T_{i}^{^{\prime }}\circ T_{i}^{-1}(\gamma ))|}%
}\ \ \mathnormal{for~almost~all}\ \ \gamma \in S^{1}.  \label{niceformulaa}
\end{equation}%
\label{niceformulaab}
\end{proposition}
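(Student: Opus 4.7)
The strategy is to identify both sides of the claimed identity by testing against arbitrary bounded measurable test functions $\phi:X\to\mathbb{R}$, using the defining property $\int\phi\,d(L_F\mu)=\int(\phi\circ F)\,d\mu$ together with the Rokhlin disintegration along $\mathcal{F}^s$. By linearity it suffices to treat positive $\mu\in\mathcal{L}^1$, since the formula then extends to signed measures via the Jordan decomposition; one must first note that $L_F$ preserves $\mathcal{AB}$ (the absolute continuity of the marginals is preserved because $T$ is nonsingular), so that the restrictions $(L_F\mu)|_\gamma$ are well-defined.

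The core computation is a change of variables. Using Remark \ref{rmkv} and the definition of the restriction measure, one has
\begin{equation*}
\int\phi\,d(L_F\mu)=\int_{S^1}\int_{D^2}\phi(\gamma,z)\,d(L_F\mu)|_\gamma(z)\,dm(\gamma),
\end{equation*}
while from the skew-product form $F(x,y)=(T(x),G(x,y))$ and the definition of $F_x=\pi_y\circ F|_{\{x\}\times D^2}\circ\pi_{x,y}^{-1}$,
\begin{equation*}
\int(\phi\circ F)\,d\mu=\int_{S^1}\int_{D^2}\phi(T(x),F_x(y))\,d\mu|_x(y)\,dm(x)=\int_{S^1}\int_{D^2}\phi(T(x),z)\,d(F_x^{*}\mu|_x)(z)\,dm(x).
\end{equation*}
I then split the outer integral into the $q$ branches $T_i:I_i\to S^1$ and change variables $\gamma=T_i(x)$, which gives $dm(x)=dm(\gamma)/|T'(T_i^{-1}(\gamma))|$ on each branch. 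Summing over $i$ yields
\begin{equation*}
\int(\phi\circ F)\,d\mu=\int_{S^1}\int_{D^2}\phi(\gamma,z)\,d\!\left(\sum_{i=1}^{q}\frac{F_{T_i^{-1}(\gamma)}^{*}\mu|_{T_i^{-1}(\gamma)}}{|T'(T_i^{-1}(\gamma))|}\right)(z)\,dm(\gamma).
\end{equation*}

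Comparing the two expressions and invoking the essential uniqueness of the disintegration, I conclude that for $m$-a.e.\ $\gamma\in S^1$ the claimed equality of measures on $D^2$ holds. The main technical obstacle is the uniqueness step: I need that two weakly measurable families $\gamma\mapsto\nu_\gamma$ of finite signed Borel measures on $D^2$ producing the same value of $\int_{S^1}\int_{D^2}\phi(\gamma,z)\,d\nu_\gamma(z)\,dm(\gamma)$ for every bounded measurable $\phi$ must agree for almost every $\gamma$. This requires a standard argument based on a countable determining family of test functions on $D^2$ (e.g.\ Lipschitz functions with rational parameters, which also match the weak norm used in $\mathcal{L}^1$) combined with Fubini to upgrade pointwise-in-$\gamma$ equality from the tensor test functions to a $m$-a.e.\ identity of measures; once this is secured, the formula follows.
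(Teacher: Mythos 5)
The paper does not actually prove this proposition; it states it and refers the reader to \cite{GLu} for the proof, so there is no in-text argument to compare against. Your proof is correct and is the natural one: the duality $\int\phi\,d(L_F\mu)=\int\phi\circ F\,d\mu$, the skew-product identity $\pi_x\circ F=T\circ\pi_x$ (which also gives $\pi_x^*(L_F\mu^\pm)=L_T(\pi_x^*\mu^\pm)$, hence preservation of $\mathcal{AB}$), the branchwise change of variables $\gamma=T_i(x)$ producing the Jacobian $1/|T_i'\circ T_i^{-1}(\gamma)|$, and finally essential uniqueness of the disintegration via a countable determining family on $D^2$. You correctly identify that last step as the only technical point, and your sketch of it (tensor test functions $1_A(\gamma)g_k(z)$ plus Fubini and a countable intersection of full-measure sets) is the standard way to close it. One small point worth making explicit when you pass from positive to signed measures: $(L_F\mu)|_\gamma$ is defined through the Jordan decomposition of $L_F\mu$, and $(L_F\mu)^{\pm}$ need not equal $L_F(\mu^{\pm})$; the identity $(L_F\mu)|_\gamma=(L_F\mu^+)|_\gamma-(L_F\mu^-)|_\gamma$ a.e.\ is itself an instance of the same uniqueness argument, so it is covered by your lemma, but it deserves a sentence rather than being absorbed silently into ``linearity.''
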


\subsection{General properties of $L^{1}$ like norms}

\begin{remark}
\label{nice}If \ $F$ is a weak contraction $:X\rightarrow X$, where $X$ is a
metric space, for every Borel measure with sign $\mu $ it holds%
\begin{equation*}
||L_{F}\mu ||_{W}\leq ||\mu ||_{W}.
\end{equation*}%
Indeed, since $F$ is a contraction, if $|g|_{\infty }\leq 1$ and $Lip(g)\leq
1$ the same holds for $g\circ F$. Then%
\begin{eqnarray*}
\left\vert \int {g~}d(L_{F}(\mu ))\right\vert &=&\left\vert \int g\circ F{~}%
d\mu \right\vert \\
&\leq &\left\vert \left\vert \mu \right\vert \right\vert _{W}.
\end{eqnarray*}%
Taking the supremum over $|g|_{\infty }\leq 1$ and $Lip(g)\leq 1$ we finish
the proof of the inequality.
\end{remark}

\begin{proposition}[The weak norm is weakly contracted by $L_{F}$]
If $\mu \in \mathcal{L}^{1}$ then 
\begin{equation}
||L_{F}\mu ||_{"1"}\leq ||\mu ||_{"1"}.
\end{equation}%
\label{weakcontral11234}
\end{proposition}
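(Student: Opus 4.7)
The plan is to reduce the statement to three ingredients already available: the explicit disintegration formula in Proposition \ref{niceformulaab}, the weak contraction of $W_1^0$ under pushforward by contractions (Remark \ref{nice}), and a change of variables on the base $S^1$. I would first observe that the norm $\|\mu\|_{"1"}$ admits a cleaner description as $\int_{S^1}\|\mu|_\gamma\|_W\, dm_1(\gamma)$: by the dual characterization of $W_1^0$, for any signed measure $\nu$ with Jordan decomposition $\nu = \nu^+ - \nu^-$ one has $W_1^0(\nu^+,\nu^-) = \sup_{\|g\|_\infty,\,L(g)\leq 1} |\int g\,d\nu| = \|\nu\|_W$. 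In particular the quantity under the integral sign in the definition of $\|\cdot\|_{"1"}$ is intrinsic to the signed measure $\mu|_\gamma$, not to its Jordan decomposition, which will let me avoid tracking how positive and negative parts interact under $L_F$.

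Next, applying Proposition \ref{niceformulaab} leaf by leaf, I would use the triangle inequality for the $W$-norm on signed measures:
\begin{equation*}
\|(L_F\mu)|_\gamma\|_W \;=\; \Bigl\| \sum_{i=1}^{q} \frac{F^*_{T_i^{-1}(\gamma)}\,\mu|_{T_i^{-1}(\gamma)}}{|T_i'(T_i^{-1}(\gamma))|}\Bigr\|_W \;\leq\; \sum_{i=1}^{q} \frac{\|F^*_{T_i^{-1}(\gamma)}\,\mu|_{T_i^{-1}(\gamma)}\|_W}{|T_i'(T_i^{-1}(\gamma))|}.
\end{equation*}
Here the fiber map $F_\gamma : D^2 \to D^2$ is a contraction with factor $\alpha<1$ by the contracting foliation hypothesis (\ref{contracting1}); in particular it is a weak contraction, so Remark \ref{nice} yields $\|F^*_{T_i^{-1}(\gamma)}\,\mu|_{T_i^{-1}(\gamma)}\|_W \leq \|\mu|_{T_i^{-1}(\gamma)}\|_W$.

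Finally, I would integrate in $\gamma$ and perform the change of variable $\gamma'=T_i^{-1}(\gamma)$ on the image of each branch, whose Jacobian $|T_i'(\gamma')|$ cancels precisely the denominator $|T_i'(T_i^{-1}(\gamma))|$. Since the branch domains of $T$ partition $S^1$ up to a null set, summing over $i$ reassembles an integral over $S^1$:
\begin{equation*}
\|L_F\mu\|_{"1"} \;\leq\; \sum_{i=1}^{q} \int_{\mathrm{dom}(T_i)} \|\mu|_{\gamma'}\|_W\, dm_1(\gamma') \;=\; \int_{S^1} \|\mu|_{\gamma'}\|_W\, dm_1(\gamma') \;=\; \|\mu\|_{"1"}.
\end{equation*}

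I expect no serious obstacle: the only subtlety is the first step, namely the observation that replacing $W_1^0(\mu^+|_\gamma,\mu^-|_\gamma)$ by $\|\mu|_\gamma\|_W$ frees us from worrying about the fact that $(L_F\mu)^\pm$ is generally not $L_F(\mu^\pm)$. Once that is noted, the rest is a straightforward combination of triangle inequality, fiberwise contraction, and the standard Perron–Frobenius change of variables on the base.
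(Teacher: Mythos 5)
Your proof is correct and follows essentially the same route as the paper: both rewrite the leafwise quantity as $\|\mu|_\gamma\|_W$, apply the triangle inequality to the branch formula of Proposition \ref{niceformulaab}, invoke Remark \ref{nice} for the fiberwise weak contraction, and cancel the Jacobian by the change of variable $\gamma = T_i(\alpha)$ on each branch. The only (immaterial) difference is the order in which the contraction step and the change of variable are performed.
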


\begin{proof}
In the following we consider, for all $i$, the change of variable $\gamma
=T_{i}(\alpha )$. Thus by Remark \ref{nice} and equation (\ref{niceformulaa}%
), we have

\begin{eqnarray*}
||L_{F}\mu ||_{"1"} &=&\int_{N_{1}}{\ ||(L_{F}\mu )|_{\gamma }||_{W}}%
dm_{1}(\gamma ) \\
&\leq &\sum_{i=1}^{q}{\int_{T(\eta _{i})}{\ \left\vert \left\vert \dfrac{%
\func{F}_{T_{i}^{-1}(\gamma )}^{\ast }\mu |_{T_{i}^{-1}(\gamma )}}{%
|T_{i}^{\prime }(T_{i}^{-1}(\gamma ))|}\right\vert \right\vert _{W}}%
dm_{1}(\gamma )} \\
&=&\sum_{i=1}^{q}{\int_{\eta _{i}}{\left\vert \left\vert \func{F}_{\alpha
}^{\ast }\mu |_{\alpha }\right\vert \right\vert _{W}}dm_{1}(\alpha )} \\
&=&\sum_{i=1}^{q}{\int_{\eta _{i}}{\left\vert \left\vert \mu |_{\alpha
}\right\vert \right\vert _{W}}}dm_{1}(\alpha ) \\
&=&||\mu ||_{"1"}.
\end{eqnarray*}
\end{proof}

\subsubsection{Convergence to equilibrium}

Now we prove that $F$ in some sense has exponential convergence to
equilibrium.

\begin{proposition}
\label{5.6} For all signed measure $\mu \in \mathcal{L}^{1}$ it holds 
\begin{equation}
||\func{F}^{\ast }\mu ||_{"1"}\leq \alpha ||\mu ||_{"1"}+(\alpha +1)||\phi
_{x}||_{1}.  \label{abovv}
\end{equation}
\end{proposition}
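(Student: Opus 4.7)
The plan is to combine the explicit leaf-wise formula for the transfer operator (Proposition \ref{niceformulaab}) with the single-leaf contraction estimate (Lemma \ref{quasicontract}), and then integrate over the base circle using the standard change of variables along each inverse branch of $T$.

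First I would expand, for a.e.\ $\gamma\in N_{1}$, using Proposition \ref{niceformulaab},
\begin{equation*}
(F^{\ast }\mu )|_{\gamma }=\sum_{i=1}^{q}\frac{F_{T_{i}^{-1}(\gamma )}^{\ast }\,\mu |_{T_{i}^{-1}(\gamma )}}{|T_{i}^{\prime }(T_{i}^{-1}(\gamma ))|},
\end{equation*}
and apply the triangle inequality for $||~||_{W}$ to obtain
\begin{equation*}
||(F^{\ast }\mu )|_{\gamma }||_{W}\leq \sum_{i=1}^{q}\frac{||F_{T_{i}^{-1}(\gamma )}^{\ast }(\mu |_{T_{i}^{-1}(\gamma )})||_{W}}{|T_{i}^{\prime }(T_{i}^{-1}(\gamma ))|}.
\end{equation*}

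Next, for each $a\in S^{1}$ I would estimate the leaf-wise quantity $||F_{a}^{\ast }(\mu |_{a})||_{W}$ by Lemma \ref{quasicontract}, using the fiber contraction rate $\alpha$ from (\ref{contracting1}). Since $\mu |_{a}$ is signed, I would split $\mu |_{a}=\mu ^{+}|_{a}-\mu ^{-}|_{a}$ and combine a direct signed application of the lemma (which supplies the contractive term $\alpha\,||\mu |_{a}||_{W}$) with separate applications to the positive parts $\mu^{\pm}|_{a}$, together with the elementary bound $||\nu ||_{W}\leq \nu (D^{2})$ valid for any positive finite Borel measure $\nu$ tested against functions of sup norm at most one. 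This would produce a fiber estimate of the form
\begin{equation*}
||F_{a}^{\ast }(\mu |_{a})||_{W}\leq \alpha \,||\mu |_{a}||_{W}+(\alpha +1)\bigl(\phi _{x}^{+}(a)+\phi _{x}^{-}(a)\bigr),
\end{equation*}
where $\phi _{x}^{\pm }$ are the marginal densities of $\mu^{\pm}$ on $S^{1}$.

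Finally I would integrate both sides over $\gamma\in N_{1}$ and, inside the sum, change variables $a=T_{i}^{-1}(\gamma)$, so that $dm_{1}(\gamma)=|T_{i}^{\prime }(a)|\,dm_{1}(a)$ cancels exactly the Jacobian in the transfer operator formula:
\begin{equation*}
||F^{\ast }\mu ||_{"1"}\leq \sum_{i=1}^{q}\int_{\eta _{i}}||F_{a}^{\ast }(\mu |_{a})||_{W}\,dm_{1}(a)=\int_{N_{1}}||F_{a}^{\ast }(\mu |_{a})||_{W}\,dm_{1}(a).
\end{equation*}
Substituting the fiber bound and recognizing $\int_{N_{1}}||\mu |_{a}||_{W}\,dm_{1}(a)=||\mu ||_{"1"}$ and $\int_{N_{1}}(\phi _{x}^{+}+\phi _{x}^{-})\,dm_{1}=||\phi _{x}||_{1}$ would then give the claim. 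The main obstacle is the signed bookkeeping in the fiber step: Lemma \ref{quasicontract} is natural for positive measures with the additive boundary term $\nu (X)$, so to keep both a genuine contraction factor in front of $||\mu |_{a}||_{W}$ and a constant multiple of the marginal density on the right-hand side, one has to carefully mix the direct signed application with separate applications on $\mu^{+}|_{a}$ and $\mu^{-}|_{a}$ — this is precisely where the coefficient $(\alpha+1)$ rather than $1$ enters. The remaining ingredients — the transfer-operator formula and the Jacobian-cancelling change of variables — are routine.
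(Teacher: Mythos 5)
Your overall skeleton — the leafwise formula of Proposition \ref{niceformulaab}, the triangle inequality for $||\cdot ||_{W}$, the fiber contraction Lemma \ref{quasicontract}, and the Jacobian-cancelling change of variables $\gamma =T_{i}(\alpha )$ — is exactly the skeleton of the paper's proof, and all of those steps are sound. The gap is in your final identification: after integrating your fiber bound you claim $\int_{N_{1}}(\phi _{x}^{+}+\phi _{x}^{-})\,dm_{1}=||\phi _{x}||_{1}$. This is false in general. By your own convention (which is also the paper's), $\phi _{x}^{\pm }$ are the densities of the marginals $\pi _{x}^{\ast }\mu ^{\pm }$ of the Hahn parts of $\mu $; although $\mu ^{+}\perp \mu ^{-}$ on $X$, their marginals on $S^{1}$ need not be mutually singular (take $\mu ^{+}$ and $\mu ^{-}$ sitting over the same base points but on different parts of the fibers). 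Hence $\int_{N_{1}}(\phi _{x}^{+}+\phi _{x}^{-})\,dm_{1}=\mu ^{+}(X)+\mu ^{-}(X)=|\mu |(X)$, which can be strictly larger than $||\phi _{x}||_{1}=\int_{N_{1}}|\phi _{x}^{+}-\phi _{x}^{-}|\,dm_{1}$, the quantity appearing in the statement (and computed as $I_{1}$ in the paper's proof). As written, your argument only yields $||F^{\ast }\mu ||_{"1"}\leq \alpha ||\mu ||_{"1"}+(\alpha +1)|\mu |(X)$, a strictly weaker inequality.

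The irony is that the "direct signed application" of Lemma \ref{quasicontract} you mention already does the whole job, and it is the detour through separate applications to $\mu ^{\pm }|_{a}$ that introduces the loss. Applying the lemma to the signed fiber measure $\nu =\mu |_{a}$ (with the boundary term read as $|\nu (D^{2})|$, which is what its proof actually delivers in the signed case) gives
\begin{equation*}
||F_{a}^{\ast }(\mu |_{a})||_{W}\leq \alpha \,||\mu |_{a}||_{W}+|\mu |_{a}(D^{2})|=\alpha \,||\mu |_{a}||_{W}+|\phi _{x}^{+}(a)-\phi _{x}^{-}(a)|,
\end{equation*}
since $\mu ^{\pm }|_{a}(D^{2})=\phi _{x}^{\pm }(a)$ (the $\mu _{\gamma }^{\pm }$ being probabilities). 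Integrating this after your change of variables yields $||F^{\ast }\mu ||_{"1"}\leq \alpha ||\mu ||_{"1"}+||\phi _{x}||_{1}$, which implies (\ref{abovv}) and is in fact sharper. The paper handles the signed bookkeeping differently: it writes $\mu |_{\gamma }=\phi _{x}^{+}(\gamma )\overline{\mu ^{+}}|_{\gamma }-\phi _{x}^{-}(\gamma )\overline{\mu ^{-}}|_{\gamma }$ and adds and subtracts the cross term with $\phi _{x}^{-}\,\overline{\mu ^{+}}|_{\gamma }$, so that one summand carries the marginal difference $|\phi _{x}^{+}-\phi _{x}^{-}|$ against a pushed-forward probability (contributing $||\phi _{x}||_{1}$) and the other is a zero-mass fiber measure to which the contraction applies (contributing $\alpha ||\mu ||_{"1"}+\alpha ||\phi _{x}||_{1}$). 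Either repair works; yours, once the last step is corrected, is the shorter one.
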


\begin{proof}
Consider a signed measure $\mu \in \mathcal{L}^{1}$ and its restriction on
the leaf $\gamma $, $\mu |_{\gamma }=\pi _{\gamma ,y}^{\ast }(\phi
_{x}(\gamma )\mu _{\gamma })$. Set%
\begin{equation*}
\overline{\mu }|_{\gamma }=\pi _{\gamma ,y}^{\ast }\mu _{\gamma }.
\end{equation*}%
If $\mu $ is a positive measure then $\overline{\mu }|_{\gamma }$ is a
probability on $D^{2}$. Moreover $\mu |_{\gamma }=\phi _{x}(\gamma )%
\overline{\mu }|_{\gamma }$.

By the above comments and the expression given by remark \ref{niceformulaab}
we have

\begin{eqnarray*}
||\func{F}^{\ast }\mu ||_{"1"} &\leq &\sum_{i=1}^{q}{\ \int_{I}{\ \left\vert
\left\vert \frac{\func{F}_{T_{i}^{-1}(\gamma )}^{\ast }\overline{\mu ^{+}}%
|_{T_{i}^{-1}(\gamma )}\phi _{x}^{+}(T_{i}^{-1}(\gamma ))}{|T_{i}^{\prime
}|\circ T_{i}^{-1}(\gamma )}-\frac{\func{F}_{T_{i}^{-1}(\gamma )}^{\ast }%
\overline{\mu ^{-}}|_{T_{i}^{-1}(\gamma )}\phi _{x}^{-}(T_{i}^{-1}(\gamma ))%
}{|T_{i}^{\prime }|\circ T_{i}^{-1}(\gamma )}\right\vert \right\vert _{W}}%
dm_{1}(\gamma )} \\
&\leq &\sum_{i=1}^{q}{\ \int_{I}{\ \left\vert \left\vert \frac{\func{F}%
_{T_{i}^{-1}(\gamma )}^{\ast }\overline{\mu ^{+}}|_{T_{i}^{-1}(\gamma )}\phi
_{x}^{+}(T_{i}^{-1}(\gamma ))}{|T_{i}^{\prime }|\circ T_{i}^{-1}(\gamma )}-%
\frac{\func{F}_{T_{i}^{-1}(\gamma )}^{\ast }\overline{\mu ^{+}}%
|_{T_{i}^{-1}(\gamma )}\phi _{x}^{-}(T_{i}^{-1}(\gamma ))}{|T_{i}^{\prime
}|\circ T_{i}^{-1}(\gamma )}\right\vert \right\vert _{W}}dm_{1}(\gamma )} \\
&+&\sum_{i=1}^{q}{\ \int_{I}{\ \left\vert \left\vert \frac{\func{F}%
_{T_{i}^{-1}(\gamma )}^{\ast }\overline{\mu ^{+}}|_{T_{i}^{-1}(\gamma )}\phi
_{x}^{-}(T_{i}^{-1}(\gamma ))}{|T_{i}^{\prime }|\circ T_{i}^{-1}(\gamma )}-%
\frac{\func{F}_{T_{i}^{-1}(\gamma )}^{\ast }\overline{\mu ^{-}}%
|_{T_{i}^{-1}(\gamma )}\phi _{x}^{-}(T_{i}^{-1}(\gamma ))}{|T_{i}^{\prime
}|\circ T_{i}^{-1}(\gamma )}\right\vert \right\vert _{W}}dm_{1}(\gamma )} \\
&=&\func{I}_{1}+\func{I}_{2}
\end{eqnarray*}%
where

\begin{equation*}
\func{I}_{1}=\sum_{i=1}^{q}{\ \int {\ }}_{I}{{\left\vert \left\vert \frac{%
\func{F}_{T_{i}^{-1}(\gamma )}^{\ast }\overline{\mu ^{+}}|_{T_{i}^{-1}(%
\gamma )}\phi _{x}^{+}(T_{i}^{-1}(\gamma ))}{|T_{i}^{\prime }|\circ
T_{i}^{-1}(\gamma )}-\frac{\func{F}_{T_{i}^{-1}(\gamma )}^{\ast }\overline{%
\mu ^{+}}|_{T_{i}^{-1}(\gamma )}\phi _{x}^{-}(T_{i}^{-1}(\gamma ))}{%
|T_{i}^{\prime }|\circ T_{i}^{-1}(\gamma )}\right\vert \right\vert _{W}}%
dm_{1}(\gamma )}
\end{equation*}%
and

\begin{equation*}
\func{I}_{2}=\sum_{i=1}^{q}{\ \int {\ }}_{I}{{\left\vert \left\vert \frac{%
\func{F}_{T_{i}^{-1}(\gamma )}^{\ast }\overline{\mu ^{+}}|_{T_{i}^{-1}(%
\gamma )}\phi _{x}^{-}(T_{i}^{-1}(\gamma ))}{|T_{i}^{\prime }|\circ
T_{i}^{-1}(\gamma )}-\frac{\func{F}_{T_{i}^{-1}(\gamma )}^{\ast }\overline{%
\mu ^{-}}|_{T_{i}^{-1}(\gamma )}\phi _{x}^{-}(T_{i}^{-1}(\gamma ))}{%
|T_{i}^{\prime }|\circ T_{i}^{-1}(\gamma )}\right\vert \right\vert _{W}}%
dm_{1}(\gamma )}.
\end{equation*}%
Let us estimate $\func{I}_{1}$ and $\func{I}_{2}$.

By Lemma \ref{unamis} and a change of variable we have 
\begin{eqnarray*}
\func{I}_{1} &=&\sum_{i=1}^{q}{\ \int_{I}{\ \left\vert \left\vert \func{F}%
_{T_{i}^{-1}(\gamma )}^{\ast }\overline{\mu ^{+}}|_{T_{i}^{-1}(\gamma
)}\right\vert \right\vert _{W}\frac{|\phi _{x}^{+}-\phi _{x}^{-}|}{%
|T_{i}^{\prime }|}\circ T_{i}^{-1}(\gamma )}dm_{1}(\gamma )} \\
&=&\int_{I}{\ \left\vert \left\vert \func{F}_{\beta }^{\ast }\overline{\mu
^{+}}|_{\beta }\right\vert \right\vert _{W}|\phi _{x}^{+}-\phi
_{x}^{-}|(\beta )}dm_{1}(\beta ) \\
&=&\int_{I}{\ |\phi _{x}^{+}-\phi _{x}^{-}|(\beta )}dm_{1}(\beta ) \\
&=&||\phi _{x}||_{1}
\end{eqnarray*}%
and by Lemma \ref{quasicontract} we have

\begin{eqnarray*}
\func{I}_{2} &=&\sum_{i=1}^{q}{\ \int_{I}{\ \left\vert \left\vert \func{F}%
_{T_{i}^{-1}(\gamma )}^{\ast }\left( \overline{\mu ^{+}}|_{T_{i}^{-1}(\gamma
)}-\overline{\mu ^{-}}|_{T_{i}^{-1}(\gamma )}\right) \right\vert \right\vert
_{W}\frac{\phi _{x}^{-}}{|T_{i}^{\prime }|}\circ T_{i}^{-1}(\gamma )}%
dm_{1}(\gamma )} \\
&\leq &\sum_{i=1}^{q}{\ \int_{I_{i}}{\ \left\vert \left\vert \func{F}_{\beta
}^{\ast }\left( \overline{\mu ^{+}}|_{\beta }-\overline{\mu ^{-}}|_{\beta
}\right) \right\vert \right\vert _{W}\phi _{x}^{-}(\beta )}dm_{1}(\beta )} \\
&\leq &\alpha \int_{I}{\ \left\vert \left\vert \overline{\mu ^{+}}|_{\beta }-%
\overline{\mu ^{-}}|_{\beta }\right\vert \right\vert _{W}\phi _{x}^{-}(\beta
)}dm_{1}(\beta ) \\
&\leq &\alpha \int_{I}{\ \left\vert \left\vert \overline{\mu ^{+}}|_{\beta
}\phi _{x}^{-}(\beta )-\overline{\mu ^{+}}|_{\beta }\phi _{x}^{+}(\beta
)\right\vert \right\vert _{W}}dm_{1}(\beta ) \\
&\leq &\alpha \int_{I}{\ \left\vert \left\vert \overline{\mu ^{+}}|_{\beta
}\phi _{x}^{-}(\beta )-\overline{\mu ^{+}}|_{\beta }\phi _{x}^{+}(\beta
)\right\vert \right\vert _{W}}dm_{1}(\beta )+\alpha \int_{I}{\ \left\vert
\left\vert \overline{\mu ^{+}}|_{\beta }\phi _{x}^{+}(\beta )-\overline{\mu
^{-}}|_{\beta }\phi _{x}^{-}(\beta )\right\vert \right\vert _{W}}%
dm_{1}(\beta ) \\
&=&\alpha ||\phi _{x}||_{1}+\alpha ||\mu ||_{"1"}.
\end{eqnarray*}%
Summing the above estimates we finish the proof.
\end{proof}

Iterating (\ref{abovv}) we get the following corollary.

\begin{corollary}
\label{nicecoro} 
\begin{equation*}
||L_{F}^{n}\mu ||_{"1"}\leq \alpha ^{n}||\mu ||_{"1"}+\overline{\alpha }%
||\phi _{x}||_{1},
\end{equation*}%
where $\overline{\alpha }=\frac{1+\alpha }{1-\alpha }$.
\end{corollary}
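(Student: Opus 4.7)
The plan is to obtain the corollary as a direct iteration of Proposition \ref{5.6}, which already gives the one-step estimate
\begin{equation*}
||\func{F}^{\ast }\mu ||_{"1"}\leq \alpha ||\mu ||_{"1"}+(\alpha +1)||\phi _{x}||_{1}.
\end{equation*}
Applying this inductively to $L_F^{k}\mu$ in place of $\mu$ gives at each step an $\alpha$-contraction of the strong term plus a correction proportional to the marginal of the iterate $L_F^{k}\mu$. If I denote by $\phi_x^{(k)}$ the marginal density of $L_F^{k}\mu$ on $S^{1}$, induction would produce
\begin{equation*}
||L_F^n \mu||_{"1"} \leq \alpha^n ||\mu||_{"1"} + (\alpha+1)\sum_{k=0}^{n-1}\alpha^{n-1-k}\,||\phi_x^{(k)}||_{1}.
\end{equation*}

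The key intermediate fact I need is that $||\phi_x^{(k)}||_{1}\leq ||\phi_x||_{1}$ for every $k\geq 0$. This follows from the skew-product structure \eqref{1eq}: since $\pi_x\circ F=T\circ\pi_x$, one checks that $\pi_x^{\ast}L_F\mu=L_T\pi_x^{\ast}\mu$, so the marginals evolve under the one-dimensional transfer operator $L_T$ of the expanding base map $T$. By Proposition \ref{weakcontr} (the weak contraction of $L_T$ in $L^1$, applied separately to the positive and negative parts of the marginal), iterates of the marginal density have non-increasing $L^1$ norm.

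Plugging this uniform bound into the telescoping inequality above and summing the resulting geometric series yields
\begin{equation*}
||L_F^n \mu||_{"1"} \leq \alpha^n ||\mu||_{"1"} + (\alpha+1)||\phi_x||_1 \sum_{k=0}^{n-1}\alpha^{k} \leq \alpha^n ||\mu||_{"1"} + \frac{\alpha+1}{1-\alpha}||\phi_x||_{1},
\end{equation*}
which is exactly the claimed bound with $\overline{\alpha}=\frac{1+\alpha}{1-\alpha}$. The main (mild) obstacle is the verification that the marginal of $L_F\mu$ is indeed $L_T\phi_x$ as a signed measure, so that Proposition \ref{5.6} can be legitimately reapplied at each step with the marginal controlled by $||\phi_x||_1$; everything else is a routine geometric-series computation.
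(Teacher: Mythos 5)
Your proof is correct and takes essentially the same route as the paper, which simply states that the corollary follows by iterating inequality (\ref{abovv}). You make explicit the one point the paper leaves implicit: since $\pi _{x}\circ F=T\circ \pi _{x}$, the marginal of $L_{F}^{k}\mu $ is $L_{T}^{k}\phi _{x}$, whose $L^{1}$ norm is non-increasing by the weak contraction of $L_{T}$, which is exactly what justifies summing the geometric series.
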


Let us consider the set of zero average measures%
\begin{equation}
\mathcal{V}=\{\mu \in \mathcal{L}^{1}:\mu (X)=0\}.  \label{mathV}
\end{equation}

Since $\pi _{x}^{\ast }\mu =\phi _{x}m_{1}$ ($\phi _{x}=\phi _{x}^{+}-\phi
_{x}^{-}$) we have $\int {\phi _{x}}dm_{1}=0$. From the last corollary and
the convergence to equilibrium for expanding maps with respect to $L^{1}$
and $W^{1,1}$ norms (see Remarks \ref{conv1} and \ref{cnv2}) it directly
follows:

\begin{proposition}[Exponential convergence to equilibrium]
\label{5.8} There exist $D\in \mathbb{R}$ and $0<\beta _{1}<1$ such that,
for every signed measure $\mu \in \mathcal{V}$, it holds 
\begin{equation*}
||L_{F}^{n}\mu ||_{"1"}\leq D_{2}\beta _{1}^{n}(||\mu ||_{1}+||\phi
_{x}||_{_{W^{1,1}}})
\end{equation*}%
for all $n\geq 1$. \ 
\end{proposition}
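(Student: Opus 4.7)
The plan is to decompose the action of $L_F^n$ on a zero-average measure into two regimes: contraction along the stable (leaf) direction and convergence to equilibrium along the expanding (base) direction. The key observation is that the skew product structure $\pi_x \circ F = T \circ \pi_x$ forces the marginals to evolve under the one-dimensional transfer operator, i.e.\ $\pi_x^\ast(L_F^n\mu)=L_T^n\phi_x\cdot m_1$. In particular, if $\mu\in\mathcal{V}$, then $\int \phi_x\,dm_1 = \mu(X)=0$, so $\phi_x$ belongs to the zero-average subspace $V_s$ for the expanding base map $T$.

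First I would apply Corollary~\ref{nicecoro} to the measure $L_F^{n-k}\mu$, whose marginal density is exactly $L_T^{n-k}\phi_x$, to obtain
\begin{equation*}
\|L_F^{n}\mu\|_{"1"} = \|L_F^{k}(L_F^{n-k}\mu)\|_{"1"} \leq \alpha^{k}\|L_F^{n-k}\mu\|_{"1"} + \overline{\alpha}\,\|L_T^{n-k}\phi_x\|_1.
\end{equation*}
The first summand is controlled via Proposition~\ref{weakcontral11234} (weak contraction of $L_F$ on $\mathcal{L}^1$), which gives $\|L_F^{n-k}\mu\|_{"1"}\leq \|\mu\|_{"1"}$. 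For the second summand, I invoke the spectral gap of $L_T$ on $W^{1,1}$ established in Remark~\ref{cnv2}: since $\phi_x\in V_s$, there exist $C>0$ and $\rho<1$ such that
\begin{equation*}
\|L_T^{n-k}\phi_x\|_{1}\leq \|L_T^{n-k}\phi_x\|_{W^{1,1}} \leq C\rho^{n-k}\|\phi_x\|_{W^{1,1}}.
\end{equation*}

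Combining the two estimates and taking $k=\lfloor n/2\rfloor$ yields
\begin{equation*}
\|L_F^{n}\mu\|_{"1"} \leq \alpha^{\lfloor n/2\rfloor}\|\mu\|_{"1"} + \overline{\alpha}\,C\,\rho^{\lceil n/2\rceil}\|\phi_x\|_{W^{1,1}},
\end{equation*}
so choosing $\beta_1 = \max(\sqrt{\alpha},\sqrt{\rho})<1$ and absorbing constants into a suitable $D_2$ gives the announced bound. The only subtle point — and where I expect a referee would look carefully — is the compatibility of the marginal evolution with the disintegration: one must check $\pi_x^\ast(L_F\mu) = L_T(\pi_x^\ast\mu)$ and that $\pi_x^\ast\mu^+$ and $\pi_x^\ast\mu^-$ are individually sent to the corresponding marginals of $L_F\mu^\pm$ (so that working with $\phi_x$ as a signed density in $V_s$ is legitimate). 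Once the skew-product identity and absolute continuity are in place, everything else is a bookkeeping of the two contraction rates $\alpha$ (leafwise) and $\rho$ (along the base).
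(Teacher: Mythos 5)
Your argument is correct and is exactly the intended route: the paper itself derives Proposition \ref{5.8} "directly" from Corollary \ref{nicecoro} together with the exponential convergence to equilibrium of the base map (Remarks \ref{conv1} and \ref{cnv2}), and your splitting $L_F^{n}=L_F^{k}\circ L_F^{n-k}$ with $k=\lfloor n/2\rfloor$, using the weak contraction of Proposition \ref{weakcontral11234} for the first factor and $||L_T^{n-k}\phi_x||_1\leq C\rho^{n-k}||\phi_x||_{W^{1,1}}$ for the marginal, is precisely the bookkeeping the paper leaves to the reader. The compatibility $\pi_x^{\ast}(L_F\mu)=L_T(\pi_x^{\ast}\mu)$ that you flag is indeed the only point to check, and it follows from the skew-product form (\ref{1eq}) (or from the leafwise formula (\ref{niceformulaa})).
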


We now prove the existence of an invariant measure for the solenoidal system
in the set $\mathcal{L}^{1}$ it is not difficult to deduce that this should
be a physical measure (points in the same stable leaves must have the same
long time average for a continuous observable).

\begin{proposition}
There is a unique $\mu \in \mathcal{L}^{1}$ such that $L_{F}\mu =\mu $.
\end{proposition}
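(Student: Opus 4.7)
The plan is to construct $\mu$ as the limit of iterates $L_F^n \nu_0$ of a reference probability measure $\nu_0 \in \mathcal{L}^1$, e.g.\ the normalized Lebesgue measure on $X$, and then verify invariance by continuity. First, observe that the $x$-marginal of $L_F^n \nu_0$ is $L_T^n 1$, where $L_T$ is the transfer operator of the base expanding map $T$. The Lasota--Yorke inequality from Section~\ref{sec:LY} provides a uniform bound $\|L_T^n 1\|_{W^{1,1}} \le M$, and Corollary~\ref{nicecoro} then yields a uniform bound $\|L_F^n \nu_0\|_{"1"} \le M'$.

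Next, we show the sequence is Cauchy. For $m \ge n$, write
\begin{equation*}
L_F^m \nu_0 - L_F^n \nu_0 = L_F^n (L_F^{m-n}\nu_0 - \nu_0).
\end{equation*}
Since both $L_F^{m-n}\nu_0$ and $\nu_0$ are probability measures, their difference lies in $\mathcal{V}$, so Proposition~\ref{5.8} gives
\begin{equation*}
\|L_F^m \nu_0 - L_F^n \nu_0\|_{"1"} \le D_2 \beta_1^n \bigl( \|L_F^{m-n}\nu_0 - \nu_0\|_{"1"} + \|L_T^{m-n} 1 - 1\|_{W^{1,1}} \bigr),
\end{equation*}
and both terms inside the parentheses are uniformly bounded in $m-n$ by the previous step. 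Hence $\{L_F^n \nu_0\}$ is Cauchy in $\|\cdot\|_{"1"}$. Since all iterates are positive probability measures and positive measures form a complete subset for $W_1^0$ (Remark~\ref{posi}), the sequence converges leafwise, and an application of dominated convergence together with the uniform $\|\cdot\|_{"1"}$ bound gives convergence in $\|\cdot\|_{"1"}$ to a positive probability measure $\mu$ whose $x$-marginal, being the $L^1$-limit of $L_T^n 1$, is the unique absolutely continuous invariant density of $T$ and hence lies in $W^{1,1}$. This ensures $\mu \in \mathcal{L}^1$.

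Invariance then follows from the fact that $L_F$ is a weak contraction in $\|\cdot\|_{"1"}$ (Proposition~\ref{weakcontral11234}), hence continuous: passing to the limit in $L_F(L_F^n \nu_0) = L_F^{n+1}\nu_0$ yields $L_F \mu = \mu$. For uniqueness, if $\mu_1, \mu_2 \in \mathcal{L}^1$ are both invariant probability measures, then $\mu_1 - \mu_2 \in \mathcal{V}$ is itself invariant; Proposition~\ref{5.8} then gives
\begin{equation*}
\|\mu_1 - \mu_2\|_{"1"} = \|L_F^n(\mu_1 - \mu_2)\|_{"1"} \xrightarrow[n\to\infty]{} 0,
\end{equation*}
forcing $\mu_1 = \mu_2$. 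A general nonzero invariant element of $\mathcal{L}^1$ reduces after normalization to this unique probability fixed point.

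The main anticipated obstacle is the completeness issue: $(\mathcal{L}^1, \|\cdot\|_{"1"})$ is not complete as a space of signed measures (its completion is a distribution space, as observed in Remark~\ref{posi}), so the Cauchy property alone does not immediately produce a limit inside $\mathcal{L}^1$. The technical heart of the argument is therefore to exploit positivity of the iterates to land in the complete cone of positive measures, and to use the uniform $W^{1,1}$ control on $L_T^n 1$ (via Helly/Rellich-type compactness in the base) to certify that the limiting marginal remains absolutely continuous with $W^{1,1}$ density, so that the limit genuinely lives in $\mathcal{L}^1$ rather than in a larger distributional completion.
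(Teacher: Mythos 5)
Your proof is correct and follows essentially the same route as the paper's: iterate a product reference measure, use Proposition \ref{5.8} to show the iterates form a Cauchy sequence in $||\cdot ||_{"1"}$, invoke the completeness of positive measures for $W_1^0$ (Remark \ref{posi}) leafwise to obtain a limit in $\mathcal{L}^1$, deduce invariance from the continuity of $L_F$ (Proposition \ref{weakcontral11234}), and get uniqueness again from Proposition \ref{5.8}. The only cosmetic differences are that the paper starts from $\varphi_x\times m$ (so the marginal is already $T$-invariant and the Cauchy estimate and the domination $||\nu_n|_\gamma||_W\leq \sup \varphi_x$ are immediate), whereas you start from Lebesgue and compensate with the uniform Lasota--Yorke bound on $L_T^n 1$, and that the a.e.\ leafwise convergence you use requires passing to a subsequence, which the paper does explicitly.
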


\begin{proof}
The base map $T$ is expanding and has an absolutely continuous invariant
measure. Let us call it $\varphi _{x}$. Consider the measure $\nu =$ $%
\varphi _{x}\times m$ (the measure having marginal $\varphi _{x}$ and
Lebesgue measure on the sable leaves) and the sequence $\nu
_{n}=L_{F}^{n}\nu .$ By Proposition \ref{5.8} $||\nu _{n}-\nu
_{m}||_{"1"}\leq D_{2}\beta _{1}^{n}$ and $\nu _{n}$ is a Cauchy sequence.
By passing to a subsequence we can find $\nu _{n_{k}}$ such that for almost
each leaf $\gamma $, $\nu _{n_{k}}|_{\gamma }$ is a Cauchy sequence for the $%
||~||_{W}$ norm. By Remark \ref{posi} this sequence must have a limit which
is a positive measure. This defines a limit measure $\mu $ which is
invariant. The integrability of $||\nu _{\gamma }||_{W}$ \ follows by the
fact that $||\nu _{n_{k}}|_{\gamma }||_{W}\leq \sup \varphi _{x}$and thus it
is a bounded sequence and \ $||\nu _{n_{k}}|_{\gamma }||_{W}$ converges
pointwise to $||\nu _{\gamma }||_{W}.$\newline
The uniqueness follow trivially by Proposition \ref{5.8}.
\end{proof}

\subsection{Strong norm and Lasota Yorke inequality}

We give here an example of a strong space satisfying a kind of Lasota Yorke
inequality which holds for positive measures.

Given $\mu \in \mathcal{L}^{1}$let us denote by ${{\phi _{\mu }}}$ its
marginal density. Let us consider the following space of measures

\begin{equation*}
"W^{1,1}"=\left\{ 
\begin{array}{c}
\mu \in \mathcal{L}^{1}:{{\phi _{\mu }\in W}}^{1,1}~\forall \gamma
_{1}~\lim_{\gamma _{2}\rightarrow \gamma _{1}}||\mu |_{\gamma _{2}}-\mu
|_{\gamma _{1}}||_{W}=0~and~ \\ 
for~almost~all~\gamma _{1},~D(\mu ,\gamma _{1}):=\limsup_{\gamma
_{2}\rightarrow \gamma _{1}}||\frac{\mu |_{\gamma _{2}}-\mu |_{\gamma _{1}}}{%
\gamma _{2}-\gamma _{1}}||_{W}~<\infty%
\end{array}%
\right\}
\end{equation*}

\begin{definition}
\label{W111}Let us consider the norm%
\begin{equation*}
||\mu ||_{"W^{1,1}"}:=||\mu ||_{"1"}+\int |D(\mu ,\gamma )|d\gamma .
\end{equation*}
\end{definition}

This norm will play the role of the strong norm in the solenoid case. Indeed
the following Lasota-Yorke-like inequality can be proved

\begin{proposition}
\label{LYYY}Let $F$ be a solenoidal map, then.$L_{F}"W^{1,1}"\subseteq
"W^{1,1}"$and there are $\lambda <1,B>0$ s.t $\forall \mu \in "W^{1,1}"$
such that $\mu \geq 0$.%
\begin{equation*}
||L_{F}\mu ||_{"W^{1,1}"}\leq \lambda ({{\alpha ||\mu ||_{"W^{1,1}"}+||\phi }%
^{\prime }{_{\mu }||}}_{1}{)+}B||\mu ||_{"1"}.
\end{equation*}
\end{proposition}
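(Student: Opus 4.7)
The plan is to imitate the one-dimensional Lasota--Yorke argument of Section \ref{sec:LY} but ``one level up'': each $\gamma\in S^{1}$ now indexes a signed measure $\mu|_{\gamma}$ on the disc $D^{2}$, and the derivative in $\gamma$ is measured in the dual Lipschitz-like norm $\|\cdot\|_{W}$. Since Proposition \ref{weakcontral11234} already gives $\|L_{F}\mu\|_{"1"}\leq\|\mu\|_{"1"}$, it is enough to control $\|D(L_{F}\mu,\cdot)\|_{1}$; preservation of the space $"W^{1,1}"$ (marginal in $W^{1,1}$, leafwise continuity and a.e. finiteness of $D(L_{F}\mu,\cdot)$) then follows from the estimate below combined with the one-dimensional Lasota--Yorke of Section \ref{sec:LY} applied to $\phi_{L_{F}\mu}=L_{T}\phi_{\mu}$.

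Using the formula of Proposition \ref{niceformulaab} and writing $\alpha_{j}^{i}=T_{i}^{-1}(\gamma_{j})$, I split, branch by branch,
\begin{equation*}
\frac{F_{\alpha_{2}^{i}}^{*}\mu|_{\alpha_{2}^{i}}}{|T_{i}'(\alpha_{2}^{i})|}-\frac{F_{\alpha_{1}^{i}}^{*}\mu|_{\alpha_{1}^{i}}}{|T_{i}'(\alpha_{1}^{i})|}=A_{i}+B_{i}+C_{i},
\end{equation*}
where $A_{i}$ collects the variation of $\mu|_{\alpha}$ in $\alpha$, $B_{i}$ the variation of the fibre map $F_{\alpha}$, and $C_{i}$ the variation of $1/|T_{i}'|$. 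Positivity of $\mu$ lets me disintegrate $\mu|_{\alpha}=\phi_{\mu}(\alpha)\bar{\mu}|_{\alpha}$ with $\bar{\mu}|_{\alpha}$ a probability, and rewrite
\begin{equation*}
\mu|_{\alpha_{2}^{i}}-\mu|_{\alpha_{1}^{i}}=\phi_{\mu}(\alpha_{2}^{i})\bigl(\bar{\mu}|_{\alpha_{2}^{i}}-\bar{\mu}|_{\alpha_{1}^{i}}\bigr)+\bigl(\phi_{\mu}(\alpha_{2}^{i})-\phi_{\mu}(\alpha_{1}^{i})\bigr)\bar{\mu}|_{\alpha_{1}^{i}}.
\end{equation*}
The first summand has zero mass, so Lemma \ref{quasicontract} contracts it by $\alpha$ under $F_{\alpha_{2}^{i}}^{*}$; the second is rank one and contributes $|\phi_{\mu}(\alpha_{2}^{i})-\phi_{\mu}(\alpha_{1}^{i})|$ after $F^{*}$. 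Adding both yields $\|A_{i}\|_{W}\leq \tfrac{1}{|T_{i}'(\alpha_{2}^{i})|}\bigl(\alpha\|\mu|_{\alpha_{2}^{i}}-\mu|_{\alpha_{1}^{i}}\|_{W}+(1+\alpha)|\phi_{\mu}(\alpha_{2}^{i})-\phi_{\mu}(\alpha_{1}^{i})|\bigr)$. For $B_{i}$, the identity $\int g\,d(F_{\alpha_{2}}^{*}-F_{\alpha_{1}}^{*})\nu=\int (g\circ F_{\alpha_{2}}-g\circ F_{\alpha_{1}})\,d\nu$ together with $\|\partial G/\partial x\|_{\infty}<\infty$ gives $\|B_{i}\|_{W}=O(|\alpha_{2}^{i}-\alpha_{1}^{i}|\,\phi_{\mu}(\alpha_{1}^{i}))$; for $C_{i}$ the $C^{2}$ regularity of $T$ gives a similar linear bound with constant $\|T''/(T')^{2}\|_{\infty}$, after dominating $\|F_{\alpha_{1}^{i}}^{*}\mu|_{\alpha_{1}^{i}}\|_{W}$ by $\phi_{\mu}(\alpha_{1}^{i})$ via Remark \ref{nice}.

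Dividing by $|\gamma_{2}-\gamma_{1}|=|T_{i}'(\alpha_{1}^{i})|\,|\alpha_{2}^{i}-\alpha_{1}^{i}|+o(\cdot)$ and taking $\limsup_{\gamma_{2}\to\gamma_{1}}$ introduces a factor $1/|T_{i}'(\alpha)|$ in each term; integrating over $\gamma_{1}$ with the change of variable $\gamma_{1}=T_{i}(\alpha)$ (Jacobian $|T_{i}'(\alpha)|$) and summing over branches then yields
\begin{equation*}
\|D(L_{F}\mu,\cdot)\|_{1}\leq \frac{\alpha}{\inf|T'|}\|D(\mu,\cdot)\|_{1}+\frac{1+\alpha}{\inf|T'|}\|\phi_{\mu}'\|_{1}+K\|\phi_{\mu}\|_{1},
\end{equation*}
with $K$ depending only on $\|\partial G/\partial x\|_{\infty}$, $\|T''/(T')^{2}\|_{\infty}$ and $\inf|T'|$. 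Combining with $\|L_{F}\mu\|_{"1"}\leq\|\mu\|_{"1"}$, using $\|D(\mu,\cdot)\|_{1}\leq\|\mu\|_{"W^{1,1}"}$, and absorbing $\|\phi_{\mu}\|_{1}\leq\|\mu\|_{"1"}$ together with the residual $\|\mu\|_{"1"}$ into a single term $B\|\mu\|_{"1"}$, one arrives at the claimed inequality with $\lambda$ chosen so that $\lambda\geq(1+\alpha)/\inf|T'|$ and $\lambda\alpha\geq\alpha/\inf|T'|$; both conditions can be met with $\lambda<1$ as soon as $\inf|T'|>1+\alpha$, and if necessary this is arranged by first passing to a sufficiently high iterate of $T$ (and correspondingly of $F$), exactly as in the ``Lipschitz Lasota--Yorke'' step of Section 4.1.1.

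The main technical obstacle is the estimate on $A_{i}$: the increment $\mu|_{\alpha_{2}^{i}}-\mu|_{\alpha_{1}^{i}}$ does \emph{not} have zero total mass, so Lemma \ref{quasicontract} cannot be applied directly. Positivity of $\mu$ is crucial in factoring out the scalar $\phi_{\mu}(\alpha)$ and splitting off a genuinely zero-mass fibre-variation (which does pick up the $\alpha$ contraction) from a rank-one mass-correction term. It is precisely this mass-correction term which forces $\|\phi_{\mu}'\|_{1}$ onto the right-hand side and prevents the inequality from closing in the strong norm alone: the regularity of the marginal enters as an independent ingredient, whose decay must be supplied separately by the one-dimensional Lasota--Yorke for $L_{T}$ acting on $\phi_{\mu}$.
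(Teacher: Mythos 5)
Your proof is correct and follows the same route as the paper's: the identical three--term decomposition of the leafwise increment of $L_{F}\mu$ (variation of the restriction, variation of the fibre map $F_{\alpha}$, variation of $1/|T_{i}'|$), the same change of variable $\gamma=T_{i}(\alpha)$ cancelling the Jacobian against the $1/|T_{i}'|$ prefactor, and the same sources for the three coefficients ($\alpha$ from fibre contraction, $\|\partial G/\partial x\|_{\infty}$ and $\|T''/(T')^{2}\|_{\infty}$ for the weak--norm remainder).

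The one place you genuinely diverge is the estimate of the term you call $A_{i}$. You factor $\mu|_{\alpha}=\phi_{\mu}(\alpha)\bar{\mu}|_{\alpha}$ and split the increment into a zero--mass fibre variation (contracted by $\alpha$ via Lemma \ref{quasicontract}) plus a rank--one mass correction, which costs you a coefficient $1+\alpha$ on $|\phi_{\mu}(\alpha_{2})-\phi_{\mu}(\alpha_{1})|$ and hence forces $\lambda\geq(1+\alpha)/\inf|T'|$, i.e.\ the extra condition $\inf|T'|>1+\alpha$ or a passage to an iterate of $F$. The paper instead applies Lemma \ref{unamis} directly to the \emph{signed} difference $\mu|_{\alpha_{1}}-\mu|_{\alpha_{2}}$: that lemma already packages exactly your splitting, but with the mass term $|\mu(X)|$ appearing with coefficient $1$ rather than $1+\alpha$. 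This yields
\begin{equation*}
I\leq\frac{1}{\inf|T'|}\bigl(\alpha\,\|D(\mu,\cdot)\|_{1}+\|\phi_{\mu}'\|_{1}\bigr),
\end{equation*}
so that $\lambda=1/\inf|T'|<1$ works for every expanding $T$ and no iterate is needed. Your version is not wrong, and your honest flagging of the obstruction is exactly right; but you re-derived a weaker form of a lemma the paper already supplies in the sharp form, and using it as stated removes the only caveat in your argument.
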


\begin{proof}
Since the map is $C^{2}$ it is obvious that $\lim_{\gamma _{2}\rightarrow
\gamma _{1}}||\mu |_{\gamma _{2}}-\mu |_{\gamma _{1}}||_{W}=0$ and ${{\phi
_{L_{T}(\mu )}\in W}}^{1,1}$. Let us estimate%
\begin{equation*}
||D(L_{F}\mu ,\gamma _{1})||_{1}=\int \limsup_{\gamma _{2}\rightarrow \gamma
_{1}}||\frac{(L_{F}\mu )|_{\gamma _{1}}-(L_{F}\mu )|_{\gamma _{2}}}{\gamma
_{2}-\gamma _{1}}||_{W}~dm.
\end{equation*}

By Equation \ref{niceformulaa} we have%
\begin{equation}
(L_{F}\mu )|_{\gamma }=\sum_{i=1}^{q}{\dfrac{\func{F}_{T_{i}^{-1}(\gamma
)}^{\ast }\mu |_{T_{i}^{-1}(\gamma )}}{|T_{i}^{^{\prime }}\circ
T_{i}^{-1}(\gamma ))|}}\ \ \mathnormal{for~almost~all}\ \gamma \in N_{1}.
\end{equation}

Then%
\begin{eqnarray*}
||D(L_{F}\mu ,\gamma _{1})||_{1} &\leq &\sum_{i=1}^{q}\int \limsup_{\gamma
_{2}\rightarrow \gamma _{1}}||\frac{1}{\gamma _{2}-\gamma _{1}}({\dfrac{%
\func{F}_{T_{i}^{-1}(\gamma _{1})}^{\ast }\mu |_{T_{i}^{-1}(\gamma _{1})}}{%
|T_{i}^{^{\prime }}\circ T_{i}^{-1}(\gamma _{1}))|}-\dfrac{\func{F}%
_{T_{i}^{-1}(\gamma _{2})}^{\ast }\mu |_{T_{i}^{-1}(\gamma _{2})}}{%
|T_{i}^{^{\prime }}\circ T_{i}^{-1}(\gamma _{2}))|})||}_{W}{~dm(}\gamma _{1})
\\
&{\leq }&\sum_{i=1}^{q}\int \limsup_{\gamma _{2}\rightarrow \gamma _{1}}%
\frac{1}{\gamma _{2}-\gamma _{1}}{||\dfrac{\func{F}_{T_{i}^{-1}(\gamma
_{1})}^{\ast }\mu |_{T_{i}^{-1}(\gamma _{1})}-\func{F}_{T_{i}^{-1}(\gamma
_{2})}^{\ast }\mu |_{T_{i}^{-1}(\gamma _{2})}}{|T_{i}^{^{\prime }}\circ
T_{i}^{-1}(\gamma _{1}))|}{||}_{W}~dm} \\
&&+|\int \limsup_{\gamma _{2}\rightarrow \gamma _{1}}\frac{1}{\gamma
_{2}-\gamma _{1}}||\func{F}_{T_{i}^{-1}(\gamma _{2})}^{\ast }\mu
|_{T_{i}^{-1}(\gamma _{2})}(\frac{1}{|T_{i}^{^{\prime }}\circ
T_{i}^{-1}(\gamma _{1}))|}-\frac{1}{|T_{i}^{^{\prime }}\circ
T_{i}^{-1}(\gamma _{2}))|}){||}_{W}dm
\end{eqnarray*}%
\begin{eqnarray*}
&{\leq }&\sum_{i=1}^{q}\int \frac{1}{|T_{i}^{^{\prime }}\circ
T_{i}^{-1}(\gamma _{1}))|}\limsup_{\gamma _{2}\rightarrow \gamma _{1}}{||}%
\frac{\func{F}_{T_{i}^{-1}(\gamma _{1})}^{\ast }\mu |_{T_{i}^{-1}(\gamma
_{1})}-\func{F}_{T_{i}^{-1}(\gamma _{2})}^{\ast }\mu |_{T_{i}^{-1}(\gamma
_{2})}}{\gamma _{2}-\gamma _{1}}{||}_{W}{dm} \\
&&+\int {||}\func{F}_{T_{i}^{-1}(\gamma _{1})}^{\ast }\mu
|_{T_{i}^{-1}(\gamma _{1})}{||}_{W}\limsup_{\gamma _{2}\rightarrow \gamma
_{1}}\frac{1}{\gamma _{2}-\gamma _{1}}(\frac{1}{|T_{i}^{^{\prime }}\circ
T_{i}^{-1}(\gamma _{2}))|}-\frac{1}{|T_{i}^{^{\prime }}\circ
T_{i}^{-1}(\gamma _{1}))|})|dm
\end{eqnarray*}

Hence%
\begin{eqnarray*}
||D(L_{F}\mu ,\gamma _{1})||_{1} &{\leq }&\sum_{i=1}^{q}\int \frac{1}{%
|T_{i}^{^{\prime }}\circ T_{i}^{-1}(\gamma _{1}))|}\limsup_{\gamma
_{2}\rightarrow \gamma _{1}}{||}\frac{\func{F}_{T_{i}^{-1}(\gamma
_{1})}^{\ast }\mu |_{T_{i}^{-1}(\gamma _{1})}-\func{F}_{T_{i}^{-1}(\gamma
_{1})}^{\ast }\mu |_{T_{i}^{-1}(\gamma _{2})}}{\gamma _{2}-\gamma _{1}}{||}%
_{W} \\
&&{+\frac{1}{|T_{i}^{^{\prime }}\circ T_{i}^{-1}(\gamma _{1}))|}%
\limsup_{\gamma _{2}\rightarrow \gamma _{1}}||}\frac{\func{F}%
_{T_{i}^{-1}(\gamma _{1})}^{\ast }\mu |_{T_{i}^{-1}(\gamma _{2})}-\func{F}%
_{T_{i}^{-1}(\gamma _{2})}^{\ast }\mu |_{T_{i}^{-1}(\gamma _{2})}}{\gamma
_{2}-\gamma _{1}}{||}~dm \\
&&+\int {||}\func{F}_{T_{i}^{-1}(\gamma _{1})}^{\ast }\mu
|_{T_{i}^{-1}(\gamma _{1})}{||}_{W}\limsup_{\gamma _{2}\rightarrow \gamma
_{1}}\frac{1}{\gamma _{2}-\gamma _{1}}(\frac{1}{|T_{i}^{^{\prime }}\circ
T_{i}^{-1}(\gamma _{2}))|}-\frac{1}{|T_{i}^{^{\prime }}\circ
T_{i}^{-1}(\gamma _{1}))|})|dm \\
&=&I+II+III
\end{eqnarray*}%
Let us estimate the first summand%
\begin{equation*}
{I=}\sum_{i=1}^{q}\int \frac{1}{|T_{i}^{^{\prime }}\circ T_{i}^{-1}(\gamma
_{1}))|}\limsup_{\gamma _{2}\rightarrow \gamma _{1}}{||}\frac{\func{F}%
_{T_{i}^{-1}(\gamma _{1})}^{\ast }\left( \mu |_{T_{i}^{-1}(\gamma _{1})}-\mu
|_{T_{i}^{-1}(\gamma _{2})}\right) }{\gamma _{2}-\gamma _{1}}{||}_{W}
\end{equation*}%
We recall that by Lemma \ref{unamis} $\ ||\func{F}_{\gamma }^{\ast }\mu
||_{W}\leq \alpha ||\mu ||_{W}+\mu (D^{2})$ then%
\begin{equation*}
I{\leq }\sum_{i=1}^{q}\int \frac{1}{|T_{i}^{^{\prime }}\circ
T_{i}^{-1}(\gamma _{1}))|}\limsup_{\gamma _{2}\rightarrow \gamma
_{1}}\left\{ \alpha {||}\frac{\left( \mu |_{T_{i}^{-1}(\gamma _{1})}-\mu
|_{T_{i}^{-1}(\gamma _{2})}\right) }{\gamma _{2}-\gamma _{1}}{||}_{W}{+|%
\frac{\left( \mu |_{T_{i}^{-1}(\gamma _{1})}(D^{2})-\mu |_{T_{i}^{-1}(\gamma
_{2})}(D^{2})\right) }{\gamma _{2}-\gamma _{1}}{|}}\right\} {{d\gamma }_{1}}
\end{equation*}

and%
\begin{eqnarray*}
I &{\leq }&\sum_{i=1}^{q}\int \frac{1}{|T_{i}^{^{\prime }}\circ
T_{i}^{-1}(\gamma _{1}))|}[\limsup_{\gamma _{2}\rightarrow \gamma _{1}}\frac{%
T_{i}^{-1}(\gamma _{2})-T_{i}^{-1}(\gamma _{1})}{\gamma _{2}-\gamma _{1}}%
\limsup_{\gamma _{2}\rightarrow \gamma _{1}}\alpha {||}\frac{\left( \mu
|_{T_{i}^{-1}(\gamma _{1})}-\mu |_{T_{i}^{-1}(\gamma _{2})}\right) }{%
T_{i}^{-1}(\gamma _{2})-T_{i}^{-1}(\gamma _{1})}{||}_{W} \\
&&{+\limsup_{\gamma _{2}\rightarrow \gamma _{1}}\frac{T_{i}^{-1}(\gamma
_{2})-T_{i}^{-1}(\gamma _{1})}{\gamma _{2}-\gamma _{1}}\limsup_{\gamma
_{2}\rightarrow \gamma _{1}}|\frac{\left( \mu |_{T_{i}^{-1}(\gamma
_{1})}(D^{2})-\mu |_{T_{i}^{-1}(\gamma _{2})}(D^{2})\right) }{%
T_{i}^{-1}(\gamma _{2})-T_{i}^{-1}(\gamma _{1})}}|]{{d\gamma }_{1}} \\
&{\leq }&\sum_{i=1}^{q}{\sup_{\gamma _{1}}}\limsup_{\gamma _{2}\rightarrow
\gamma _{1}}\frac{T_{i}^{-1}(\gamma _{2})-T_{i}^{-1}(\gamma _{1})}{\gamma
_{2}-\gamma _{1}}\int_{I_{i}}\alpha D(\mu ,T_{i}^{-1}(\gamma _{1}))+|\phi
_{x}^{\prime }(T_{i}^{-1}(\gamma _{1}))|dT_{i}^{-1}(\gamma _{1}).
\end{eqnarray*}

Then summing the contributions from all branches $T_{i}$ and intervals $%
I_{i} $%
\begin{eqnarray*}
I &{\leq }&{{\sup_{i,\gamma _{1}}}\limsup_{\gamma _{2}\rightarrow \gamma
_{1}}\frac{T_{i}^{-1}(\gamma _{2})-T_{i}^{-1}(\gamma _{1})}{\gamma
_{2}-\gamma _{1}}{[\alpha ||D(\mu ,\gamma _{1})||_{1}+||\phi }^{\prime }{%
_{\mu }||_{1}}]} \\
&\leq &\frac{1}{\inf |T^{\prime }|}{{[\alpha ||D(\mu ,\gamma
_{1})||_{1}+||\phi }^{\prime }{_{\mu }||_{1}}].}
\end{eqnarray*}

Now the other summands,%
\begin{gather*}
II\leq \sum_{i}\int_{I}\frac{1}{|T_{i}^{^{\prime }}\circ T_{i}^{-1}(\gamma
_{1}))|}\limsup_{\gamma _{2}\rightarrow \gamma _{1}}{||}\frac{\func{F}%
_{T_{i}^{-1}(\gamma _{1})}^{\ast }\mu |_{T_{i}^{-1}(\gamma _{2})}-\func{F}%
_{T_{i}^{-1}(\gamma _{2})}^{\ast }\mu |_{T_{i}^{-1}(\gamma _{2})}}{\gamma
_{2}-\gamma _{1}}{||}_{W}{d\gamma }_{1} \\
\leq {{\sup_{i,\gamma _{1}}}\limsup_{\gamma _{2}\rightarrow \gamma _{1}}%
\frac{T_{i}^{-1}(\gamma _{2})-T_{i}^{-1}(\gamma _{1})}{\gamma _{2}-\gamma
_{1}}} \\
{\times }\sum_{i}\int_{I}\frac{1}{|T_{i}^{^{\prime }}\circ T_{i}^{-1}(\gamma
_{1}))|}\limsup_{\gamma _{2}\rightarrow \gamma _{1}}{||}\frac{\func{F}%
_{T_{i}^{-1}(\gamma _{1})}^{\ast }\mu |_{T_{i}^{-1}(\gamma _{2})}-\func{F}%
_{T_{i}^{-1}(\gamma _{2})}^{\ast }\mu |_{T_{i}^{-1}(\gamma _{2})}}{%
T_{i}^{-1}(\gamma _{2})-T_{i}^{-1}(\gamma _{1})}{||}_{W}{d\gamma }_{1} \\
\leq {{\sup_{i,\gamma _{1}}}\limsup_{\gamma _{2}\rightarrow \gamma _{1}}%
\frac{T_{i}^{-1}(\gamma _{2})-T_{i}^{-1}(\gamma _{1})}{\gamma _{2}-\gamma
_{1}}}\sum_{i}\int_{I_{i}}\limsup_{\gamma _{2}\rightarrow \gamma _{1}}{||}%
\frac{\func{F}_{T_{i}^{-1}(\gamma _{1})}^{\ast }\mu |_{T_{i}^{-1}(\gamma
_{2})}-\func{F}_{T_{i}^{-1}(\gamma _{2})}^{\ast }\mu |_{T_{i}^{-1}(\gamma
_{2})}}{T_{i}^{-1}(\gamma _{2})-T_{i}^{-1}(\gamma _{1})}{||}_{W}{d}%
T_{i}^{-1}(\gamma _{1}) \\
\leq \frac{1}{\inf |T^{\prime }|}||\frac{\partial G}{\partial x}||_{\infty
}||\mu ||_{1}.
\end{gather*}%
where in the last step we used that $\mu \geq 0$. Finally 
\begin{eqnarray*}
III &\leq &\sum_{i}\int {||}\func{F}_{T_{i}^{-1}(\gamma _{1})}^{\ast }\mu
|_{T_{i}^{-1}(\gamma _{1})}{||}_{W}\limsup_{\gamma _{2}\rightarrow \gamma
_{1}}\frac{1}{\gamma _{2}-\gamma _{1}}(\frac{1}{|T_{i}^{^{\prime }}\circ
T_{i}^{-1}(\gamma _{2}))|}-\frac{1}{|T_{i}^{^{\prime }}\circ
T_{i}^{-1}(\gamma _{1}))|})|dm(\gamma _{1}) \\
&\leq &||T^{\prime }||_{\infty }||\frac{T^{\prime \prime }}{(T^{\prime })^{2}%
}||_{\infty }||\mu ||_{1}.
\end{eqnarray*}

Summarizing%
\begin{equation}
||L_{F}\mu ||_{W^{1,1}}\leq \frac{1}{\inf |T^{\prime }|}({{\alpha ||\mu
||_{"W^{1,1}"}+||\phi }^{\prime }{_{\mu }||}_{1}+}||\frac{\partial G}{%
\partial x}||_{\infty }||\mu ||_{1})+(1+||T^{\prime }||_{\infty }||\frac{%
T^{\prime \prime }}{(T^{\prime })^{2}}||_{\infty })||\mu ||_{1}.
\label{final}
\end{equation}
\end{proof}

As done before, iterating the inequality, gives

\begin{corollary}
\label{LYYYYYY}There are $B>0,\lambda <1$ such that%
\begin{equation*}
||L_{F}^{n}\mu ||_{"W^{1,1}"}\leq \lambda ^{n}({{||\mu ||_{"W^{1,1}"}+||\phi 
}^{\prime }{_{\mu }||}_{1})+}B||\mu ||_{1}.
\end{equation*}
\end{corollary}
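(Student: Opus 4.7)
The plan is to combine Proposition \ref{LYYY} with the one-dimensional Lasota--Yorke estimate for the base expanding map, exploiting the fact that the marginal density evolves autonomously under the transfer operator of $T$.

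First I observe that since $F$ is a skew product over $T$, we have $\pi_x^\ast(L_F\mu) = L_T \pi_x^\ast \mu$, so the marginal density satisfies $\phi_{L_F\mu} = L_T \phi_\mu$, where $L_T$ is the one-dimensional transfer operator of $T$ on $S^1$. Iterating the estimate from Section \ref{sec:LY}, namely (\ref{LY}) applied to the circle map $T$, gives
\begin{equation*}
||\phi'_{L_F^n\mu}||_1 \leq \lambda^n \, ||\phi'_\mu||_1 + \frac{D}{1-\lambda}\, ||\phi_\mu||_1,
\end{equation*}
with $\lambda = 1/\inf|T'|$, the same rate appearing in Proposition \ref{LYYY}. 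For a positive measure $\mu$, taking the constant function $g \equiv 1$ in Definition \ref{wasserstein} yields $||\mu|_\gamma||_W \geq \phi_\mu(\gamma)$, hence $||\phi_\mu||_1 \leq ||\mu||_{"1"}$, and therefore
\begin{equation*}
||\phi'_{L_F^n\mu}||_1 \leq \lambda^n \, ||\phi'_\mu||_1 + C_1 \, ||\mu||_{"1"}.
\end{equation*}

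Next, I would plug this bound into the one-step inequality of Proposition \ref{LYYY} applied to $L_F^{n-1}\mu$. Writing $a_n := ||L_F^n \mu||_{"W^{1,1}"}$ and using $||L_F^k\mu||_{"1"} \leq ||\mu||_{"1"}$ from Proposition \ref{weakcontral11234}, this produces the scalar recursion
\begin{equation*}
a_n \leq \lambda\alpha\, a_{n-1} + \lambda\, ||\phi'_{L_F^{n-1}\mu}||_1 + B\, ||\mu||_{"1"} \leq \lambda\alpha\, a_{n-1} + \lambda^n\, ||\phi'_\mu||_1 + C_2\, ||\mu||_{"1"}.
\end{equation*}
Unrolling the recursion and summing the resulting geometric series in $\lambda\alpha$ and in $\alpha$ gives
\begin{equation*}
a_n \leq (\lambda\alpha)^n \, ||\mu||_{"W^{1,1}"} + \frac{\lambda^n}{1-\alpha}\, ||\phi'_\mu||_1 + C_3 \, ||\mu||_{"1"},
\end{equation*}
which, after absorbing $1/(1-\alpha)$ into the multiplicative constant (or into a slightly relaxed rate $\lambda' \in (\lambda,1)$), is the announced form, with $B$ obtained by taking $C_3$ as weak constant.

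The main obstacle is that Proposition \ref{LYYY} is not a Lasota--Yorke inequality in the strict strong--weak sense: the auxiliary term $||\phi'_\mu||_1$ on the right-hand side has the same contraction coefficient $\lambda$ as the strong norm itself, so a naive one-norm iteration does not produce contraction. The trick that makes the iteration close up is precisely that in the skew-product structure the marginal direction is decoupled from the fibers, so $||\phi'||_1$ satisfies its own one-dimensional Lasota--Yorke bound independently of what happens on the stable leaves, and this is what lets the full strong norm $||\cdot||_{"W^{1,1}"}$ contract at the rate $\lambda$.
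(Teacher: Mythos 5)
Your argument is correct and is essentially the paper's intended proof: after Proposition \ref{LYYY} the paper only says ``as done before, iterating the inequality,'' and your unrolling of the recursion --- using that the marginal evolves autonomously, $\phi_{L_F\mu}=L_T\phi_\mu$, so that $||\phi'_{L_F^{k}\mu}||_1$ obeys its own one-dimensional Lasota--Yorke bound with the same rate $\lambda=1/\inf|T'|$ --- supplies exactly the detail needed for the iteration to close, since the $||\phi'_\mu||_1$ term is not dominated by the weak norm and would otherwise block the recursion. The only discrepancy is the harmless factor $1/(1-\alpha)$ (equivalently a multiplicative constant, or a slightly relaxed rate) in front of $\lambda^n||\phi'_\mu||_1$, which you correctly note can be absorbed and which is implicit in the paper's loosely stated corollary as well.
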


%comment

This inequality shows that the iteration of a positive measure keeps a
bounded regularity in this strong norm.

\section{Bibliographic remarks and acknowledgements}

The books \cite{B}, \cite{gora}, \cite{Viana}, \cite{DKL} contains detailed
introductions to subjects similar to the one considered in these notes (from
different points of view). Our point of view is closer to the one given in
the notes \cite{L2} and \cite{S}, from which some definition and statement
is taken. We recommend to consult all these texts for many related topics,
generalizations, applications and complements we cannot include in these
short introductory notes.

The author whish to thank C. Liverani, J. Sedro, R. Possobon and I. Nisoli
for useful conversations and warmly thank the students of the Ergodic Theory
course of "Laurea magistrale in matematica", University of Pisa, year 2015,
and in particular A. Pigati for useful comments and corrections to the
earlier versions of this text.


\begin{thebibliography}{99}
\bibitem{AS} J. F. Alves, M. Soufi \emph{Statistical Stability in Chaotic
Dynamics }Progress and Challenges in Dyn. Sys. Springer Proc. in Math. \&
Statistics V. 54, 2013, pp 7-24

\bibitem{Ar} Arnold, L. \emph{Random Dynamical Systems }Springer Monographs
in Mathematics (1998).

\bibitem{B} V. Baladi \emph{Positive transfer operators and decay of
correlations}, Advanced Series in Nonlinear Dynamics, 16 World Sci. Publ.,
NJ, (2000).

\bibitem{Bsusc} V.Baladi,\emph{\ On the susceptibility function of piecewise
expanding maps},Comm. Math. Phys. 275, I. 3, pp 839--859 (2007).

\bibitem{Baicm} V. Baladi,\emph{\ Linear response, or else}, ICM Seoul 2014,
Proceedings, Volume III, 525--545,

\bibitem{BT} V. Baladi, M. Tsujii \emph{Anisotropic Holder and Sobolev
spaces for hyperbolic diffeomorphisms} Ann. Inst. Fourier (Grenoble) 57
(2007), no. 1, 127--154.

\bibitem{BB} W. Bahsoun, C. Bose \emph{Invariant densities and escape rates:
rigorous and computable estimation in the L}$^{\infty }$\emph{\ norm. }
Nonlinear Analysis, 2011, vol. 74, 4481-4495.

\bibitem{BGN} W. Bahsoun, S. Galatolo, I\ Nisoli, X Niu \emph{A Rigorous
Computational Approach to Linear Response } arXiv:1506.08661

\bibitem{BGN2} Bahsoun, W., Galatolo, S., Nisoli, I. and Niu, X., Rigorous
approximation of diffusion coefficients for expanding maps. \textit{Journal
of Statistical Physics} V. 163 (2016) Issue 6, pp 1486\^{a}\euro
\textquotedblleft 1503.

\bibitem{Ba} G. Basso \emph{A Hitchhiker's guide to Wasserstein distances }%
http://n.ethz.ch/\symbol{126}gbasso/download/A\%20Hitchhikers\%20guide\%20to%
\%20Wasserstein/A\%20Hitchhikers\%20guide\%20to\%20Wasserstein.pdf

\bibitem{BK} V. Bogachev, A. V. Kolesnikov \emph{The Monge-Kantorovich
problem: achievements, connections, and perspectives \ }Russian Mathematical
Surveys, Volume 67, Number 5 (2012)

\bibitem{BM} Christopher J. Bose, R. Murray. \emph{The exact rate of
approximation in Ulam's method.} Discrete and Continuous Dynamical Systems,
7(1):219-235 (2001).

\bibitem{DelJu02} M. Dellnitz, O. Junge \emph{Set Oriented Numerical Methods
for Dynamical Systems} Handbook of dynamical systems vol 2 - Elsevier,
(2002).

\bibitem{DKL} M. F. Demers, N. Kiamari e C. Liverani \emph{Transfer
Operators in Hyperbolic Dynamics - an introduction \ }Editora do IMPA (2021).

\bibitem{DZ} M Demers H.-K. Zhang \emph{A functional analytic approach to
perturbations of the Lorentz Gas}, Communications in Mathematical Physics
324:3 (2013), 767-830.

\bibitem{D} M. Demers \emph{A gentle introduction to anisotropic banach
spaces }Chaos, Solitons \& Fractals\ V. 116, pp 29-42 (2018)

\bibitem{EM} Eslami, P. and Misiurewicz, M., \emph{Singular limits of
absolutely continuous invariant measures for families of transitive maps.}
\_J. Difference Equ. Appl.\_ 18 (2012), no. 4, 739-750

\bibitem{F08} G. Froyland \emph{Extracting dynamical behaviour via Markov
models } in Alistair Mees, editor, Nonlinear Dynamics and Statistics:
Proceedings, Newton Institute, Cambridge (1998): 283-324, Birkhauser, 2001.

\bibitem{gora} A. Boyarsky P. Gora \emph{Laws of Chaos: Invariant Measures
and Dynamical Systems in One Dimension }\ Probability and Its Applications
Birkh\"{a}user Boston, 1997

\bibitem{RC} Evans, Lawrence C. (2010). \emph{Partial Differential Equations
(2nd ed.)}. American Mathematical Society. ISBN 0-8218-4974-3.

\bibitem{EG} Z. Li, P. Gora, A. Boyarsky, H. Proppe, P. Eslami \emph{Family
of piecewise expanding maps having singular measure as a limit of ACIMs }%
Erg. Th. Dyn. Sys. (2013)

\bibitem{GN} S. Galatolo I\ Nisoli \emph{An elementary approach to rigorous
approximation of invariant measures }SIAM Journal on Applied Dynamical
Systems 2014, Vol. 13, No. 2, pp. 958-985

\bibitem{GN2} Galatolo, S., Nisoli, I., \emph{\ Rigorous computation of
invariant measures and fractal dimension for piecewise hyperbolic maps: 2D
Lorenz like maps.} {Ergodic Theory and Dynamical Systems} 36, I. 6 (2016),
pp. 1865-1891 doi: 10.1017/etds.2014.145

\bibitem{G} S. Galatolo \emph{Quantitative statistical stability and speed
of convergence to equilibrium for partially hyperbolic skew products}
"Quantitative statistical stability, speed of convergence to equilibrium and
partially hyperbolic skew products" Journal de l'\'{E}cole polytechnique ---
Math\'{e}matiques, Volume 5 (2018) p. 377-405

\bibitem{GLu} Galatolo S., Lucena R \emph{Spectral Gap and quantitative
statistical stability for systems with contracting fibers and Lorenz like
maps.} "Spectral gap and quantitative statistical stability for systems with
contracting fibers and Lorenz-like maps" DCDS 40 (3): 1309-1360. (2020) doi:
10.3934/dcds.2020079

\bibitem{GG} Galatolo, S.; Giulietti, P. \emph{A linear response for
dynamical systems with additive noise.} Nonlinearity 32 (2019), no. 6,
2269--2301.

\bibitem{GM} Galatolo, S ; Marsan H. \emph{Quadratic response and speed of
convergence of invariant measures in the zero-noise limit. }arXiv:2007.05339

\bibitem{GS} Galatolo S ; Sedro J.

\bibitem{GMN} S Galatolo, M Monge, I Nisoli \emph{Existence of Noise Induced
Order, a Computer Aided Proof \ }Nonlinearity V. 33, N. 9 (2020)

\bibitem{GP} S. Galatolo, M. Pollicott \emph{Controlling the statistical
properties of expanding maps} Nonlinearity, Volume 30, Number 7 (2017)

\bibitem{GL} S.Gouezel, C. Liverani \emph{Banach spaces adapted to Anosov
systems}, Ergodic Theory and Dynamical Systems, 26, 1, 189--217, (2006)

\bibitem{HH} Hennion, H, Herve, L Hennion \emph{Limit Theorems for Markov
Chains and Stochastic Properties of Dynamical Systems by Quasi-Compactness }%
Lecture Notes in Mathematics 1766 (2001)

\bibitem{K} G. Keller \emph{Stochatic stability in some chaotic dynamical
systems }Mh. Math. 94, 313-333 (1982)

\bibitem{Ki} Yu. Kifer. \emph{Ergodic theory of random perturbations.}
Birkhauser, 1986.

\bibitem{M} M. Mazzolena \emph{Dinamiche espansive unidimensionali:
dipendenza della misura invariante da un parametro} Master thesis,\emph{\ }%
Universit\`{a} di Roma Tor Vergata (2006)

\bibitem{LM} A. Lasota M. Mackey \emph{Chaos, Fractals, and Noise Stochastic
Aspects of Dynamics} Applied Mathematical Sciences book series (AMS, volume
97) (1994)

\bibitem{Lan} O. E. Lanford III \emph{Informal Remarks on the Orbit
Structure of Discrete Approximations to Chaotic Maps} Exp. Math. (1998), 7:
317:324

\bibitem{L2} C. Liverani \emph{\ Invariant measures and their properties. A
functional analytic point of view}, Dynamical Systems. Part II: Topological
Geometrical and Ergodic Properties of Dynamics. Centro di Ricerca Matematica
\textquotedblleft Ennio De Giorgi\textquotedblright : Proceedings. Published
by the Scuola Normale Superiore in Pisa (2004).

\bibitem{L} C. Liverani, \emph{Rigorous numerical investigations of the
statistical properties of piecewise expanding maps--A feasibility study},
Nonlinearity (2001), 14: 463-490.

\bibitem{LY} A. Lasota, J.Yorke \emph{On the existence of invariant measures
for piecewise monotonic transformations }, Trans. Amer. Math. Soc. (1973),
186: 481-488.

\bibitem{Kva} K. Oliveira, M. Viana \emph{Fudamentos da Teoria Ergodica},
Colecao Fronteiras da Matematica, SBM, (2014).

\bibitem{S} O. Sarig \emph{Introduction to the transfer operator method \ }%
http://www.wisdom.weizmann.ac.il/\symbol{126}sarigo/

\bibitem{Sed} J. Sedro \emph{A regularity result for fixed points, with
applications to linear response }arXiv:1705.04078

\bibitem{Viana} M. Viana \emph{Stochastic dynamics of deterministic systems }
Lecture Notes XXI Braz. Math. Colloq., IMPA, Rio de Janeiro (1997).
w3.impa.br/\symbol{126}viana/out/sdds.pdf

\bibitem{LSY} L.-S. Young \newblock\emph{What are SRB measures, and which
dynamical systems have them?} J. Stat. Phys. (2002), 108: 733-754.
(www.cims.nyu.edu/\symbol{126}lsy/papers/SRBsurvey.ps)

\bibitem{V} M. Viana \emph{Lectures on Lyapunov Exponents} Cambridge press
2014. (section 5)
\end{thebibliography}
\end{document}